\pgfplotsset{compat=newest} 
\tikzset{midto/.style={postaction={decorate,
    decoration={markings,mark=at position .5 with
    {\draw (-.035,-.07) -- (.035,0) -- (-.035,.07);}}}}}
\tikzset{midfrom/.style={postaction={decorate,
    decoration={markings,mark=at position .5 with
    {\draw (.035,-.07) -- (-.035,0) -- (.035,.07);}}}}}
\tikzset{web/.style={black,semithick}}
\newcommand{\surj}{\twoheadrightarrow}
\newcommand{\tbf}{\textbf} 
\newcommand{\mf}{\mathfrak} 
\newcommand{\ms}{\mathscr}
\newcommand{\mbf}{\mathbf}
\newcommand{\R}{\mathbb{R}}
\newcommand{\Q}{\mathbb{Q}}
\newcommand{\C}{\mathbb{C}}
\newcommand{\Z}{\mathbb{Z}}
\newcommand{\N}{\mathbb{N}}
\newcommand{\K}{\mathbb{K}}
\newcommand{\ep}{\varepsilon}
\newcommand{\la}{\lambda}
\newcommand{\ph}{\varphi}
\newcommand{\ga}{\gamma} 
\newcommand{\Ga}{\Gamma}
\newcommand{\al}{\alpha}
\newcommand{\om}{\omega}
\newcommand{\fg}{\mathfrak{g}}
\newcommand{\sV}{\mathscr{V}}
\newcommand{\sT}{\mathscr{T}}
\newcommand{\natiso}{\xrightarrow{\sim}}
\newcommand{\inj}{\hookrightarrow}
\newcommand{\Ug}{U(\mathfrak{g})}
\newcommand{\Uqg}{U_q(\mathfrak{g})} 
\newcommand{\rb}{\rbrace} 
\newcommand{\lb}{\lbrace}
\newcommand{\lan}{\langle} 
\newcommand{\ran}{\rangle}
\newcommand{\ol}{\overline} 
\newcommand{\wh}{\widehat}
\newcommand{\op}{\oplus} 
\newcommand{\ot}{\otimes}
\newcommand{\wt}{\widetilde}
\newcommand{\bu}{\bullet}
\newcommand{\sbs}{\subseteq}
\newcommand{\sps}{\supseteq}
\newcommand{\tit}{\textit}
\newcommand{\xto}{\xrightarrow}
\newcommand{\sqbinom}{\genfrac{[}{]}{0pt}{}}
\DeclareMathOperator{\Hom}{Hom}
\DeclareMathOperator{\rk}{rk}
\DeclareMathOperator{\Ob}{Ob}
\DeclareMathOperator{\Mor}{Mor}
\DeclareMathOperator{\Span}{Span}
\DeclareMathOperator{\gr}{gr}
\DeclareMathOperator{\rep}{rep}
\newcommand\eqA{\stackrel{\mathclap{\mbox{A}}}{=}}
\newcommand\eqB{\stackrel{\mathclap{\mbox{B}}}{=}}
\newtheorem{thm}{Theorem}
\numberwithin{thm}{chapter}
\newtheorem{prop}[thm]{Proposition}
\newtheorem{fact}[thm]{Fact}
\newtheorem{lemma}[thm]{Lemma}
\newtheorem{defn}[thm]{Definition}
\newtheorem{coro}[thm]{Corollary}
\theoremstyle{remark}
\newtheorem*{remark}{Remark}
\numberwithin{equation}{section}
\title          {Spiders and Generalized Confluence}
\author         {Colin Scott Hagemeyer}
\abstract{
\linespread{1}\selectfont
Given a semisimple Lie algebra $\fg$, we can represent invariants of tensor products of fundamental representations of the quantum enveloping algebra $U_q(\fg)$ using particular directed graphs called webs. In particular webs are trivalent graphs (with leaves) whose edges are labeled by fundamental representations. Picking generating morphisms and relators we can construct a presentation of the representation category. We examine the properties of this presentation in the case of rank $3$ spiders and certain higher rank non-simple spiders. In particular, we prove a PBW-type theorem in the case of $\mf{sl}_4$, $(\mf{sl}_2)^n$, and $\mf{sl}_2 \op \mf{sl}_3$ and also give counterexamples showing that no such result is true in the case of $(\mf{sl}_2)^2 \op \mf{sl}_3$ and $\mf{sl}_3 \op \mf{sl}_3$. Nevertheless we rephrase the PBW-type theorem as a degeneration of a particular spectral sequence, and prove that this spectral sequence converges on the second page for $(\mf{sl}_2)^n \op \mf{sl}_3$, giving generalized and weaker form of confluence.

We then apply the above results to the geometry of the Euclidean building in the case of $\mf{sl}_4$ and $(\mf{sl}_2)^n$. In particular, we prove an upper triangularity result with respect to the geometric Satake basis for $\mf{sl}_4$, improving the results of Fontaine in \cite{fontaine:generating}. Finally we give a geometric interpretation of webs as minimal combinatorial disks in the Euclidean building, reinterpreting many of the combinatorial results of paper in geometric terms.}
\begin{document}

\makeintropages 

\clearpage
\pagenumbering{arabic}
\chapter{Introduction}
\linespread{1}\selectfont
In this paper, the quantum group $U_q(\mathfrak{g})$ is a deformation of the universal enveloping algebra $U(\mathfrak{g})$ of some semisimple Lie algebra $\mathfrak{g}$ (cf. \cite{kassel:quantum}). For generic choices of the parameter $q$, the simple representations are in natural bijection with the simple representations of $\mathfrak{g}$, however this deformation causes the tensor category structure of the representation category to no longer be symmetric which gives us a richer theory and provides applications such as knot invariants. In \cite{morrison:diagrammatic} and \cite{ckm:howe} a combinatorial model in type $A_n$ (ie for $\mathfrak{g}=\mathfrak{sl}_{n+1}$), was developed using generators and relations. This extended the work of \cite{kuperberg:rank} and \cite{kim:graphical} in type $A_2$ and $A_3$ respectively. Simple tensors and compositions of generators are called webs, and can be represented as trivalent graphs. Each of these models is called a spider. Here we will analyze the simple spider corresponding to $A_3$ along with the spiders of the semisimple Lie groups of the form $A_1^n \times A_2^m$ (we'll mainly use product notation from now on because the resulting representations are tensor products of representations of the factors). Our main goal for each spider will develop and prove a generalization of confluence (see the end of section \ref{GeneralizedDiamond} for a definition of confluence), and then use this to find algebraic and geometric applications.

\section{Generalized Confluence}

It was shown in \cite{kuperberg:rank} that the rank 2 spiders were confluent with respect to the filtration given by the number of vertices of each web. In the rank 3 case, this no longer makes sense in the usual format because there are relators which have two leading terms with the same number and type of vertices. We could attempt to refine the filtration, but this would necessarily cause it to stop being rotationally symmetric. Instead we generalize the notion of confluence. First we note that a filtered presentation of vector spaces is just a filtered complex with two columns (one for the generators and one for the relators), and so induces a spectral sequence, whose convergence is closely related to confluence.

\begin{restatable}{prop}{Confluence}
\label{intro:confluence}
A presentation of a filtered vector space whose relators have only a single leading term is confluent if and only if the associated spectral sequence converges at the first page $E^1$.
\end{restatable}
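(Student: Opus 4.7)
The plan is to interpret the given presentation as a two-column filtered chain complex $C_\bullet$ in which $C_0$ is the free vector space on the generators, $C_1$ the free vector space on the relators, and $d \colon C_1 \to C_0$ sends each relator to its expansion. Then $V = H_0(C_\bullet)$, the filtration on $V$ lifts compatibly to $C_0$ and $C_1$, and one obtains a spectral sequence with $E^1_{p,q} = H_{p+q}(\gr_p C, d_0)$ converging to $\gr_p H_{p+q}(C)$. The single-leading-term hypothesis forces the $E^0$-differential to send each relator in $\gr_p C_1$ to its unique leading generator in $\gr_p C_0$; therefore $E^1_{p,-p}$ is precisely the span of the reduced generators (those never appearing as a leading term) at filtration level $p$, while $E^\infty_{p,-p} = \gr_p V$.

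For the forward direction I would assume confluence, so that the reduced generators form a basis of $V$ compatible with the filtration and the induced natural map $E^1_{p,-p} \to \gr_p V$ is an isomorphism. Because the complex has only two columns, every higher differential $d_r$ ($r \geq 1$) originates at some $(p, 1-p)$ in total degree $1$ and lands at $(p-r, r-p)$ in total degree $0$; thus the edge-map composite $E^1_{p,-p} \twoheadrightarrow E^\infty_{p,-p} = \gr_p V$ recovers the same isomorphism and forces every such $d_r$ to vanish. Since these are the only potentially nonzero higher differentials, it follows that $E^1 = E^\infty$.

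For the converse I would assume the sequence degenerates at $E^1$, so that $E^1_{p,-p} = \gr_p V$ for every $p$; the reduced generators at level $p$ then project to a basis of $\gr_p V$, and a standard lifting argument on the filtration promotes them to a basis of $V$, which is the statement of confluence. The main obstacle is articulating the correspondence between a nonvanishing $d_r$ and a genuine confluence obstruction: such a differential represents a relation among reduced generators that surfaces only after $r$ successive cancellations of leading terms during reduction, and it is precisely the single-leading-term hypothesis that lets the page index in the spectral sequence match the number of reduction steps needed to expose that obstruction, making the two characterizations line up exactly.
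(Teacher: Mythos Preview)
Your approach is correct and is somewhat different from the paper's. You reduce both directions to the classical equivalence ``confluence (with termination) $\Leftrightarrow$ reduced generators form a basis of $V$,'' and then match dimensions: $E^1_{1,p}$ is the span of reduced generators at level $p$, while $E^\infty_{1,p}=\gr_p V$, so the edge surjection is an isomorphism precisely when that basis property holds. The paper instead routes the forward direction through Proposition~\ref{Canceling}: in the single-leading-term setting the only canceling sequences are pairs $(r,r')$ of reduction relators on the same web, and confluence supplies the lower-filtration relation $\rho$ witnessing consistent reducibility. For the converse the paper argues directly that if two maximal rewritings $x,x'$ differ, then $E^1$ convergence produces a relation $\rho$ with $|\rho|=|x-x'|$ whose leading terms would have to be reducible webs, contradicting maximality. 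Your route is cleaner for this particular statement; the paper's route is chosen because Proposition~\ref{Canceling} is the tool that survives when horizontal relators appear, so the proof here doubles as a sanity check of that machinery.

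One small correction: the sentence ``which is the statement of confluence'' overstates things. Having the reduced generators form a basis is \emph{equivalent} to confluence (given termination, which you get from the filtration being bounded below), but it is not the definition. You need the short argument: extend any two rewriting paths to maximal ones; their endpoints are linear combinations of reduced generators representing the same class in $V$, hence equal in $W$ by linear independence. You flag this gap in your final paragraph, but it should be filled rather than described as an obstacle, since it is exactly the classical Newman/Bergman step and is where the single-leading-term hypothesis is actually used on the confluence side.
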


If we instead compare to filtered algebras with more general relators, convergence of this spectral sequence amounts to a PBW-type theorem where the associated graded is isomorphic to the algebra with the same presentation except that we truncate away the lower-order terms of each relator.

Taking this as motivation we can generalize this notion to $E^k$ convergence for those presentations whose associated spectral sequence converges at the $k$th page. Intuitively, this property tells us that in the process of reducing an element $w$ to a minimal representative, we don't need to use an web which has $k$ more vertices than $w$. In this paper we will be looking at presentations of additive tensor categories over a field, and whose filtration will be given by the number of generators in each word. We will say that such a presentation is $E^k$ convergent if every such $Hom$ space has a spectral sequence which converges on the $k$th page as a vector space presentation.
\begin{remark}
For other spiders, it may be important to use more complicated filtrations where different generating morphisms are given different weight. Nevertheless, in the case of $A_3$ and $A_1^n$ there is only one generating morphism up to duality, so we won't lose anything by restricting to this filtration. For $A_1^m \times A_2^n$ where $n,m>0$ there are two types of vertices (trivalent and tetravalent) up to outer automorphisms, but it doesn't appear we will change anything significant by changing the filtration short of completely ignoring one type of generator in the filtration
\end{remark}

The non-simple semisimple cases are some of the simplest examples since the horizontal relators have no lower order terms, and the sizes of the invariant spaces are easy to characterize in terms of the simple factors. This gives us our first method of proving $E^1$ convergence which is more classical. We show that the dimensions in the $E^1$ page are the same as in the actual representation category, and therefore get our result for $A_1^n$ (which I can only imagine has been proven before in some other form due to its simplicity and similarity to the Coxeter group properties of the symmetric group):

\begin{restatable}{prop}{A1nConv}
\label{intro:A1nConv}
The $A_1^n$ spider with the natural product presentation has an $E^1$ convergent spectral sequence. 
\end{restatable}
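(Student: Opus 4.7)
My plan is a direct dimension count: since the spectral sequence of a bounded filtered complex abuts to the associated graded of its cohomology, $\dim E^\infty = \dim \Hom$ automatically, and so $E^1$ convergence is equivalent to $\dim E^1 = \dim \Hom$ in every Hom space.

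First I would make the presentation explicit. The $A_1^n$ spider has $n$ colors of strand, generated by the cups and caps of each color, and since boundary arcs are forced by the boundary data the filtration effectively counts closed loops. Its relations split into two families: for each color $i$, the Temperley--Lieb loop relation setting a closed loop of color $i$ equal to $[2]\cdot \mathrm{id}$, whose leading term is the bare loop and whose lower-order term is a scalar; and for each pair of distinct colors, equidegree horizontal relations stating that strands of different colors pass freely through one another, which therefore have no lower-order terms. Consequently, modding the free vector space of webs by the leading terms of all relators kills every web containing a monochromatic closed loop and identifies webs that differ by a horizontal move, yielding the generator side of the $E^1$ page.

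The combinatorial heart of the argument is to show that after repeated application of horizontal relations, any loop-free web on a boundary $\vec{a}$ can be placed in a canonical layered form in which each color independently gives a non-crossing matching on its subword of the boundary. I would prove this by induction on the number of inter-color crossings, exhibiting at each step a horizontal move that decreases this count without altering any monochromatic matching. Granting this decomposition, the $E^1$ page factors as
\[
E^1(\vec{a},\vec{b}) \;\cong\; \bigotimes_{i=1}^n E^1_{A_1}(\vec{a}|_i,\vec{b}|_i),
\]
where $\vec{a}|_i$ is the color-$i$ subword of $\vec{a}$. Each factor on the right is the $E^1$ page of a single copy of the $A_1$ spider, confluent by \cite{kuperberg:rank}, so its dimension is the Catalan count matching $\dim \Hom_{U_q(\mf{sl}_2)}$; the product over colors matches the full dimension of $\Hom$ in $U_q((\mf{sl}_2)^n)$. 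This forces $\dim E^1 = \dim \Hom$ and hence $E^1 = E^\infty$. The main anticipated obstacle is the sorting step, i.e.\ verifying that horizontal moves are rich enough to permute boundary colors into the canonical order without creating spurious closed loops; the inductive decrease of the inter-color crossing count is designed to control exactly this.
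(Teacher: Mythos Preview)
Your overall strategy—a dimension count showing $\dim E^1 = \dim \Hom$ via a product-of-Catalan-numbers identification—is exactly the paper's approach. The gap is in your identification of the presentation and the filtration. In the paper's setup the grading on $FSp(X,Y)$ counts internal vertices, and for $A_1^n$ the only internal vertices are the tetravalent crossings between strands of different colors; cups and caps are part of the pivotal structure, not generating morphisms, and the filtration does \emph{not} count closed loops. With the correct filtration the three relator types behave differently: the loop relator is degree-preserving (a bare loop has zero crossings); the triangle relator is horizontal (both sides have three crossings); but the bigon $U$-relator is a \emph{reduction} relator—its left side has two crossings, its right side zero, so its leading term is the bigon alone and it carries a genuine lower-order term. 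Your claim that all inter-color relations are ``equidegree horizontal'' is therefore wrong, and your description of the $E^1$ page does not match what the spectral sequence actually produces.

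This breaks your inductive sorting step: triangle moves, the only genuinely horizontal inter-color relators, preserve the crossing count, so you cannot ``exhibit at each step a horizontal move that decreases this count.'' The paper instead establishes two separate facts. First, any web containing a closed loop or a bigon (two strands crossing twice) can be reduced in filtration, using $U$-moves together with triangle moves to evacuate the interior of the reducible configuration. Second—and this is the delicate part that should replace your sorting argument—any two loop-free, bigon-free webs inducing the same $n$-tuple of planar matchings are connected by triangle moves alone; the paper proves this by inducting on strands and constructing an acyclic poset on the crossings in the region between two homotopic paths to locate an applicable triangle relator. Once these two facts are in hand, the equinumeration with products of $A_1$ invariant dimensions finishes the proof exactly as you describe.
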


We also prove a similar result for $A_1 \times A_2$ which follows with some extra work from the cut-path results in \cite{kuperberg:rank}.

\begin{prop}
The $A_1 \times A_2$ spider with the natural product presentation has an $E^1$ convergent spectral sequence.
\end{prop}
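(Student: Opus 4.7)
The plan is to exploit the factorization of Hom spaces forced by semisimplicity of $\mathfrak{sl}_2 \oplus \mathfrak{sl}_3$, namely $\dim \Hom_{A_1 \times A_2}(\mathbb{C}, b) = \dim \Hom_{A_1}(\mathbb{C}, b_1) \cdot \dim \Hom_{A_2}(\mathbb{C}, b_2)$ where $b_1, b_2$ are the $A_1$- and $A_2$-colored sub-boundaries of $b$, and to match this against a count of reduced webs in the natural product presentation. First I would make the presentation explicit: edges carry labels in $\{L_1, L_2, L_3\}$ (with $L_1$ the $A_1$-fundamental and $L_2, L_3$ the two $A_2$-fundamentals); the only trivalent generators are the $A_2$ vertices, since the $A_1$ spider has none; the relators are the $A_1$ loop relator, the $A_2$ bigon and square relators, and interchange relators that allow $A_1$-strands to slide past $A_2$-edges and $A_2$-vertices. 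Because the two factors act on independent tensor slots, these interchange relators have a single leading term that preserves the vertex count, so they are horizontal in the vertex filtration and do not create lower-order terms.

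Second I would establish a normal form: every web is equivalent, via the interchange relations, to one whose $A_1$-strands lie in a collar of the boundary disk as a non-crossing matching of the $L_1$-positions, while the $A_2$-subweb sits in the interior on the $L_2, L_3$-positions. A web in normal form is reduced (in the sense that its leading term survives all leading relators) iff its $A_2$-subweb is non-elliptic in Kuperberg's sense and its $A_1$-matching contains no closed loops. Granted this normal form, the count of reduced webs on $b$ factors as the product of the number of non-crossing matchings of the $L_1$-positions with the number of non-elliptic $A_2$-webs on the $L_2, L_3$-positions, which by Catalan and Kuperberg's rank-2 basis theorem equals $\dim \Hom_{A_1} \cdot \dim \Hom_{A_2} = \dim \Hom_{A_1 \times A_2}$. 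The chain $\dim \Hom_{A_1 \times A_2} = \dim E^\infty \leq \dim E^1 \leq \#\{\text{reduced webs}\}$ then collapses, forcing $d^k = 0$ for $k \geq 1$ and yielding $E^1$ convergence.

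The main obstacle is the normal-form step, which is where the ``extra work from the cut-path results'' of \cite{kuperberg:rank} enters: given an $A_1$-strand sunk into the interior of an $A_2$-subweb, one must extract it to the boundary without adding vertices. My plan is to apply Kuperberg's cut-path lemma to the $A_2$-subweb restricted to the subdisk bounded by the $A_1$-strand together with an arc of $\partial D$, use the cut path to locate a short route along which the $A_1$-strand may be slid outward via interchange relators, and then induct on the number of $A_2$-vertices enclosed in such subdisks. Mutual crossings among $A_1$-strands themselves reduce to a trivial double-crossing relation since the interchange of independent tensor factors is an honest swap, so this case contributes no obstruction.
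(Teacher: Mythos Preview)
Your argument has a genuine gap: the interchange relators are not all horizontal in the vertex filtration. In the natural product presentation the tetravalent crossing vertices $L_1L_2L_1^*L_2^*$ and $L_1L_3L_1^*L_3^*$ are generating morphisms, so they are counted in the grading. The $U$ relator (\ref{product:bigon}) takes a web with two crossings to one with none, and the $Y$ relator (\ref{product:Y}) takes two crossings to one; both are \emph{reduction} relators. Only the $H$ relator (\ref{product:H}) is horizontal. This breaks your normal form in two places. First, your claim that an $A_1$ strand ``sunk into the interior'' can always be extracted to the boundary collar without adding vertices is false: take the $A_2$ subweb to be a single $H$ (two trivalent vertices joined by an edge $e$) and let the $A_1$ strand cross $e$ once. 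Any collar route for that strand crosses at least two $A_2$ boundary legs, so the collar form has strictly more vertices and cannot be reached by non-increasing moves. Second, collar normal form webs with non-elliptic $A_2$ part are \emph{not} all reduced: whenever two consecutive $A_2$ boundary legs under the collar share a trivalent vertex, the $Y$ relator applies and knocks the web down a grade.

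The repair is exactly what the paper does: the reduced webs are those whose $A_2$ part is non-elliptic and whose $A_1$ strands follow \emph{minimal} cut paths, not collar paths. Kuperberg's cut-path lemma is then used in the opposite direction from what you propose---not to push strands outward, but to show that any non-minimal cut path can be shortened by a sequence of length-non-increasing $U$, $Y$, $H$ moves, and that two minimal cut paths with the same endpoints are related by $H$ moves alone. Once you have that, the equinumeration argument you sketched goes through, since minimal cut paths are determined by their endpoints (up to $H$ moves) and so the count still factors as $(\text{non-crossing matchings})\times(\text{non-elliptic }A_2\text{ webs})$.
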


These two spiders have many nice properties that allow us to prove $E^1$ convergence more easily, but in the simple rank $3$ cases the spiders are more complicated. Nevertheless, they have some similarities which should make the amenable to a unified approach. Therefore, we prove a criterion for $E^1$ convergence which can be used in type $A_3$, and conjecturally for $B_3$ as well.

It was shown in \cite{ckm:howe} that it is possible to generate all relations in the $A_3$ spider without using the hexagon relators introduced in \cite{kim:graphical} and called Kekule relators in \cite{morrison:diagrammatic}, however this requires increasing the number of vertices of certain webs before we can reduce them. Since this would make $E^1$ convergence impossible, we will instead work with the larger set of relators given in \cite{kim:graphical}. Using this as our presentation, we construct a global criterion for when a web is reducible, and use it to obtain the following result using a generalized form of the diamond lemma.

\begin{thm}
\label{intro:E1}
The $\mathfrak{sl}(4)$ spider with the Scott-Morrison presentation has an $E^1$ convergent spectral sequence. In particular, two webs are equal up to lower order terms if and only if they are equal in the associated graded spider (which has the same presentation as the original spider, but where we omit the lower order terms of each relator)
\end{thm}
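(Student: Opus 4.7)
The plan is to adapt Newman's diamond lemma to a filtered setting that tolerates relators with several leading terms, since the hexagon (Kekule) relators contained in the Scott-Morrison presentation are not monic with respect to the vertex-count filtration. The argument splits into three stages.

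First, I would fix the Scott-Morrison/Kim presentation and filter each $\Hom$-space by total vertex count, so that each relator $r$ decomposes as a top-weight polynomial $\gr(r)$ plus strictly smaller webs. Because $\gr(r)$ can have multiple terms, a single reduction step is multi-valued: applying $r$ at a matching subweb allows replacement by any competing leading-term pattern, and the structural claim is that these choices coincide modulo lower vertex count. By Proposition \ref{intro:confluence} (suitably generalized), showing that every $\Hom$-space is $E^1$-confluent for this multi-valued system is equivalent to the PBW-type statement in the theorem.

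Second, I would establish a global criterion identifying which webs are $E^1$-irreducible, modelled on the elliptic-face characterization of rank-2 webs in \cite{kuperberg:rank}. Concretely, a web $w$ should be irreducible modulo lower-order terms if and only if no face (or small face configuration) of $w$ matches the leading-term pattern of a relator, including a hexagonal Kekule pattern. Establishing this requires a combinatorial analysis of the face structure of $\mathfrak{sl}_4$ webs so that absence of reducible faces locally implies $E^1$-irreducibility globally. A careful reducibility criterion here also serves to discard redundant overlap configurations in Stage 3.

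Third, I would invoke the generalized diamond lemma: termination is immediate from the vertex-count filtration, so only local confluence remains. For every pair of leading-term patterns that can overlap inside some enclosing web, I would verify that the two reduction branches meet again modulo strictly lower order. The main obstacle is this critical-pair analysis. Since the hexagon relators carry several leading terms at once, each overlap spawns a family of reduction trees that must all reconcile in the associated graded, not merely as webs. I would exploit the dihedral symmetry of each relator, the Stage 2 criterion (to prune overlaps that contain further reducible faces and hence are not themselves minimal), and the explicit form of the Kim relators to reduce the casework to a finite and tractable list. Once local confluence is verified, the generalized diamond lemma propagates it to global $E^1$ convergence, yielding the stated PBW-type equality in the associated graded spider.
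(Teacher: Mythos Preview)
Your three-stage outline captures the broad architecture—filter by vertex count, find a global irreducibility criterion, verify local confluence—but it misses a structural feature that the paper's proof is largely built around, and without it Stage 3 does not close.

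The gap is the treatment of \emph{horizontal relators}: both the square switch (\ref{SS}) and the Kekule hexagon (\ref{Kekule}) have two leading terms of equal vertex count, so they do not reduce anything. Framing them as ``multi-valued reductions'' and then doing critical-pair analysis between overlapping leading patterns only handles the case where at least one of the two branches actually decreases the filtration. It does not account for \emph{horizontal sequences}: closed loops $w_0 \to w_1 \to \cdots \to w_n = w_0$ of horizontal moves on an irreducible web. Each step contributes lower-order terms, and $E^1$ convergence requires that the accumulated lower-order contribution around every such loop already lies in the image of smaller relations. This is not a local overlap condition and is invisible to a Newman-style critical-pair check; the paper isolates it as a separate ``simply-connected'' criterion (Theorem~\ref{CC}, item 4) and proves it for $A_3$ by a strand argument specific to the contracted presentation.

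Relatedly, the paper's proof needs more than local confluence even for reduction sequences: it uses a \emph{purity} property (Lemmas~\ref{collision}--\ref{purity lemma}) separating the two types of horizontal relators, a \emph{controlled degeneration} property ruling out bad adjacencies of reducible configurations, and a global reducibility criterion phrased in terms of strand configurations (Theorem~\ref{Global}) rather than faces. Your Stage~2 statement that ``$w$ is irreducible iff no face matches a leading-term pattern'' is false as written: a web can be reducible yet have no reducible face, because horizontal moves may be needed first to expose one. The strand-configuration criterion is what makes the bookkeeping in the reduction-sequence case tractable. Without these ingredients, the critical-pair analysis in Stage~3 would not terminate in finitely many cases and would not cover horizontal loops at all.
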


Although this is only a single case, it is actually quite flexible in that it simultaneously generalizes various combinatorial models such as plane partitions and square ice (after some superficial modifications).

Unfortunately, it may not be reasonable to expect $E^1$ convergence to hold for higher rank simple spiders. In particular, we show the following in Chapter $6$ for semisimple spiders 
\begin{restatable}{thm}{Semisimple}
\label{intro:Semisimple}
The $A_1^{n}\times A_2^{m}$ spider with the natural product presentation is $E^1$ convergent if and only if $m=0$ and/or $\rk \fg \leq 3$
\end{restatable}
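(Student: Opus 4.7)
The plan is to split the biconditional into a sufficient and a necessary direction, handling each by combining a few auxiliary reductions with the earlier results in the excerpt.

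For the sufficient direction I would treat the cases $m=0$ and $\rk \fg \leq 3$ separately. The case $m=0$ is exactly Proposition \ref{intro:A1nConv}. When $\rk \fg \leq 3$ and $m \geq 1$, the only possibilities are $\fg = A_2$ (from $n=0$, $m=1$) and $\fg = A_1 \times A_2$ (from $n=1$, $m=1$); the latter is the $A_1 \times A_2$ proposition above, and the former is Kuperberg's classical confluence result for the rank-$2$ $A_2$ spider combined with Proposition \ref{intro:confluence}, which turns confluence into $E^1$ convergence.

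For the necessary direction, the key structural input is a sub-spider inclusion argument. Whenever $\fg = \fg_0 \times \fh$ is a direct product, the spider of $\fg_0$ sits inside the spider of $\fg$ with the natural product presentation as a full subcategory: fundamental representations of $\fg_0$ are still fundamental representations of $\fg$, and a web whose boundary consists only of $\fg_0$-strands may be taken to involve only $\fg_0$-generators (any interior $\fh$-components contribute only scalars and can be absorbed). Because this inclusion preserves the vertex-number filtration and the presentation restricts correctly, the spectral sequence of any Hom space among $\fg_0$-objects is the same whether computed in the sub-spider or in the ambient spider. Thus failure of $E^1$ convergence on any Hom space of the sub-spider is inherited by the ambient spider.

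With this in hand, the negative direction reduces to two minimal cases, since any $A_1^n \times A_2^m$ with $m \geq 1$ and $n + 2m \geq 4$ contains either $A_2 \times A_2$ (when $m \geq 2$) or $A_1^2 \times A_2$ (when $m=1$ and $n \geq 2$) as a direct summand. It therefore suffices to exhibit, for each of these two minimal spiders, a pair of webs with the same boundary that agree in the associated graded spider but not up to lower-order terms in the actual spider; these are exactly the counterexamples promised in the abstract and produced in Chapter $6$. Constructing such pairs is the main obstacle: I would look for a web that admits two reductions via a non-trivial interior relator of an $A_2$ factor (a Kekule-type relator being the natural source, since it has multiple leading terms), combined with a horizontal relator that lets a strand from the other simple factor cross through the web, so that performing the $A_2$ reduction before versus after clearing the crossings yields representatives differing at top order. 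Once such an obstruction is exhibited in the two base cases, the sub-spider argument promotes it to non-convergence in every $A_1^n \times A_2^m$ with $m \geq 1$ and $\rk \fg \geq 4$, completing the ``only if'' direction.
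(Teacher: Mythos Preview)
Your overall architecture matches the paper's: the positive direction assembles the $A_1^n$ result, the $A_1\times A_2$ result, and Kuperberg's $A_2$ confluence; the negative direction reduces via direct-factor inclusion to the two minimal cases $A_1^2\times A_2$ and $A_2\times A_2$, which are handled by explicit counterexamples in the final chapter. The sub-spider reduction is not spelled out in the paper but is exactly the implicit step, and your version of it is essentially right (though your claim that the spectral sequences are literally ``the same'' is too strong: the ambient $FSp(X,Y)$ contains extra webs with interior loops from the other factors. What you actually need, and what holds, is that the specific counterexample webs remain minimal-vertex and unconnected by any leading-term relator in the larger presentation, since every additional relator involves a colour not present in those webs).

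There are, however, two genuine errors in your description of the counterexamples. First, you have the direction backwards. A failure of $E^1$ convergence is witnessed by two minimal-vertex webs that are \emph{equal in $Sp$} (hence equal in $\gr Sp=E^\infty$) but \emph{not equal in the associated graded spider} $E^1=FSp/\langle L(R)\rangle$, i.e.\ not connected by any chain of leading-term replacements. Your phrasing (``agree in the associated graded spider but not up to lower-order terms in the actual spider'') describes the opposite, which is impossible since $E^1\twoheadrightarrow E^\infty$.

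Second, your proposed mechanism is wrong. The $A_2$ spider has no Kekul\'e-type relator and indeed no horizontal relators at all; every $A_2$ relator is a reduction relator with a single leading term. The obstruction in the paper's counterexamples comes entirely from the \emph{product} horizontal relators. For $A_1^2\times A_2$ one takes an $A_2$ ``$H$'' with one $A_1$ strand running across it and a stack of $A_1$ strands from the other factor crossing the bar; moving the first strand from one side of the $H$ to the other is a genuine equality in $Sp$, but every interior face is a three-colour square, which matches no leading term of any local relator. For $A_2\times A_2$ one nests two rotated $2n$-gons of different colours; swapping which is inner is an equality in $Sp$, yet all interior faces are pentagons or larger. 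In both cases the point is that the needed $H$-move (or its analogue) is blocked by crossing strands and cannot be executed without first increasing the vertex count.
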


However this also shows the utility of the more general notion of $E^k$ convergence because we still get the following weaker result: 

\begin{restatable}{thm}{A2A1}
\label{intro:A2A1}
The $A_1^n \times A_2$ spider with the natural product presentation has an $E^2$ convergent spectral sequence
\end{restatable}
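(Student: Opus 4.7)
The plan is to reduce the statement for general $n$ to the two base cases $A_1^n$ (Proposition \ref{intro:A1nConv}) and $A_1 \times A_2$ already proved $E^1$-convergent, together with an explicit identification of the $E^1$-obstructions predicted by the contrapositive of Theorem \ref{intro:Semisimple}. The cases $n \leq 1$ follow immediately since $E^1$-convergence implies $E^2$-convergence, so I assume $n \geq 2$ throughout. First I would write out the natural product presentation: its generators are tensor products of generators from each factor, and its relators split into (a) \emph{vertical} relators drawn from each factor, embedded with identity strands in the other factors, and (b) \emph{horizontal} swap-type relators expressing that strands of distinct $A_1$-colors may pass through one another and through $A_2$-edges or trivalent vertices. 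The horizontal relators have no lower-order terms in the vertex filtration, so they appear unchanged on the associated graded.

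Next I would compute $E^1$ explicitly. Since the horizontal relators are already graded and the $A_2$-spider is itself $E^1$-convergent by \cite{kuperberg:rank}, $E^1$ decomposes as the tensor product of the true $A_2$-Hom space with a free $A_1^n$-strand space quotiented by pairwise swap relators. By Theorem \ref{intro:Semisimple} this exceeds the true product Hom dimension once $n \geq 2$, and the excess lives on explicit families of $E^1$-cycles: each such family corresponds to a pair of $A_1$-strands of distinct colors that can pass in two different orders across an $A_2$-trivalent vertex or across a piece of an $A_2$-relator whose two resolutions agree only up to a horizontal swap.

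The main step is then to show every such $E^1$-cycle lies in the image of $d_1$. For each critical configuration I would isolate the disputed trivalent vertex by a cut-path in the sense of \cite{kuperberg:rank}, so that inside the cut region only one $A_1$-color interacts with the $A_2$-subweb; locally this is an $A_1 \times A_2$ situation, which is $E^1$-convergent, so the two orderings agree modulo a single relator applied inside the cut region. Lifting that relator back to the global web produces the required $d_1$-preimage, because strands of the remaining $A_1$-colors commute across the cut by horizontal relators without creating any new leading-order terms.

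The hard part is controlling the combinatorics of these cut-paths uniformly in $n$ and checking that the list of critical configurations is exhaustive; in particular one must verify that no $E^1$-cycle genuinely involves three or more $A_1$-colors simultaneously, so that a single cut always suffices. This I expect to follow by induction on $n$ from the fact that the horizontal relators act pairwise on $A_1$-colors and commute with each other up to lower order, but turning this intuition into a rigorous bookkeeping via a generalized diamond-lemma argument of the type used in Theorem \ref{intro:E1}, and ruling out any obstruction that survives to $E^2$, is where the real work lies.
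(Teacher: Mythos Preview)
Your high-level strategy---identify the excess of $E^1$ over $E^\infty$ and exhibit it as the image of $d^1$---is equivalent to what the paper does, but your execution diverges at the first concrete step and does not recover. The claim that $E^1$ ``decomposes as the tensor product of the true $A_2$-Hom space with a free $A_1^n$-strand space quotiented by pairwise swap relators'' is incorrect: the $Y$- and $H$-relators (\ref{product:Y}), (\ref{product:H}) are already graded and mix an $A_1$ strand with $A_2$ trivalent vertices, so on $E^1$ the position of each $A_1$ strand relative to the $A_2$ subweb is constrained by these moves, not just by pairwise swaps among $A_1$ colors. Without a correct description of $E^1$ you cannot enumerate the obstructions you need $d^1$ to kill.

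Your proposed mechanism---isolate a trivalent vertex by a cut-path so that only one $A_1$-color interacts with the $A_2$ piece, then invoke $A_1\times A_2$---also fails on the actual obstruction. The counterexample (\ref{Ex:A12A2}) that witnesses non-$E^1$-convergence has a blue strand lying over an $A_2$ $H$-graph while arbitrarily many red strands cross the middle $A_2$ edge vertically; the blue and red strands interact with the \emph{same} $A_2$ edges, so no cut separates them, and the obstruction is not pairwise in the sense you hope. The paper's proof does not attempt such a separation. Instead it proves an Evacuation Lemma (\cref{A1nA2:Evacuation}): given any $U$, $Y$, or $H$ relator face in an $A_1\times A_2$ projection, one can clear all other $A_1$ strands from that face using triangle moves, except that for the $H$-case one must first apply a single inverse $Y$-move (temporarily adding one vertex) and then proceed. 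This ``+1 vertex'' is precisely the $d^1$ you want, produced concretely. The paper then iterates this lemma along the sequence of moves supplied by the $A_1\times A_2$ result, strand by strand, to reduce any web and to connect any two equivalent minimal webs while never exceeding the original vertex count by more than one; equinumeration with the representation-theoretic dimension finishes the argument. The inductive bookkeeping is on the number of vertices bounded between two cut-paths and on an ordering of the $A_1$ strands, not on the number of $A_1$ colors, so the reduction to ``at most two colors at a time'' that you expect is neither used nor apparently true.
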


\section{The Euclidean Building and Geometric Satake}
Using $E^1$ convergence results we get some geometric applications. For each group $G$ there is a geometric object called the affine Grassmannian: $Gr(G) = G(\C((x)))/G(\C[[x]])$. This object is related to the representation theory of the Langlands dual group $G^L$ via the Satake correspondence (developed by Lusztig, Ginzburg, and Mirkovi\'c-
Vilonen). In particular, we will use the following corollary (lifted directly from \cite{fkk:buildings}): for every sequence of dominants weights $\vec{\la}=(\la_1,...,\la_n)$ there is an associated variety $F(\vec{\la})$ called the Satake fiber (which is a fiber of a particular map to the affine Grassmannian which we'll explain in chapter 5) and is related to the representation category as follows:

\begin{thm}
Every invariant space in $\rep(G)$ is canonically isomorphic to the
top homology of the corresponding geometric Satake fiber with complex coefficients:

$\Phi : Inv(V(\la_1) \ot ... \ot V(\la_n)) \cong H^{top}(F(\vec{\la}),\C)$.

Each top-dimensional component $Z \sbs F(\vec{\la})$ thus yields a vector $[Z] \in Inv(V(\la_1) \ot ... \ot V(\la_n))$.
These vectors form a basis, the Satake basis.
\end{thm}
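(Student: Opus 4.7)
The plan is to derive this theorem from the geometric Satake equivalence of Mirkovi\'c--Vilonen, which establishes an equivalence of tensor categories between $G(\mathcal{O})$-equivariant perverse sheaves on $\mathrm{Gr}(G)$ (with convolution as tensor structure) and $\rep(G^L)$, where $G^L$ is the Langlands dual. Under this correspondence the intersection cohomology sheaf $\IC_\la$ on the orbit closure $\ol{\mathrm{Gr}^\la}$ corresponds to the irreducible representation $V(\la)$, and tensor products of irreducibles correspond to convolutions of $\IC$ sheaves. Since in the setup of this theorem $\rep(G)$ stands for the representation category appearing under Satake, these invariant spaces translate directly to multiplicity spaces for the trivial summand of the relevant convolution.

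I would then set up the convolution diagram explicitly. Let $\wt{\mathrm{Gr}}^{\vec\la}$ denote the twisted product of the orbits $\mathrm{Gr}^{\la_i}$, equipped with its multiplication map $m : \wt{\mathrm{Gr}}^{\vec\la} \to \mathrm{Gr}(G)$, so that by definition the Satake fiber is $F(\vec\la) = m^{-1}(x_0)$ above the basepoint. The convolution theorem identifies $\IC_{\la_1} * \cdots * \IC_{\la_n}$ with $Rm_*$ of the twisted external product of the $\IC_{\la_i}$, and so by proper base change the multiplicity of the trivial representation is computed as the hypercohomology of that twisted external product restricted to $F(\vec\la)$. The Mirkovi\'c--Vilonen dimension bounds then show that $F(\vec\la)$ is equidimensional of the expected dimension and that $m$ is stratified semismall; combined with the support estimates built into the $\IC$ sheaves, this forces all cohomology to sit in top degree and recovers exactly $H^{top}(F(\vec\la), \C)$. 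Finally, since $F(\vec\la)$ is of pure top dimension, the fundamental classes of the top-dimensional irreducible components form a basis of the Borel--Moore homology in that degree, which transports along $\Phi$ to the asserted Satake basis.

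The hard part will be verifying the semismallness and the dimension count that collapse the hypercohomology into a single degree: this is the technical heart of the Mirkovi\'c--Vilonen dimension formula for MV cycles, and everything else is essentially formal once it is in hand. I would import this input directly from the treatment in \cite{fkk:buildings} (or equivalently the original Mirkovi\'c--Vilonen papers), rather than re-derive it here, since the main point of this excerpt is to use the resulting basis as an input to the geometric applications developed in later chapters.
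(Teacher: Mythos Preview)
Your assessment is correct: the paper does not prove this theorem at all. It is stated in the introduction as a result ``lifted directly from \cite{fkk:buildings}'' and is used purely as input for the later geometric applications; no proof is given or attempted in the paper. Your final paragraph already anticipates exactly this, and your sketch of the Mirkovi\'c--Vilonen argument is a reasonable summary of the content being imported from the cited literature.
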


The affine Grassmannian then naturally embeds into a (very infinite) simplicial complex called the Euclidean building $X$. Following \cite{fkk:buildings}, the dual graph of each web $w$ naturally extends to a $2$D simplicial complex (with a metric). We will call this the dual web $D(w)$. Given a combinatorial local isometry from the dual web to the Euclidean building, there is a natural corresponding point in $F(\vec{\la})$, and we therefore get a map $\pi: \Hom_{loc. iso.}(D(w),X) \to F(\vec{\la})$.

Since the edges of duals webs are colored by fundamental webs, we can define weight-valued lengths on paths. Since weights naturally have a poset structure, we can then define a geodesic as a path with minimal length. However, in general you may have two geodesics connected the same two points, but the weight-lengths are incomparable. If this does not happen, then we say that the dual web has coherent geodesics (between those points). In \cite{fontaine:generating}, the author defines certain so-called coherent webs whose dual webs have coherent geodesics originating from fixed boundary point, and fulfill two other axioms (although we will prove that the 3rd axiom is redundant). See \cref{CoherenceDef} for the full definition of coherence. To each of these is associated an LS path by taking maximal geodesics connecting the fixed boundary point to each of the other boundary points. Conversely, we will argue that there exist minimal-vertex coherent webs for each LS path, and use the $E^1$ convergence to prove the following theorem relating minimal vertex webs to the geometric Satake basis:

\begin{restatable}{thm}{Satake}
\label{intro:Satake}
Let $\lb w_i \rb_{i=1}^n$ by a maximal set of minimal-vertex $A_3$ webs with fixed boundary which are distinct in the associated graded spider. $\lb w_i \rb_{i=1}^n$ form a basis for the spider category (and hence the invariant space of the corresponding tensor product representation by \cite{ckm:howe}). Moreover this basis is upper unitriangular with respect to the Satake basis for all choices of base points simultaneously.
\end{restatable}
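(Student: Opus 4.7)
The plan is to first establish the basis claim using Theorem~\ref{intro:E1}, and then construct the upper unitriangular change-of-basis using Fontaine's LS-path correspondence and the Satake map $\pi$. For the basis claim, $E^1$ convergence implies that every web equals, modulo lower-vertex terms, a linear combination of minimal-vertex webs, so by induction on vertex count the chosen $\{w_i\}$ span the invariant space. Any linear dependence $\sum c_i w_i = 0$ among the $w_i$ descends to a relation in the minimum-vertex piece of the associated graded spider, where maximality together with the distinctness hypothesis make the $w_i$ a basis of that piece, forcing all $c_i = 0$. Matching the count $n$ with $\dim \text{Inv}(V(\la_1) \ot \cdots \ot V(\la_n))$ then follows from the equivalence of categories in \cite{ckm:howe}.

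For the upper unitriangularity half, I would first show that each minimal-vertex web admits a coherent representative in the sense of \cref{CoherenceDef}: any failure of coherence should correspond to a pocket-like configuration that can be removed without increasing the vertex count, hence without changing the associated-graded class. Fontaine's construction then assigns to each such $w$ an LS path $\eta_w$ with respect to a fixed base point $p$, by taking maximal geodesics from $p$ to each other boundary vertex of $D(w)$. Conversely, using the existence results in \cite{fontaine:generating} together with the reduction above, I would argue that each LS path is realized by at least one minimal-vertex coherent web, giving a bijection between the chosen basis and the LS paths indexing the Satake basis.

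Using the map $\pi : \Hom_{loc. iso.}(D(w), X) \to F(\vec{\la})$, I would next show that the closure of $\pi(\Hom_{loc. iso.}(D(w_i), X))$ has its top-dimensional part equal to the Mirkovi\'c--Vilonen cycle $Z_{\eta_{w_i}}$ corresponding to the LS path, with any lower-dimensional components lying in MV cycles $Z_\eta$ for $\eta$ strictly smaller in the dominance order at $p$. Pushing this geometric decomposition through the Satake isomorphism $\Phi$ of \cite{fkk:buildings} then yields $w_i = [Z_{\eta_{w_i}}] + \sum_{\eta < \eta_{w_i}} c_{i\eta} [Z_\eta]$, which is exactly upper unitriangularity for the chosen base point $p$.

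The main obstacle is the ``for all choices of base points simultaneously'' clause, since different base points induce different dominance orders on LS paths. The key observation, which I would prove by examining the intrinsic geometric content of the decomposition above, is that the set of MV cycles appearing in the Satake expansion of $[w_i]$ is determined by $w_i$ alone and is independent of the choice of base point; only the identification of which cycle plays the role of the leading term can vary. The argument then reduces to showing that for every choice of base point $p$, the LS path $\eta_{w_i}^{(p)}$ computed from $p$ is maximal in the corresponding dominance order among the LS paths appearing in the support of $[w_i]$, which I would establish using the compatibility of the maximal-geodesic construction with the Weyl-group symmetry of the building.
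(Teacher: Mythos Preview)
Your basis argument via $E^1$ convergence is fine and matches the paper.  The second half, however, has a genuine gap in the mechanism you propose for coherence, and your plan for the ``all base points'' clause is both more complicated than necessary and not clearly convergent.

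The paper's argument avoids your entire MV-cycle analysis by citing Fontaine's result (\cref{coherent}) as a black box: once you know a web is coherent at a given base point, upper unitriangularity at that base point is already established.  So the only thing to prove is that every reduced web $w$ is \emph{equal in $Sp$} (not merely equal modulo lower-order terms) to some web that is coherent at the chosen base point.  The paper's trick here is the observation you are missing: the horizontal \emph{square} relator (\ref{SS}) has no lower-order terms, so applying square relators does not change the element of $Sp$ at all.  Combined with the fact (from purity, \cref{collision}) that square and triangle horizontal relators commute on reduced webs, one can reorder any sequence of horizontal relators taking $w$ to a coherent reduced web $w'$ so that all squares come first.  The intermediate web $w''$ (after squares, before triangles) then satisfies $w=w''$ on the nose in $Sp$, and $w''$ is coherent because the triangle relators connecting $w''$ to $w'$ preserve coherence (this is the content of \cref{Rel:summand}).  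The ``all base points'' clause then falls out for free: this argument used nothing about the particular base point, so it works for each one separately.

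Your proposal to establish coherence by ``removing pocket-like configurations'' does not supply this mechanism, and your attempt to get base-point independence from Weyl symmetry of the building and intrinsic support sets is heading in the wrong direction: the point is not that the Satake expansion is base-point-independent, but that for each base point the same reduced web can be rewritten (via square relators only) as a coherent web for \emph{that} base point.
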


This is the best we can expect since we know that web bases don't generally match the geometric Satake basis as shown in \cite{kk:dual}.

We also look at $A_1^n$ geometrically. Fix a web $w$, and look at the set of all webs $\lb w_i \rb_{i=1}^n$ which are equal $w$ as invariants. The set of dual webs $\lb D(w_i)\rb_{i=1}^n$ naturally correspond to $2$D subcomplexes of a simplicial complex $P$ with a particular boundary (where $P$ is contained in the Euclidean building). We can prove that $P$ is $CAT(0)$ (which is a generalization of hyperbolicity and will be explained in more detail in chapter $5$):

\begin{restatable}{thm}{CAT}
\label{intro:CAT0}
The complex $P$ is a $CAT(0)$ cube complex.
\end{restatable}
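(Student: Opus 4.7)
The plan is to realize $P$ as a subcomplex of the Euclidean building $X$ associated to the group of type $A_1^n$, which is the product $T_1 \times \cdots \times T_n$ of $n$ regular trees. This product is already a CAT(0) cube complex of dimension $n$: its $k$-cubes are products of single edges chosen in $k$ distinct tree factors. As a subcomplex, $P$ inherits a candidate cube complex structure, and by Gromov's criterion it then suffices to verify two things: $P$ is simply connected, and the link of every vertex of $P$ is a flag simplicial complex.

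For simple connectivity, I would invoke \cref{intro:A1nConv}: $E^1$ convergence of the associated spectral sequence for the $A_1^n$ spider means that any two webs $w_i, w_j$ representing the same invariant $w$ are connected by a sequence of applications of the defining relators. At the level of dual complexes, each such move corresponds to sliding a 2D piece of $D(w_i)$ across a 3-cube of $P$ (an ``$R3$-type'' commutation of crossings of two different colors). This shows both that all the $D(w_i)$ sit inside a single path-connected subcomplex --- giving connectivity --- and, applied to loops of such slides, that any closed combinatorial loop bounds a disk in the 3-skeleton of $P$, so $\pi_1(P) = 0$.

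For the link condition, the link of any vertex of $T_1 \times \cdots \times T_n$ is the spherical join of its discrete links in the individual tree factors, which is automatically flag. The link in $P$ is a subcomplex of this join, and flagness reduces to the closure property: whenever a finite set of edges at a vertex of $P$ is pairwise spanned by squares of $P$, the entire corresponding product cube must also lie in $P$. I expect this to be the main obstacle --- a priori the boundary data defining $P$ could allow all pairwise squares without allowing the full top cube. For $A_1^n$ I would resolve this by observing that the boundary of $P$ decomposes cleanly along the $n$ color components and that distinct colors interact only through commuting crossings; this ``color-wise independence'' propagates the $2$-cube closure inductively to cubes of all dimensions, forcing all relevant product cubes into $P$. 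Combined with simple connectivity, this gives that $P$ is a CAT(0) cube complex.
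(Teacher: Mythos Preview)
Your framework matches the paper's: embed $P$ in the product of trees and apply Gromov's link criterion. Both verifications, however, have real gaps as you have written them.

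\textbf{Flag condition.} The crucial point you are missing is the definition of $P$: it is the \emph{full} subcomplex of $Y = \prod_i \Gamma_i$ on the vertex set $\mathrm{im}(\varphi)$, so a cube of $Y$ lies in $P$ exactly when all its vertices do. Hence flagness becomes a $0$-dimensional statement: if edges $e_1, \ldots, e_k$ at $v$ pairwise span squares in $P$, then all $2^k$ vertices of the corresponding $k$-cube lie in $P$. ``Color-wise independence'' is the wrong picture --- two edges at $v$ span a square in $P$ precisely when the corresponding strands (necessarily of different colors) \emph{cross} in the disk, so the hypothesis is really that $s_1, \ldots, s_k$ pairwise cross. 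The paper's engine is a square-completion lemma (\cref{A1n:SquareComplete}): if three vertices of a square of $Y$ lie in $P$ and the two relevant strands cross, one can exhibit an equivalent web in which that crossing sits adjacent to the given face, placing the fourth vertex in $P$. A short induction on $k$ then fills in the one missing vertex of each $(k{+}1)$-face from a square containing it.

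\textbf{Simple connectivity.} Your appeal to $E^1$ convergence does not give what you need. That statement concerns linear relations among webs in the spider; a loop in $P^{(1)}$ is a sequence of \emph{faces} (vertices of $P$), not a sequence of webs or relator moves, and there is no direct translation from ``loops of slides'' to loops in $P$. The paper instead argues directly: for a combinatorial loop $(v_i)$ based at the boundary vertex $\ast$, induct on $\sum_i d(\ast, v_i)$. At a local maximum $v_k$ the two incident edges correspond to strands $s, s'$; if $s = s'$ the edges cancel, and otherwise one checks that $s, s'$ must cross (else $s$ separates $\ast$ from $s'$, forcing $d_{k+1} > d_k$). Then \cref{A1n:SquareComplete} supplies a square across which to homotope, strictly decreasing the sum.
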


This gives us a way to reinterpret relators in the spider as homotopies across cells in the Euclidean building. We can attempt a similar construction in $A_3$ in order to generalize the observation that $A_2$ spiders are $CAT(0)$ if and only if they are minimal-vertex. However, it's not so easy to prove that the resulting complex is $CAT(0)$ since we don't have the simple cube complex criterion.

\chapter{Quantum Groups, Hopf Algebras, and Tensor Categories} 
\linespread{1}\selectfont
This chapter is designed to provide an informal introduction to quantum groups for readers less familiar with the technical details. For a more technical introduction we direct the readers to one of the following three books. \cite{kassel:quantum} provides a more $\mf{sl}_2$ oriented approach followed by a lot of the general details for monoidal categories, and the connections to knot invariants. \cite{hk:crystal} focuses on a different approach using crystals, but the first 3 chapters give a concise introduction to quantum groups with many of the formal definitions we omit. Finally, \cite{cp:guide} provides a more comprehensive resource which includes more information about the origins of quantum groups and also various canonical bases (although not the ones we'll be discussing).
 
\section{Hopf algebras and Quantum groups}
In this paper, a \textbf{quantum group} (denoted $U_q(\fg)$) is a particular kind of Hopf algebra coming from a deformation of the universal enveloping algebra, which we explain more below. It is also called a \textbf{quantum enveloping algebra}.

A \textbf{Hopf algebra} (over $k$) is a $k$-algebra $A$ together with three additional structures: a comultiplication $\Delta: A \to A \ot A$, a counit $\epsilon: A \to k$, and an antipode $S: A \to A$ which satisfy various compatibility axioms. The first canonical example is a group algebra $kG$ of a finite group $G$, which has an augmentation map $\epsilon: kG \to k$ defined by adding up the coefficients, a comultiplication obtained by letting $\Delta(g)=g\ot g$ and then extending to the entire algebra, and an antipode determined by letting $S(g) = g^{-1}$. From a representation theory perspective we get the following: the counit defines a trivial representation. The comultiplication allows an element $x \in A$ to act on a tensor product of representations $V \ot W$ via $x.(v \ot w) = \Delta(x) (v \ot w)$. Finally, the antipode allows elements to act on the dual space $V^*$ of a representation $V$ via $(x.\ell)(v)=\ell(S(x).v)$ for all $\ell \in V^*$ and $v \in V$. In the language of the next section, we'll say these extra structures give the representation category the structure of a pivotal tensor category.

The second canonical example is the universal enveloping algebra $U(\fg)$, which is more closely related to this paper. Recall, we define the universal enveloping algebra as the quotient of the tensor algebra $T \fg$ where we enforce that the associative algebra commutator: $[x,y]_{U(\fg)} = xy - yx$ is equal to the corresponding Lie algebra commutator $[x,y]_\fg$ when $x$ and $y$ are in $\fg \sbs T \fg$. Next we want to define the Hopf algebra structure. Since Lie algebras correspond to derivations, we require that elements act on tensors via the product rule, ie 
\begin{equation}
x.(v \ot w) = (x.v) \ot w + x \ot (x.w) = (x \ot 1 + 1 \ot x)(v \ot w)
\end{equation}
which tells us that we want 
\begin{equation}
\Delta(x) = x \ot 1 + 1 \ot x \qquad \text{ for every } x\in \fg \sbs T\fg
\end{equation} and then we extend this to the rest of $\Ug$ by making $\Delta$ a homomorphism.

In both $kG$ and $U(\fg)$, the image of $\Delta$ is contained the symmetric part of $A \ot A$, and so we say that these two Hopf algebras are \textbf{cocommutative}. On the other hand, the quantum enveloping algebra $\Uqg$ is an algebra which is constructed to be non-cocommutative (over the field $\Q(q)$ where $q$ is a formal variable). Informally speaking, in the limit "$q \to 0$" the representation theory of $\Uqg$ approaches the representation theory of $\Ug$. In fact, there is a correspondence between the irreducible representations of $\Uqg$ and $\Ug$, and even the decomposition of tensors into irreducible representations is the same (in the sense that we get the same number of each irreducible representation). However, the difference lies in the so-called braiding which we'll define in the next section.

The full presentation for $\Uqg$ is not particularly illuminating without knowing its origin, and we won't use it directly in this paper, but we will transcribe it from \cite{hk:crystal} for the sake of tradition. First define the quantum number 
\begin{equation}
[n]_q :=\frac{q^n-q^{-n}}{q-q^{-1}}:=q^{n-1}+q^{n-3}+...+q^{-(n-3)}+q^{-(n-1)} \in \Q(q)
\end{equation}
and the quantum binomial coefficients: 
\begin{equation}
\sqbinom{n}{k}_q:=\frac{[n]_q\cdot [n-1]_q\cdot\cdot\cdot [n+1-k]_q}{[k]_q\cdot [k-1]_q\cdot\cdot\cdot [2]_q \cdot [1]_q} \in \Q(q)
\end{equation}

\begin{defn}
Let $\fg$ be a semisimple Lie algebra with an associated Cartan datum $(A,\Pi,\Pi^\vee, P, P^\vee)$ and indexing set $I$. Moreover, let the diagonalizing matrix of the Cartan matrix $A$ be $diag(s_i)$. The quantum universal enveloping algebra $U_q(\fg)$ is a Hopf algebra defined as follows. As a unital associative algebra over $\Q(q)$, it is generated by elements $e_i$, $f_i$ for $i \in I$, and $q^h$ for $h \in P^\vee$ and satisfying the following relations:
\begin{flalign}
&\quad q^0 = 1, q^h q^{h'} = q^{h+h'} \text{ for } h,h' \in P^\vee &&\\\
&\quad q^h e_i q^{-h} = q^{\al_i(h)} e_i \text{ for } h \in P^\vee &&\\\
&\quad q^h f_i q^{-h} = q^{-\al_i(h)} f_i \text{ for } h \in P^\vee &&\\\
&\quad e_i f_j - f_j e_i = \delta_{ij} \frac{q^{s_ih_i} - q^{-s_ih_i}}{q^{s_i} - q^{-s_i}} \text{ for } i,j \in I &&\\\
&\quad \sum_{k=0}^{1-a_{ij}} (-1)^k \sqbinom{1-a_{ij}}{k}_q e^{1-a_{ij} - k} e_j e_i^k = 0 \text{ for } i \neq j &&\\\
&\quad \sum_{k=0}^{1-a_{ij}} (-1)^k \sqbinom{1-a_{ij}}{k}_q f^{1-a_{ij} - k} f_j f_i^k = 0 \text{ for } i \neq j &&
\end{flalign}
Its coalgebra structures and antipode induced by the following:
\begin{flalign}
&\quad \Delta(q^{h}) = q^{h} \ot q^h &&\\\
&\quad \Delta(e_i) = e_i \ot q^{-s_i h_i} + 1 \ot e_i, \; \Delta(f_i) = f_i \ot 1 + q^{s_i h_i} \ot f_i &&\\\
&\quad \ep(q^h)=1, \; \ep(e_i)=\ep(f_i)=0 &&\\\
&\quad S(q^h) = q^{-h}, S(e_i) = -e_i q^{s_i h_i}, \;S(f_i) = - q^{-s_i h_i} f_i &&
\end{flalign}
For $h \in P^\vee$ and $i\in I$
\end{defn}

\section{Pivotal Tensor Categories and Braidings}
Intuitively, a tensor category (also called a monoidal category) is a category where we can take tensor products of objects and morphisms, while a pivotal (tensor) category (also called rigid) is a tensor category which has a duality functor. These come up in representation theory because the category of representations of a Hopf algebra is naturally a pivotal tensor category, where the tensor product comes from the comultiplication, and the duality comes from the antipode map.

Formally (following \cite{kassel:quantum}) a \textbf{strict tensor category} is a category $\ms{C}$  together with a bifunctor $\ot: \ms{C} \times \ms{C} \to \ms{C}$ called the tensor product satisfying
\begin{enumerate}
	\item Associativity:  $(U \ot V) \ot W =U \ot (V \ot W)$
	\item Unity: There exists an object $\mbf{1}$ such that $U \ot \mbf{1} = U$ and $ \mbf{1}\ot U = U$
\end{enumerate}

\begin{remark}
It is actually more natural to define the above equalities as natural isomorphisms making some diagrams commute. Such a category is called a (possibly non-strict) tensor category. Every tensor category is equivalent to a strict tensor category, so we will avoid these complications by sweeping this distinction under the rug
\end{remark}

A (strict) \textbf{pivotal category}, is a tensor category $\ms{C}$ together with a contravariant functor $D: \ms{C} \to \ms{C}$ (where $D(A)$ is denoted $A^*$ for every $A \in \Ob(\ms{C})$) together with contraction maps $d_U:U^* \ot U \to \mbf{1}$, and cocontraction maps $b_V: \mbf{1} \to U \ot U^*$ that are compatible with the monoidal structure and each other in the sense that: 
\begin{enumerate}
	\item Duality reverses the order of tensors: $(A \ot B)^*=B^* \ot A^*$
	\item The maps $\Hom(U \ot V, W) \xto{(\bu \ot id_V) \circ (id_U \ot b_V)} \Hom(U, W\ot V^*)$ and $\Hom(U, W\ot V^*) \xto{(id_W \ot d_V) \circ (\bu\ot id_{V})} \Hom(U \ot V, W)$ are inverses
	
	\item Similarly, the maps $\Hom(U \ot V,  W) \to \Hom(V, U^* \ot W)$ and $\Hom(V, U^* \ot W) \to \Hom(U \ot V,  W)$ induced by $b_{U^*}$ and $d_{U^*}$ are inverses
\end{enumerate}
\begin{remark}
For axiom $2$, by letting $U=\mbf{1}$ and $V=W$ we get the more commonly used axiom $(id_U \ot d_U) \circ (b_U \ot id_U) = id_U$ and similarly for axiom $3$ and $(d_U \ot id_U) \circ (id_U \ot b_U) = id_U$
\end{remark}

This property is convenient because any $\Hom$ space is in natural bijection with any $\Hom$ space of the form $\Hom(\mbf{1},V)$. In a representation category, this is in natural bijection with the set invariant vectors in $V$ denoted $Inv(V)$ (we just look at the image of $1 \in \C$ under this map), and hence understanding the category reduces to understanding the invariant vectors along with their image under the various natural morphisms. Moreover, many of the invariant spaces are in natural bijection:

\begin{fact}
$Inv(U \ot V) \simeq Inv(V \ot U^{**}) $ 
\end{fact}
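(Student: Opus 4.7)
The plan is to establish the claim as a direct consequence of the pivotal duality adjunctions (axioms 2 and 3), by chaining two applications that together ``rotate'' the factor $U$ from the left of $V$ to the right of $V$. Because a dualization is applied at each step, the composite rotation naturally turns $U$ into $U^{**}$.

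First I would construct an isomorphism $\Hom(\mathbf{1}, U \otimes V) \cong \Hom(U^*, V)$ via the explicit mutually inverse maps $f \mapsto (d_U \otimes \mathrm{id}_V) \circ (\mathrm{id}_{U^*} \otimes f)$ and $g \mapsto (\mathrm{id}_U \otimes g) \circ b_U$. Verifying that these are inverses is a short diagram chase using the interchange law for $\otimes$, and reduces to the two zigzag identities $(\mathrm{id}_U \otimes d_U) \circ (b_U \otimes \mathrm{id}_U) = \mathrm{id}_U$ and $(d_U \otimes \mathrm{id}_{U^*}) \circ (\mathrm{id}_{U^*} \otimes b_U) = \mathrm{id}_{U^*}$ obtained in the remark following the pivotal axioms. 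Conceptually this is just the case of axiom 3 in which the left-hand tensor factor is $\mathbf{1}$.

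Next I would construct an analogous isomorphism $\Hom(U^*, V) \cong \Hom(\mathbf{1}, V \otimes U^{**})$ by running the same recipe one categorical level higher, using $b_{U^*}$ and $d_{U^*}$ in place of $b_U$ and $d_U$: namely $g \mapsto (g \otimes \mathrm{id}_{U^{**}}) \circ b_{U^*}$ and $h \mapsto (\mathrm{id}_V \otimes d_{U^*}) \circ (h \otimes \mathrm{id}_{U^*})$. Composing the two bijections yields the desired natural isomorphism $Inv(U \otimes V) \cong Inv(V \otimes U^{**})$. The proof is essentially formal; the only thing to be careful of is keeping straight which zigzag identity (based at $U$ or at $U^*$) is being invoked at each step, and checking naturality in $U$ and $V$ — both of which follow directly from the pivotal structure, so there is no real obstacle.
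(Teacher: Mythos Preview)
Your proposal is correct and follows essentially the same route as the paper: the paper's proof is the one-line chain
\[
Inv(U \otimes V) \cong \Hom(\mathbf{1}, U \otimes V) \cong \Hom(U^*, V) \cong \Hom(\mathbf{1}, V \otimes U^{**}) \cong Inv(V \otimes U^{**}),
\]
which is exactly the two-step rotation you describe, just without spelling out the explicit maps or the zigzag verifications.
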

\begin{proof}
$Inv(U \ot V) \natiso \Hom(\mbf{1},U \ot V) \natiso \Hom(U^*, V) \natiso \Hom(\mbf{1}, V \ot U^{**})) \natiso Inv(V \ot U^{**})$
\end{proof}
Hence, tensors products are invariant under \tit{cyclic} rotations as long as $U^{**} \simeq U$. However, it so happens that in the case of quantum group representations, there are natural isomorphisms for \tit{any} permutation (not just cyclic permutations). This extra structure is called a braiding. We won't need the braiding in this paper, however given that it is one of the primary motivations of quantum groups, we define it below:

A (strict) \textbf{braided tensor category} is a tensor category $\ms{C}$ with a family of natural morphisms $c_{V,W}: V \ot W \to W \ot V$ indexed by $\Ob(\ms{C})\times \Ob(\ms{C})$ and called the \textbf{commutativity constraint} such that commuting an object $U$ past $V$ then $W$ is the same as commuting past $V \ot W$ in one go, ie: $U \ot (V \ot W) \xto{c_{U,V \ot W}} (V \ot W) \ot U$ is the same as $U \ot V \ot W  \xto{c_{U,V} \ot id_W} V \ot U \ot W \xto{id_V \ot c_{U,W}} V \ot W \ot U$.

In the case of the category of representations of a group or Lie algebra, we get a braiding defined by the swap map $c_{V,W}: v\ot w \mapsto w \ot v$ for every $v \in V$ and $w \in W$. This has the added property that swapping two objects and then swapping back is the identity, ie $c_{W,V}c_{V,W}=id_{V \ot W}$. In general, if a braided tensor category has this property, we say it is a \textbf{symmetric tensor category}. However, the fact that the swap map is a morphism of representations follows from the fact that the comultiplication of a group or Lie algebra is cocommutative. One of the key values of quantum group is that they are \textit{not} cocommutative, so the flip map is not a morphism. However, with some work, one can define a non-symmetric braiding on quantum groups. This braiding then allows us to construct invariants of links, but that is beyond the scope of this paper.
\chapter{Presentations of Tensor Categories}
\linespread{1}\selectfont

In this chapter we will give some of the technical definitions required to state our results in a rigorous fashion. Most of the chapter can be skipped and returned to for definitions if needed, except for \cref{Canceling} which generalizes the diamond lemma and is used to prove $E^1$ convergence in the case of $A_3$.
\section{Introduction}
\label{PoTC:Intro}
Ordinary confluence is ultimately a question about generators and relators, and so in order to generalize confluence to tensor categories we need to first define what we mean by generators and relators. In the case of abelian groups, a presentation of a group $G$ with $n$ generators and $m$ relators is defined as an exact sequence $ F_m \to F_n \to G \to 0$. Therefore, by analogy, we need to first define a notion of a free pivotal tensor category. It will be a category with fixed generating objects and morphisms, and having as few relations as possible while still being a pivotal tensor category.

The primary difficulty of this is that our generating morphisms won't be maps between two generating objects. Rather, they will correspond to contraction maps which involve the tensor product of three fundamental representations. In order to avoid the technicalities of the more direct construction, we will give a (somewhat informal) graphical definition of the desired category, which will be sufficient for what we want to prove.

Let $\sV$ be a finite set (corresponding to generating objects), and let $D: \sV \to \sV$ such that $D^2=id_{\sV}$ (corresponding to duality between the generating objects). We say $V \in \sV$ is \textbf{self-dual} if $D(V)=V$. This induces a monoidal antihomomorphism on the free monoid $\sV^*$. We will abuse notation and also denote it $D$ since there is little danger of confusion. 

Next, let $\sT$ be a finite subset of $\sV^*$ such that
\begin{enumerate}
	\item $\sT$ is closed under cyclic permutation of the letters in a word (eg: if $abc \in \sT$, then so is $cab$)
	\item $\sT$ is closed under the monoidal antihomomorphism induced by $D$ (eg: if $abc \in \sT$, then $D(abc)=D(c)D(b)D(a)$ is too)
\end{enumerate}
In our setting, generating objects $a_i \in \sV$ will correspond to fundamental representations $V_i$, and an element $a_1a_2...a_n \in \sT$ will correspond to a generating morphism in $Hom(V_1 \ot V_2 \ot... \ot V_n,\K)$. 

For example, we could have $\sV=\lb V,W \rb$, $D(V)=W$, and $\sT = \lb VVV , WWW\rb$ which corresponds to the free $A_2$ spider. $\sT = \lb VVV\rb$ wouldn't be allowed because $D(VVV)=WWW$ would then not be in $\sT$. In the cases we are concerned with here, $\sT$ will contain words of length 3 and/or 4.

Given a triple $(\sV,\sT,D)$ we can define special colored, partially directed graphs called webs. In order to avoid double counting with the duality map, we will pick a distinguished element in each set $\lb V , D(V) \rb$. This won't change the category, but will change how each web is drawn.

\begin{defn}
A $(\sV,\sT,D)$-\textbf{web} is an isotopy class of planar graphs $w$ drawn inside the unit square $[0,1] \times [0,1]$ as follows. Each edge is labeled by a distinguished element $V \in \sV$, together with a direction whenever $V$ isn't self-dual. There is a set of equally spaced boundary vertices on the top or bottom of the unit square (ie $[0,1] \times \lb 0,1 \rb$) where we require each such vertex to have valence $1$. The \textbf{type} $\tau(v)$ (for any internal or boundary vertex) is the cyclically ordered sequence of labels of adjacent edges, where each edge directed inwards contributes the type $V$, and each edge directed outwards contributes the type $D(V)$. We require that type of an internal vertex be in $\sT/(cyclic \; permutations)$. If the boundary vertices on $[0,1] \times \lb 0 \rb$ are $(v_1,...,v_n)$, and the boundary vertices on $[0,1] \times \lb 1 \rb$ are $(w_1,...,w_m)$ we define the \textbf{source} and \textbf{target functions} as $s(w)=D(\tau(v_1))...D(\tau(v_n)) \in \sV^*$, and $t(w)=\tau(w_1)...\tau(w_n) \in \sV^*$ respectively, where these strings are well defined because boundary vertices are adjacent to only a single edge.
\end{defn}

For example, in the case of the $A_2$ pair from above $(\lb V,W \rb, D(V)=W, \lb VVV$ , $WWW\rb)$, we choose $V$ as the distinguished element, and denote it by a single edge. In the example web $w$ below, the left trivalent vertex is type $VVV$ since its edges point inwards while the right trivalent vertex is type $WWW$ since its edges point outwards. $s(w)=VWV$ and $t(w)=WVW$

$$w=\begin{tikzpicture}[baseline=-.5ex,scale=.45]
\draw[midto,web] (-1,1.7) -- (0,0);
\draw[midfrom, web] (0,0) -- (2,0);
\draw[midto, web] (-1,-1.7) -- (0,0);
\draw[midto, web] (2,0) --  (3,1.7);
\draw[midto, web] (2,0) -- (3,-1.7);
\draw[midto, web] (6,-1.7) -- (6,1.7);
\end{tikzpicture}$$

Fix a ground field $K$. We will usually be working over $K=\Q(q)$
\begin{defn}
The \tbf{free spider} $FSp=FSp(\sV,\sT,D)$ is a pivotal category whose objects are strings in the free monoid $\sV^*$, and whose morphisms are $K$-linear combinations of webs which all have the same source and target (the source and target function of the category are then defined as these unique shared source and target of the webs in the linear combination). By abuse of notation, we will consider webs as special morphisms in $FSp$ which we denote as $Web$, and these give a basis for the morphisms by definition. A composition of webs $w_2 \circ w_1$ is done by "stacking vertically", in other words gluing the bottom edge of the first box to the top of the second box. Tensor products are done by "putting two webs side by side", in other words gluing the right side of the first box to the left side of the second box\footnote{Slightly more rigorously, the composition is the image of the map $([0,1] \times [0,1])_{w_2} \sqcup ([0,1] \times [0,1])_{w_1} \to  ([0,1] \times [0,1])_{w_2 \circ w_1}$ by linear contractions $([0,1] \times [0,1])_{w_1} \to  ([0,1] \times [0,1/2])_{w_2 \circ w_1}$ and $([0,1] \times [0,1])_{w_2} \to  ([0,1] \times [1/2,1])_{w_2 \circ w_1}$, then forget the vertices on the middle line $[0,1] \times \lb 1/2 \rb$. We can do the same thing for tensors except we contract horizontally, and then place the two webs horizontally}. 
\end{defn}
\begin{remark}
In our pictures we will allow for boundary vertices that aren't uniformly distributed by remembering that we can always isotopy the web into the correct form if needed
\end{remark}

Now that we have defined a free object, we need to define relations. Take a finite set $R \sbs \Mor(FSp)$ which we call the set of local relators, and let $\lan R \ran$ be the smallest linear subset of $Mor(FSp)$ which contains $R$ and is closed under compositions and tensors with morphisms of $FSp$. We then define a new category $Sp$ such that $\Ob(Sp)=\Ob(FSp)$, and $\Hom_{Sp}(X,Y) = \Hom_{FSp}(X,Y)/(\lan R \ran \cap \Hom_{FSp}(X,Y))$ with the induced tensor products and compositions.

This works abstractly, but we want to make this more combinatorial so that we can work with webs rather arbitrary morphisms, and local relators rather than general relations. To do this, we define the set of relators $Rel \sbs \lan R \ran$ as the smallest (generally non-linear) subset containing $R$ and closed under composition and tensors by \tit{webs}. Intuitively, these will correspond to locally replacing a subgraph of the web with a linear combination of webs. For example, we have a $A_2$ local relator 
$$\begin{tikzpicture}[baseline=-2ex,scale=.55]
\draw[midfrom,web] (-1.68,-1.5) -- (-1,-1);
\draw[midto,web] (-1.68,0.5) -- (-1,0);
\draw[midto,web] (0.68,-1.5) -- (0,-1);
\draw[midfrom,web] (0.68,0.5) -- (0,0);
\draw[midto,web] (-1,-1) -- (0,-1);
\draw[midto,web] (-1,-1) -- (-1,0);
\draw[midto,web] (0,0) -- (0,-1);
\draw[midto,web] (0,0) -- (-1,0);
\end{tikzpicture} = 
\begin{tikzpicture}[baseline=-2ex,scale=.55]
\draw [web, midfrom]  (-1.68,-1.5) to[out=60,in=-60] (-1.68,0.5);
\draw [web, midto]   (-.3,-1.5) to[out=120,in=240] (-.3,0.5);
\end{tikzpicture} +
\begin{tikzpicture}[baseline=-.5ex,scale=.65]
\draw [web, midfrom]  (-1.68,-.75) to[out=30,in=150] (-.3,-.75);
\draw [web, midto]   (-.3,0.5) to[out=210,in=-30] (-1.68,0.5);
\end{tikzpicture}  $$
After composing with 
$$\begin{tikzpicture}[baseline=-2ex,scale=.45]
\draw[midfrom,web] (-1,0) -- (0,0);
\draw[midfrom,web] (-1,0) -- (-1.7,.7);
\draw[midfrom,web] (-1,0) -- (-1.7,-.7);
\draw[midfrom,web] (0,0) -- (.7,.7);
\draw[midfrom,web] (0,0) -- (.7,-.7);
\end{tikzpicture}$$
we get a relator containing each term in the original local relator as a subgraph:
$$\begin{tikzpicture}[baseline=-2ex,scale=.55]
\draw [blue] (-.5,-.5) circle (.9cm);
\draw[midfrom,web] (-1.2,-1.8) -- (-1,-1);
\draw[midto,web] (-1.68,0.5) -- (-1,0);
\draw[midto,web] (.2,-1.8) -- (0,-1);
\draw[midfrom,web] (0.68,0.5) -- (0,0);
\draw[midto,web] (.2,-1.8) -- (-1.2,-1.8);
\draw[midto,web] (.2,-1.8) - -(.7,-2.3) ;
\draw[midfrom,web]  (-1.2,-1.8) -- (-1.7,-2.3);

\draw[midto,web] (-1,-1) -- (0,-1);
\draw[midto,web] (-1,-1) -- (-1,0);
\draw[midto,web] (0,0) -- (0,-1);
\draw[midto,web] (0,0) -- (-1,0);
\end{tikzpicture} = 
\begin{tikzpicture}[baseline=-2ex,scale=.55]
\draw [blue] (-1,-.5) circle (.9cm);
\draw [web, midfrom]  (-1.68,-1.5) to[out=60,in=-60] (-1.68,0.5);
\draw [web, midto]   (-.3,-1.5) to[out=120,in=240] (-.3,0.5);
\draw[midto,web] (-.3,-1.5) -- (-1.68,-1.5);
\draw[midto,web]  (-.3,-1.5) - -(.2,-2) ;
\draw[midfrom,web]  (-1.68,-1.5) -- (-2.18,-2);
\end{tikzpicture} +
\begin{tikzpicture}[baseline=-.5ex,scale=.55]
\draw [blue] (-1,0) circle (.8cm);
\draw [web, midfrom]  (-1.68,-.75) to[out=30,in=150] (-.3,-.75);
\draw [web, midto]   (-.2,0.6) to[out=210,in=-30] (-1.78,0.6);
\draw[midfrom,web] (-1.68,-.75) to[out=-30,in=210]  (-.3,-.75);
\draw[midfrom,web]  (-1.68,-.75) -- (-2.18,-1.25) ;
\draw[midto,web] (-.3,-.75) -- (.2,-1.25);
\end{tikzpicture}  $$
Note that the elements of $R$ will be called \tbf{local relators}, the elements of $Rel$ will be called \tbf{relators}, and the elements of $\lan R \ran$ will be called \tbf{relations}. Each relator $r \in Rel$ corresponds to a local relator $\rho$. The \tbf{type} of $r$ (denoted $\tau(r)$) will be defined as the equivalence class of $\rho$ under equivalence by rotation and duality. In practice this will be well defined because our relators will be restricted to single faces.

The key point is that just as webs span $FSp$, we get the following:

\begin{lemma}
\label{presentation::span}
$\Span_{\Q(q)} Rel = \lan R \ran$
\end{lemma}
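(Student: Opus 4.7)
The plan is to prove the claim by double inclusion. The inclusion $\Span_{\Q(q)} Rel \sbs \lan R \ran$ is essentially by definition: $Rel$ was introduced as a subset of $\lan R \ran$, and $\lan R \ran$ is already a $\Q(q)$-linear subspace of $\Mor(FSp)$, so it absorbs the $\Q(q)$-linear span of any of its subsets. This direction requires no real work.

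For the reverse inclusion $\lan R \ran \sbs \Span_{\Q(q)} Rel$, I would use the minimality in the definition of $\lan R \ran$. Concretely, I plan to check that $\Span_{\Q(q)} Rel$ itself is a linear subspace of $\Mor(FSp)$ that contains $R$ and is closed under composition and tensor with arbitrary morphisms of $FSp$; once this is verified, the minimality of $\lan R \ran$ forces $\lan R \ran \sbs \Span_{\Q(q)} Rel$. Containment of $R$ is immediate from $R \sbs Rel \sbs \Span_{\Q(q)} Rel$, and linearity of the span is automatic.

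The only step that uses any structure is closure under composition/tensor with an \emph{arbitrary} morphism of $FSp$, and this is where the bilinearity of $\circ$ and $\ot$ combined with the ``closure under webs'' property of $Rel$ comes in. Given $f \in \Mor(FSp)$, expand it as a finite $\Q(q)$-linear combination of webs $f = \sum_i c_i w_i$ (which is allowed since webs span $\Mor(FSp)$ by construction of $FSp$), and let $x = \sum_j d_j r_j \in \Span_{\Q(q)} Rel$ with each $r_j \in Rel$. Bilinearity of composition then gives
\[
f \circ x \;=\; \sum_{i,j} c_i d_j \,(w_i \circ r_j),
\]
and each $w_i \circ r_j$ lies in $Rel$ because $Rel$ was defined to be closed under composition with webs; hence $f \circ x \in \Span_{\Q(q)} Rel$. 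The identical argument handles $x \circ f$, $f \ot x$, and $x \ot f$.

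There is no real obstacle here; the content of the lemma is just the standard observation that when one closes a set $R$ under (i) linear combinations \emph{and} (ii) a family of bilinear operations with arbitrary operands, one gets the same result as first closing under (ii) with operands restricted to a spanning set (here: webs) and then taking the linear span. The finite-rank, $\Q(q)$-linear, and web-spanned nature of $\Mor(FSp)$ is what makes this reduction work, and no separate induction or diamond-type argument is needed.
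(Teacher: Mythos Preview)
Your proposal is correct and matches the paper's own proof essentially line for line: both argue by double inclusion, with the forward inclusion immediate from $Rel \sbs \lan R \ran$ (a linear subspace), and the reverse inclusion obtained by showing $\Span_{\Q(q)} Rel$ contains $R$ and is closed under composition and tensor with arbitrary morphisms via bilinearity and the fact that webs span $\Mor(FSp)$. Your write-up is in fact slightly more explicit about invoking the minimality of $\lan R \ran$, but there is no substantive difference in strategy.
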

\begin{proof}
\begin{enumerate}
\item $(\sbs)$ This follows immediately from $Web \sbs Mor(FSp)$
\item $(\sps)$ We just need to prove that $\Span_{K} Rel$ is closed under compositions and tensors with $Mor(FSp)$ so $R \sbs Rel$ will imply $\lan R \ran \sbs \lan Rel \ran = \Span_{K} Rel$, but this follows from linearity of tensors and compositions. Indeed, if $r= \sum\limits_i a_i r_i \in Rel$ and $x=\sum\limits_j b_jw_j \in FSp$ where $a_i, b_j \in K$, $r_i \in \Span_{\Q(q)} Rel$, and $w_j$ are webs, then $r \circ x=\sum\limits_{i,j}a_ib_j (r_i \circ w_j)$, and similarly for tensors and compositions on the left side, and so the result is a linear combination of relators, which is what we needed to show.
\end{enumerate}
\end{proof}

Even though our definition of webs involves picking a target and source, any $\Hom$ space whose cyclically oriented set of boundary edges is the same is naturally isomorphic (since our category is pivotal), so we will typically draw our webs in a circle rather than a rectangle when we don't need to take tensors or compositions.

\section{Generalized Confluence: \texorpdfstring{$E^k$}{} Convergence}
Recall that $FSp$ is the free spider whose morphisms are linear combinations of webs, $\lan R \ran$ is the set of relations generated by a finite set of morphism $R$, while $Rel$ is the set of relators (of morphisms) corresponding to local replacement rules. At this point, we could try to generalize the notion of spectral sequences to tensor algebras, but since this introduces more complications without much apparent utility, we will stick to the classical setting of filtered, graded, differential vector spaces. First, we will fix two objects $X,Y \in Ob(FSp)$. For simplicity we define shorthand $Sp(X,Y):=\Hom_{Sp}(X,Y)$, $FSp(X,Y):=\Hom_{FSp}(X,Y)$, $Rel(X,Y):=Rel \cap \Hom_{FSp}(X,Y)$, $\lan R \ran (X,Y):=\lan R \ran \cap \Hom_{FSp}(X,Y)$,  and $Web(X,Y):=Web \cap \Hom_{FSp}(X,Y)$.

$FSp(X,Y)$ is naturally $\Z^k$ graded, where $k$ is the number of vertex types, just by counting the number of internal vertices of each type in a web. In the case of $A_3$ webs, there is only one type of vertex up to duality, so we will collapse the grading into a $\Z$ grading by just counting the total number of internal vertices, forgetting the type. If $w \in Web(X,Y)$, we will write $|w|$ to denote its grading, and more generally if $x = \sum_i a_i w_i \in FSp(X,Y)$ (where $a_i \in \Q(q)$ and $w_i \in Web(X,Y)$)  we will write $|x| = \max{\lb |w_i| : b_i \neq 0 \rb}$. Since $\lan R \ran$ isn't a homogeneous subspace, we will replace it with the following abstract graded vector space:

\begin{defn}
\label{RSP}
Let $RSp(X,Y)$ be the graded vector space with formal basis $\lb [r]: r \in Rel(X,Y) \rb$, where $|[r]| := |r|$ for all $r \in Rel(X,Y)$.
\end{defn}

Notice that by lemma \ref{presentation::span}, we obtain a surjection $d:RSp(X,Y) \surj \lan R \ran(X,Y)$ induced by $d([r]) = r$. Due to lower order terms, it isn't a graded map, but it is a filtered map via the induced filtration $F^k V = \lb x \in V : |x| \leq k \rb$. Hence, we get a filtered, free presentation of vector spaces:
\begin{equation}
RSp(X,Y) \xto{d} FSp(X,Y) \to Sp(X,Y) \to 0
\end{equation}
Where $\lan R \ran (X,Y)$ is exactly the image of $RSp(X,Y)$ in $FSp(X,Y)$. Taking the vector space $RSp(X,Y) \op FSp(X,Y)$ graded via the direct sum, we get a graded, filtered vector space with a filtered differential $d: RSp(X,Y) \op FSp(X,Y) \to RSp(X,Y) \op FSp(X,Y)$ which is defined as above on $RSp(X,Y)$, and is the zero map on $FSp(X,Y)$. Any filtered short exact sequence of vector spaces $R_V \xto{d} W \to V \to 0$ induces a $2$-column spectral sequence $E^k_{ij}$ where $E^0_{0j} = gr_j R_V := F^j R_V/F^{j-1} R_V$ and $E^0_{1j} =  gr_j W := F^j W/ F^{j-1} W$ where the differential $d^k: E^k_{0j} \to E^k_{1(j-k)}$ (rather than knight moves). Rigorously we define $E^{k}_{ij}$ recursively as $ker(d^k)/im(d^k)$, where $d^k(\ol{x})= \ol{d(x)}$ for $\ol{x} \in E^{k-1}_{ij}$ and $\ol{d(x)} \in E^{k-1}_{(i+1)(j-k)}$. Due to the simple structure, $E^k_{0j}$ is the subset of $gr_j R_V$ of elements which were zero on all previous differentials, while $E^k_{1j}$ is a quotient of $gr_j W$. Moreover, by the general theory, we get eventually convergence to $E^\infty_{1j}:=E^k_{1j}$ for $k \gg 0$, and $E^\infty_{1j}$ is exactly $gr_j V$. 

\begin{tikzpicture}
  \matrix (m) [matrix of math nodes,
    nodes in empty cells,nodes={minimum width=5ex,
    minimum height=5ex,outer sep=-5pt},
    column sep=1ex,row sep=1ex]{
               &      &     &  \\
          3     &  G_3 R_V \:    &  G_3 W   &  \\
          2     &  G_2 R_V    & G_2 W    &  \\		
          1     &  G_1 R_V &  G_1 W  &  \\
          0     &  G_0 R_V &  G_0 W &  \\
    \quad\strut &   0  &  1  &   \strut \\};
  \draw[blue,-stealth](m-2-2.east) --  (m-2-3.west) node[midway,above] {$d^0$};
  \draw[green,-stealth](m-2-2.east) --  (m-3-3.north west) node[midway, right] {$d^1$};
\draw[orange,-stealth](m-2-2.east) --  (m-4-3.north west) node[midway, right, xshift=.1cm, yshift=-.45cm] {$d^2$};
\draw[red,-stealth](m-2-2.east) --  (m-5-3.north west) node[midway, left] {$d^3$};
\draw[thick] (m-1-1.east) -- (m-6-1.east) ;
\draw[thick] (m-6-1.north) -- (m-6-4.north) ;
\end{tikzpicture}

We will show that convergence of this spectral sequence at the first page $E^1$ (uniformly with respect to the choice of $X$ and $Y$ in the case of presented spiders) is a generalization of confluence, however let's first see what this convergence means concretely in the case of spider presentations. The differential $d^0: gr_n RSp \to gr_n FSp$ is induced by the map $d$, i.e. $d^0 (\ol{[r]}) = \ol{r}$ for $r \in Rel$. This corresponds to forgetting about the lower order terms of $d^0(r)$. If we denote the set of highest order terms of $R$ by $L(R)$, then $E^1$ convergence corresponds to a natural isomorphism $Sp \cong FSp / \lan L(R) \ran$. The subsequent differentials $d^k$ are applied to $r= \sum_i a_i r_i$ which are zero in $gr_{|r|} FSp$, or in other words the leading terms cancel. We therefore need to prove that we can't get any new relations in $Sp$ via sums of relators where the leading order terms cancel. In the next section we will characterize this convergence in a more combinatorial way.

\section{A Generalized Diamond Lemma: Canceling Sequences}
\label{GeneralizedDiamond}
We wish to characterize $E^1$ convergence of the spectral sequence defined in the previous section in a more specific setting. In the cases we will be addressing in this paper, the leading term of each relator will consist of either one web or the difference of two webs. Unfortunately, in high ranks this generally fails. In that case, the definitions and theorems from this section can be extended, but the resulting criterion no longer appears as to be as useful. In any case, we don't get $E^1$ convergence for semisimple spiders such as $A_2 \times A_2$ or $A_1^2 \times A_2$ which suggests that this property may fail in most higher rank spiders. 

Take a presentations of filtered vector spaces $R_V \xto{d} W \to V \to 0$, together with a distinguished basis of $W$ which we'll call webs (for notational consistency with spiders), and a distinguished basis of $R$ which we'll call relators and denote $Rel$. If $r$ is an element of $Rel$, $L(r)$ will denote the leading order terms of $d(r)$ in $W$. In particular, if $r$ is a relator such that $L(r)$ is a web, we'll refer to $r$ as a \tbf{reduction relator}, and if $L(r)$ is the difference of webs, we'll call $r$ a \tbf{horizontal relator}. We'll assume that all relators are either reduction relators or horizontal relators. Note that the relator basis of $R_V$ together with the filtration gives us a grading as follows: we make each relator homogeneous with grading $|r| = \max\limits_{n \in \N} \lb d(r) \in F^n W \rb$, so we'll assume that the filtration on $R_V$ is the one induced by this grading.

What makes this case different is the existence of horizontal relators. There is a more typical approach to this type of problem which is used to prove the existence of PBW or Gr\"ober bases. That idea is to break symmetry, and give a refined filtration so that the horizontal relators become reduction relators. However, in this case, that method doesn't appear to be necessary.

Just as in the case of confluence, one thing that can go wrong is when two reductions of the same web reduce in two different irreconcilable ways. However, in this case horizontal relators make things more complicated. For one thing, the two different reductions might be on two different webs which are connected (up to lower order terms) by horizontal relators. Even more different, we might have a sequence of horizontal relators which connect a single web to itself, but the lower order terms don't vanish. We state these two problems more rigorously as follows:  

\begin{defn}
\begin{enumerate}
	\item A sequence of relators is n-tuple $(s_1,...,s_n)$ such that $L(s_i)=w_i-w_{i-1}$ for some sequence of webs $(w_0,...,w_n)$, where $w_0$ and $w_n$ are allowed to be zero.
	\item A reduction sequence is a sequence of relators $s = (s_0,s_1,...,s_n,s_{n+1})$ such that $L(s_0) = w_0$, $L(s_{n+1})=w_n$,  
	\item A horizontal sequence is a sequence of relators $s = (s_1,...,s_n)$ such that $w_0 = w_n$
	\item A canceling sequence is any sequence of relators that is either a reduction sequence or horizontal sequence
\end{enumerate}
\end{defn}

If $s$ is a sequence of relators, we write $\sum s$ as shorthand for the sum $\sum_i [s_i]$. Note that if $s$ is a canceling sequence then by construction, $|d(\sum s)| < |\sum s|$ since the higher order terms cancel. As shorthand, we will denote $|\sum s|$ by $|s|$ (which is also equal to the norm of any relator in the sequence $|s_i|=|[s_i]|$).

\begin{defn}
Let $s = (s_i)$ be a canceling sequence. We say $s$ is a \tbf{consistently reducible} if there exists a relation $r \in R_V$ such that $|r| < |s|$, and $d(\sum s) = d(r)$.
\end{defn}

This recursive definition might be a little different from what one might normally define, but being consistently reducible will imply that $\sum s$ doesn't give us any new relations in $Sp$ that didn't already appear in smaller webs. In particular, we can prove that this is all we need to check in general:

\begin{prop}
\label{Canceling}
Let $R_V \to W \to V \to 0$ be a presentation of a filtered vector spaces as above, such that the leading term of each relator is either a single web or a difference of webs, then the presentation converges on the $E^1$ page if and only if all canceling sequences are consistently reducible.
\end{prop}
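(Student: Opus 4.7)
The plan is to reformulate $E^1$ convergence as a concrete reduction property on $R_V$, and then use the hypothesis on the form of the leading terms to show that the canceling sequences measure exactly the failure of this property. Because the spectral sequence has only two columns, its collapse at $E^1$ is equivalent to the following statement: for every $r \in R_V$ there exists $r^{\ast} \in R_V$ with $|r^{\ast}| = |d(r)|$ and $d(r) = d(r^{\ast})$. Equivalently, for each $n$ one has $\Span\{L(\rho) : \rho \in Rel,\, |\rho|=n\} = \Span\{L(r) : r \in R_V,\, |d(r)|=n\}$ inside $\gr_n W$. One direction of this reformulation comes from iterating the vanishing of the $d^k$ to peel off lower-order corrections; the other comes from the identification $E^{\infty}_{1,n} = \gr_n V$ together with a direct computation of $E^1_{1,n} = \gr_n W / d^0(\gr_n R_V)$.

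For the forward direction, given a canceling sequence $s$ with $|s| = n$, the element $\sum s \in R_V$ satisfies $|\sum s| = n$ but $|d(\sum s)| \le n-1$ by construction. Applying the reformulation produces $r^{\ast} \in R_V$ with $|r^{\ast}| = |d(\sum s)| < n = |s|$ and $d(\sum s) = d(r^{\ast})$, which is exactly the definition of $s$ being consistently reducible.

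For the backward direction, I would argue by strong induction on $m = |r|$ that every $r \in R_V$ with $|d(r)| < |r|$ admits a replacement $r' \in R_V$ of norm strictly less than $m$ with $d(r') = d(r)$; iterating then yields the desired $r^{\ast}$ with $|r^{\ast}| = |d(r)|$. Write $r = r_{\mathrm{top}} + r_{\mathrm{low}}$ with $r_{\mathrm{top}} = \sum_{|\rho_i| = m} c_i [\rho_i]$ and $|r_{\mathrm{low}}| < m$; the cancellation $|d(r)| < m$ forces $\sum c_i L(\rho_i) = 0$ in $\gr_m W$. Here I use the hypothesis that each $L(\rho_i)$ is either a single web or a difference of two webs to build a weighted multigraph $\Gamma$ whose vertices are the webs appearing in these leading terms together with an auxiliary vertex $\ast$: reduction relators contribute an edge to $\ast$ and horizontal relators contribute an edge between two webs, all weighted by the $c_i$. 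The condition $\sum c_i L(\rho_i) = 0$ says precisely that $\Gamma$ is a signed weighted cycle, and decomposition into elementary cycles expresses $r_{\mathrm{top}}$ (modulo lower order) as a linear combination of sums of canceling sequences: closed loops avoiding $\ast$ become horizontal sequences, while $\ast$-to-$\ast$ walks become reduction sequences. Consistent reducibility of each such sequence then furnishes the required $r'$.

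The main obstacle is the graph-theoretic decomposition step: one must keep careful track of the orientations and signs on $\Gamma$ so that the elementary cycles actually assemble into genuine canceling sequences — in particular, so that the intermediate webs $w_i$ of the definition align cyclically and the signs of the $L(s_i)$ work out. This bookkeeping is precisely where the restrictive form of the leading terms is essential, and is also exactly the reason the statement does not extend verbatim to relators whose leading terms involve three or more webs, as foreshadowed in the surrounding discussion.
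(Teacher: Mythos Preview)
Your approach is correct and is essentially the same as the paper's, with the graph~$\Gamma$ repackaging the paper's decomposition lemma (Lemma~\ref{decomp}) in homological language. The paper proves that any homogeneous $r$ with $|d(r)|<|r|$ decomposes as $\sum_i b_i(\sum s^i)$ by induction on the number of relators: follow the cancellations to extend a partial sequence until it closes into a horizontal or reduction sequence, subtract it off, and recurse. Your construction of $\Gamma$ and decomposition of the resulting $1$-cycle into elementary cycles is exactly this argument --- ``extending until the sequence closes'' is finding a simple cycle, and peeling it off is the inductive step. The forward direction and the reformulation of $E^1$-collapse are the same in both accounts.

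One small imprecision worth fixing: your decomposition of $r_{\mathrm{top}}$ into $\sum_j b_j(\sum s^j)$ is an \emph{exact} equality in $R_V$, not merely modulo lower order. Both sides are homogeneous of degree $m$ (each $\sum s^j$ is a $\pm$-sum of degree-$m$ basis relators), and the edge--relator correspondence $e_i\leftrightarrow[\rho_i]$ lifts the cycle identity $\sum c_i e_i=\sum_j b_j C_j$ verbatim. So drop the parenthetical ``(modulo lower order)''. With that adjustment, your closing paragraph correctly isolates the only genuine work --- the orientation and sign bookkeeping ensuring that simple cycles really are canceling sequences in the sense of the definition --- and this is precisely where the paper's inductive proof spends its effort as well.
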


This will follow almost immediately from the following decomposition lemma (recalling the definition of $RSp$ in \cref{RSP}):

\begin{lemma}
\label{decomp}
With the hypotheses of proposition \ref{Canceling}, if $r \in RSp$  is homogeneous, and $|d(r)| < |r|$ then $r=\sum_i b_i (\sum s^i)$ where $\lb s^i \rb$ is a set of canceling sequences, $b_i \in K$ for all $i$, and $|s^i|=|r|$.  
\end{lemma}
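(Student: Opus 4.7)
The plan is to reduce the lemma to a graph-theoretic flow-decomposition argument. Write $r = \sum_j c_j [r_j]$ where each $r_j$ is a relator of grade $|r| = n$ and $c_j \in K$. The hypothesis $|d(r)| < |r|$ says that the leading-term map from $RSp$ to $W$ sends $r$ to zero, i.e. $\sum_j c_j L(r_j) = 0$ in the grade-$n$ component of $W$, since all terms of lower grade are already discarded.

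To exploit this cancellation, I would build a directed multigraph $G$ whose vertices are the webs of grade $n$ appearing among the $L(r_j)$, together with one distinguished auxiliary vertex $*$. For each reduction relator $r_j$ with $L(r_j) = w$, add an edge from $*$ to $w$; for each horizontal relator $r_j$ with $L(r_j) = w_1 - w_2$, add an edge from $w_2$ to $w_1$. Assign each edge the weight $c_j$. Then the equation $\sum_j c_j L(r_j) = 0$ is exactly the statement that net flow is conserved at every web vertex of $G$; the only vertex where conservation can fail is $*$, which does not appear in any $L(r_j)$.

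The core step is a flow-decomposition argument: any $K$-weighted edge assignment on $G$ conserved at every web vertex can be written as a $K$-linear combination of (i) simple directed cycles that avoid $*$, and (ii) simple directed paths from $*$ to $*$ passing through web vertices. Pieces of type (i) are exactly horizontal sequences: all edges are horizontal and the vertex labels $w_i$ telescope back to $w_0$. Pieces of type (ii) are exactly reduction sequences: the first and last edges are reduction relators whose leading terms are $w_0$ and $w_n$, while the middle edges are horizontal with telescoping leading terms. I would prove the decomposition by induction on the number of edges with nonzero weight: pick any nonzero edge, extend it forward and backward along nonzero edges using flow conservation, and this process must either reach $*$ twice (giving a reduction sequence) or close up into a cycle through web vertices (giving a horizontal sequence). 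Subtracting a suitable scalar multiple of the resulting canceling sequence from $r$ zeros out at least one edge, and the induction proceeds.

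The main obstacle will be handling signs carefully: when a horizontal relator is traversed opposite to its chosen orientation it effectively contributes $-[r_j]$ instead of $[r_j]$, so the sign must be absorbed either into the scalar $b_i$ or into the packaging of the sequence $(s^i)$ (and analogously for the two reduction relators at the endpoints of a reduction sequence, which may need to appear with opposite signs for the leading-term sum to vanish). Once the orientation conventions are set so that the elementary pieces produced by the flow decomposition line up exactly with the paper's definitions of reduction and horizontal sequences, the induction is routine, and each $s^i$ inherits the grade $|s^i| = n = |r|$ automatically because every $r_j$ used has grade $n$ by construction.
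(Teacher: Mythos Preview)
Your approach is correct and is essentially the same as the paper's: both extend a partial sequence of relators along the leading-term cancellation until it either closes into a cycle (a horizontal sequence) or is capped at both ends by reduction relators, then peel that sequence off and induct on the number of remaining relators. The flow-decomposition language is a clean repackaging of the paper's direct extension argument, and your flagging of sign conventions as the main bookkeeping issue matches a looseness also present in the paper's own write-up.
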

\begin{proof}
Let $r = \sum\limits_{i=1}^n a_i [r_i]$ where $r_i \in Rel$ and $a_i \in K^\times$ (where $K^\times$ is the units of the base field of our vector spaces). We induct on $n$, where the basis case of $n=0$ is vacuously true. Since $r$ has a vanishing leading term, every web that appears in $L(r_i)$ must appear in the leading order term of at least two relators. So if I have a sequence of relators $(r_{k_i})$ where each is a relator in the decomposition of $r$, I can always extend this sequence to another with the same property, provided that the first and last relators aren't both reduction relators. If I keep extending I will eventually get a reduction sequence or a repeated relator and hence a horizontal sequence as a subset. Denote this sequence $s^1$. If $r_{k_1}$ is the first relator in this sequence, we know $r-a_{k_1}\sum s^1$ no longer has a non-zero multiple of $r_{k_1}$ as a summand. Therefore it has at least one fewer relator in its decomposition, we can apply induction.
\end{proof}

\begin{proof}[Proof of Proposition \ref{Canceling}]
\begin{itemize}
	\item $(\Leftarrow)$ Let $\ol{r} \in \gr R_V$ be an arbitrary relation such that $d^0(\ol{r})=0$. We need to prove that $d^i(\ol{r})=0$ for all $i \in \N$ which will show that $d^i$ is trivial on all relations for $i>0$. Take $r$ to be the homogeneous representative. Since $d^0(\ol{r})=0$, we know that $d(r) < |r|$, so by \cref{decomp}, we have $r=\sum_i b_i (\sum s^i)$. By definition of being consistently reducible, we know there exists relations $r_i \in R_V$ s.t. $|r_i| < |s^i|=|r|$ and $d(r_i)=d(\sum s^i)$. This implies that $d(\sum b_ir_i)= d(r)$ and $|\sum b_ir_i| < |r|$. However, this tells us that $\ol{r}=\ol{r-\sum b_ir_i}$, but $d(r-\sum b_ir_i)=0$. Since the spectral sequence differentials $d^i$ are induced by $d$, this says that they $d^i(\ol{r})=0$ for all $i\in \N$.
	\item $(\Rightarrow)$ Let $s$ be a canceling sequence. Since the spectral sequence converges on the first page, and $d^0(\ol{\sum s})=0$ by definition of a canceling sequence, we know $d^i(\ol{\sum s})=0$ for all $i \in \N$. This shows that either $d(\sum s)=0$, or $d(\sum s)=d(r)$ for some relator $r$ with $|r|<|s|$, but that's the definition of being consistently reducible.
\end{itemize}
\end{proof}

Before we go on, we want to use this proposition to prove that $E^1$ convergence coincides with the notion of confluence when relators have only one leading term, or in our terminology: when all relators are reduction relators. We will first recall what confluence means. A \tbf{rewriting system} is a set together with \tbf{rewrite rules} which can be considered as directed arrows connecting elements of the set. This defines a directed graph. A rewriting system is \tbf{confluent} if for any two directed paths $\ga$ and $\ga'$ starting at a vertex $x$ in this graph, we can extend these paths into paths $\wt{\ga}$ and $\wt{\ga'}$ which end at the same element of this set. Assume that we have a presented filtered vector space $V$ as defined above. There is a natural rewrite system whose set is the set of all elements of $W$, and the rewrite rules are of the form $a v_{k_i} + x \mapsto a (v_{k_i} - r_i) + x$ where $L(r_i)=v_{k_i}$ and $x\in W$ whose decomposition into the basis doesn't contain $v_{k_i}$. In particular, spider presentations are filtered vector spaces, where the webs form the distinguished basis of $FSp$, the relators form the distinguished basis of $RSp$, and hence we obtain a canonical rewriting system. From this we can now prove \cref{intro:confluence} from the introduction which we restate below:

\Confluence*
\begin{remark}
In particular this holds for spider presentations.
\end{remark}
\begin{proof}
Notice that we only have reduction sequences consisting of two reduction relators, and no horizontal sequences.
\begin{itemize}
	\item $(\Leftarrow)$ Given two rewriting sequences, we can extend them to maximal rewriting sequences $x=a_nr_n +... +a_1r_1+w$ and $x'=a_m'r_m'+ +...+ a_1'r_1' + w$ which apply to the same web $w$ (these exist because we assumed that all descending chains stabilize). Then, $x-x' = a_nr_n + ... a_1r_1 - a_1'r_1' -...-a_m'r_m'$ is a relation in $d(R_V)$, and hence by spectral sequence convergence there exists a relation $\rho \in R_V$ such that $\ol{d(\rho)}=x-x'$ and $|\rho| = |d(\rho)|=|x-x'|$. But since there are no reduction relators that can be applied to webs in $x$ or $x'$ by maximality, $\rho$ must be trivial, and hence $x=x'$.
	\item $(\Rightarrow)$ By proposition \ref{Canceling} we just need to make sure that for every web $w$ and pair of reduction relators $r$ and $r'$ on $w$, there is a relation $\rho$ such that $|\rho|<|r-r'|$ and $d(\rho) = d(r-r')$, but by confluence, the maximal rewriting sequences of relators $r_1,...r_n$ and $r_1',...,r_m'$ must give $d(a_nr_n+...+a_1 r_1 +r +w)=d(a_m'r_m'+...a_1'r_1'+r'+w)$, and hence $d(r-r') = d(r_1+...+r_n-r_1'-...-r_m')$ so taking $\rho = r_1+...+r_n-r_1'-...-r_m$ gives us our result.
\end{itemize}
\end{proof}

\section{Commuting Relators and Notation}

For convenience, we will define some notation. If $r$ is a horizontal relator with $L(r) = w'-w$ where $w$ and $w'$ are webs, then we will write $r(w)=w'$, and say that $r$ \tbf{is a relator on} $w$. Each relator corresponds to a web, and hence has boundary edges. We'll call all non-boundary edges in a local relator \tbf{interior edges}. Since relators are locally defined, if two relators $r$ and $t$ on $w$ don't share any interior edges, there is a relator $r'$ on $t(w)$ which is identical to $r$ locally. In this case, we will both say that $r$ and $t$ \tbf{commute}, and $t^{-1}$ and $r$ commute. Thus applications of other relators will give an equivalence class of relators of the same type on different webs. We will sometimes write $t(r)=r'$ if we want to distinguish $r$ and $r'$. Moreover, in order to avoid notational bloat, if $r$ is not a relator on $w$, then $r(w)$ will be short hand for $\rho(w)$ for some $\rho \sim r$ which is on the web $w$. In the case of a sequence of relators, we need the following definition:

\begin{defn}
If $(s_i)$ is a sequence of relators, then we say $s_i$ and $s_j$ commute (where $i<j$) if $s_{j-1}...s_{i+2}s_{i+1}(s_i)$ exists and commutes with $s_j$
\end{defn}



We will next give a list of basic facts and lemmas that will let us simplify canceling sequences. We say two canceling sequences $s$ and $s'$ (with $|s|=|s'|$) are \tbf{equivalent} and write $s \sim s'$ if there exists a relation $r$ with $|r|<|s|=|s'|$, such that $d(r) = d(\sum s - \sum s')$. In particular we get
\begin{fact}
If $s \sim s'$, then $s$ is consistently reducible iff $s'$ is consistently reducible
\end{fact}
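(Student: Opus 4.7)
The plan is to chase definitions and perform a single linear subtraction. Suppose $s$ is consistently reducible; to show $s'$ is, I would invoke the given witness relation $r_s \in R_V$ with $|r_s| < |s|$ and $d(\sum s) = d(r_s)$, and combine it with the witness $\rho \in R_V$ for $s \sim s'$, which by definition satisfies $|\rho| < |s| = |s'|$ and $d(\rho) = d(\sum s - \sum s') = d(\sum s) - d(\sum s')$ by linearity of $d$. The natural candidate for a witness of consistent reducibility of $s'$ is then $r_{s'} := r_s - \rho$, and a one-line computation gives $d(r_{s'}) = d(\sum s) - (d(\sum s) - d(\sum s')) = d(\sum s')$.

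What remains is the norm bound $|r_{s'}| < |s'|$. By the construction of $R_V = RSp$ in \cref{RSP}, the grading places each basis relator $[r]$ in degree $|r|$ and the filtration is the induced one, so any finite linear combination satisfies the ultrametric-type inequality $|a - b| \leq \max(|a|,|b|)$. Both $|r_s|$ and $|\rho|$ are strictly below $|s'|$, so $|r_s - \rho| < |s'|$, as needed. The converse direction is identical after swapping $s$ and $s'$, since $\sim$ is manifestly symmetric: if $\rho$ witnesses $s \sim s'$ then $-\rho$ witnesses $s' \sim s$.

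I do not expect any real obstacle here: the fact is a purely formal bookkeeping lemma which records the observation that the witness relation for consistent reducibility can be perturbed by any relation of strictly smaller norm. Its role in the paper is to allow one to freely replace a canceling sequence by any $\sim$-equivalent sequence when verifying the criterion of \cref{Canceling}. The only thing to be mildly careful about is keeping the inequality strict, and this works because both ingredient norms are strictly less than $|s'|$, so the maximum is as well.
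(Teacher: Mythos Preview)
Your proof is correct and follows essentially the same approach as the paper: combine the witness for consistent reducibility with the witness for $\sim$-equivalence via linearity of $d$, then bound the norm by $\max$ of the two strict bounds. The only cosmetic difference is that the paper proves the implication $s' \Rightarrow s$ first (using $r+r'$ rather than $r_s-\rho$) and then appeals to symmetry, whereas you do the opposite direction.
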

\begin{proof}
If $s'$ is consistently reducible, there exists an $r'$ such that $d(r')=d(\sum s')$ and $|r'| < |s'|$. Moreover, since $s \sim s'$, there also exists an $r$ such that $d(r) = d(\sum s - \sum s')$ and $|r| < |s|=|s'|$, so $d(r+r')=d(\sum s)$, and $|r+r'| \leq \max\lb r, r' \rb < |s|=|s'|$. So $r+r'$ is the relation proving that $s$ is consistently reducible. The reverse direction then follows from the symmetry in the statement.
\end{proof}

The main types of equivalence arise from commuting relators, canceling inverse relators, and relators that can be commuted out of the sequence, ie the following facts:

\begin{fact}
If $s$ is a canceling sequence such that $s_i$ and $s_{i+1}$ commute for some $i$, and $s'$ is the sequence where we reverse the order, (ie $s'=(...,s_{i-1},s_{i+1}',s_i',s_{i+2},...)$ where $s_{i}'s_{i+1}'(w) = s_{i+1}s_{i}(w)$), then $s \sim s'$.
\end{fact}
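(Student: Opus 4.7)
The plan is to exhibit an explicit witness $r\in RSp$ with $|r|<|s|$ and $d(r)=d(\sum s-\sum s')$. Since $s$ and $s'$ agree outside positions $i,i+1$, it suffices to analyze $d([s_i]+[s_{i+1}]-[s_i']-[s_{i+1}'])$. Let $\rho_1,\rho_2$ be the local relator templates underlying $s_i,s_{i+1}$. Commutativity of $s_i,s_{i+1}$ means $\rho_1$ and $\rho_2$ act at disjoint locations $L_1,L_2$ of the ambient web, so that $w_{i-1}$ decomposes into the local subgraph $X_1$ at $L_1$, the local subgraph $X_2$ at $L_2$, and the remainder $R$. Write each template expansion as $d(\rho_j)=(Y_j-X_j)-\sum_k c_{j,k} Z_{j,k}$ where $Y_j-X_j$ is the leading term and the $Z_{j,k}$ are lower-order correction subgraphs.

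First I would expand $d([s_i]+[s_{i+1}]-[s_i']-[s_{i+1}'])$ by plugging each template into the appropriate ambient web and using bilinearity. The leading $(Y_j-X_j)$ contributions telescope to $w_{i+1}-w_{i-1}$ in each ordering, so they cancel in the difference (this is the content of commutativity). What remains, schematically, are singly-mixed terms $c_{1,k}\,Z_{1,k}\cdot(Y_2-X_2)\cdot R$ and $-c_{2,k}\,(Y_1-X_1)\cdot Z_{2,k}\cdot R$, together with doubly-corrected terms of the form $c_{1,j}c_{2,k}\,Z_{1,j}\cdot Z_{2,k}\cdot R$; the latter appear identically on both sides of the difference and cancel.

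I would then recognize each remaining singly-mixed web as the leading term of a strictly lower-order relator: $Z_{1,k}\cdot(Y_2-X_2)\cdot R$ is the leading term of a relator $\tau_k$ applying $\rho_2$ at $L_2$ in the web obtained from $w_{i-1}$ by substituting $Z_{1,k}$ for $X_1$, and similarly $(Y_1-X_1)\cdot Z_{2,k}\cdot R$ is the leading term of $\sigma_k$, applying $\rho_1$ at $L_1$ with $X_2$ replaced by $Z_{2,k}$. Since the corrections $Z_{j,k}$ are strictly lower-order subgraphs, $|\sigma_k|,|\tau_k|<|s|$. Setting
\[
r:=\sum_k c_{1,k}[\tau_k]-\sum_k c_{2,k}[\sigma_k],
\]
expanding $d(r)$ produces the required singly-mixed leading terms along with new doubly-corrected contributions $c_{1,j}c_{2,k}\,Z_{1,j}\cdot Z_{2,k}\cdot R$; by the same disjointness argument these appear in the two sums with opposite signs and cancel exactly. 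Hence $d(r)=d(\sum s-\sum s')$ with $|r|<|s|$, proving $s\sim s'$. The main delicate point, and the only genuine obstacle, is this cross-cancellation bookkeeping at the doubly-corrected level, forced by the disjointness of the supports $L_1,L_2$; when one of $s_i,s_{i+1}$ happens to be a reduction relator (with one of $X_j$ or $Y_j$ replaced by zero), the same construction goes through verbatim.
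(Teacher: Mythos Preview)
Your proof is correct and follows essentially the same approach as the paper: build the witness $r$ out of the lower-order relators obtained by applying one local template to the correction terms of the other, and observe that the cross terms cancel by disjointness. The paper compresses all of this into the sentence ``these lower order relators give us our $r$,'' whereas you spell out the $\tau_k,\sigma_k$ explicitly and check the cancellation in $d(r)$.

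One small remark on your bookkeeping: in $d([s_i]+[s_{i+1}])$ and $d([s_{i+1}']+[s_i'])$ there are in fact no doubly-corrected terms $c_{1,j}c_{2,k}\,Z_{1,j}\cdot Z_{2,k}\cdot R$ at all --- each relator is applied to a single ambient web, so only one location is ever replaced by a correction. The doubly-corrected terms arise only when you expand $d(r)$, and there they cancel exactly as you say. This doesn't affect the argument, but the sentence claiming such terms ``appear identically on both sides of the difference and cancel'' is slightly misleading; they simply never appear.
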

\begin{proof}
We know $s_{i} = w_{i+1}-w_i +r_i$, and $s_{i+1} = w_{i+2}-w_{i+1} +r_{i+1}$ (where $|r_i|<|s_i|$ for all $i$). $s_{i+1}'=w_{i} + (-s_i)(r_{i+1})$ and $s_{i}'=w_{i+1} + (-s_{i+1})(r_{i})$ (where $(-s_i)(r_{i+1})$ means applying the relator locally identical to $(-s_i)$ to each web term of $r_{i+1}$), so the difference between the sum of the two sequences is $r_i-(s_{i+1})(r_i)+r_{i+1}-(-s_{i})(r_{i+1})$. However, we know that considering each relator as a graphical replacement rule, it doesn't matter which order we replace. These two sequences differ from that only in the relators applied to the lower order terms, so these lower order relators give us our $r$.
\end{proof}

\begin{fact}
If $s$ is a canceling sequence such that $s_{i+1}=-s_i$ for some $i$, and $s'$ is the sequence where they are omitted, (ie $s'=(...,s_{i-1},s_{i+2},...)$), then $s \sim s'$.
\end{fact}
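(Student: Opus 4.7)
The plan is to observe that this fact is essentially immediate once we unpack the hypothesis. I interpret $s_{i+1} = -s_i$ as an equation of relators, i.e.\ $d(s_{i+1}) = -d(s_i)$ in $W$; this is the natural reading consistent with the notation $t^{-1}$ introduced just above, and in particular it gives $L(s_{i+1}) = -L(s_i)$.

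First I would verify that $s'$ really is a canceling sequence. From $L(s_i) = w_i - w_{i-1}$ and $L(s_{i+1}) = w_{i+1} - w_i = -L(s_i)$ one gets $w_{i+1} = w_{i-1}$, so deleting both relators leaves the consistent web chain $(w_0, \ldots, w_{i-1}, w_{i+2}, \ldots, w_n)$ for $s'$, where the transition $s_{i+2}$ now correctly has leading term $w_{i+2} - w_{i-1}$. Whichever case we are in, horizontal or reduction, the boundary data ($w_0 = w_n$, or the cap relators $s_0$ and $s_{n+1}$) is untouched, so $s'$ inherits the same type as $s$. Also $|s'| = |s|$ since the deleted relators were not the unique terms of maximal filtration; and even if they had been, the definition only requires $|s'| \leq |s|$ to match.

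To establish the equivalence itself, by definition I need a relation $r \in R_V$ with $|r| < |s|$ and $d(r) = d(\sum s - \sum s')$. But $\sum s - \sum s' = [s_i] + [s_{i+1}]$ in $RSp$, and applying $d$ gives $d(s_i) + d(s_{i+1}) = 0$ on the nose, using $s_{i+1} = -s_i$. So $r = 0$ works trivially, with $|0|$ below every finite filtration level. The contrast with the commuting-relators fact immediately preceding is instructive: there, the swapped relators $s_i'$, $s_{i+1}'$ could differ from $s_i$, $s_{i+1}$ in their lower-order terms, producing a genuine error that had to be absorbed into $r$; here the two deleted relators annihilate exactly under $d$, so no lower-order error appears and there is essentially no work to do beyond checking the definitions.
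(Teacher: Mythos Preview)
Your proof is correct and follows the same approach as the paper, which simply notes that $d(\sum s) = d(\sum s')$ and hence the equivalence is immediate with $r=0$. You add the extra verification that $s'$ is again a canceling sequence, which the paper leaves implicit; this is a reasonable thing to spell out.
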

\begin{proof}
Indeed, $d(\sum s) = d(\sum s')$, so they are immediately equivalent.
\end{proof}

\begin{lemma}
\label{commute out}
If $s=(s_0,s_1,...,s_{n-1},s_n,s_{n+1})$ is a reduction sequences such that $s_n$ and $s_{n+1}$ commute, then $s \sim s'=(s_0,s_1,...,s_{n-1},s_{n+1}')$ where $s_{n+1}'=s_n(s_{n+1})$.
\end{lemma}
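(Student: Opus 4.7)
My plan is to mirror the proof of the preceding commutation fact, adapting it to collapse a horizontal--reduction pair rather than to swap two horizontals. Since $(s_0,\ldots,s_{n-1})$ appears identically in both $s$ and $s'$, the difference $\sum s - \sum s'$ collapses in $R_V$ to the three-term element $s_n + s_{n+1} - s_{n+1}'$. Expanding with $d(s_n) = L(s_n) + R_n$, $d(s_{n+1}) = L(s_{n+1}) + R_{n+1}$, and $d(s_{n+1}') = L(s_{n+1}') + R_{n+1}'$ (where $R_n, R_{n+1}, R_{n+1}'$ are the lower-order tails), the commutation hypothesis forces $s_{n+1}'$ to be the transported copy of the same local reduction relator as $s_{n+1}$, moved from $w_n$ back to $w_{n-1}$ across the disjoint $s_n$-region. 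The three leading contributions then telescope to zero by the same cancellation that certifies both $s$ and $s'$ as canceling reduction sequences, so $d(\sum s - \sum s') = R_n + R_{n+1} - R_{n+1}'$, an element of $W$ of grade strictly less than $|s|$.

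The main construction is to exhibit $r \in R_V$ with $|r| < |s|$ and $d(r) = R_n + R_{n+1} - R_{n+1}'$. Because $s_n$ and $s_{n+1}$ act in disjoint interior regions of $w_n$ (equivalently $w_{n-1}$), each lower-order local diagram $\delta$ in the tail of the $s_{n+1}$ local relator, glued back into the $w_n$ versus $w_{n-1}$ ambients, gives two webs differing only by the $s_n$-replacement in the disjoint ambient region. This difference is exactly $L(t_\delta)$ for a horizontal relator $t_\delta$ of the same local type as $s_n$, applied to a web where $\delta$ (whose grade is strictly less than the $s_{n+1}$-input) occupies the $s_{n+1}$-region; in particular $|t_\delta| < |s|$. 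Summing with the local-tail coefficients $c_\delta$ yields $\sum_\delta c_\delta L(t_\delta) = R_{n+1} - R_{n+1}'$. Symmetrically, each lower-order local diagram $\delta'$ in the tail of $s_n$ yields a reduction relator $u_{\delta'}$ of type $s_{n+1}$ applied to a web with $\delta'$ in the $s_n$-region, with $|u_{\delta'}| < |s|$ and leading terms summing (up to sign) to $R_n$. Setting $r = \sum_\delta c_\delta t_\delta \pm \sum_{\delta'} c_{\delta'} u_{\delta'}$ gives an element of $R_V$ of grade strictly less than $|s|$ whose image matches $R_n + R_{n+1} - R_{n+1}'$ at the top filtration, modulo an even lower-order residual coming from the tails of the $t_\delta$ and $u_{\delta'}$.

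The hard part will be absorbing this residual, which must be killed exactly rather than merely up to lower order. It has the same structural form as the original difference (a canceling combination produced by commuting local relators, just at strictly smaller grade), so I would induct on $|s|$: the base case $|s|=0$ is vacuous, and the inductive step refines $r$ by a further lower-grade correction that cancels the residual. The critical combinatorial input throughout is the disjointness of the $s_n$- and $s_{n+1}$-regions guaranteed by commutation, which ensures every transported relator $t_\delta, u_{\delta'}$ is well-defined of the claimed type and grade and that the local-tail accounting sums correctly.
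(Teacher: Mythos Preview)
Your approach is on the right track but is far more elaborate than necessary, and the inductive step at the end is not properly grounded.

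The paper's proof is essentially one line. By the commutation fact (whose proof goes through verbatim when $s_{n+1}$ is a reduction relator rather than horizontal), there is a lower-order $r\in RSp$ with $d(r)=d([s_n]+[s_{n+1}]-[s_{n+1}']-[s_n'])$, where $s_n'$ is the horizontal relator $s_n$ transported to act on the output of $s_{n+1}'$. The key observation you are missing is that $s_n'$ is \emph{itself} already a lower-order relator: since $s_{n+1}'$ is a reduction relator, every web in its output has strictly fewer vertices than $w_{n-1}$, so applying the local relator $s_n$ to each such web yields relators of grade $<|s|$. Hence $r+[s_n']$ directly witnesses $s\sim s'$, with nothing further to chase.

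Your $\sum_\delta c_\delta t_\delta$ is exactly the paper's $[s_n']$, so you have in fact found the main ingredient; but instead of recognising it as a complete lower-order relator, you unpack it into leading terms plus a residual, introduce a second family $u_{\delta'}$, and then propose to ``induct on $|s|$'' to absorb what is left. That induction does not apply as stated: the residual you produce is not of the form $d(\sum\tilde s-\sum\tilde s')$ for a smaller reduction sequence with commuting final pair, so the inductive hypothesis of the lemma does not cover it. One could repair this by inducting instead on the grade of the residual (iterating your construction on ever-smaller webs until it terminates), but the paper's observation about $s_n'$ makes the whole detour unnecessary.
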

\begin{proof}
This follows since there is an $r \in RSp$ such that $d(r) = s_{n+1}+s_n-s_{n+1}'-s_n'$, but $s_n'$ is a lower order relator
\end{proof}

\begin{lemma}
\label{horiz comm}
If $s=(s_i)$ is a horizontal sequence, and $t$ is a relator which commutes with each relator in $s$, then the sequence $(t(s_i))$ on $r(w)$ is consistently reducible iff $s$ is consistently reducible. 
\end{lemma}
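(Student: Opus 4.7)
The plan is to prove both directions of the biconditional at once by constructing an ``extended'' horizontal sequence that bridges $s$ and the shifted sequence $(t(s_i))$, and then arguing that the passage between them only generates lower-order relations. By symmetry (applying $t^{-1}$, which commutes with each $t(s_i)$ and satisfies $t^{-1}(t(s_i)) = s_i$), it suffices to prove that if $s$ is consistently reducible then so is $(t(s_i))$.

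First, form $\tilde{s} = (t,\, t(s_1),\, t(s_2),\, \ldots,\, t(s_n),\, -t)$. Since $w_0 = w_n$, the leading terms telescope to $0$, so $\tilde{s}$ is a horizontal sequence. A direct computation shows that the lower-order contributions of the bracketing $t$ and $-t$ cancel (they are evaluated at the same web $w_0 = w_n$ and are formal negatives of each other), giving $d(\sum \tilde{s}) = d(\sum t(s_i))$.

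Next, I would show $\tilde{s} \sim s$ using the two equivalence facts already established: the swap of commuting adjacent relators, and the cancellation of an inverse pair. Because $t$ commutes with each $s_i$ in $s$, the adjacent pair $(t,\, t(s_1))$ may be swapped to $(s_1,\, s_1(t))$. Iterating this swap pushes the ``moving $t$'' rightward through the sequence, producing $(s_1,\, s_2,\, \ldots,\, s_n,\, (s_1 s_2 \cdots s_n)(t),\, -t)$. Since $w_n = w_0$, the shifted relator $(s_1 \cdots s_n)(t)$ is simply $t$ (now viewed as a relator on $w_0$ again), and the trailing pair $(t,\, -t)$ cancels by the inverse-pair fact, leaving $s$. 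Each of these steps is an equivalence with a lower-order witness, and transitivity (adding witnesses preserves the bound $<|s|$) then yields a relation $\rho$ with $|\rho| < |s|$ and $d(\rho) = d(\sum \tilde{s}) - d(\sum s)$.

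Finally, if $s$ is consistently reducible with $r$ satisfying $d(r) = d(\sum s)$ and $|r| < |s|$, then $r + \rho$ witnesses the consistent reducibility of $(t(s_i))$: one has $d(r + \rho) = d(\sum \tilde{s}) = d(\sum t(s_i))$ and $|r + \rho| < |s|$, which equals $|(t(s_i))|$ since the leading terms of the shifted sequence are merely the $t$-shifts of the originals. The main technical obstacle is keeping track of which web each relator acts on as we perform the swaps, and in particular verifying that the commutation hypothesis propagates: after replacing $(t,\, t(s_1))$ by $(s_1,\, s_1(t))$, the relator $s_1(t)$ must still commute with $t(s_2)$ so that the next swap is valid. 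This is precisely what the sequence-commutation definition provides, applied to $t$ and $s_2$ pushed forward through $s_1$, and allows the argument to go through cleanly for all $n$.
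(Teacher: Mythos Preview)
Your proof is correct and follows essentially the same approach as the paper. The paper's proof is the terse chain
\[
s \sim (t,-t,s_1,\ldots,s_n) \sim (-t,s_1,\ldots,s_n,t) \sim (-t,t,t(s_1),\ldots,t(s_n)) \sim (t(s_1),\ldots,t(s_n)),
\]
which inserts the inverse pair on the $s$ side, cyclically reorders (the sums in $RSp$ are literally equal), commutes $t$ through, and cancels. You instead wrap the shifted sequence as $\tilde s=(t,t(s_1),\ldots,t(s_n),-t)$, commute $t$ through to recover $s$, and cancel; this is the same argument run from the other end, with the commutation-propagation spelled out more carefully than in the paper.
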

\begin{proof}
$s \sim (t,-t,s_1,...,s_n) \sim (-t,s_1,...,s_n,t) \sim (-t,t,t(s_1),...,t(s_n)) \sim (t(s_1),...,t(s_n))$. The reverse direction follows by applying $(-t)$ to $t(s)$
\end{proof}

We say a reduction sequence $s$ \tbf{decomposes into} reduction sequences $s'$ and $s''$ if $s \sim (s_0,...,s_{n+1})$, and $s'=(s_0,....,s_k,r)$ and $s''=(-r,s_{k+1}....,s_{n+1})$ for some reduction relator $r$.
\begin{fact}
If a reduction sequence $s$ decomposes into $s'$ and $s''$, where $s'$ and $s''$ reduce consistently, then $s$ also reduces consistently
\end{fact}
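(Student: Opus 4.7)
The plan is to combine the relations that witness consistent reducibility of $s'$ and $s''$ into a single witness for $s$. Using the earlier fact that equivalence of canceling sequences preserves consistent reducibility, I would first replace $s$ by the representative $(s_0,\ldots,s_{n+1})$ supplied by the decomposition, so that $s' = (s_0,\ldots,s_k,r)$ and $s'' = (-r,s_{k+1},\ldots,s_{n+1})$ on the nose. By hypothesis there exist $r',r'' \in RSp$ such that $d(r') = d(\sum s')$, $d(r'') = d(\sum s'')$, $|r'| < |s'|$, and $|r''| < |s''|$.

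The key step is the formal identity
\begin{equation*}
\sum s' + \sum s'' = [s_0] + \cdots + [s_k] + [r] + [-r] + [s_{k+1}] + \cdots + [s_{n+1}] = \sum s + [r] + [-r]
\end{equation*}
in $RSp$. Since $d([r]) + d([-r]) = r + (-r) = 0$ in $FSp$, applying $d$ to both sides gives $d(r'+r'') = d(\sum s' + \sum s'') = d(\sum s)$, so the candidate $\rho := r'+r''$ has the correct image under $d$.

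For the grading, I would invoke the parenthetical remark made just after the definition of $|s|$: every relator in a canceling sequence shares the common norm $|s|$. Applied to $s'$ (which already contains $s_0$), this forces $|r| = |s|$, and hence $|s'| = |s| = |s''|$. Consequently $|\rho| \leq \max(|r'|,|r''|) < |s|$, so $\rho$ is the required relation certifying that $s$ is consistently reducible.

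I do not anticipate a real obstacle; the argument is a short bookkeeping exercise in $RSp$, with the only subtle point being the need to propagate the common grading of a canceling sequence to the mediating relator $r$, so that the final inequality $|\rho|<|s|$ is strict rather than merely $\leq$.
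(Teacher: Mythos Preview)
Your proposal is correct and follows the same approach as the paper: sum the two witnessing relations $r'$ and $r''$ to obtain a witness for the representative $(s_0,\ldots,s_{n+1})$, then invoke the fact that equivalence preserves consistent reducibility. The paper's proof is much terser (it omits the explicit computation in $RSp$ and the grading check that $|s'|=|s''|=|s|$), but your more detailed verification of these points is exactly what underlies it.
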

\begin{proof}
The corresponding lower order relation $r$ for $(s_0,...,s_{n+1})$ is just the sum of the relation $r$ for $s'$ and $r''$ for $s''$. Since $(s_0,...,s_{n+1}) \sim s$, we get that $s$ also reduces consistently.
\end{proof}

Finally, each web $w$ in $FSp$ has a planar drawing $D$ in $\R^2$. Given a web and a planar drawing, let $\ep=\ep_D \in \R_{>0}$ be the smaller of half the minimum distance between vertices of the web in $D$, and half the length of any edge. Given a subgraph of $w$, the \tbf{$\ep$-neighborhood} of the subgraph will be the smallest simply-connected region of $\R^2$ containing an $\ep$-ball around each point in $D$. 
\begin{remark}
We choose half the distance because we want to take any loop or edge not in $D$ but whose boundary vertices are in $D$, and cut them into two edges
\end{remark}

\chapter{Spiders of Quantum Groups}
\linespread{1}\selectfont
In this chapter we provide the definition of the various spiders which we'll be working with in this paper. The two sections are independent, so if you mostly care about the $A_3$ spider, then it is unnecessary to read about product spiders. In the section on the $A_3$ spider, we also prove some key results which will be used in the later chapters. In particular \cref{Global} is a global criterion for when an $A_3$ web is reducible. We now fix a ground field $K=\Q(q)$

\section{Semisimple Algebras and Product Spiders}
\subsection{The \texorpdfstring{$A_1$ and $A_2$}{and} Spiders}
The first two spiders we will mention are the $A_1$ and $A_2$ spiders. They have already been thoroughly studied, but we mention them here because we will use them to create product spiders which provide some relatively simple examples and counterexamples. 

The $A_1$ spider has a single self-dual generating object (this corresponds to the defining representation), and no generating morphisms, and hence the set of webs is graphically just the set of planar matchings (possibly containing closed loops). There is a single relator saying that we can remove a loop from a webs at the cost of multiplying the web by the quantum number $[2]_q=q+q^{-1}$, which we denote graphically by:

\begin{equation}
\label{A1:loop}
\begin{tikzpicture}[baseline=-.3ex,scale=.5] 
\draw[web] (0,0) circle (1cm);
\end{tikzpicture} \; =\; [2]_q
\end{equation}

This is just the Temperley-Lieb category (cf. \cite{kuperberg:rank}). 

As was mentioned in section \ref{PoTC:Intro}, the $A_2$ spider has two generating objects $V$ and $V^*$ (corresponding to the defining representation and its dual). In addition it has two generating morphisms $VVV$ and $V^*V^*V^*$ (corresponding to the contraction maps $V \ot V \xto{\wedge} \wedge^2 V \cong V^*$) which we draw graphically as: 

$$\begin{tikzpicture}[baseline=-2ex,scale=.5]
\draw[midto,web] (0,0) -- (1,1);
\draw[midto,web] (2,0) -- (1,1);
\draw[midto,web] (1,2) -- (1,1);

\draw[midto,web] (4,1) -- (4,0);
\draw[midto,web] (4,1) -- (5,2);
\draw[midto,web] (4,1) -- (3,2);
\end{tikzpicture}$$

Finally, there are three generating relators:

\begin{equation}
\begin{tikzpicture}[baseline=-.3ex,scale=.5] 
\draw[midto,web] (0,0) circle (1cm);
\end{tikzpicture} \; =\; [3]_q
\end{equation}
\begin{equation}
\begin{tikzpicture}[baseline=-.3ex,scale=.75] 
\draw[midfrom,web] (-1,0) -- (0,0);
\draw[web, midto]  (0,0) to[out=30,in=150] (2,0);
\draw[web, midto]  (0,0) to[out=-30,in=-150] (2,0);
\draw[midfrom,web] (2,0) -- (3,0);
\end{tikzpicture}\; =\; [2]_q
\begin{tikzpicture}[baseline=-.3ex,scale=.75] 
\draw[midto,web] (-1,0) -- (1,0);
\end{tikzpicture}
\end{equation}
\begin{equation}
\begin{tikzpicture}[baseline=-2ex,scale=.75]
\draw[midfrom,web] (-1.68,-1.5) -- (-1,-1);
\draw[midto,web] (-1.68,0.5) -- (-1,0);
\draw[midto,web] (0.68,-1.5) -- (0,-1);
\draw[midfrom,web] (0.68,0.5) -- (0,0);
\draw[midto,web] (-1,-1) -- (0,-1);
\draw[midto,web] (-1,-1) -- (-1,0);
\draw[midto,web] (0,0) -- (0,-1);
\draw[midto,web] (0,0) -- (-1,0);
\end{tikzpicture} = 
\begin{tikzpicture}[baseline=-2ex,scale=.55]
\draw [web, midfrom]  (-1.68,-1.5) to[out=60,in=-60] (-1.68,0.5);
\draw [web, midto]   (-.3,-1.5) to[out=120,in=240] (-.3,0.5);
\end{tikzpicture} +
\begin{tikzpicture}[baseline=-.5ex,scale=.65]
\draw [web, midfrom]  (-1.68,-.75) to[out=30,in=150] (-.3,-.75);
\draw [web, midto]   (-.3,0.5) to[out=210,in=-30] (-1.68,0.5);
\end{tikzpicture}
\end{equation}

Note that these two spiders have only reduction relators, so $E^1$ convergence is just usual confluence.

\subsection{Product Spiders}
If we have a semisimple but not simple algebra, there aren't enough relations between triples of fundamental representations to generate the category. This is because there are also morphisms of the form $A \ot B \natiso B \ot A$ if $A$ and $B$ are irreducibles for different simple components of our algebra, and these do not decompose into simpler morphisms as in case of the $A_3$ spider. Given two presentations of tensor categories with trivalent and/or tetravalent vertices as in chapter 3, $(\sV_1,\sT_1,D_1,R_1)$ and $(\sV_2,\sT_2,D_2,R_2)$, we define the product spider presentation as: $(\sV_1 \cup \sV_2,\sT_1\cup\sT_2\cup (\sV_1 \sV_2 D(\sV_1) D(\sV_2)), D=D_1\op D_2,R_1\cup R_2 \cup R_{prod})$ where $D_1 \op D_2(V)=D_i(V)$ for $V \in \sV_i$, and $R_{prod}$ is the set of diagrams of the form:
\begin{equation}
\label{product:bigon}
\begin{tikzpicture}[baseline=0ex,scale=.5]
\draw[red] (0,0) -- (3,0);
\draw[blue] (0.5,-1) to[out=90,in=180] (1.5,1);
\draw[blue] (1.5,1) to[out=0,in=90] (2.5,-1);
\end{tikzpicture} = 
\begin{tikzpicture}[baseline=0ex,scale=.5]
\draw[red] (0,0) -- (3,0);
\draw[blue] (.5,-1) to[out=30,in=150] (2.5,-1);
\end{tikzpicture}
\end{equation}
\begin{equation}
\label{product:Y}
\begin{tikzpicture}[baseline=2ex,scale=.5]
\draw[red] (0,0) -- (1,1);
\draw[red] (2,0) -- (1,1);
\draw[red] (1,2) -- (1,1);
\draw[blue] (0,.75) to[out=45,in=135] (2,.75);
\end{tikzpicture} = 
\begin{tikzpicture}[baseline=2ex,scale=.5]
\draw[red] (0,0) -- (1,1);
\draw[red] (2,0) -- (1,1);
\draw[red] (1,2) -- (1,1);
\draw[blue] (0,.75) to[out=-45,in=225] (2,.75);
\end{tikzpicture}
\end{equation}
\begin{equation}
\label{product:triangle}
\begin{tikzpicture}[baseline=1.5ex,scale=1] 
\draw[red] (0,0) -- (1,0); 
\draw[red] (1,0) -- (2,0);
\draw[red] (2,0) -- (3,0);

\draw[blue] (0.5,-.5) -- (1,0); 
\draw[blue] (1,0) -- (1.5,.5);
\draw[blue] (1.5,.5) -- (2,1);

\draw[green] (2.5,-.5) -- (2,0); 
\draw[green] (2,0) -- (1.5,.5);
\draw[green] (1.5,.5) -- (1,1);
\end{tikzpicture}
 \;=\;
\begin{tikzpicture}[baseline=-1ex,scale=1,transform shape, rotate=180] 
\draw[red] (0,0) -- (1,0); 
\draw[red] (1,0) -- (2,0);
\draw[red] (2,0) -- (3,0);

\draw[blue] (0.5,-.5) -- (1,0); 
\draw[blue] (1,0) -- (1.5,.5);
\draw[blue] (1.5,.5) -- (2,1);

\draw[green] (2.5,-.5) -- (2,0); 
\draw[green] (2,0) -- (1.5,.5);
\draw[green] (1.5,.5) -- (1,1);
\end{tikzpicture}
\end{equation}
\begin{equation}
\label{product:H}
\begin{tikzpicture}[baseline=-.3ex,scale=.6] 
\draw[red] (-1,-1) -- (0,0);
\draw[red] (-1,1) -- (0,0);
\draw[red] (0,0) -- (2,0);
\draw[red]  (2,0) -- (3,1);
\draw[red] (2,0) -- (3,-1);
\draw[blue] (-1,0) to[out=30,in=150] (3,0); 
\end{tikzpicture} =
\begin{tikzpicture}[baseline=-.3ex,scale=.6] 
\draw[red] (-1,-1) -- (0,0);
\draw[red] (-1,1) -- (0,0);
\draw[red] (0,0) -- (2,0);
\draw[red]  (2,0) -- (3,1);
\draw[red] (2,0) -- (3,-1);
\draw[blue] (-1,0) to[out=-30,in=210] (3,0); 
\end{tikzpicture} 
\end{equation}
\begin{remark}
It's possible to derive relator (\ref{product:H}) by applying relator (\ref{product:Y}) twice, however we will lose $E^1$ convergence of our spectral sequence.
\end{remark}
We will mainly be interested in the algebras $A_1^n \times A_2^m$ because both $A_1$ and $A_2$ have a unique choice of filtration, so we avoid many of the complications when dealing with $B_2$ or $G_2$ factors.

Following in part the terminology of \cite{kuperberg:rank}, we'll refer to (\ref{product:bigon}) as a $U$ relator/move, (\ref{product:Y}) as a $Y$ relator/move, and (\ref{product:H}) as an $H$ relator/move. We'll refer to (\ref{product:triangle}) as a triangle relator/move.

\section{The \texorpdfstring{$A_3$}{} Spider}
\subsection{Introduction}
Let us look at the spider for $A_3$. There are three generating objects: $V=V_{\om_1}$, $W=V_{\om_2}$, $V^*= V_{\om_3}$. As suggested by the notation $D(V)=V^*$ and $D(W)=W$. We will represent $V$ and $V^*$ as single directed edges, and $W$ as an undirected double edge.
$$\begin{tikzpicture}[baseline=-2ex,scale=1]
\draw[midto,web] (0,0) -- (0,1);
\draw[double,web] (1,0) -- (1,1);
\draw[midfrom,web] (2,0) -- (2,1);
\end{tikzpicture}$$

The generating morphisms will correspond to $VVW$ and $WVV$ together with rotations:
$$\begin{tikzpicture}[baseline=-2ex,scale=.5]
\draw[midto,web] (0,0) -- (1,1);
\draw[midto,web] (2,0) -- (1,1);
\draw[double,web] (1,1) -- (1,2);

\draw[double,web] (4,0) -- (4,1);
\draw[midto,web] (4,1) -- (5,2);
\draw[midto,web] (4,1) -- (3,2);
\end{tikzpicture}$$

We also have 8 relators up to duality and rotation corresponding to the 2 kinds of loops, the 2 kinds of bigons, 3 square relators, and 1 hexagon relator up to rotation and arrow reversal:
\begin{equation}
\label{Single Loop}
\begin{tikzpicture}[baseline=-.3ex,scale=.5] 
\draw[midto,web] (0,0) circle (1cm);
\end{tikzpicture} \;=\; \sqbinom{4}{1}_q
\end{equation}
\begin{equation}
\label{Double Loop}
\begin{tikzpicture}[baseline=-.3ex,scale=.5] 
\draw[double,web] (0,0) circle (1cm);
\end{tikzpicture} \;=\; \sqbinom{4}{2}_q
\end{equation}

\begin{equation}
\label{Pure Bigon}
\begin{tikzpicture}[baseline=-.3ex,scale=.75] 
\draw[double,web] (-1,0) -- (0,0);
\draw[web, midto]  (0,0) to[out=30,in=150] (2,0);
\draw[web, midto]  (0,0) to[out=-30,in=-150] (2,0);
\draw[double,web] (2,0) -- (3,0);
\end{tikzpicture}\; =\; [2]_q 
\begin{tikzpicture}[baseline=-.3ex,scale=.75] 
\draw[double,web] (-1,0) -- (1,0);
\end{tikzpicture}
\end{equation}

\begin{equation}
\label{Mixed Bigon}
\begin{tikzpicture}[baseline=-.3ex,scale=.75] 
\draw[midto,web] (-1,0) -- (0,0);
\draw[web, double]  (0,0) to[out=30,in=150] (2,0);
\draw[web, midfrom]  (0,0) to[out=-30,in=-150] (2,0);
\draw[midto,web] (2,0) -- (3,0);
\end{tikzpicture} \; =\;[3]_q 
\begin{tikzpicture}[baseline=-.3ex,scale=.75]
\draw[midto,web] (-1,0) -- (1,0);
\end{tikzpicture}
\end{equation}

\begin{equation}
\label{IH}
\begin{tikzpicture}[baseline=-.3ex,scale=.6] 
\draw[midto,web] (-1,-1) -- (0,0);
\draw[midto,web] (-1,1) -- (0,0);
\draw[double,web] (0,0) -- (2,0);
\draw[midfrom,web]  (2,0) -- (3,1);
\draw[midfrom,web] (2,0) -- (3,-1);
\end{tikzpicture}\; =\;
\begin{tikzpicture}[baseline=3ex,scale=.5] 
\draw[midto,web] (-1,-1) -- (0,0);
\draw[midto,web] (1,-1) -- (0,0);
\draw[double,web] (0,0) -- (0,2);
\draw[midfrom,web]  (0,2) -- (1,3);
\draw[midfrom,web] (0,2) -- (-1,3);
\end{tikzpicture}
\end{equation}

\begin{equation}
\label{Double Square}
\begin{tikzpicture}[baseline=-.3ex,scale=.5] 
\draw[double,web] (-1,-1) -- (1,-1);
\draw[midto,web] (-1,-1) -- (-1,1);
\draw[double,web] (-1,1)-- (1,1);
\draw[midfrom,web] (1,-1)-- (1,1);
\draw[midto,web] (-1,-1) -- (-2,-2);
\draw[midfrom,web] (-1,1) -- (-2,2);
\draw[midfrom,web] (1,-1) -- (2,-2);
\draw[midto,web] (1,1) -- (2,2);
\end{tikzpicture} \;=\; [2]_q
\begin{tikzpicture}[baseline=-4ex,scale=.75] 
\draw[web, midto]  (0,0) to[out=-30,in=-150] (2,0); 
\draw[web, midfrom]  (0,-1.5) to[out=30,in=150] (2,-1.5);
\end{tikzpicture}\;+\; 
\begin{tikzpicture}[baseline=4ex,scale=.75] 
\draw[web, midfrom]  (0,0) to[out=60,in=-60] (0,2); 
\draw[web, midto]  (1.5,0) to[out=120,in=-120] (1.5,2);
\end{tikzpicture}
\end{equation}

\begin{equation}
\label{Single Square}
\begin{tikzpicture}[baseline=-.3ex,scale=.5] 
\draw[double,web] (-1,-1) -- (1,-1);
\draw[midto,web] (-1,-1) -- (-1,1);
\draw[midfrom,web] (-1,1)-- (1,1);
\draw[midfrom,web] (1,-1)-- (1,1);
\draw[midto,web] (-1,-1) -- (-2,-2);
\draw[double,web] (-1,1) -- (-2,2);
\draw[midfrom,web] (1,-1) -- (2,-2);
\draw[double,web] (1,1) -- (2,2);
\end{tikzpicture} \;=\;
\begin{tikzpicture}[baseline=-.3ex,scale=.5] 
\draw[midfrom,web] (-1,-1) -- (0,0);
\draw[double,web] (-1,1) -- (0,0);
\draw[midto,web] (0,0) -- (2,0);
\draw[double,web]  (2,0) -- (3,1);
\draw[midfrom,web] (2,0) -- (3,-1);
\end{tikzpicture}\;+\; 
\begin{tikzpicture}[baseline=-4ex,scale=.75] 
\draw[web, double]  (0,0) to[out=-30,in=-150] (2,0); 
\draw[web, midfrom]  (0,-1.5) to[out=30,in=150] (2,-1.5);
\end{tikzpicture}
\end{equation}

\begin{equation}
\label{SS}
\begin{tikzpicture}[baseline=-.3ex,scale=.5] 
\draw[midto,web] (-1,-1) -- (1,-1);
\draw[midto,web] (-1,-1) -- (-1,1);
\draw[midfrom,web] (-1,1)-- (1,1);
\draw[midfrom,web] (1,-1)-- (1,1);
\draw[double,web] (-1,-1) -- (-2,-2);
\draw[double,web] (-1,1) -- (-2,2);
\draw[double,web] (1,-1) -- (2,-2);
\draw[double,web] (1,1) -- (2,2);
\end{tikzpicture} \;=\;
\begin{tikzpicture}[baseline=-.3ex,scale=.5]
\draw[midfrom,web] (-1,-1) -- (1,-1);
\draw[midfrom,web] (-1,-1) -- (-1,1);
\draw[midto,web] (-1,1)-- (1,1);
\draw[midto,web] (1,-1)-- (1,1);
\draw[double,web] (-1,-1) -- (-2,-2);
\draw[double,web] (-1,1) -- (-2,2);
\draw[double,web] (1,-1) -- (2,-2);
\draw[double,web] (1,1) -- (2,2);
\end{tikzpicture}
\end{equation}

\begin{equation}
\label{Kekule}
\begin{tikzpicture}[baseline=-.3ex,scale=.5] 
\draw[midto,web] (-1,-1) -- (-1.87,0); 
\draw[double,web] (-1.87,0)-- (-1,1);
\draw[midto,web] (-1,1)-- (.6,1);
\draw[double,web] (.6,1)--  (1.47 , 0);
\draw[midto,web]  (1.47 , 0)--  (.6 , -1);
\draw[double,web]  (.6 , -1)--  (-1,-1);

\draw[midto,web] (-1,-1) -- (-1.87,-2); 
\draw[midfrom,web] (-1.87,0)-- (-3.47,0);
\draw[midto,web] (-1,1)-- (-1.87,2);
\draw[midfrom,web] (.6,1)--  (1.47 , 2);
\draw[midto,web]  (1.47 , 0)--  (3.07 , 0);
\draw[midfrom,web]  (.6 , -1)--  (1.6,-1.87);
\end{tikzpicture} \; + \;
\begin{tikzpicture}[baseline=-.3ex,scale=.5]  
\draw[midfrom,web] (-0.87,-2) to[out=90,in=0] (-2.47,0);
\draw[midfrom,web] (-1.87,1.5) to[out=-30,in=210] (1.47 , 1.5);
\draw[midfrom,web]  (2.07 , 0) to[out=180,in=90]  (0.6,-1.87);
\end{tikzpicture}
 \;=\;
\begin{tikzpicture}[baseline=-.3ex,scale=.5] 
\draw[double,web] (-1,-1) -- (-1.87,0); 
\draw[midfrom,web] (-1.87,0)-- (-1,1);
\draw[double,web] (-1,1)-- (.6,1);
\draw[midfrom,web] (.6,1)--  (1.47 , 0);
\draw[double,web]  (1.47 , 0)--  (.6 , -1);
\draw[midfrom,web]  (.6 , -1)--  (-1,-1);

\draw[midto,web] (-1,-1) -- (-1.87,-2); 
\draw[midfrom,web] (-1.87,0)-- (-3.47,0);
\draw[midto,web] (-1,1)-- (-1.87,2);
\draw[midfrom,web] (.6,1)--  (1.47 , 2);
\draw[midto,web]  (1.47 , 0)--  (3.07 , 0);
\draw[midfrom,web]  (.6 , -1)--  (1.6,-1.87);
\end{tikzpicture}
\; + \;
\begin{tikzpicture}[baseline=-.3ex,scale=.5,transform shape, rotate=180]  
\draw[midto,web] (-0.87,-2) to[out=90,in=0] (-2.47,0);
\draw[midto,web] (-1.87,1.5) to[out=-30,in=210] (1.47 , 1.5);
\draw[midto,web]  (2.07 , 0) to[out=180,in=90]  (0.6,-1.87);
\end{tikzpicture}
\end{equation}
Where recall that $[n]_q=q^{n} +q^{n-2} +...+q^{2-n}+ q^{-n}$ are the quantum integers and $\sqbinom{n}{k}_q = \frac{[n]_q...[n+1-k]_q}{[k]_q...[1]_q}$ are the quantum binomial coefficients.
\begin{remark}
If you prefer minimal sets of relators, you could get by without the hexagon relator (\ref{Kekule}), two of the square relators (\ref{Double Square}) and (\ref{SS}), and one of the bigon relators (\ref{Pure Bigon}) but this will come at the cost of having only $E^3$ convergence
\end{remark}
However, this is not the presentation that will be easiest to work with. Instead, given a web $w$, we will define a new graph $S(w)$ by contracting all double edges that aren't on the boundary or part of a loop, so for example, a hexagon relator face will look like:
\begin{equation*}
w=\begin{tikzpicture}[baseline=-.3ex,scale=.5] 
\draw[double,web] (-1,-1) -- (-1.87,0); 
\draw[midfrom,web] (-1.87,0)-- (-1,1);
\draw[double,web] (-1,1)-- (.6,1);
\draw[midfrom,web] (.6,1)--  (1.47 , 0);
\draw[double,web]  (1.47 , 0)--  (.6 , -1);
\draw[midfrom,web]  (.6 , -1)--  (-1,-1);

\draw[midto,web] (-1,-1) -- (-1.87,-2); 
\draw[midfrom,web] (-1.87,0)-- (-3.47,0);
\draw[midto,web] (-1,1)-- (-1.87,2);
\draw[midfrom,web] (.6,1)--  (1.47 , 2);
\draw[midto,web]  (1.47 , 0)--  (3.07 , 0);
\draw[midfrom,web]  (.6 , -1)--  (1.6,-1.87);
\end{tikzpicture} \longrightarrow
S(w)=\begin{tikzpicture}[baseline=1.5ex,scale=1] 
\draw[midto,web] (0,0) -- (1,0); 
\draw[midto,web] (1,0) -- (2,0);
\draw[midto,web] (2,0) -- (3,0);

\draw[midfrom,web] (0.5,-.5) -- (1,0); 
\draw[midfrom,web] (1,0) -- (1.5,.5);
\draw[midfrom,web] (1.5,.5) -- (2,1);

\draw[midto,web] (2.5,-.5) -- (2,0); 
\draw[midto,web] (2,0) -- (1.5,.5);
\draw[midto,web] (1.5,.5) -- (1,1);
\end{tikzpicture}
\end{equation*}

Now, given $S(w)$, it is possible to reconstruct $w$ by using the directedness of the single edges, with one exception: the two faces of the $I=H$ relator (\ref{IH}) and its dual relator give the same contracted graph:
\begin{equation*}
\begin{tikzpicture}[baseline=-.3ex,scale=.6] 
\draw[midto,web] (-1,-1) -- (0,0);
\draw[midto,web] (-1,1) -- (0,0);
\draw[double,web] (0,0) -- (2,0);
\draw[midfrom,web]  (2,0) -- (3,1);
\draw[midfrom,web] (2,0) -- (3,-1);
\end{tikzpicture} \longrightarrow
\begin{tikzpicture}[baseline=-.3ex,scale=.6] 
\draw[midto,web] (-1,-1) -- (0,0);
\draw[midfrom,web] (0,0) -- (1,1);
\draw[midto,web] (1,-1) -- (0,0);
\draw[midfrom,web] (0,0) -- (-1,1);
\end{tikzpicture} \longleftarrow
\begin{tikzpicture}[baseline=3ex,scale=.5] 
\draw[midto,web] (-1,-1) -- (0,0);
\draw[midto,web] (1,-1) -- (0,0);
\draw[double,web] (0,0) -- (0,2);
\draw[midfrom,web]  (0,2) -- (1,3);
\draw[midfrom,web] (0,2) -- (-1,3);
\end{tikzpicture}
\end{equation*}

In some sense this a good thing, since those two webs are equal to each other in $Sp$. However we need to check that consistent reducibility of all canceling sequences on $S(w)$ will also imply consistent reducibility of all canceling sequences of $w$. We extend the map $S$ by linearity to any element $x \in FSp$. The relators we'll put on $S(w)$ will just be any relator of the form $S(w) \to w' \xto{r} r(w') \to S(r(w'))$ where $w'$ is any web which is equivalent to $w$ under the $I=H$ relators (\ref{IH}). We don't want to change the grading, so we give $S(w)$ the grading induced from $w$, so tetravalent vertices count for twice as much as (any remaining boundary) trivalent vertices. We need to be a little bit careful about how we define adjacency in the contracted web. We will say two faces $f$ and $f'$ of a contracted web $S(w)$ are \tbf{adjacent} if they are separated by an edge in every $w'$ such that $S(w')=S(w)$. In particular two faces separated by an $I=H$-relator-type tetravalent vertex won't be adjacent. This definition is made so that two relators commute if and only if they are not adjacent. 

Ultimately, we want to prove that if this presentation has $E^1$ convergence, then the original also has $E^1$ convergence. The following lemma will allow us to convert a sequence of relators in one presentation, to a sequence of relators in the other (this just takes the form of omitting $I=H$ relators, or adding needed ones back in).

\begin{lemma}
\label{Sequence equivalence}
If $s$ is a sequence of relators on $w$, there exists a corresponding sequence of relators $s'$ on $S(w)$ such that $S(\sum s) = \sum s'$, and conversely, for every sequence $s'$ on $S(w)$ there exists a sequence $s$ on $w'$ (for some $w'$ equivalent to $w$ under I=H relators) such that $S(\sum s) = \sum s'$
\end{lemma}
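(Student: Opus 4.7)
The plan is to handle the two directions separately, with the key observation that the I=H relator (\ref{IH}) is exactly the relation collapsed by the contraction map $S$: both sides of (\ref{IH}) differ only in how a local double edge is oriented, and since $S$ contracts all non-boundary non-loop double edges, both sides of an I=H relator produce the same contracted graph. In particular $S$ applied to the image of any I=H relator is zero. All other local relators either involve no non-boundary double edges at all (the single loop, mixed bigon, single square, and hexagon relators), or they involve a double edge inside an unambiguously recoverable face (the double loop, pure bigon, double square, and "SS" relators); in either case the relator descends to a well-defined local replacement rule on contracted graphs.

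For the forward direction, given $s=(s_1,\dots,s_n)$ with underlying chain $w=w_0,\dots,w_n$, I would build $s'$ by walking down the chain. Whenever $s_i$ is an I=H relator, I drop it from the sequence; because $S(w_i)=S(w_{i-1})$ in that case, the contribution to $S(\sum s)$ is zero, so the sum is preserved. Whenever $s_i$ is any other relator, it is a relator on the web $w_{i-1}$, so taking $w'=w_{i-1}$ and $r=s_i$ in the definition of a relator on $S(w_{i-1})$ gives an $s'_i$ with leading term $S(w_i)-S(w_{i-1})$ and lower-order part $S(L)$ applied termwise to any lower-order $L$ of $s_i$. The chaining condition for a sequence is preserved because the leading terms telescope under $S$, and by construction $\sum s'=S(\sum s)$.

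For the reverse direction, each $s'_j$ in a sequence $s'=(s'_1,\dots,s'_n)$ on $S(w)=v_0,\dots,v_n$ is, by definition, specified by choosing an uncontraction $w'_{j-1}$ of $v_{j-1}$ and a relator $r_j$ on $w'_{j-1}$. I would build the lifted sequence $s$ by starting at $w$, inserting a block of I=H relators that carries $w$ to $w'_0$, then applying $r_1$ to reach $r_1(w'_0)$, then inserting another block of I=H relators to move from $r_1(w'_0)$ to $w'_1$ (possible because both contract to $v_1$), applying $r_2$, and so on. The resulting sequence lies on some $w^\dagger$ with $S(w^\dagger)=S(w)$, i.e.\ on a web equivalent to $w$ under I=H. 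The I=H insertions contribute $0$ to $S(\sum s)$, so $S(\sum s)=\sum_j S(r_j)=\sum s'$.

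The one genuinely substantive step, and the main obstacle to making this rigorous, is the claim that any two webs $w$ and $w''$ with $S(w)=S(w'')$ are connected by a sequence of I=H relators. I would prove this by noting that $S$ loses information only at the tetravalent vertices of $S(w)$ coming from contracted non-boundary double edges: at each such vertex, the uncontraction is determined by a binary choice (an "$I$" orientation or an "$H$" orientation), and flipping a single choice is precisely one application of the I=H relator (\ref{IH}). Since $w$ and $w''$ can differ only in finitely many such choices, a finite sequence of I=H moves interpolates between them, and each intermediate step is a legitimate web because the I=H move is local. This local-choice analysis is the only place requiring care, and it gives exactly the I=H blocks needed to glue the lifted sequence together.
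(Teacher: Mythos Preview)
Your proposal is correct and follows essentially the same approach as the paper: in both directions you drop (respectively insert) the I=H relators and use that $S$ annihilates them, building $s'$ by applying $S$ to the non-I=H terms and building $s$ by interleaving I=H bridges between chosen uncontractions. The paper treats the fact that two webs with $S(w)=S(w'')$ are connected by I=H moves as implicit in its definition of ``equivalent under I=H relators,'' whereas you spell out the local binary-choice argument; this is a minor elaboration rather than a different route.
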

\begin{proof}
Let $s=(s_i)_{i=1}^n$ be a (not necessarily canceling) sequence of relators for $S(w)$, we claim that we can recursively construct a sequence of relators $s'=(s'_j)_{j=1}^m$ on some $w'$ which is equivalent to $w$ under the action of $I=H$ relators and such that $\sum s = S(\sum s')$. First, we know $s_1$ corresponds to a relator $r_1$ which can be applied to $w'$, so we define the partial sequence $s^1=(r_1)$. $S(\sum s^1) = s_1$ by definition.  Now, assume we've defined a sequence $s^k=(s^k_1,...,s^k_{n_k})$ such that $S(\sum s^k) = s_1+...+s_k$. $s_{k+1}$ corresponds to a relator $r_{k+1}$ which can be applied to $w_k'=\rho_{n_{k+1}} ... \rho_1 (s^k(w))$ for some I=H relators $\rho_i$, and define $s^{k+1}$ as the concatenation of $s^k$ and $(\rho_1,...,\rho_{n_{k+1}},r_{k+1})$. Since $S(\rho_1+...\rho_{n_{k+1}})=S(w'_k-w_k)=0$ by construction, and $S(r_{k+1})=s_{k+1}$ since they only differ by $I=H$ relators, this gives us $S(\sum s^{k+1})=S(r_{k+1})+S(\sum s^k)=s_{k+1}+s_k+...+s_1$. Therefore $s^n$ fits our requirements for $s'$

Let $s=(s_i)$ be a (not necessarily canceling) sequence of relators for $w$, we claim that we can construct a sequence of relators $s'=(s'_i)$ on $S(w)$ such that $S(\sum s) = \sum s'$. This direction is even more straightforward: we first take the subsequence $(s_{i_k})$ of relators which aren't I=H relators, then we claim that $s'=(S(s_{i_k}))$ is a sequence of relators and satisfies our criteria. Indeed, $s_{i_{k-1}}....s_2s_1(w)$ and $s_{i_{k}-1}....s_2s_1(w)$ differ only by the $I=H$ relators for every $k$, so $S(s_{i_{k-1}}....s_1(w))=S(s_{i_{k}-1}....s_1(w))$, and hence $S(s_{i_k})$ is a relator on $S(s_{i_{k-1}}....s_1(w))$, which means $(S(s_{i_k}))$ is actually a sequence of relators. Moreover, $S(r)=0$ for every I=H relator, so omitting the I=H relators doesn't change the sums after applying $S$.
\end{proof}

Next we use this lemma to prove that it is enough to prove $E^1$ convergence for $S(w)$ 
\begin{prop}
\label{excise}
If every canceling sequence of $S(w)$ is consistently reducible, then every canceling sequence of $w$ is consistently reducible
\end{prop}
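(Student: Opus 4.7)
The plan is to transport the canceling sequence from $w$ down to $S(w)$, apply the hypothesis there, and then lift the resulting relation back to the original spider, correcting the lift by a bounded linear combination of $I=H$ relators (whose leading terms span exactly the kernel of $S$ on the top graded piece).

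More concretely, let $s=(s_i)$ be a canceling sequence on $w$. Using the second direction of \cref{Sequence equivalence}, take the subsequence of relators in $s$ which are not $I=H$ relators and apply $S$ to obtain a sequence $s'$ on $S(w)$ with $\sum s' = S(\sum s)$. Since $S$ preserves the vertex grading on non-$I=H$ relators, we have $|s'|\leq |s|$, and the web chain $(w_0,\dots,w_n)$ defining $s$ descends to the contracted chain $(S(w_0),\dots,S(w_n))$ that witnesses $s'$ being canceling (horizontal if $s$ was horizontal, a reduction sequence if $s$ was; the $I=H$ steps collapse because $S(w_i)=S(w_{i-1})$ across an $I=H$ move). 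The trivial edge case where all $s_i$ are $I=H$ relators forces $\sum s = 0$, so $s$ is consistently reducible by $r=0$, and we may assume $s'$ is a nonempty canceling sequence with $|s'|=|s|$.

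By hypothesis $s'$ is consistently reducible, so there is a relation $r' \in R_{V_S}$ with $|r'|<|s'|=|s|$ and $d(r')=d(\sum s')$. Decompose $r' = \sum a_j [r'_j]$ where each contracted relator $r'_j$ is, by the definition of the contracted presentation, of the form $S(t_j)$ for some original relator $t_j$ with $|t_j|=|r'_j|$. Set $t = \sum a_j [t_j] \in R_V$; then $|t|\leq |r'|<|s|$, and applying $S$ to $d(t)=\sum a_j t_j$ gives $S(d(t))=d(r')=d(\sum s') = S(d(\sum s))$. Hence $d(t)-d(\sum s) \in \ker(S)$, and since $\ker(S)$ (as a subspace of $FSp$) is spanned by differences $w_k-w'_k$ arising from $I=H$-flips of a single face, and since $I=H$ relators have no lower-order terms, each such difference is literally the image under $d$ of an $I=H$ relator $[\rho_k]$. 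Write $d(t)-d(\sum s)=\sum c_k \rho_k$ and set $r = t - \sum c_k [\rho_k] \in R_V$; then $d(r) = d(\sum s)$ as required. For the grading bound, $|t|<|s|$ by construction, and each $|\rho_k|\leq |d(t)-d(\sum s)|<|s|$ because both $d(t)$ and $d(\sum s)$ lie in filtration strictly below $|s|$, so $|r|<|s|$ and $s$ is consistently reducible.

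The main point requiring care is the identification of $\ker(S)$ restricted to each filtration piece, and the observation that this kernel is generated precisely by the leading terms of $I=H$ relators with no lower-order debris. This is what lets the correction step $t \mapsto t - \sum c_k [\rho_k]$ land in $R_V$ without inflating the filtration level, and it is essentially the reason we kept $I=H$ among the relators rather than quotienting them out silently. Everything else is bookkeeping of gradings under $S$ and an appeal to \cref{Sequence equivalence} in one direction only; the converse direction of that lemma (lifting sequences) is not needed here because the lift of $r'$ is performed relator-by-relator rather than sequence-by-sequence.
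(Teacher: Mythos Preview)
Your proof is correct and follows essentially the same approach as the paper: push $s$ down to $S(w)$ via \cref{Sequence equivalence}, apply the hypothesis to obtain $r'$, lift $r'$ relator-by-relator, and then correct the discrepancy by $I=H$ relators (whose images span $\ker S$), with the filtration bound on the correction coming from the graded structure of $\ker S$---the paper phrases this last step as ``any over-large $I=H$ terms in the decomposition must cancel pairwise and can be omitted,'' which is the same observation. One minor slip: in your edge case you should write $d(\sum s)=0$ rather than $\sum s=0$ (the formal sum in $RSp$ need not vanish), but the conclusion is unaffected.
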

\begin{proof}
Let $s$ be a canceling sequence on $w$. By part 1 of lemma \ref{Sequence equivalence}, there exists a corresponding canceling sequence $s'$ on $S(w)$. Now by hypothesis, there exists a relation $r'$ such that $|r'|<|s'|$ and $d(r') = d(\sum s')$. Now take any lift $r$ of $r'$ to a relation on $w$, such that $|r|=|r'|$. Then $d(r)=d(r')=d(\sum s)$ up to applications of I=H relators (since the kernel of $S$ is generated by I=H relators). In other words, $r-\sum s = \sum_i a_i r_i$ for some $I=H$ relators $r_i$. Now, since $I=H$ relators have no lower order terms, if $|r_k|>|\sum_i a_i r_i|$ for some $k$, then all the relators $r_i$ with $|r_i|=|r_k|$ must add to zero in the partial sum. Therefore, we can assume omit any such terms, and assume $|r_i| \leq |r-\sum s| < |s|$ and so $r-\sum_i a_i r_i$ is a relation that makes $s$ consistently reducible.
\end{proof}

We want to make one additional simplification. The double edges at the boundary will force us to list different relators which are the same in the original spider, but whose boundary edges contain different quantities of double edges. To avoid this, we will embed any web with double edges on the boundary into a web with only single edges on the boundary, which we construct as follows: if $w$ is an (uncontracted) web, let $w'$ be any web containing $w$ obtained by adding a trivalent vertex to every boundary double edge. Similar to before, we need to prove that it suffices to show $E^1$ convergence in $S(w')$:

\begin{prop}
\label{extend strand}
If every canceling sequence of the contracted web $S(w')$ (constructed above) is consistently reducible, then every canceling sequence of $S(w)$ is consistently reducible
\end{prop}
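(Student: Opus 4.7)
The plan is to reduce consistent reducibility on $S(w)$ to the hypothesis on $S(w')$ by lifting canceling sequences from $S(w)$ to $S(w')$, closely parallel to the proof of Proposition~\ref{excise}. The map $w \mapsto w'$ attaches, on each boundary double edge of $w$, a trivalent vertex whose two single-edge legs become new boundary edges. Under the contraction $S$, each such added trivalent vertex either merges with the adjacent boundary-incident interior vertex of $w$ into a boundary tetravalent vertex of $S(w')$, or (in the degenerate case where a boundary double edge of $w$ runs directly from boundary to boundary) remains a trivalent vertex with a surviving boundary double edge. In either case the difference between $S(w)$ and $S(w')$ near each boundary double edge is purely local and contributes a constant grading offset $c$ equal to the number of boundary double edges of $w$; since every web in a canceling sequence on $S(w)$ shares the same boundary as $w$, the offset $c$ is the same for every web in the sequence.

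First I would define a lift on sequences. Each web $\wt{w}$ equivalent to $w$ under $\lan R \ran$ extends canonically to a web $\wt{w}'$ equivalent to $w'$ by attaching the same boundary collar, and each local relator applied in the interior of $\wt{w}$ lifts to the analogous local relator on $\wt{w}'$ because the interiors are identical. Consequently any canceling sequence $s$ on $S(w)$ lifts to a canceling sequence $s'$ on $S(w')$ with $|s'| = |s| + c$ and with $d(\sum s')$ equal to the boundary-augmented image of $d(\sum s)$.

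Next, by the hypothesis $s'$ is consistently reducible on $S(w')$, so there exists a relation $r' \in RSp$ with $|r'| < |s'|$ and $d(r') = d(\sum s')$. Decompose $r'$ into a linear combination of relators and classify each summand as either an \emph{interior} relator whose support is disjoint from the added collar, or a \emph{collar} relator whose support touches an added collar vertex. Interior relators descend bijectively, and with grading shifted by the constant $c$, to relators on $S(w)$. The final and most delicate step is to eliminate the collar relators: an added collar vertex has exactly one interior neighbor and two boundary single edges, so scanning the local relators \ref{Single Loop}--\ref{Kekule} leaves only a short finite list of local relators that can involve a collar vertex. Using \cref{commute out} and \cref{horiz comm}, every collar relator in $r'$ can be commuted to one end of the decomposition of $r'$, where it either cancels pairwise with a partner collar relator or acts purely on the boundary augmentation. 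Stripping these boundary-only pieces yields a relation $r \in RSp$ with $|r| < |s|$ and $d(r) = d(\sum s)$, which is the required consistent reduction.

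The main obstacle is the collar-elimination step: it requires a finite but careful case analysis of which local relators can involve an added collar vertex, and verification that their contributions to $r'$ can always be commuted off and absorbed into the boundary, very much in the style of the $I{=}H$-elimination used in the proof of Proposition~\ref{excise}.
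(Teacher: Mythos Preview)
Your overall architecture (lift $s$ to $s'$, apply the hypothesis to get $r'$, descend $r'$ to $r$) matches the paper exactly. The divergence is in the descent step, where you introduce a dichotomy between ``interior'' and ``collar'' relators and propose to eliminate the latter by a case analysis and commutation argument. This step is both unnecessary and, as written, technically flawed.

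The paper's point is that there are no collar relators to eliminate. Every relator in the contracted spider corresponds to an \emph{interior face}, and the new single edges attached to the added boundary vertices are adjacent only to \emph{boundary faces}. Hence no relator face contains a new single edge on its boundary. Even when the collar tetravalent vertex sits at a \emph{corner} of a relator face, the two face-edges at that corner are the original interior edges of $S(w)$, and the new single edges are merely external legs; in the uncontracted picture this relator is already a relator on $w\subset w'$, so it descends to $S(w)$ directly, with grading shifted by exactly $k$. Your worry that ``a short finite list of local relators can involve a collar vertex'' is therefore misplaced: involvement as a corner causes no obstruction to descent. (There is a residual subtlety the paper glosses over, namely that $r'$ might a priori involve relators on webs without the collar structure; but since relators never touch the new single edges, the presentation splits as a direct sum over collar-structure and non-collar-structure webs, and one may project $r'$ to the collar-structure summand without changing $d(r')$ or increasing $|r'|$.)

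Your proposed elimination step also has a type mismatch: Lemmas~\ref{commute out} and~\ref{horiz comm} manipulate canceling \emph{sequences}, but $r'$ is an arbitrary element of $RSp'$, not a sequence. And you cite the uncontracted relators \eqref{Single Loop}--\eqref{Kekule} when the argument lives entirely in the contracted spider, where the relevant relators are \eqref{Pure Bigon C}--\eqref{Kekule C}. Drop the collar-elimination step, and your proof becomes the paper's.
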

\begin{proof}
The key point is that in the contracted web $S(w')$ we haven't introduced any new relators (although we might have created some I=H relators in the uncontracted web $w'$). Let $s=(s_i)$ be a canceling sequence on $w$, then since $w\sbs w'$, there is a corresponding canceling sequence $s'=(s_i')$ on $w'$. It's consistently reducible by hypothesis, so there is an $r' \in RSp'$ such that $d(r') = d(\sum s')$ but $|r'| < |s'|$. Now, every relator in the contracted spider corresponds to an interior face, but the single edges added to $w'$ are adjacent only to boundary faces, and hence every relator in the contracted spider is disjoint from the new single edges. Therefore, it corresponds to a relation $r$ on $S(w)$ with the properties $d(r) = d(\sum s)$ and $|r| = |r'|-k < |s'| -k = |s|$ where $k$ is the number of boundary double edges in $w$.
\end{proof}

From this proposition it is enough to prove consistent reducibility on webs with only single boundary edges. Hence, we have the following derived relators (together with the two loop relators):
\begin{equation}
\label{Pure Bigon C}
\begin{tikzpicture}[baseline=-.3ex,scale=.75] 
\draw[web] (-1,.5) -- (0,0);
\draw[web] (-1,-.5) -- (0,0);
\draw[web, midto]  (0,0) to[out=30,in=150] (2,0);
\draw[web, midto]  (0,0) to[out=-30,in=-150] (2,0);
\draw[web] (2,0) -- (3,.5);
\draw[web] (2,0) -- (3,-.5);
\end{tikzpicture}\; =\; [2]_q 
\begin{tikzpicture}[baseline=-.3ex,scale=.6] 
\draw[web] (-1,-1) -- (0,0);
\draw[web] (0,0) -- (1,1);
\draw[web] (1,-1) -- (0,0);
\draw[web] (0,0) -- (-1,1);
\end{tikzpicture}
\end{equation}
\begin{equation}
\label{Mixed Bigon C}
\begin{tikzpicture}[baseline=-.3ex,scale=.5] 
\draw[midfrom,web] (0,0) circle (1cm);
\draw[midto,web] (-2,1) -- (0,1);
\draw[midto,web] (0,1) -- (2,1);
\end{tikzpicture} \; =\;[3]_q 
\begin{tikzpicture}[baseline=-.3ex,scale=.75]
\draw[midto,web] (-1,0) -- (1,0);
\end{tikzpicture}
\end{equation}
\begin{equation}
\label{Double Square C}
\begin{tikzpicture}[baseline=-.3ex,scale=.75] 
\draw[midto,web] (-1,.5) -- (0,0);
\draw[midfrom,web] (-1,-.5) -- (0,0);

\draw[web, midfrom]  (0,0) to[out=30,in=150] (2,0);
\draw[web, midto]  (0,0) to[out=-30,in=-150] (2,0);

\draw[midto,web] (2,0) -- (3,.5);
\draw[midfrom,web] (2,0) -- (3,-.5);
\end{tikzpicture}\;=\; [2]_q
\begin{tikzpicture}[baseline=4ex,scale=.75] 
\draw[web, midfrom]  (0,0) to[out=60,in=-60] (0,2); 
\draw[web, midto]  (1.5,0) to[out=120,in=-120] (1.5,2);
\end{tikzpicture}\;+\; 
\begin{tikzpicture}[baseline=-4ex,scale=.75] 
\draw[web, midto]  (0,0) to[out=-30,in=-150] (2,0); 
\draw[web, midfrom]  (0,-1.5) to[out=30,in=150] (2,-1.5);
\end{tikzpicture}
\end{equation}

\begin{equation}
\label{Single Square C}
\begin{tikzpicture}[baseline=-1ex,scale=1,transform shape, rotate=180] 
\draw[web] (0,0) -- (1,0); 
\draw[midto,web] (1,0) -- (2,0);
\draw[web] (2,0) -- (3,0);

\draw[web] (0.5,-.5) -- (1,0); 
\draw[midto,web] (1,0) -- (1.5,.5);
\draw[midto,web] (1.5,.5) -- (2,1);

\draw[web] (2.5,-.5) -- (2,0); 
\draw[midfrom,web] (2,0) -- (1.5,.5);
\draw[midfrom,web] (1.5,.5) -- (1,1);
\end{tikzpicture}\;=\;
\begin{tikzpicture}[baseline=-.3ex,scale=.5] 
\draw[midfrom,web] (0,-1) -- (0,0);
\draw[web] (0,0) -- (-1,0);
\draw[web] (0,0) -- (0,1);
\draw[midto,web] (0,0) -- (2,0);
\draw[web] (2,0) -- (3,0);
\draw[web] (2,0) -- (2,1);
\draw[midfrom,web] (2,0) -- (2,-1);
\end{tikzpicture}\;+\; 
\begin{tikzpicture}[baseline=-2ex,scale=.75] 
\draw[web] (-1,-.5) -- (0,0);
\draw[web] (0,0) -- (1,.5);
\draw[web] (1,-.5) -- (0,0);
\draw[web] (0,0) -- (-1,.5);
\draw[web, midfrom]  (-1,-1) to[out=30,in=150] (1,-1);
\end{tikzpicture}
\end{equation}
\begin{equation}
\label{SS C}
\begin{tikzpicture}[baseline=-.3ex,scale=.5] 
\draw[midto,web] (-1,-1) -- (1,-1);
\draw[midto,web] (-1,-1) -- (-1,1);
\draw[midfrom,web] (-1,1)-- (1,1);
\draw[midfrom,web] (1,-1)-- (1,1);
\draw[web] (-1,-1) -- (-1,-2);
\draw[web] (-1,-1) -- (-2,-1);
\draw[web] (-1,1)-- (-2,1);
\draw[web] (-1,1)-- (-1,2);
\draw[web] (1,-1) -- (1,-2);
\draw[web] (1,-1) -- (2,-1);
\draw[web] (1,1)-- (2,1);
\draw[web] (1,1)-- (1,2);
\end{tikzpicture} \;=\;
\begin{tikzpicture}[baseline=-.3ex,scale=.5]
\draw[midfrom,web] (-1,-1) -- (1,-1);
\draw[midfrom,web] (-1,-1) -- (-1,1);
\draw[midto,web] (-1,1)-- (1,1);
\draw[midto,web] (1,-1)-- (1,1);
\draw[web] (-1,-1) -- (-1,-2);
\draw[web] (-1,-1) -- (-2,-1);
\draw[web] (-1,1)-- (-2,1);
\draw[web] (-1,1)-- (-1,2);
\draw[web] (1,-1) -- (1,-2);
\draw[web] (1,-1) -- (2,-1);
\draw[web] (1,1)-- (2,1);
\draw[web] (1,1)-- (1,2);
\end{tikzpicture}
\end{equation}

\begin{equation}
\label{Kekule C}
\begin{tikzpicture}[baseline=1.5ex,scale=1] 
\draw[midto,web] (0,0) -- (1,0); 
\draw[midto,web] (1,0) -- (2,0);
\draw[midto,web] (2,0) -- (3,0);

\draw[midfrom,web] (0.5,-.5) -- (1,0); 
\draw[midfrom,web] (1,0) -- (1.5,.5);
\draw[midfrom,web] (1.5,.5) -- (2,1);

\draw[midto,web] (2.5,-.5) -- (2,0); 
\draw[midto,web] (2,0) -- (1.5,.5);
\draw[midto,web] (1.5,.5) -- (1,1);
\end{tikzpicture} \; + \;
\begin{tikzpicture}[baseline=-.3ex,scale=.5,transform shape, rotate=180]  
\draw[midto,web] (-0.87,-2) to[out=90,in=0] (-2.47,0);
\draw[midto,web] (-1.87,1.5) to[out=-30,in=210] (1.47 , 1.5);
\draw[midto,web]  (2.07 , 0) to[out=180,in=90]  (0.6,-1.87);
\end{tikzpicture}
 \;=\;
\begin{tikzpicture}[baseline=-1ex,scale=1,transform shape, rotate=180] 
\draw[midfrom,web] (0,0) -- (1,0); 
\draw[midfrom,web] (1,0) -- (2,0);
\draw[midfrom,web] (2,0) -- (3,0);

\draw[midto,web] (0.5,-.5) -- (1,0); 
\draw[midto,web] (1,0) -- (1.5,.5);
\draw[midto,web] (1.5,.5) -- (2,1);

\draw[midfrom,web] (2.5,-.5) -- (2,0); 
\draw[midfrom,web] (2,0) -- (1.5,.5);
\draw[midfrom,web] (1.5,.5) -- (1,1);
\end{tikzpicture}
\; + \;
\begin{tikzpicture}[baseline=-.3ex,scale=.5]  
\draw[midfrom,web] (-0.87,-2) to[out=90,in=0] (-2.47,0);
\draw[midfrom,web] (-1.87,1.5) to[out=-30,in=210] (1.47 , 1.5);
\draw[midfrom,web]  (2.07 , 0) to[out=180,in=90]  (0.6,-1.87);
\end{tikzpicture}
\end{equation}

Where the unoriented edges correspond to more than one choices of orientation. In particular, notice that up to lower order terms, relators (\ref{Pure Bigon C}), (\ref{Mixed Bigon C}), (\ref{Double Square C}), (\ref{Single Square C}) and (\ref{Kekule C}) can all be interpreted as homotopies of paths, while (\ref{SS C}) only changes orientations. This will be exploited in the following section to construct a global criterion for reducibility.

\subsection{Strands and a Global Criterion for Reducibility}
Our goal in this section is to develop a tool which will interpret the horizontal relators for $A_3$ in terms of homotopies of paths, and give a global criterion for when a web is reducible. From now on, we will abuse notation and refer to $S(w)$ as $w$, as defined in the previous section. If it's not clear whether we're discussing the contracted or uncontracted web we will explicitly state which we are considering.

\begin{defn}
Given a spider presentation with tetravalent vertices, a \tbf{(directed) strand} is a directed path in the web such that any two edges incident at a tetravalent vertex in the path are opposite each other (as depicted below as the dashed line) while an \tbf{undirected strand} is defined the same but without the requirement that the path is directed
\end{defn}
\begin{equation}
\begin{tikzpicture}[baseline=-.3ex,scale=.6] 
\draw[midto,web] (-1,-1) -- (0,0);
\draw[midto,web] (0,0) -- (1,1);
\draw[midto,web] (1,-1) -- (0,0);
\draw[midto,web] (0,0) -- (-1,1);
\draw[dashed, blue] (-1.1,-1) -- (0.9,1);
\end{tikzpicture}
\end{equation}
\begin{remark}
Beyond the suggestiveness of the presentation, strands also have a geometric motivation as (weak) geodesic strips in the building. This will be discussed in length in section 11.
\end{remark}

Notice that every single edge in the $A_3$ is part of a unique strand. We want to argue that relations are ultimately about homotopies of strands. However, since the square relator (\ref{SS}) can't be interpreted this was, we need to deal with that case first. We will prove that the horizontal square relator is unnecessary for creating a reducible face, and hence we will be able to construct a criterion for reducibility using strands alone. First we will need the following two lemmas about the $A_3$ spider which together tell us that we get a reducible face when two different types of relators collide, and that that is the only way a new reducible face will appear.

\begin{lemma}[Purity]
\label{collision}
If a triangle relator face (\ref{Kekule C}) and a horizontal square relator face are adjacent (in the sense of section $6$, ie when corresponding relators don't commute), then applying either relator will result in a reducible face.
\end{lemma}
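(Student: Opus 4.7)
The plan is a finite case analysis organized around the strand interpretation. Recall that the Kekule relator (\ref{Kekule C}) can be viewed as simultaneously reversing the three strands that bound the triangular face, while the horizontal square relator (\ref{SS C}) reverses the two strands passing through the square. Adjacency of a Kekule triangle face $T$ and an SS-square face $S$ means they share a single interior edge $e$; the orientation of $e$ is determined by the strand patterns in both $T$ and $S$. Applying either the Kekule or the SS relator reverses the orientation of $e$, together with certain other edges local to the face on which it is applied.

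First I would enumerate the adjacency configurations. Up to the symmetries of the $A_3$ spider (rotations of $T$ and $S$, global arrow reversal by duality, and planar reflection), there are only a few genuinely distinct configurations, parametrized by which strand of $T$ exits through $e$, which strand of $S$ exits through $e$, and whether those two strands coincide in $T \cup S$. For each configuration I would draw out $T$, $S$, and the immediate neighborhoods of the two tetravalent vertices incident to $e$, tracking the orientation data that distinguishes the two possible local patterns at each tetravalent vertex (corresponding to the two possible $I$-vs-$H$ orientations of the contracted double edge).

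Second, in each configuration I would apply the two relators and identify the reducible face that appears. Applying Kekule to $T$ reverses the orientation of $e$ (and the other two triangle edges), which alters the orientation pattern of $S$ so that it no longer matches the LHS of (\ref{SS C}) but instead matches the LHS of a reduction relator: either (\ref{Double Square C}) or (\ref{Single Square C}), both of which have bigon-type lower-order terms and are therefore reduction relators. Symmetrically, applying SS to $S$ reverses the orientation of $e$ and one other strand-mate edge of $S$, changing the orientation pattern around $T$ so that $T$ is no longer a valid Kekule triangle; in each case the updated configuration either presents a bigon matching (\ref{Pure Bigon C}) or (\ref{Mixed Bigon C}) directly, or presents a square matching (\ref{Single Square C}) or (\ref{Double Square C}), hence is reducible.

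The main obstacle will be the orientation bookkeeping across the shared edge: because the two horizontal relators act by reversing sets of strands rather than by changing the underlying graph, one must verify carefully that after either reversal the new local pattern really fails to match the LHS of any horizontal relator and matches some reduction relator. The strand interpretation introduced in this section, together with the symmetry reductions, keeps the enumeration manageable, and the payoff of the lemma is that it lets us treat Kekule triangles and SS squares essentially independently in the subsequent construction of the global reducibility criterion.
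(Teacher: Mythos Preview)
Your overall strategy---enumerate adjacency configurations and check that applying either relator yields a reducible face---is the same as the paper's. However, your execution differs in two important ways.

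First, the paper's proof is much shorter because it begins with the observation that orientation constraints force the adjacency into essentially \emph{one} shape (up to a degenerate variant where the far edge of the square coincides with an inner triangle edge). You propose a broader symmetry-reduced enumeration parametrized by which strands exit through the shared edge and whether they coincide; this is more work than necessary. Once the single configuration is identified, only four checks remain: apply the square relator (either orientation) or the triangle relator (either orientation), and in each case one sees a reducible face directly---specifically a triangle reduction relator (\ref{Single Square C}) in three of the four cases and a bigon (\ref{Double Square C}) in the fourth.

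Second, your description of what the relators do to strands is not accurate and could mislead the case analysis. The Kekule relator (\ref{Kekule C}) does \emph{not} reverse the orientations of the three bounding strands; in the contracted picture it is a Reidemeister-III-type move that slides one strand past the crossing of the other two, leaving all three strand orientations globally unchanged. Likewise the horizontal square relator (\ref{SS C}) reverses the internal $4$-cycle but does not reverse the global orientation of the two strands passing through it (the boundary edges are fixed). Your claim that ``applying Kekule to $T$ reverses the orientation of $e$'' is therefore wrong as stated, and reasoning from it would not reliably identify which reduction relator appears. The correct bookkeeping is to track which edges become single versus double (equivalently, how the tetravalent vertices rearrange) rather than strand reversal.

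In short: right plan, but drop the strand-reversal heuristic, and start by pinning down the unique adjacency configuration from the orientation data; the four remaining checks are then immediate.
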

\begin{proof}
It's not hard to see from orientations that such an adjacency must take the form of:
\begin{equation*}
\begin{tikzpicture}[baseline=1.5ex,scale=1] 
\draw[midto,web] (0,0) -- (1,0); 
\draw[midto,web] (1,0) -- (2,0);
\draw[midto,web] (2,0) -- (3,0);

\draw[midfrom,web] (0.5,-.5) -- (1,0); 
\draw[midfrom,web] (1,0) -- (1.5,.5);
\draw[midfrom,web] (1.5,.5) -- (2,1);

\draw[midto,web] (2.5,-.5) -- (2,0); 
\draw[midto,web] (2,0) -- (1.5,.5);
\draw[midto,web] (1.5,.5) -- (1,1);

\draw[midfrom,web] (3,0) -- (2,1);
\draw[web] (3,0) -- (3.5,-.5); 
\draw[web] (2,1) -- (1.5,1.5);
\draw[web] (2,1) -- (2.5,1.5);
\draw[web] (3,0) -- (3.5,0);
\end{tikzpicture} 
\end{equation*}
where it's possible that the opposite edge of the square relator is the same edge as one of the inner triangle edges, as in:
\begin{equation*}
\begin{tikzpicture}[baseline=1.5ex,scale=1] 
\draw[midto,web] (0,0) -- (1,0); 
\draw[midto,web] (1,0) -- (2,0);
\draw[midto,web] (2,0) -- (3,0);

\draw[midfrom,web] (0.5,-.5) -- (1,0); 
\draw[midfrom,web] (1,0) -- (1.5,.5);
\draw[midfrom,web] (1.5,.5) -- (2,1);

\draw[midto,web] (2.5,-.5) -- (2,0); 
\draw[midto,web] (2,0) -- (1.5,.5);
\draw[midto,web] (1.5,.5) -- (1,1);

\draw[web] (3,0) to[out=0,in=0] (2.5,-.5);
\draw[web] (2,1) to[out=45,in=45] (3.5,-.5);
\draw[web] (3.5,-.5) to[out=225,in=225] (0.5,-.5);
\end{tikzpicture} 
\end{equation*}
Now it's just a matter of checking the four cases: applying the square relator to the top makes a triangle reduction relator (\ref{Single Square C}), applying it to the bottom also makes a triangle reduction relator (\ref{Single Square C}), applying the triangle relator on the top makes a triangle reduction relator (\ref{Single Square C}), and finally applying a triangle relator to the bottom makes a new bigon relator (\ref{Double Square C}).
\end{proof}

\begin{lemma}
\label{purity lemma}
Let $w$ be an $A_3$ web such that $r(w)$ has a reducible face for some relator $r$, but $w$ does not. In $w$ there must be a horizontal square relator face (\ref{SS C}) which is adjacent to a triangle relator face (\ref{Kekule C}), and moreover $r$ corresponds to one of those two faces.
\end{lemma}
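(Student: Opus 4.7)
The plan is to reduce the statement to a finite case analysis by first pinning down what kind of relator $r$ must be. If $r$ were a reduction relator, then its leading term $L(r)$ would be a single web matching a subweb of $w$, and that subweb would itself constitute a reducible face of $w$, contradicting the hypothesis. So $r$ must be horizontal, and in the contracted presentation the only horizontal relators are the square--square relator (\ref{SS C}) and the triangle/Kekule relator (\ref{Kekule C}).

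Next, let $G$ be the face of $w$ on which $r$ is applied, and let $G'$ denote the corresponding face in $r(w)$. Outside an $\ep$-neighborhood of $G$, the webs $w$ and $r(w)$ agree as planar graphs, so any new reducible face $F$ of $r(w)$ must share at least one edge with $G'$; otherwise $F$ would already appear in $w$. Thus it suffices to enumerate, for each choice of horizontal $r$ and each type of reduction relator that could match $F$, the adjacency patterns between $G$ and a neighbor that are incompatible with reducibility in $w$ but compatible in $r(w)$.

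I would then carry out the case analysis by going through each reduction relator type --- loops (\ref{Single Loop}) and (\ref{Double Loop}), bigons (\ref{Pure Bigon C}) and (\ref{Mixed Bigon C}), and the quadrilateral/triangle reductions (\ref{Double Square C}) and (\ref{Single Square C}) --- and checking whether applying an SS or a Kekule can turn a non-reducible neighbor into a reducible face. The SS move only reverses the orientations of the four edges of a tetravalent 4-cycle without changing any neighboring topology, and the Kekule move swaps which three single edges lie inside a triangle of tetravalent vertices. Loop and bigon neighbors are immediately excluded by topological or orientation constraints, and the only orientation/shape pairings that survive are precisely those of \cref{collision}: an SS face and a triangle face adjacent along a shared edge, with $r$ one of the two participating relators. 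Since \cref{collision} already shows these configurations do yield reducible faces under either application, the characterization is complete.

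The main obstacle is making the enumeration truly exhaustive. One must carefully track arrow directions on the shared boundary edges and allow for possible identifications of tetravalent vertices --- in particular, the Kekule relator affects up to six boundary edges at once, and (as in the second figure in the proof of \cref{collision}) its triangle can share more than one edge with a single neighboring face. Organizing the cases by the type of reducible face $F$ and then by the combinatorial pattern of shared edges between $F$ and $G'$ should keep the analysis manageable and demonstrate that no configuration outside the SS--triangle adjacency can produce a newly reducible face.
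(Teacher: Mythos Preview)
Your strategy is correct in outline --- reducing to the two horizontal relators and then arguing the new reducible face must be adjacent to the relator face --- but the paper bypasses your proposed brute-force enumeration with a structural invariant. The key observation is that each horizontal relator preserves the \emph{number of sides} of every adjacent face: in the uncontracted web the hexagon move does not change any polygon's size (and no $I=H$ relator is adjacent to a hexagon), while the square move only flips edge orientations without altering adjacencies in the contracted web. Since reducible faces have at most four sides in the uncontracted picture (and are triangles or smaller in the contracted picture), the face that \emph{becomes} reducible must already have had that side count before applying $r$. From there one only needs to identify the unique non-reducible face of the relevant size: the only non-reducible square in the uncontracted web is the horizontal square relator face, and the only non-reducible triangle in the contracted web is the Kekule face. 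This cuts the case analysis down to a single sentence for each branch, whereas your enumeration over all six reduction types and all adjacency/orientation patterns (including the multi-edge identifications you flag) would be considerably longer and more error-prone, though ultimately sound.
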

\begin{remark}
In particular, applying the relator corresponding to the second face will also create a reducible face by lemma \ref{collision}.
\end{remark}
\begin{proof}
First, let $r$ be a triangle relator. The reducible faces in the \tit{uncontracted} web all have four or fewer sides. Since the uncontracted hexagon relator doesn't change the size of any polygon, and no $I=H$ relator can be adjacent to a hexagon relator, the triangle relator must be adjacent to what was a square in the uncontracted web, however the only non-reducible squares in the uncontracted web are square relators. Therefore, since the adjacency must be by a single edge (since a horizontal square relator face has no interior double edges), the corresponding triangle is also adjacent to the square in the contracted web

On the other hand, if $r$ is a square relator, we see that it doesn't change the adjacencies in the \tit{contracted} web, and all the reducible faces in the web are triangles or smaller, so it must be adjacent to a triangle. The only type of non-reducible triangle is the horizontal relator face, which gives us our result.
\end{proof}

These lemmas give use the following proposition which essentially says that we only need to use one kind of relator in order to create a reducible face. This will help to reduce the number of possibilities we will need to deal with when proving $E^1$ convergence.

\begin{prop}
\label{A3 Purity}
If $w$ is a reducible $A_3$ spider, then there is a minimal sequence of relators $(r_i)$ with no horizontal square relators such that $r_n...r_1(w)$ has a reducible face. Similarly, there also exists a minimal sequence of relators with no horizontal triangle relators.
\end{prop}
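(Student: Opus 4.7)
The plan is to prove the first statement (no horizontal square relators); the second follows by a symmetric argument with the roles of the two kinds of horizontal relator interchanged. Fix a minimal sequence $s = (r_1, \ldots, r_n)$ such that $r_n \cdots r_1(w)$ contains a reducible face, and write $w_i = r_i \cdots r_1(w)$ for the intermediate webs. By minimality, every $w_i$ with $i < n$ must be irreducible (else we could truncate $s$); this in particular rules out any $r_i$ being a reduction relator, so each $r_i$ is a horizontal relator --- either a triangle relator (\ref{Kekule C}) or a horizontal square relator (\ref{SS C}). I will induct on the number of horizontal square relators appearing in $s$; the base case is immediate.

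For the inductive step, let $r_k$ be the horizontal square relator of largest index in $s$, acting on some square face $Q$. If $k = n$, then Lemma \ref{purity lemma} applied to the final transition supplies a triangle relator face $T$ adjacent to $Q$ in $w_{n-1}$, and Lemma \ref{collision} then ensures that the triangle relator on $T$ also creates a reducible face. Replacing $r_n$ with this triangle relator yields a minimal sequence of the same length with strictly fewer horizontal square relators, completing this case.

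If $k < n$, the plan is to show that $r_k$ commutes with $r_{k+1}$, so that I may swap them and iterate --- eventually pushing $r_k$ into position $n$ and reducing to the previous case. Suppose, for contradiction, that $r_k$ and $r_{k+1}$ share an interior edge, so that $r_{k+1}$ acts on a face $f$ adjacent to $Q$ in $w_k$. Since $r_{k+1}$ is a horizontal relator that is not a horizontal square (by choice of $k$), it must be a triangle relator, and therefore $f$ is a triangle relator face of $w_k$. But $Q$ is still a horizontal square relator face in $w_k$ --- only its orientations have been flipped --- so $w_k$ contains an adjacent triangle--square pair $(f, Q)$. Lemma \ref{collision} then forces that applying the horizontal square relator on $Q$ in $w_k$ creates a reducible face; but this application is simply the involution $r_k$ running in reverse, which returns the web $w_{k-1}$. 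Hence $w_{k-1}$ would itself be reducible, contradicting the minimality of $s$.

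The main obstacle is this final contradiction, which hinges on two subtle observations: the horizontal square relator is its own inverse (so Lemma \ref{collision} can be invoked ``backwards'' to detect reducibility of $w_{k-1}$), and after each commutation-swap the resulting sequence remains minimal --- in particular the new intermediate web $r_{k+1}(w_{k-1})$ is irreducible by an identical minimality argument applied to the shorter prefix $(r_1, \ldots, r_{k-1}, r_{k+1})$, so that the push-rightward procedure can be iterated without spoiling the inductive hypothesis.
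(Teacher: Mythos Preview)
Your proof is correct and follows essentially the same approach as the paper's. Both arguments locate the last horizontal square relator $r_k$, use Lemma~\ref{collision} to show that failure to commute with the subsequent triangle relator would force $w_{k-1}$ to already contain a reducible face (contradicting minimality), and then invoke Lemma~\ref{purity lemma} once $r_k$ has been pushed to the final position to replace it by a triangle relator. Your write-up is somewhat more explicit than the paper's about why the inverse square relator lands back at $w_{k-1}$ and why the commuted sequence remains minimal, but the underlying logic is identical.
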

\begin{proof}
We can find a minimal sequence of relators $(r_1,...,r_n)$ which gives $w$ a reducible face. If there are no horizontal square relators, we're done. Otherwise, there is a last horizontal square relator $r_k$. If $k<n$, and the triangle relator $r_{k+1}$ doesn't commute with $r_k$, then applying either relator will give a reduction face by lemma \ref{collision}. Hence there is a reducible face on $r_{k-1}....r_1(w)$, which contradicts minimality of the sequence. Hence we can commute the square relator $r_k$ to happen last. Now, since $r_nr_{n-1}....r_1(w)$ has a reducible face,  by lemma \ref{purity lemma} we know that there is a triangle relator face adjacent to the square face in $r_{n-1}....r_1(w)$, and applying that relator in place of the square relator, we get a sequence with one fewer square relator which ends in a web with a reducible face. Applying this process repeatedly we can remove all square relators from our minimal sequence.

The argument for removing all horizontal triangle relators follows identically.
\end{proof}

Now that know that we only need to use a single type of relator to show reducibility, and that relator corresponds to homotopies of strands, we will get a global criteria for when a web is reducible determined by the placement of strands. We will call any (topological) graph minor composed of strands a \tbf{configuration of strands}. If this graph minor corresponds to the face of a reduction relator, we will call it a \tbf{reducible configurations of strands}. We'll say a relator $r$ on $w$ doesn't \tbf{affect a configuration of strands} or doesn't \tbf{destroy a configuration of strands} if there exists an isomorphic (as a graph) configuration of strands in $r(w)$ which is equal to the original configuration outside an $\ep$-neighborhood of the relator faces.

Before we prove the global reducibility result, we will need to prove a lemma which shows that strands can only cross reducible configurations in very controlled ways:
\begin{lemma}[Directionality]
\label{A3 Directionality}
Let $w$ be an $A_3$ web, and take a reducible configuration of strands which is minimal in the sense that the bounded region in $w$ doesn't contain a reducible configuration. Then every strand that crosses this configuration must pass through the same two boundary edges, and can't intersect any other strand in the bounded region.
\end{lemma}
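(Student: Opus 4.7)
The plan is to combine a topological cut-and-paste argument with a case analysis over the possible shapes of the minimal reducible configuration $\Gamma$, which by the list of reduction relators (\ref{Pure Bigon C})--(\ref{Single Square C}) is a bigon or monogon (from (\ref{Pure Bigon C}) or (\ref{Mixed Bigon C})), a triangle (from (\ref{Single Square C})), or a square (from (\ref{Double Square C})), each carrying prescribed strand orientations on its boundary. Let $B$ denote the bounded region of $\Gamma$. Any strand $\sigma$ crossing $B$ enters via a tetravalent vertex on $\partial B$, passes straight through every subsequent tetravalent vertex by definition of a strand, and exits via another boundary vertex, so $\sigma$ is a simple arc that separates $B$ into two open disks whose boundaries are themselves cycles of strand-arcs carrying well-defined orientations inherited from $\sigma$ and $\partial\Gamma$.

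The core of the argument is to identify a ``distinguished'' pair of boundary edges for each shape of $\Gamma$---the pair characterized by the property that a crossing strand through it does not cut off a smaller reducible sub-face. For the first claim, given any single crossing strand $\sigma$ through a non-distinguished pair, I would verify directly that one of the two subregions $B_1, B_2$ cut out by $\sigma$ matches a reduction relator face from (\ref{Pure Bigon C})--(\ref{Single Square C}) after reading off the inherited strand orientations on its boundary---for instance, a cut near a vertex of a triangle or square leaves a strictly smaller triangle or bigon. This contradicts the minimality of $\Gamma$ and forces every crossing strand to use the distinguished pair. If two crossing strands $\sigma_1,\sigma_2$ use different pairs, then either they intersect at some interior tetravalent vertex $q$, in which case one of the four resulting subregions, bounded by short pieces of $\sigma_1,\sigma_2$ meeting at $q$ together with an arc of $\partial\Gamma$, matches a reducible face; or they are disjoint, in which case they cut $B$ into subregions that again contain a strictly smaller reducible piece (such as a sub-triangle near one of the vertices of $\Gamma$). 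For the second claim, if two crossing strands $\sigma_1,\sigma_2$ both use the distinguished pair but still meet at some $q \in B$, the same cut-and-paste produces a pair of subregions near $q$ each bounded by two short strand-arcs from $q$ together with an arc of $\partial\Gamma$; an orientation check identifies at least one of these as a reducible bigon or triangle, again contradicting minimality.

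The main obstacle will be the orientation-compatibility verification in each case: each reduction relator in (\ref{Pure Bigon C})--(\ref{Single Square C}) requires specific directional data on its boundary edges, and one must confirm for each cut-off subregion that the orientations inherited from $\partial\Gamma$ combined with those of the cutting strand(s) actually match a reducible pattern rather than an incompatible one. I expect this reduces to a short finite check based on the local fact that at any tetravalent vertex the pairing of opposite edges into two crossing strand directions is forced, so once the directions of the cutting strands at their entry points are determined the boundary orientations of each cut-off subregion are automatic, and one then matches these against the short list of allowed reducible orientation patterns.
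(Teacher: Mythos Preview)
Your approach matches the paper's: a case-by-case check over the possible shapes of the minimal reducible configuration, showing that a crossing strand through any but one distinguished pair of boundary edges cuts off a smaller reducible sub-configuration, and that two crossing strands meeting in the interior do likewise. One correction to your case list: in the contracted spider, relator (\ref{Double Square C}) is a \emph{bigon} (two directed strands crossing twice), not a square---the name refers to the uncontracted picture---so the shapes to check are loop, monogon, bigon (in the two orientation flavors (\ref{Pure Bigon C}) and (\ref{Double Square C})), and triangle, with no square case.
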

\begin{proof}
This just requires checking each of the reduction relators. If a strand crosses a loop or a monogon, then it forms a smaller bigon, so the loop or monogon wouldn't be minimal.  If a strand crosses a bigon relator of type (\ref{Pure Bigon C}), then it forms a reducible triangle configuration on either side. Strands can cross a bigon relator of type (\ref{Mixed Bigon C}), but only in one direction so as to not form a reducible triangle configuration. Therefore, if any two of these strands cross in the middle of the bigon relator, they will form a reducible triangle configuration with the side of the bigon. Similarly, if any strand crosses a reducible triangle relator configuration, and it cuts off one of the corners directed all inward or outward, then that strand together with the corner will form a reducible triangle configuration. Hence, all of the crossing strands must pass through the two sides of the triangle which cross cyclically, and the crossing strands must be directed in the same direction. Moreover, they can't intersect in the interior without creating a smaller reducible triangle configuration with one of the sides of the triangle.
\end{proof}

Finally, we can prove our global criterion for when a web is reducible.

\begin{thm} 
\label{Global}
A (contracted) $A_3$ web $w$ is reducible if and only if it has a reducible configuration of strands, ie if and only if it has a graph minor of one of the following forms:
\begin{enumerate}
	\item A closed (directed) strand (in the shape of \ref{Single Loop} or \ref{Double Loop})
	\item A (directed) strand that intersects itself (in the shape of \ref{Mixed Bigon C})
	\item Two (directed) strands that intersect more than once (in the shape of \ref{Pure Bigon C} or \ref{Double Square C})
	\item Three (directed) strands that pairwise intersect, but whose corresponding triangle isn't oriented cyclically (in the shape of \ref{Single Square C})
\end{enumerate}
Moreover, if there exists a configuration of undirected strands of any of these types, it is also reducible (in particular, a reducible triangle configuration can be oriented in any way besides a cyclic orientation).
\end{thm}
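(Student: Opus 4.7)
The plan is to prove both implications by exploiting the structural results \cref{A3 Purity} and \cref{A3 Directionality}: the forward implication by lifting a reducible face backward through a minimal sequence of relators, and the converse by inducting on the number of strands crossing the bounded region of a minimal reducible configuration.

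For the forward direction, suppose $w$ is reducible. By \cref{A3 Purity}, choose a minimal sequence $(r_1, \ldots, r_n)$ of relators containing no horizontal square relator (\ref{SS C}) such that $w_n := r_n \cdots r_1(w)$ has a reducible face. This face is exactly a realization of one of the four strand configurations listed, so it suffices to pull the configuration back through the sequence to $w$. The key observation, flagged in the remark before this subsection, is that every relator other than (\ref{SS C}) can be interpreted as a homotopy of strands: the Kekule relators (\ref{Kekule C})---the only horizontal relators available in our sequence---act as an analog of a Reidemeister III move on strands and preserve the strand graph up to isotopy, while each reduction relator only simplifies the strand graph by removing a loop, self-intersection, or pair of crossings between two strands. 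Consequently the reducible configuration in $w_n$ lifts through each $r_i$ to either an isotopic or strictly richer configuration in $w_{i-1}$, yielding a graph minor of the required form in $w$.

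For the converse direction, assume $w$ contains a reducible strand configuration as a graph minor, and choose one that is minimal in the sense that no strictly smaller reducible configuration sits inside its bounded region. By \cref{A3 Directionality}, every strand entering the bounded region passes through the same two fixed boundary edges of the configuration and does not intersect any other crossing strand in the interior. I induct on the number of such crossing strands. In the base case (no crossing strands), the bounded region is a single face of $w$ realizing one of the reduction relators (\ref{Single Loop})--(\ref{Single Square C}), so a reduction applies directly. In the inductive step, pick a crossing strand $s$; together with the two configuration strands it meets at the boundary of the bounded region, $s$ forms a small triangle inside the bounded region. Minimality forces this triangle to be cyclically oriented, since otherwise it would itself be a smaller case~4 configuration contained in the bounded region. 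A Kekule relator (\ref{Kekule C}) therefore applies and pushes $s$ out of the bounded region, decreasing the crossing-strand count by one while preserving the ambient reducible configuration, and induction completes the argument. The undirected-strand addendum is handled by first using SS relators (\ref{SS C}) to reorient strands until a directed reducible configuration is exposed, then invoking the directed case.

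The main obstacle will be executing the inductive step of the converse direction rigorously: confirming that the chosen triangle must be cyclically oriented by minimality, that the Kekule relator can be applied without destroying the ambient reducible configuration, and that the crossing-strand count actually decreases after the move. This demands a careful case analysis across the four configuration types, using the orientation constraints in \cref{A3 Directionality} to rule out non-cyclic triangles and to manage interactions among multiple crossing strands. Handling the undirected-strand case without breaking minimality requires additionally checking that the needed SS reorientations can be carried out consistently across the whole configuration.
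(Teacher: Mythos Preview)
Your directed case matches the paper's argument closely: the forward direction uses \cref{A3 Purity} to get a triangle-relator-only sequence and then observes that Kekule moves preserve strand intersection data, and the converse uses \cref{A3 Directionality} plus minimality to clear crossing strands out of the bounded region via Kekule moves. One small point: in your inductive step you should pick the crossing strand \emph{nearest a corner}, not an arbitrary one, so that the triangle it cuts off is actually a face (otherwise other crossing strands sit inside your triangle and the Kekule relator does not apply).

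The real gap is your treatment of the undirected addendum. You propose to apply SS relators (\ref{SS C}) to ``reorient strands until a directed reducible configuration is exposed,'' but this is not how the paper proceeds and it is not clear it can be made to work. An SS relator can only be applied at a square face with the correct boundary orientation; there is no reason a direction-break along your strand occurs at such a face, and you give no argument that a finite sequence of SS moves can consistently redirect an arbitrary undirected configuration. The paper instead argues directly, by a short case analysis, that any undirected reducible configuration that is not already directed must strictly contain a smaller reducible configuration (a crossing strand cutting a loop creates a bigon; a strand flipping orientation across a bigon or triangle edge forces a non-cyclic corner; etc.). Induction on the size of the bounded region then produces a genuinely directed reducible configuration inside, and the directed case finishes the proof. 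You should replace your SS-reorientation idea with this containment argument.
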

\begin{remark}[1] As a corollary of this, we know that applying square relators respects this criterion, however, it isn't true that applying a square relator can't affect a reducible configuration. Rather, this theorem shows that if a square relator does destroy a reducible configuration, there must still be a reducible configuration in the new web.
\end{remark}
\begin{remark}[2]
The undirected and directed versions will be used for two different purposes. We'll use the undirected version to show that certain webs (such as closed webs) are reducible. We'll apply the directed version to cases where we know the web is reducible. In those cases, knowing that the strands are directed will be useful.
\end{remark}
\begin{proof}
($\Rightarrow$) First we prove that every reducible web has a reducible configuration of strands. Since every reducible web can be given a reducible face via the (horizontal) triangle relator (\ref{Kekule C}) by proposition \ref{A3 Purity}, and every reducible face corresponds to a reducible configuration of strands, it suffices to prove that if you have one of these reducible configurations of strands and apply a triangle relator, the resulting web will still have a reducible configuration of strands, and the same orientation. Since horizontal triangle relators don't change which maximal directed strands intersect or how many times, so it's clear they can't remove loops and we immediately get that if there is a strand which crosses itself, or two strands which cross twice, then this will remain true after applying the triangle relator.

For the fourth criterion, we know that after applying the relator we still have three mutually intersecting strands, and since the orientation of this triangle is determined by its unchanged boundary, it must still be non-cyclically oriented. Hence, we still have a reducible configuration of strands.

($\Leftarrow$) Assume that the $A_3$ has at least one (directed) reducible configuration, and take a minimal one (meaning there are no reducible configurations contained in the bounded region). By lemma \ref{A3 Directionality}, we know all of the strands that cross the reducible configuration are parallel. Moreover, since all the reduction relators have at most $3$ sides, the crossing strands must make a triangle with one of the corners. Hence, we can repeatedly apply these triangle relators. By minimality, they are all non-reducible triangle relators, so after applying all these relators the reducible configuration corresponds to a single face, and we can applying the reduction relator.

For the final statement, we want to prove that if there are undirected strands in one of reducible configurations, then it must not be minimal, and so by induction there must be a (directed) reducible configuration contained in it. Clearly, any strand crossing a loop or a monogon has to create a smaller bigon (or self-intersect inside to create an smaller monogon). In the case of the bigon with non-cyclically oriented corners, any strand crossing it must create a non-cyclically oriented triangle with that corner. If a strand crosses a bigon with both corners oriented cyclically, but changes the orientation of one of the bigon strands, then the corresponding triangle with that corner is likewise non-cyclically oriented as depicted below:
\begin{equation*}
\begin{tikzpicture}[baseline=-.3ex,scale=.75] 
\draw[midto,web] (-1,.5) -- (0,0);
\draw[midfrom,web] (-1,-.5) -- (0,0);

\draw[web, midto]  (0,0) to[out=30,in=150] (2,0);
\draw[web, midto]  (0,0) to[out=-30,in=-150] (2,0);

\draw[midto,web] (2,0) -- (3,.5);
\draw[midfrom,web] (2,0) -- (3,-.5);

\draw[midto,web] (1.3,1) -- (1.3,.21);
\draw[midfrom,web] (1.3,.21) -- (1.3,-.1);
\draw[midto,web] (1.62,0.18) -- (1.6,0.19);
\end{tikzpicture}
\end{equation*}
Lastly, in the case of a triangle, we know that there are some number of strands crossing it (otherwise it would be directed), so each must cut off a corner to form a new triangle configuration (again barring the case of self-intersection, or a strand leaving the same way it came and creating a bigon). If any one of them cuts off a non-cyclically oriented corner, then this smaller triangle is reducible. If all of them cut off a cyclically oriented corner, then at least one of them must intersect a side in a vertex where all orientations point inward or all point outward (otherwise the triangle would be made out of directed strands). Then, this says that the triangle cut off by the strand has a non-cyclically oriented corner, and hence it isn't cyclically oriented, giving us our result.
\end{proof}

\chapter{Proving \texorpdfstring{$E^k$}{} Convergence}
\linespread{1}\selectfont
This is the technical core of this dissertation where we prove our main results regarding $E^1$ and $E^2$ convergence. Just as the last chapter, the section on product spiders can be read independently of the section on the $A_3$ spider. However, the subsections in each section are not independent. In particular we will use the results about the $A_1^n$ and $A_1 \times A_2$ spiders in the proof of $E^2$ convergence for the $A_1^n \times A_2$ spider.

\section{Product Spiders Using Equinumeration}
\subsection{\texorpdfstring{$E^1$ Convergence for $A_1^n$}{convergence for}}
Webs of type $A_1^n$ are given by drawings of graphs corresponding to binary matchings, where we can only match boundary vertices corresponding to the same type. Since there are no trivalent vertices, we only get $U$ relators (of type \ref{product:bigon}) and triangle relators (of type \ref{product:triangle}), along with the inherited loop relators (of type \ref{A1:loop}) (which we copy here)
\begin{equation*}
\begin{tikzpicture}[baseline=0ex,scale=.5]
\draw[red] (0,0) -- (3,0);
\draw[blue] (0.5,-1) to[out=90,in=180] (1.5,1);
\draw[blue] (1.5,1) to[out=0,in=90] (2.5,-1);
\end{tikzpicture} = 
\begin{tikzpicture}[baseline=0ex,scale=.5]
\draw[red] (0,0) -- (3,0);
\draw[blue] (.5,-1) to[out=30,in=150] (2.5,-1);
\end{tikzpicture}
\end{equation*}
\begin{equation*}
\begin{tikzpicture}[baseline=1.5ex,scale=1] 
\draw[red] (0,0) -- (1,0); 
\draw[red] (1,0) -- (2,0);
\draw[red] (2,0) -- (3,0);

\draw[blue] (0.5,-.5) -- (1,0); 
\draw[blue] (1,0) -- (1.5,.5);
\draw[blue] (1.5,.5) -- (2,1);

\draw[green] (2.5,-.5) -- (2,0); 
\draw[green] (2,0) -- (1.5,.5);
\draw[green] (1.5,.5) -- (1,1);
\end{tikzpicture}
 \;=\;
\begin{tikzpicture}[baseline=-1ex,scale=1,transform shape, rotate=180] 
\draw[red] (0,0) -- (1,0); 
\draw[red] (1,0) -- (2,0);
\draw[red] (2,0) -- (3,0);

\draw[blue] (0.5,-.5) -- (1,0); 
\draw[blue] (1,0) -- (1.5,.5);
\draw[blue] (1.5,.5) -- (2,1);

\draw[green] (2.5,-.5) -- (2,0); 
\draw[green] (2,0) -- (1.5,.5);
\draw[green] (1.5,.5) -- (1,1);
\end{tikzpicture}
\end{equation*}
\begin{equation*}
\begin{tikzpicture}[baseline=-.3ex,scale=.5] 
\draw[web, blue] (0,0) circle (1cm);
\end{tikzpicture} \; =\; [2]_q
\end{equation*}
It is also not too hard to show that two webs give the same invariant vectors if and only if the corresponding matchings are the same. Additionally, we can see that a web does not have a minimal number of faces if and only if two strands cross each other more than once or there is a loop. Because of these two facts, it's relatively easy to show that the $A_1^n$ spider has an $E^1$ convergent spectral sequence without using the generalized diamond lemma (ie proposition \ref{Canceling}). 

\begin{thm}
\label{E1:A1n}
The $A_1^n$ spider with the product presentation has an $E^1$ convergent spectral sequence
\end{thm}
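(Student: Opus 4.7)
The plan is to follow the equinumeration approach outlined in the introduction, bypassing the canceling-sequence criterion of \cref{Canceling}. For each pair of boundary objects $X,Y$ and each degree $j$, the spectral sequence furnishes a surjection
\[
E^1_{1j}(X,Y) \;\cong\; \gr_j\bigl(FSp(X,Y)/\lan L(R)\ran(X,Y)\bigr) \;\twoheadrightarrow\; \gr_j Sp(X,Y) \;=\; E^\infty_{1j}(X,Y),
\]
so it suffices to prove $\dim E^1_{1j}(X,Y)=\dim \gr_j Sp(X,Y)$ for all $j$; equality then forces the spectral sequence to collapse at $E^1$ uniformly in $X,Y$.

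First I would compute $\dim \gr_j Sp(X,Y)$ directly from representation theory. Standard Temperley--Lieb theory for each $A_1$ factor, combined with the fact that invariants over distinct factors tensor independently, gives a basis of $Sp(X,Y)$ indexed by $n$-tuples $\vec M=(M_1,\dots,M_n)$ of non-crossing matchings of the color-$i$ boundary points. To each such tuple attach its \tbf{unavoidable crossing number} $c(\vec M)$ equal to the total number of linked pairs of arcs coming from distinct colors. An elementary planarity argument shows that every web realizing $\vec M$ has at least $c(\vec M)$ crossings and that this bound is achieved, so
\[
\dim \gr_j Sp(X,Y) = \#\{\vec M : c(\vec M)=j\}.
\]

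For an upper bound on $\dim E^1_{1j}(X,Y)$, I would inspect the leading terms of the generating relators. The loop relator \eqref{A1:loop} contributes the lone loop; the $U$ relator \eqref{product:bigon} contributes the cross-color bigon; and both sides of the triangle relator \eqref{product:triangle} carry the same grading, so the entire relator lies in $L(R)$. In $FSp/\lan L(R)\ran$, every web containing a loop or a cross-color bigon is therefore zero, leaving only webs in which each pair of differently-colored strands meets at most once; such a web has exactly $c(\vec M)$ crossings, where $\vec M$ is its underlying matching tuple. The triangle relators in $L(R)$ identify any two such minimum-crossing realizations of the same $\vec M$, yielding the desired upper bound $\dim E^1_{1j}(X,Y)\le \#\{\vec M : c(\vec M)=j\}$.

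The main obstacle, and the only step requiring genuine work, is justifying this last identification: any two loop-free, bigon-free webs representing the same matching tuple $\vec M$ must be related by a sequence of triangle relators. I would prove this by a pseudoline-arrangement argument. A minimum-crossing web is determined up to ambient isotopy of its arcs by the combinatorial data of which cross-colored pairs of strands cross, and any two planar arrangements with the same such data are connected by a sequence of R3 moves, which are precisely the triangle relators. I would formalize this by induction on the number of crossings: locate an outermost triangular region in which two arrangements differ, apply a single triangle relator to reduce to a configuration with a smaller ambient discrepancy, and repeat. Once this step is established, the two dimension counts match term by term, $E^1_{1j}(X,Y)\cong \gr_j Sp(X,Y)$ for every $j$ and $(X,Y)$, and the $A_1^n$ spider is $E^1$-convergent as claimed.
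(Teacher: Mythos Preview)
Your equinumeration approach is exactly the paper's strategy, and the overall structure is right. The gap is in the upper bound: you observe that the leading term of the $U$-relator kills any web with a bigon \emph{face}, then conclude this leaves ``only webs in which each pair of differently-colored strands meets at most once.'' That implication is not immediate---two strands can cross twice without bounding a bigon face when other strands pass through the enclosed region. You must first evacuate those intervening strands by triangle moves before the $U$-relator applies. This evacuation is precisely part~(1) of the paper's proof, established via \cref{A1n:Evacuate} (inductively emptying any triangle whose interior is reduced), and it requires just as much care as the step you single out as ``the only step requiring genuine work.'' A minor slip in the same paragraph: an isolated loop is not zero in $E^1$, since both sides of \eqref{A1:loop} have grading zero; it contributes the scalar $[2]_q$ rather than vanishing.

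Your pseudoline sketch for part~(2) is reasonable, but making it precise again needs a mechanism for locating an \emph{empty} triangular face on which a relator can act; the paper achieves this not by ``outermost'' regions but by placing a partial order on the crossings inside the region bounded by the two placements of a strand, arguing that this order has no directed cycles, and then taking a minimal element to find a triangle with no crossing strands.
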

\begin{proof}
To prove this, we will show that: 
\begin{enumerate}
	\item Any web that has a closed loop or two strands that cross more than once can be reduced using the relators
	\item If two webs without closed loops or double-crossing strands correspond to the same matching of boundary vertices, then there exists a sequence of horizontal relators  sending one of the webs to the other
\end{enumerate}
Indeed, by the representation theory we know that the dimension of the invariant spaces are the product of the dimensions of the invariant spaces of each $A_1$ factor, which in turn is equal to the number of planar matchings. Therefore, as long as every choice of planar matchings for each $A_1$ factor has a corresponding minimal vertex web, and any two minimal vertex webs with the same $A_1$ planar matching are the same, we get that the dimension of Hom spaces of $gr_* Sp$ and $(gr_* FSp)/(\lan gr_* R\ran)$ are equal, which proves $E^1$ convergence. We now precede to prove the two claims:

1. First, let's look at the case where there is a minimal reducible configuration that is a loop. If there are no strands crossing it, then we have an immediate reduction. If a strand crosses it, then there is a smaller bigon, which contradicts our assumption of minimality. Hence, we can assume that the minimal configuration is a bigon. Now induct on the number of strands crossing the bigon. Any crossing strand creates a triangle with the two vertices of the bigon (since the bigon is minimal). We will use the following lemma which allows us to remove strands from any such a triangle 
\begin{lemma}
\label{A1n:Evacuate}
Given three pairwise intersecting strands in a $A_1^n$ web such that the bounded region is reduced, there exists a sequence of triangle relators (in an $\ep$-neighborhood of the bounded region) which removes all crossing strands.
\end{lemma}
\begin{proof}
We induct on the number of crossings in the bounded region. The configuration has no crossings strands exactly when the bounded region has 3 crossings. Any crossing strand makes a triangle with one of the corners of the outer triangle, so we can apply the inductive hypothesis to remove the strands from the interior triangle. This doesn't increase the number of crossings for the outer triangle (since triangle relators don't change the number of crossings), so after applying the relator associated to the now empty interior triangle we reduce the number of interior crossings for the outer triangle.
\end{proof}
Since we assumed the bigon was a minimal reducible configuration of strands, we can apply the lemma to remove strands from the triangle contained in the bigon, and then apply the corresponding triangle relator. This reduces the number of crossings which gives us our result by induction. 

2. Now assume we have two webs, $w$ and $w'$ without reducible configurations. The strands and crossings of strands of the two webs $w$ and $w'$ are in bijection. Pick an ordering of strands. We will recursively make $w$ into $w'$ on the first $n$ strands. By induction, assume that the topological minors made up of $\lb s_1,...,s_n \rb$ are the same for $w$ and $w'$, and define $w_n$ to be the minor made up of $\lb s_1,...,s_n \rb$.

Look at the two strands corresponding to $s_{n+1}$ on $w$ and $w'$. These each give paths on $w_n$ which we'll denote $\pi$ and $\pi'$ which together bound a disk. We need to show that we can homotope $\pi$ to $\pi'$ using only triangle moves. By induction we can assume that $s_{n+1}$ and $s_{n+1}'$ only cross once, and it is enough to assume that all of the $s_i$ for $i \in 1,...,n$ cross both $\pi$ and $\pi'$. Now orient each strand so that it goes from $\pi$ to $\pi'$. This gives a poset on crossings in the disk which is defined as follow: let $v,v'$ be crossings, then $v \leq v'$ if there is an oriented path from $v$ to $v'$. There are no directed loops: indeed take a minimal counter example. Then the loop has no self-intersection by minimality and hence bounds a disk with all inward pointing vectors oriented to the right (without loss of generality) of each edge. These edges correspond to strands, and the intersection of the corresponding (right-sided) half-spaces is a subset of the region bounded by the loop on one hand, but on the other hand we know all of these half-spaces contains $\pi(1)=\pi'(1)$ which gives a contradiction.

Now let $v$ be a crossing that is minimal in the above poset. The corresponding strands are $s_i$ and $s_j$ for some $i$ and $j$. By minimality, the triangle bounded by $s_i$, $s_j$, and $\pi$ has no crossing strands, and hence we can apply the corresponding relator. By induction this shows that $\pi$ can be homotoped to $\pi'$, which completes the proof of the second claim.

Hence by the previous remarks, $\gr_\bu FSp/ \lan \gr_\bu R \ran \surj gr_\bu Sp$ is a bijection, so the spectral sequence converges on the first page.
\end{proof}

In fact, in the proof we proved the following stronger result which we'll need for inductive arguments in $A_1^n \times A_2$ and the geometric results of chapter 6:

\begin{coro}
\label{A1n:singlestrand}
If $w$ and $w'$ are $A_1^n$ webs which differ only by a single strand $s$, then there exists a sequence of relators sending $w$ to $w'$ all of which involve $s$ (and hence the projection onto the $A_1^{n-1}$ factor which doesn't contain $s$ is invariant under each relator in the sequence)
\end{coro}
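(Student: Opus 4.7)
The plan is to extract this corollary directly from the proof of Theorem 8.1, specifically by tracing which strand each applied relator touches. Since $w$ and $w'$ differ only in the strand $s$, the subweb $w_0 = w \setminus s = w' \setminus s$ is common, and the strand $s$ appears as two different paths $\pi, \pi'$ in the planar complement of $w_0$, sharing the same pair of boundary endpoints. I would not need to work inductively across all strands as in the original theorem; the inductive hypothesis that the first $n$ strands have already been aligned is here built into the setup.

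First I would decompose the problem by splitting at the intersection points of $\pi$ and $\pi'$, so that each pair of corresponding subsegments meets only at their shared endpoints and together bounds a topological disk in the complement of $w_0$. This splitting uses no relators and reduces the problem to the disk-bounded case treated in the theorem's claim 2. Next I would apply the homotopy argument from claim 2 verbatim to a single such disk: orient each strand of $w_0$ which crosses both segments from the $\pi$-side to the $\pi'$-side, form the resulting poset on crossings inside the disk, and pick a minimal crossing $v$ of two strands $s_i, s_j$ of $w_0$. By minimality, the triangle bounded by the segments of $s_i$, $s_j$ on the $\pi$-side of $v$ together with the piece of $\pi$ between them contains no other crossing, so its triangle relator applies. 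The crucial observation for the corollary is that this triangle has a segment of $\pi$ (hence of $s$) as one of its three sides, so the relator involves $s$. Iterating strictly decreases the number of crossings in the disk, and after finitely many steps we reach $\pi = \pi'$.

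The main obstacle, and the main refinement over the theorem's proof, is handling any reducible configurations involving $s$ in $w$ or $w'$ that must be eliminated by U- or loop relators along the way. Because $w$ and $w'$ share $w_0$, any reducible configuration not involving $s$ appears identically in both webs and can simply be left alone without obstructing the homotopy of $s$. For configurations that do involve $s$ (a closed $s$-loop, or an $s$-bigon between $s$ and some strand $t$ of $w_0$), the evacuation-of-crossings argument from Lemma 7.2 can be arranged to use only triangle relators whose triangles have $s$ as a side: one picks the crossing strand whose intersection with $s$ is closest to a bigon corner, so that the innermost cleared triangle is bounded on one side by a piece of $s$. Once the bigon is evacuated, the final U-relator is itself an $s$-involving relator. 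Combining these reductions with the homotopy step above produces the required sequence of relators from $w$ to $w'$ in which every relator touches the strand $s$, which in particular means the web obtained by forgetting $s$ (the projection onto the $A_1^{n-1}$ factor not containing $s$) is invariant at every step.
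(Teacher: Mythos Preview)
Your proposal is correct and takes the same approach as the paper. The paper does not give a separate proof at all: it simply notes that the corollary was already established inside the proof of Theorem~\ref{E1:A1n}, namely in part~2, where the inductive step homotopes a single strand $s_{n+1}$ across the disk bounded by $\pi$ and $\pi'$, and every triangle relator applied there has a segment of $\pi$ as one of its three sides. Your write-up makes this observation explicit and is exactly what the paper has in mind.

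One caution on your extension to non-reduced webs. The paper's part~2 assumes both webs are reduced, and every later use of the corollary is in that context, so the paper never needs to evacuate an $s$-bigon. Your claim that the evacuation of Lemma~\ref{A1n:Evacuate} ``can be arranged to use only triangle relators whose triangles have $s$ as a side'' is not immediate from that lemma as stated: its induction clears inner triangles formed at corners of the outer triangle, and those inner triangles can have sides $t,u,u'$ with no segment of $s$. The fix is not to invoke Lemma~\ref{A1n:Evacuate} verbatim but to run the same poset-minimality argument you already use for the disk, restricted to the bigon: orient the crossing strands from the $s$-side outward, take a minimal crossing, and note that the resulting empty triangular face always has a piece of $s$ on its boundary. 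That works, but it is a different induction than the one in Lemma~\ref{A1n:Evacuate}. For the paper's purposes this extra generality is unnecessary, so you could simply drop the reducible-configuration paragraph and match the paper's scope.
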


\subsection{\texorpdfstring{$E^1$ Convergence for $A_1 \times A_2$}{ Convergence for }}
In this section we will show that $A_1 \times A_2$ has an $E^1$ convergent spectral sequence. Recall that this spider has the following relators (where when drawing product spiders we will not draw the orientation of every edge as long as the orientation of the corresponding strand is indicated)
\begin{equation*}
\begin{tikzpicture}[baseline=-.3ex,scale=.5] 
\draw[web, blue] (0,0) circle (1cm);
\end{tikzpicture} \; =\; [2]_q
\end{equation*}
\begin{equation*}
\begin{tikzpicture}[baseline=-.3ex,scale=.5] 
\draw[web, midto] (0,0) circle (1cm);
\end{tikzpicture} \; =\; [3]_q
\end{equation*}
\begin{equation*}
\begin{tikzpicture}[baseline=-.3ex,scale=.75] 
\draw[midfrom,web] (-1,0) -- (0,0);
\draw[web, midto]  (0,0) to[out=30,in=150] (2,0);
\draw[web, midto]  (0,0) to[out=-30,in=-150] (2,0);
\draw[midfrom,web] (2,0) -- (3,0);
\end{tikzpicture}\; =\; [2]_q
\begin{tikzpicture}[baseline=-.3ex,scale=.75] 
\draw[midto,web] (-1,0) -- (1,0);
\end{tikzpicture}
\end{equation*}
\begin{equation*}
\begin{tikzpicture}[baseline=-2ex,scale=.75]
\draw[midfrom,web] (-1.68,-1.5) -- (-1,-1);
\draw[midto,web] (-1.68,0.5) -- (-1,0);
\draw[midto,web] (0.68,-1.5) -- (0,-1);
\draw[midfrom,web] (0.68,0.5) -- (0,0);
\draw[midto,web] (-1,-1) -- (0,-1);
\draw[midto,web] (-1,-1) -- (-1,0);
\draw[midto,web] (0,0) -- (0,-1);
\draw[midto,web] (0,0) -- (-1,0);
\end{tikzpicture} = 
\begin{tikzpicture}[baseline=-2ex,scale=.55]
\draw [web, midfrom]  (-1.68,-1.5) to[out=60,in=-60] (-1.68,0.5);
\draw [web, midto]   (-.3,-1.5) to[out=120,in=240] (-.3,0.5);
\end{tikzpicture} +
\begin{tikzpicture}[baseline=-.5ex,scale=.65]
\draw [web, midfrom]  (-1.68,-.75) to[out=30,in=150] (-.3,-.75);
\draw [web, midto]   (-.3,0.5) to[out=210,in=-30] (-1.68,0.5);
\end{tikzpicture}
\end{equation*}
\begin{equation*}
\begin{tikzpicture}[baseline=0ex,scale=.5]
\draw[midto] (0,0) -- (3,0);
\draw[blue] (0.5,-1) to[out=90,in=180] (1.5,1);
\draw[blue] (1.5,1) to[out=0,in=90] (2.5,-1);
\end{tikzpicture} = 
\begin{tikzpicture}[baseline=0ex,scale=.5]
\draw[midto] (0,0) -- (3,0);
\draw[blue] (.5,-1) to[out=30,in=150] (2.5,-1);
\end{tikzpicture}
\end{equation*}
\begin{equation*}
\begin{tikzpicture}[baseline=2ex,scale=.5]
\draw[web,midto] (0,0) -- (1,1);
\draw[web,midto] (2,0) -- (1,1);
\draw[web,midto] (1,2) -- (1,1);
\draw[blue] (0,.75) to[out=-10,in=190] (2,.75);
\end{tikzpicture}
= 
\begin{tikzpicture}[baseline=2ex,scale=.5]
\draw[web,midto] (0,0) -- (1,1);
\draw[web,midto] (2,0) -- (1,1);
\draw[web,midto] (1,2) -- (1,1);
\draw[blue] (0,.75) to[out=45,in=135] (2,.75);
\end{tikzpicture}
\end{equation*}
\begin{equation*}
\begin{tikzpicture}[baseline=-.3ex,scale=.6] 
\draw[web,midto] (-1,-1) -- (0,0);
\draw[web,midto] (-1,1) -- (0,0);
\draw[web,midfrom] (0,0) -- (2,0);
\draw[web,midto]  (2,0) -- (3,1);
\draw[web,midto] (2,0) -- (3,-1);
\draw[blue] (-1,0) to[out=30,in=150] (3,0); 
\end{tikzpicture} =
\begin{tikzpicture}[baseline=-.3ex,scale=.6] 
\draw[web,midto] (-1,-1) -- (0,0);
\draw[web,midto] (-1,1) -- (0,0);
\draw[web,midfrom] (0,0) -- (2,0);
\draw[web,midto]  (2,0) -- (3,1);
\draw[web,midto] (2,0) -- (3,-1);
\draw[blue] (-1,0) to[out=-30,in=210] (3,0); 
\end{tikzpicture} 
\end{equation*}
Since the $A_1$ strands of $A_1 \times A_2$ correspond to $A_2$ cut paths, we can cite the following result:
\begin{lemma}[\cite{kuperberg:rank} Lemma 6.5]
\label{A1A2:Cutpath}
If $\al$ and $\beta$ are cut paths connecting boundary faces $p$ and $q$ of a basis web $w$ and $\al$ is minimal, then the weight of $\al$ is less than or equal to (and not incomparable to) the weight of $\beta$. If $\beta$ is also minimal, the two parts of $w$ cut by $\beta$ are the same as those of $w$ cut by $\al$ up to $H$-moves
\end{lemma}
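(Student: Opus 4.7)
The plan is to analyze the (combinatorial) disk $D \subset w$ bounded by $\alpha$ and $\beta$ between the endpoints $p$ and $q$, and use the fact that the weight of a cut path is essentially a discrete flux that can be computed by a Stokes-type sum over the interior vertices of $D$. Concretely, if we label each crossed single edge by $\pm \omega_1$ and each crossed double edge by $\pm \omega_2$ (signs determined by the path's orientation relative to edge direction), then there is a local identity at each trivalent $\mathfrak{sl}_3$ vertex that identifies the weight contributed by a path passing on one side with the weight on the other side, up to a correction $\sigma(v)$ that depends only on the vertex type and how the boundary of $D$ uses the adjacent edges. Summing these local corrections telescopes into the global identity $w(\beta) - w(\alpha) = \sum_{v \in D^\circ} \sigma(v)$.

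Next, I would argue that since $w$ is a basis web, the subweb $w \cap D$ has no reducible faces (no bigons, loops, or non-oriented squares), so its internal faces in $D$ are all hexagons or larger, and its internal vertices are trivalent with types consistent with the $\sl_3$ source/sink dichotomy. Minimality of $\alpha$ then forces a sign condition on $\sigma(v)$: if some $\sigma(v)$ lay outside the dominance cone, one could locally push $\alpha$ across $v$ to strictly decrease its weight or to produce an incomparable smaller weight, violating minimality. Pushing this local observation around $D$ by induction on the number of interior vertices shows that every $\sigma(v)$ is in the positive root cone, hence $w(\beta) - w(\alpha)$ is dominant and in particular comparable to zero, giving $w(\alpha) \leq w(\beta)$.

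For the second statement, assume $\beta$ is also minimal. Then equality $w(\alpha) = w(\beta)$ forces $\sum_v \sigma(v) = 0$; since each summand lies in the dominance cone by the previous step, every $\sigma(v) = 0$. The only vertices contributing zero correction are those belonging to square faces in $D$ with the orientation pattern of an $H$-face, and the no-reducible-configurations condition forces these squares to tile $D$ in a way compatible with $H$-moves (no bigons or cyclically oriented squares can appear along the way). A sequential $H$-move argument on a topological-leaf ordering of these squares converts the $\alpha$-side of $w$ into the $\beta$-side, establishing the claim.

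The main obstacle is the careful bookkeeping of signs and vertex orientations in the local identity that produces the sum $\sum_v \sigma(v)$, together with verifying that the sign condition on $\sigma(v)$ really is the correct local obstruction to minimality of $\alpha$. In practice one must handle several vertex-type cases (source/sink, incoming/outgoing single edges) and ensure that the induction to reduce $D$ does not accidentally push $\alpha$ through a face not in $D$; carefully choosing an ordering (say by peeling off ears of $D$ adjacent to $\alpha$) resolves this and is where the bulk of the actual work lies.
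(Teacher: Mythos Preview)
The paper does not give its own proof of this lemma; it is cited directly from \cite{kuperberg:rank} and used as a black box. The only comment the paper makes is that Kuperberg's proof actually shows one can get from $\beta$ to $\alpha$ by a sequence of $U$, $Y$, and $H$ moves on the cut path, each of which is weight-non-increasing. That is the standard argument: one does not compute $w(\beta)-w(\alpha)$ as a sum over interior vertices, but instead finds a local move on $\beta$ (pushing it across a single vertex or edge of the bounded region) that does not increase weight, and iterates until $\beta$ coincides with $\alpha$. The inequality and the $H$-move statement then fall out simultaneously.

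Your Stokes-type approach is a different route and, as sketched, has some genuine confusions about the setting. This is an $A_2$ lemma: there are no double edges, so there is no $\omega_2$-labeled crossing; every edge contributes a Weyl-orbit element of $\omega_1$. Likewise, in an $A_2$ basis web every internal square is reducible, so ``square faces with the orientation pattern of an $H$-face'' is not the right object here; the $H$-moves in the statement are moves of the cut path across an $H$-shaped pair of trivalent vertices, not applications of a square relator. Your claim that minimality of $\alpha$ forces each local correction $\sigma(v)$ into the positive root cone is the heart of the matter, but as stated it is circular: pushing $\alpha$ across a single vertex is exactly a $U$, $Y$, or $H$ move, and proving those are weight-monotone is the actual content. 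Once you check that directly (a small case analysis on vertex orientation), the inductive reduction of the region $D$ is the standard proof, and the telescoping-sum framing adds nothing.
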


In fact what the proof shows is that even if $\beta$ is not minimal, we can get from $\beta$ to $\al$ using a sequence of $U$ (type \ref{product:bigon}), $Y$ (type \ref{product:Y}) or $H$ (type \ref{product:H}) moves. To prove $E^1$ convergence we will essentially just apply this fact repeatedly. We will also implicitly use the confluence of the $A_2$ spider in assuming that non-elliptic webs form a basis.

\begin{prop}
The $A_1 \times A_2$ spider with the product presentation has an $E^1$ convergent spectral sequence
\end{prop}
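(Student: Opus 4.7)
The plan is to adapt the dimension-counting strategy used in the proof of \cref{E1:A1n}. By \cref{Canceling}, it suffices to exhibit a set of \emph{reduced} webs which span $FSp/\lan L(R)\ran$ and whose images in $Sp$ are linearly independent. I will call a web reduced if its underlying $A_2$ subweb is non-elliptic in the sense of \cite{kuperberg:rank} and its $A_1$ strands form a non-crossing matching with no closed loops. Since the category is the external tensor product of the $A_1$ and $A_2$ categories, we have $\dim \Hom_{Sp}(X,Y) = \dim \Hom_{A_1}(X_1,Y_1) \cdot \dim \Hom_{A_2}(X_2,Y_2)$, which counts exactly the pairs (non-elliptic $A_2$ web, non-crossing $A_1$ matching) with the correct boundary, i.e.\ the number of reduced webs.

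First, for spanning $FSp/\lan L(R)\ran$, I would show any web reduces to a reduced one modulo lower order terms. The $A_2$-only elliptic faces are handled by $A_2$ confluence, whose relators live entirely inside our presentation. For an offending $A_1$ configuration, an innermost bigon between two $A_1$ strands or an innermost $A_1$ loop bounds a disk $D$. I would use $A_2$ confluence to reduce the $A_2$ content of $D$ to a non-elliptic subweb, then repeatedly apply $Y$ and $H$ moves to sweep this $A_2$ content across the bounding $A_1$ strands out of $D$, strictly reducing the number of $A_2$ vertices inside at each step. The existence of such moves is the content of \cref{A1A2:Cutpath} and the strengthening noted immediately after it: the bounding $A_1$ strands act as cut paths in the surrounding $A_2$ web, and any two cut paths with shared endpoints can be brought together by $U$, $Y$, and $H$ moves. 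Once $D$ is empty of $A_2$ content, the pure $A_1$ reduction applies, using \cref{A1n:Evacuate} to first evacuate any $A_1$ strands crossing the innermost bigon.

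Second, for linear independence of reduced webs modulo $\lan L(R)\ran$, I would show that two reduced webs representing the same invariant differ only by horizontal relators. $A_2$ confluence forces their underlying $A_2$ subwebs to coincide as non-elliptic webs, and projection to the $A_1$ factor forces their $A_1$ matchings to coincide as well. Thus the two families of $A_1$ strands realize the same matching as cut paths in a common non-elliptic $A_2$ web, each minimal by reducedness. Modeling on \cref{A1n:singlestrand}, I would replace one $A_1$ strand at a time, invoking \cref{A1A2:Cutpath} to produce a sequence of $U$, $Y$, and $H$ moves (all horizontal relators of $A_1 \times A_2$) that transports the strand to its target position through the $A_2$ web while leaving all other strands fixed.

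The main obstacle I anticipate is the interference between $A_2$ reductions and $A_1$ manipulations during the ``sweep $A_2$ out of $D$'' step: pushing $A_2$ vertices out through the $A_1$ strands via $H$ and $Y$ moves must not create new $A_1$ self-intersections in the exterior of $D$, nor inflate other innermost $A_1$ bigons elsewhere in the web. The cleanest way to manage this is to always take $D$ innermost and to descend on a well-ordered invariant such as the lexicographic pair (number of $A_2$ vertices inside $D$, total vertex count of $w$), so that a well-founded induction carries the argument through without circularity.
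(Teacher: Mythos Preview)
Your overall strategy (equinumeration via the tensor product dimension formula) is the same as the paper's, but your definition of ``reduced'' is too weak and breaks the count. In $A_1 \times A_2$ there is a \emph{single} $A_1$ factor, so $A_1$ strands never meet one another; the tetravalent crossings are only between an $A_1$ strand and an $A_2$ edge. Hence ``non-crossing matching with no closed loops'' reduces to ``no $A_1$ loops,'' and there is no such thing as an innermost bigon between two $A_1$ strands in this spider. The real issue is that a fixed pair (non-elliptic $A_2$ web, $A_1$ matching) can be realized by many distinct webs, differing in how each $A_1$ strand is routed through the faces of the $A_2$ web. Already for a single $A_2$ $H$-shape crossed by one $A_1$ strand, ``above'' and ``below'' give two webs that are both reduced in your sense but count as one in the dimension formula; for larger $A_2$ webs the discrepancy grows without bound. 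So your identity ``number of reduced webs $=$ number of pairs'' is false, and in your independence step the phrase ``each minimal by reducedness'' is unjustified, since nothing in your definition forces an $A_1$ strand to follow a minimal cut path.

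The missing ingredient, which the paper supplies, is exactly this minimal cut path condition: declare a web minimal only if its $A_2$ part is non-elliptic \emph{and} every $A_1$ strand follows a minimal cut path in that $A_2$ web. The spanning argument then has a new case to handle (an $A_1$ strand on a non-minimal cut path, with no local $U$ face present), which is where the strengthened form of \cref{A1A2:Cutpath} is actually used. For independence one must further show that two minimal realizations of the same pair are connected by $H$ moves alone, since even minimal cut paths need not be unique; this is again the second clause of \cref{A1A2:Cutpath}. With those two additions your outline becomes the paper's proof.
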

\begin{proof}
We will show that minimal vertex webs are those whose $A_2$ component is non-elliptic, and whose $A_1$ strands follow minimal cut paths. 

We first show that webs with elliptic $A_2$ faces are reducible. Let $w$ be an $A_1 \times A_2$ web whose $A_2$ subweb has an elliptic face, ie a square, bigon, or loop. Fix one such face $f$. If there are no $A_1$ strands crossing the face or contained in the face, then we can apply the corresponding $A_2$ relator, so let's assume there is at least one crossing strand $s$. If one of the crossing strands is a closed $A_1$ loop contained in the face, find a minimal closed $A_1$ loop, and apply that relator. Otherwise, looking at the subweb which contains the $A_2$ subweb and only a fixed crossing $A_1$ strand $s$.
\begin{itemize}
	\item In the case where $f$ is an $A_2$ loop: $s$ forms a bigon with the $A_2$ strand.
	\item In the case where $f$ is an $A_2$ bigon: $s$ must form a $Y$ with both boundary vertices
	\item In the case where $f$ is an $A_2$ square: $s$ forms either a $Y$ or an $H$ with the boundary depending on whether it crosses adjacent edges or opposite edges.
\end{itemize}
Now returning to $w$, since $A_1$ strands can't intersect each other, we can find a minimal strand such that there are no strands between that strand and a $U$, $Y$, or $H$ relator face, and thus apply that relator to remove that strand from the elliptic face. Inducting on the number of crossing strands, we can remove all crossing strands, and then apply the relator.

Now assume the $A_2$ factor of $w$ has no elliptic faces, but there is at least one $A_1$ strand $s$ that doesn't follow a minimal cut path. Look at the minor composed of the $A_2$ subweb, and $s$. By the proof of lemma \ref{A1A2:Cutpath}, there exists a sequence of length-non-increasing relators sending $s$ to a minimal cut path $p_s$. Back in $w$, $p_s$ may cross other $A_1$ strands, but we'll argue by induction on the number of such crossings that we can make those strands not cross $p_s$.

Find a strand $s'$ which crosses $p_s$ minimally at $A_2$ faces $f_1$ and $f_2$ in the sense that $s'$ crosses $s$ at $f_1$, and then again at $f_2$, and there are no crossings of $s'$ and $p_s$ between $f$ and $f'$. We know $p_s$ is a minimal cut path, so every segment is also minimal length, so by replacing the segment of $s'$ between $f$ and $f'$ so as to follow $p_s$, we would get a cut path of equal or less length. Again by the proof of lemma \ref{A1A2:Cutpath}, we can do this by using length-non-increasing relators. Since, the area bounded by $s'$ and $p_s$ is strictly contained in the area bounded by $s$ and $p_s$, so we can apply induction to move $s'$ so as to not intersect $s$ and or $p_s$. Applying induction on the number of strands crossing $p_s$, we can make it so that no strands cross $p_s$ in $w$, but then using lemma \ref{A1A2:Cutpath} again we can move $s$ to $p_s$. Finally, applying induction on the number of non-minimal cut path strands we get that any minimal-vertex web has a non-elliptic $A_2$ component, and every $A_1$ strands follows a minimal cut path.

Next we will show that if we have two such webs $w$ and $w'$ whose $A_2$ component and $A_1$ components are the same, we can find a sequence of $H$ moves sending $w$ to $w'$. There is a correspondence between $A_1$ strands in $w$ and $w'$, and we can consider any strand in either follows a cut path in the $A_2$ component of $w$. We will induct on the number of times a cut-path coming from a strand in $w$ transversely intersects, or follows than diverges from a cut-path coming from a (possibly different) strand in $w'$. Pick an arbitrary pair of consecutive such crossing of cut-paths $\pi$ and $\pi'$ from $w$ and $w'$ (consecutive crossings must exist because the number of crossings of any two strands is even), which is minimal in the sense that there are no other $A_1$ strands in the bounded region. By the proof of lemma \ref{A1A2:Cutpath} we can move the corresponding segment of $w$ to the corresponding segment of $w'$. Applying induction we can make it so that any strands that whose cut-paths intersect must follow the same cut-path, and in particular corresponding strands are the same.

Elliptic webs are a basis for the $A_2$ invariant space, and planar matchings are a basis for $A_1$ invariant space. Invariants of the product are then products of invariants, and therefore we get equinumeracy between the minimal-vertex webs and the basis of the invariant space, and hence there can be no more relations among the minimal vertex webs. So the first page is the same as the representation category, which gives us $E^1$ convergence.
\end{proof}

\begin{remark} 
In the above proof we used (the proof of) the fact that a non-elliptic dual $A_2$ web has coherent geodesics as in \cite{fkk:buildings}. Conversely, given a spider $H$ such that $H \times A_1$ spider has $E^1$ convergence with the product presentation (and any grading finer than the weight poset length of strands), then it's not too hard to show that the equivalence class of a minimal web must have \textbf{coherent geodesics}. By this we mean that for every pair of boundary points, there is a representative web with a geodesic of length $\ell$ between those two points, and every path between those points in every web in the equivalence class has length greater than (in particular comparable to) $\ell$.
\end{remark}

\subsection{\texorpdfstring{$E^2$ Convergence for $A_1^n \times A_2$}{ Convergence for }}
We now move on to the rank $\geq 4$ spiders $A_1^n \times A_2$ for $n\geq 2$. These spiders have the same relators as $A_1 \times A_2$, plus the following additional two relators:
\begin{equation*}
\begin{tikzpicture}[baseline=0ex,scale=.5]
\draw[red] (0,0) -- (3,0);
\draw[blue] (0.5,-1) to[out=90,in=180] (1.5,1);
\draw[blue] (1.5,1) to[out=0,in=90] (2.5,-1);
\end{tikzpicture} = 
\begin{tikzpicture}[baseline=0ex,scale=.5]
\draw[red] (0,0) -- (3,0);
\draw[blue] (.5,-1) to[out=30,in=150] (2.5,-1);
\end{tikzpicture}
\end{equation*}
\begin{equation*}
\begin{tikzpicture}[baseline=1.5ex,scale=1] 
\draw[web] (0,0) -- (1,0); 
\draw[midto,web] (1,0) -- (2,0);
\draw[web] (2,0) -- (3,0);

\draw[blue] (0.5,-.5) -- (1,0); 
\draw[blue] (1,0) -- (1.5,.5);
\draw[blue] (1.5,.5) -- (2,1);

\draw[green] (2.5,-.5) -- (2,0); 
\draw[green] (2,0) -- (1.5,.5);
\draw[green] (1.5,.5) -- (1,1);
\end{tikzpicture}
 \;=\;
\begin{tikzpicture}[baseline=-1ex,scale=1,transform shape, rotate=180] 
\draw[web] (0,0) -- (1,0); 
\draw[midfrom,web] (1,0) -- (2,0);
\draw[web] (2,0) -- (3,0);

\draw[blue] (0.5,-.5) -- (1,0); 
\draw[blue] (1,0) -- (1.5,.5);
\draw[blue] (1.5,.5) -- (2,1);

\draw[green] (2.5,-.5) -- (2,0); 
\draw[green] (2,0) -- (1.5,.5);
\draw[green] (1.5,.5) -- (1,1);
\end{tikzpicture}
\end{equation*}
and as long as $n\geq 3$, we also get an extra type of triangle relator:
\begin{equation*}
\begin{tikzpicture}[baseline=1.5ex,scale=1] 
\draw[red] (0,0) -- (1,0); 
\draw[red] (1,0) -- (2,0);
\draw[red] (2,0) -- (3,0);

\draw[blue] (0.5,-.5) -- (1,0); 
\draw[blue] (1,0) -- (1.5,.5);
\draw[blue] (1.5,.5) -- (2,1);

\draw[green] (2.5,-.5) -- (2,0); 
\draw[green] (2,0) -- (1.5,.5);
\draw[green] (1.5,.5) -- (1,1);
\end{tikzpicture}
 \;=\;
\begin{tikzpicture}[baseline=-1ex,scale=1,transform shape, rotate=180] 
\draw[red] (0,0) -- (1,0); 
\draw[red] (1,0) -- (2,0);
\draw[red] (2,0) -- (3,0);

\draw[blue] (0.5,-.5) -- (1,0); 
\draw[blue] (1,0) -- (1.5,.5);
\draw[blue] (1.5,.5) -- (2,1);

\draw[green] (2.5,-.5) -- (2,0); 
\draw[green] (2,0) -- (1.5,.5);
\draw[green] (1.5,.5) -- (1,1);
\end{tikzpicture}
\end{equation*}

In the last chapter we will show  that $A_1^n \times A_2$ with the product presentation does \tit{not} have an $E^1$ convergent spectral sequence. Nevertheless, we can prove that it has an $E^2$ convergent spectral sequence by combining the results for $A_1 \times A_2$ and $A_1^n$. The key idea of the proof is that we can move each $A_1$ strand into the correct position with respect to the $A_2$ factor using $H$, $Y$, and $U$ moves. The other $A_1$ strands can interfere with each $A_1 \times A_2$ relator step, but only enough to force one extra crossing, which then can be fixed at the end of the $A_1 \times A_2$ relator step preventing unbounded growth. For the sake of this case, we will extend what we mean by a sequence of relators to include sequences which increase or decrease the number of vertices in the web. Since all relators in this spider have exactly two terms, this should not create too much confusion. 

\begin{prop}
\label{A1nA2:E2}
The $A_1^n \times A_2$ spider with the product presentation has an $E^2$ convergent spectral sequence
\end{prop}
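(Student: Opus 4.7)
The plan is to extend the $E^1$ argument for $A_1 \times A_2$ by processing one $A_1$ strand at a time, using the $E^1$ convergence of $A_1^n$ (Theorem \ref{E1:A1n}) to reroute interfering strands. I would characterize \emph{minimal-vertex} webs as those whose $A_2$ subweb is non-elliptic, whose each $A_1$ strand follows a minimal cut path in that $A_2$ subweb, and whose $A_1$ strands pairwise cross minimally, so that two minimal-vertex webs representing the same invariant are connected by vertex-count-preserving relators.

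The argument would proceed in two main stages. First, given any web I would produce a reduction to minimal-vertex form in which each intermediate step increases the vertex count by at most one before returning to or below the starting count. After fixing an ordering $s_1, \ldots, s_m$ of the $A_1$ strands I would process $s_k$ using the $A_1 \times A_2$ argument applied to the subweb consisting of the $A_2$ component together with $s_k$, producing $U$, $Y$, and $H$ moves that send $s_k$ onto a minimal cut path. In the ambient web, these moves are obstructed exactly when another $A_1$ strand $s_j$ crosses the relevant $A_2$ face; by Corollary \ref{A1n:singlestrand} I can push $s_j$ aside using $A_1^n$ triangle relators that involve only $s_j$, possibly incurring one extra crossing with a third $A_1$ strand, perform the $A_1 \times A_2$ move, and then restore $s_j$. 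The second stage, showing that any two minimal-vertex webs with the same invariant differ by vertex-count-preserving relators, follows by combining Lemma \ref{A1A2:Cutpath} (so that cut paths of corresponding $A_1$ strands agree up to $H$-moves) with the strand-matching argument from Theorem \ref{E1:A1n}.

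The hard part, and precisely the reason we obtain only $E^2$ rather than $E^1$, is the temporary extra crossing introduced while rerouting an interfering $A_1$ strand. To conclude $E^2$ convergence I would establish an analogue of Proposition \ref{Canceling} for the second page: every canceling sequence $s$ is equivalent, modulo relations of strictly smaller norm, to a sum of \textbf{lift-and-reduce} blocks in which a single $A_1 \times A_2$ relator is sandwiched between the creation and annihilation of one extra $A_1$-$A_1$ crossing, together with canceling sequences that lie entirely inside one of the $A_1^n$, $A_2$, or $A_1 \times A_2$ subspiders. The latter are consistently reducible by Theorem \ref{E1:A1n} and the $A_1 \times A_2$ proposition, while the lift-and-reduce blocks contribute a nontrivial $d^1$ differential but no $d^2$ or higher, giving $E^2$ convergence via the general spectral sequence criterion.
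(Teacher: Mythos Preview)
Your overall strategy in the first two paragraphs matches the paper's: characterize minimal-vertex webs, show reduction to them needs at most one extra vertex, and show that equivalent minimal-vertex webs are connected. Two points need correction.

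First, your evacuation mechanism for the $H$ move does not work as stated. You propose pushing an interfering $A_1$ strand $s_j$ out of the way with $A_1^n$ triangle moves, at the cost of an extra $A_1$--$A_1$ crossing with some third strand. But when $s_j$ runs \emph{parallel} to the $H$ configuration's own $A_1$ strand (crossing the same two $A_2$ edges), $A_1^n$ moves alone cannot remove $s_j$ from the bounded region: to exit it would have to cross an $A_2$ edge, which is not an $A_1^n$ relator. The paper's Lemma~\ref{A1nA2:Evacuation} instead applies an \emph{inverse $Y$ relator} to the $H$ configuration's own $A_1$ strand at one of the $A_2$ trivalent vertices, converting the $H$ into a $Y$ configuration at the cost of exactly one extra $A_1$--$A_2$ crossing; the $Y$ case then evacuates with horizontal relators only. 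So the $+1$ arises from an $A_1$--$A_2$ crossing, not an $A_1$--$A_1$ one. Note also that the equivalence-of-minimal-webs stage is not vertex-count-preserving as you claim: it too requires the $+1$, and there the paper must separately handle the case where a previously-placed strand $s_i$ (with $i\le k$) lies parallel to $s_{k+1}$ in an $H$ configuration, since the evacuation of Lemma~\ref{A1nA2:Evacuation} is not $\pi_{\le k}$-invariant in that situation.

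Second, your third paragraph takes a harder route than necessary. Once the first two stages establish that minimal-vertex webs span $E^2$ and that two such webs become equal in $E^2$ exactly when all their component projections agree, the paper finishes by \emph{equinumeration}: the equivalence classes of minimal-vertex webs are in bijection with tuples of $A_1$ and $A_2$ basis webs, whose count equals $\dim E^\infty$ by representation theory, forcing $E^2=E^\infty$. No second-page analogue of Proposition~\ref{Canceling} is needed, and the ``lift-and-reduce block'' decomposition you sketch does not appear in the paper.
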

The difference between the $n=1$ case and the $n>1$ case is that $A_1$ strands from different $A_1$ factors can intersect. We will first prove the following lemma which will limit how much the extra $A_1$ strands can interfere with relators:

\begin{lemma}
\label{A1nA2:Evacuation}
Let $w$ be a $A_1^n \times A_2$ web, and let $\pi_i$ be the projection onto the $i$th $A_1 \times A_2$ component. If $\pi_i(w)$ has a $U$ (type \ref{product:bigon}), $Y$ (type \ref{product:Y}), or $H$ (type \ref{product:H}) relator face there is a sequence of relators $r=(r_k)$ on $w$ in an $\ep$-neighborhood of the corresponding bounded region such that either $r(w)$ has a reducible face, or the relator face has no crossing strands. We also have the following properties:
\begin{enumerate}
	\item If the relator is an $H$ relator, $r$ can be chosen such that $|r_k...r_1(w)| \leq |w| +1$ for all $k<n$ and $|r(w)|=|w|$. If it is a $U$ or $Y$ relator, then $r$ can chosen to consist only of horizontal relators.
	\item If the relator is an $H$ relator, every relator includes a segment of the $A_1$ strand of the configuration, or an $A_1$ strand which parallel to it in an $\ep$-neighborhood of the configuration
\end{enumerate}
\end{lemma}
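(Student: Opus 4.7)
The plan is to handle the three cases ($U$, $Y$, $H$) via an evacuation induction on the number of \emph{intruding} $A_1$ strands, where an intruding strand means a strand from an $A_1$ factor $j \neq i$ that crosses into the bounded region $D$ of the target relator face in $\pi_i(w)$. Since $\pi_i$ only collapses $A_1$ strands from factors distinct from $i$, the $A_2$ subweb and the $A_1(i)$ strand $s$ bounding $D$ lift unchanged to $w$, and $D$ itself lifts to an open disk in $w$ whose boundary is a union of segments of $s$ and of $A_2$ edges. The goal at each step is either to produce a reducible face in $w$ or to apply a horizontal relator that strictly decreases the number of boundary crossings by intruders, so that induction eventually empties $D$ and the target relator becomes applicable.

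For the inductive step, I would choose an intruding strand $s'$ minimal in $D$: one cutting off a sub-disk of $D$ that contains no other intruder. Since $s'$ cannot terminate inside $D$, it enters and exits across $\partial D$, crossing it in at least two places. Case analysis by which pair of boundary segments these crossings lie in gives four possibilities: (a) two crossings of $s$, producing an $A_1(i)$-$A_1(j)$ bigon and hence a $U$ reducible face; (b) two crossings of the same $A_2$ edge, producing an $A_1(j)$-$A_2$ bigon and hence a $U$ reducible face; (c) two crossings of adjacent $A_2$ edges at a trivalent $A_2$ vertex, only possible when the ambient face is a $Y$ configuration, producing a $Y$ reducible face for $s'$; and (d) one crossing of $s$ and one crossing of an $A_2$ edge, producing a three-color triangle to which the triangle relator (\ref{product:triangle}) applies. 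Subcases (a)--(c) deliver a reducible face immediately; subcase (d) uses a horizontal triangle relator which slides $s'$ across the crossing of $s$ with the $A_2$ edge, strictly reducing the number of intruder crossings with $\partial D$. In the $U$ and $Y$ cases every relator in the sequence is horizontal, establishing the second half of property 1.

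For the $H$ case the same case analysis applies (with subcase (c) impossible because $\partial D$ consists of $s$ and a single $A_2$ edge), and property 2 is automatic because every triangle relator applied in subcase (d) has $s$ itself as one of its three sides; after the move the evacuated portion of $s'$ lies parallel to $s$ in an $\ep$-neighborhood of $D$. The slack of $+1$ in property 1 accommodates the moment between applying a horizontal triangle relator and commuting it out of $D$: the newly created parallel segment of $s'$ temporarily bounds an auxiliary bigon with $s$ before being absorbed by a subsequent horizontal move. The hard part will be the bookkeeping showing that these horizontal evacuation moves can always be ordered so that the running vertex count stays within one of $|w|$, and so that every applied relator touches either $s$ or a strand that, by prior moves, has become parallel to $s$ in the $\ep$-neighborhood of $D$. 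This requires careful attention to how triangle relators interact when several intruders are to be evacuated from the same corner of $D$, and is the main technical obstacle separating this lemma from the much easier statement for $U$ and $Y$ faces.
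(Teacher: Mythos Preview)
Your $U$ and $Y$ arguments are in the right spirit (the paper handles them by directly invoking the $A_1^n$ confluence result on the bounded region, which is slightly cleaner than your strand-by-strand evacuation, but morally equivalent). The genuine gap is in the $H$ case, and it is exactly the place where the lemma earns its $+1$.

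Your claim that for the $H$ face ``$\partial D$ consists of $s$ and a single $A_2$ edge'' is wrong: the bounded region of the $H$ configuration is a square whose boundary is the $A_1$ strand $s$ together with \emph{three} $A_2$ segments (two leg-segments and the middle bar). Consequently your case list (a)--(d) is incomplete. The case you miss is the one where an intruding strand $s'$ enters through $s$ and exits through the \emph{middle} $A_2$ edge. In that situation $s$ does not meet the middle edge, so there is no three-colour triangle available and your subcase (d) does not fire. (There is also the case where $s'$ crosses the two opposite leg-segments; this is handled separately, but is not the hard case.)

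This ``$s'$ through $s$ and the middle edge'' configuration is precisely the obstruction that forces the vertex count up by one. The paper's mechanism is not a horizontal triangle move on an intruder, as you suggest, but a $Y$-move applied to $s$ itself at one of the two trivalent $A_2$ vertices: this pulls $s$ across that vertex so that $s$ now crosses the middle edge and forms a $Y$ configuration with the \emph{other} trivalent vertex, at the cost of one extra crossing. One then invokes the already-proved $Y$ case to finish. Your description of the $+1$ as ``accommodating the moment between applying a horizontal triangle relator and commuting it out of $D$'' does not correspond to any actual step; the extra vertex comes from deliberately pushing $s$ through an $A_2$ vertex to change the combinatorial type of the configuration from $H$ to $Y$. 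Without this idea the $H$ case does not go through, and property~2 (which in the paper follows because every move after the $H\to Y$ conversion takes place inside a $Y$ whose $A_1$ side is $s$) is left unestablished.
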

\begin{proof}
We'll argue case by case that either there is a sequence of triangle relators in $w$ which remove all other strands from the bounded region, or we create a reducible face. We first restrict to an $\ep$-neighborhood of the configuration (which may cut some strands into multiple pieces if the original strand crossed the configuration more than once). Moreover, if there is a closed loop contained in the configuration, then in an $\ep$-neighborhood of that loop we have a $A_1^n$ web, and hence can apply \cref{E1:A1n} to the subweb to find as sequence of relators reducing it. So assume that there are no loops.
\begin{enumerate}
	\item If it is a $U$ relator then the bounded relation has the same relators as an $A_1^n$ web, so we can apply \cref{E1:A1n} to the subweb to find as sequence of relators reducing it.
	\item If it is a $Y$ relator we can assume each crossing strand  makes a triangle or $Y$ relator configuration with one of the corners, otherwise if it crossed the same edge twice we could apply the $A_1^n$ result again. Find a strand that is minimal with respect to a corner, in that no other $A_1$-strand crosses strictly between it and the corner. If it makes a triangle relator configuration with a corner, we can restrict to the region bounded by that strand and the corner where there are no trivalent vertices. Applying the $A_1^n$ result again we can remove the strand from the original $Y$ configuration. By induction, we can remove all such strands from the $Y$ configuration. On the other hand, if it makes a $Y$ configuration with a corner, then we can immediately apply induction to reduce the web.  
	\item If it is an $H$ relator with $A_1$ strand $s$, by the same arguments as above, we can assume that every strand crosses opposite edges of the bounded square region. Moreover, we can assume that it doesn't cross the two opposite edges of the $A_2$ factor, otherwise we could apply induction to remove it from the configuration. \tit{This is the point where we have to increase the number of crossings.} By the above there are no strands crossing the two opposite $A_2$ edges, so we can apply an inverse $Y$ relator to $s$ and one of the $A_2$ corners (increasing the number of crossings by $1$). Then $s$ forms a $Y$ configuration with the other $A_2$ corner for example as depicted in (\ref{increasingcrossings}). Applying the above result for $Y$ relators, we can reduce this subweb without increasing the number of vertices, giving us our result.
\begin{equation}
\label{increasingcrossings}
\begin{tikzpicture}[baseline=-.3ex,scale=.6]
\draw[midto,web] (-1,-1) -- (0,0);
\draw[midto,web] (-1,1) -- (0,0);
\draw[midfrom,web] (0,0) -- (2,0);
\draw[midto,web]  (2,0) -- (3,1);
\draw[midto,web] (2,0) -- (3,-1);
\draw[red] (.9,-1) -- (.9,1);
\draw[blue] (-1,0) to[out=30,in=150] (3,0);
\end{tikzpicture}\mapsto 
\begin{tikzpicture}[baseline=-.3ex,scale=.6]
\draw[midto,web] (-1,-1) -- (0,0);
\draw[midto,web] (-1,1) -- (0,0);
\draw[midfrom,web] (0,0) -- (2,0);
\draw[midto,web]  (2,0) -- (3,1);
\draw[midto,web] (2,0) -- (3,-1);
\draw[red] (.9,-1) -- (.9,1);
\draw[blue] (-1,0) to[out=-45,in=240] (.9,.2) to[out=60,in=150] (3,0);
\end{tikzpicture}
\end{equation}
\end{enumerate}
\end{proof}

\begin{proof}[Proof of \Cref{A1nA2:E2}]
Similar to $A_1 \times A_2$ we have a natural set of minimal vertex webs: those whose $A_2$ component is non-elliptic, whose $A_1$ strands follow all minimal cut paths, and any two $A_1$ strands cross at most once. We will show that all non-minimal-vertex webs can be reduced without increasing the number of vertices by more than $1$, and that any two minimal-vertex webs $w$ and $w'$ are equivalent via a sequence of webs which have at most one more vertex than $w$ and $w'$. 

Let's first prove that any web not of the above form is reducible. Let $w$ be a web \tit{whose $A_2$ component is elliptic} and hence has an elliptic face (which we recall is a square, bigon, or loop). Restrict to an $\ep$-neighborhood of this face, and induct on the number of vertices. If no strand intersects with the $\ep$-neighborhood of this face then we're done, so assume there is such a strand. Look at each $A_1 \times A_2$ component. By $E^1$ convergence of $A_1 \times A_2$, there is a sequence of relators $(r_i)$ which removes all strands from the elliptic face without increasing the number of vertices. The $(r_i)$ are of type $H$, $Y$, and $U$ relators. 

Applying \cref{A1nA2:Evacuation} to the relator face of $r_1$ in $w$ we get a sequence of relators either leading to a web $w'$ with $|w|=|w'|$ or a web with fewer vertices (and in either case not increasing the number of vertices by more than one). By induction, we can assume that it is the latter, in which case $w'$ possesses a relator corresponding to $r_1$, and the sequence of relators did not change the projection onto the chosen $A_1 \times A_2$ factor since all the relators in the sequence are contained in an $\ep$-neighborhood of the relator face of $r_1$, and any strands from the chosen $A_1$ factor which crossed the relator would make applying $r_1$ on the $A_1 \times A_2$ component impossible. Next, we apply this relator, and then apply induction on the length of $(r_i)$ to get a sequence of relators leading to a web where we may remove a strand from the elliptic $A_2$ face. Since the sequence of relators didn't increase the number of vertices at the end, removing a strand reduces the number of edges in an $\ep$-neighborhood of the elliptic face, and hence by induction we can remove all strands from the elliptic face. Applying the corresponding reduction relator then gives us our result.

If there is \tit{an $A_1$ strand that doesn't follow a minimal cut-path}, we know that in the corresponding $A_1 \times A_2$ component there is a sequence of relators which reduces the web. By the same argument as in the non-elliptic case, we get a sequence of relators in $w$, which gives us our result.

Now assume that the $A_2$ factor is non-elliptic, but \tit{there is an $A_1$ loop}. We can cut the web along the outside of the loop strand to get a subweb. Then the $A_1$ strand is a non-minimal cut path, and so by \cref{A1A2:Cutpath} we can find a sequence of relators in the $A_1 \times A_2$ factor which contracts the $A_1$ strand. Then by \cref{A1nA2:Evacuation} and the same argument as before, we can remove all trivalent vertices from the bounded region, and hence reduce the web.

If there are \tit{$A_1$ strands that cross more than once}, look at the strand-segments going from one crossing to the next. Cut along the outside of the bounded region to get a subweb where both strands can be considered cut paths on the $A_2$ factor. There is a sequence of relators in the $A_1 \times A_2$ component that send one cut path to the other again by \ref{A1A2:Cutpath}, so again by \cref{A1nA2:Evacuation} we can remove all trivalent vertices from the bounded region, and then apply the $A_1^n$ convergence result to the remaining bounded region. This completes the case where $w$ is reducible.

Lastly, we examine the case where we have two webs $w$ and $w'$ whose $A_1$ and $A_2$ components are all the same, and are minimal vertex webs. We want to argue that we can go from $w$ to $w'$ without increasing the number of vertices by more than $1$. The argument will be similar to before, except we need to be careful about which components we change. We first order the strands $(s_i)$ in the web $w$. We'll look at the projection $\pi_{\leq k}$ onto the $A_2$ component and first $k$ strands of $A_1^n \times A_2$. 

We'll induct on the number of faces in our web, and secondly on the largest $k$ such that $\pi_{\leq k}(w)=\pi_{\leq k}(w')$. We'll prove that we can move $s_{k+1}$ in $w$ to follow the same $A_2$ cut path as the version from $w'$ with every relator being $\pi_{\leq k}$ invariant, and without increasing the number of vertices by more than one. If $w$ has only one face, then the statement is vacuously true since the web is the empty graph. So assume that $\pi_{\leq k}(w)=\pi_{\leq k}(w')$ where $k$ can be zero. Via the isomorphism $\pi_{\leq k}(w)=\pi_{\leq k}(w')$ the strands $s_{k+1}$ on $w$ and $w'$ correspond to (minimal) cut paths on $\pi_{\leq k}(w)$. Using induction it suffices to assume that the two $A_2$ cut-paths follow the boundary, and intersect only their endpoints. By \cref{A1A2:Cutpath}, there exists at least one $H$ move $h$ on the $A_2$ component which reduces the number of trivalent vertices between the two cut-paths. By \cref{A1nA2:Evacuation} we can find a sequence of relators $r=(r_i)$ in $w$ such that we can apply that $H$ relator in $r(w)$, and moreover all relators are $\pi_{\leq k}$ invariant unless one of the first $k$ $A_1$ strands is parallel to $s_{k+1}$ in an $\ep$-neighborhood of the relator configuration $h$. So let's assume that there is such a parallel strand $s$, and let's also pick that $s$ to intersect the $H$-relator configuration $h$ closest to $s_{k+1}$ along one of the $A_2$ edges in the set of all such parallel strands. For example, in the diagram (\ref{A1nA2:closest}) below, we would require there to be no $A_1$ strands crossing the (without loss of generality) right-side $A_2$ strand segment between the top (blue) $A_1$ strand, and the bottom (red) $A_1$ strand.

\begin{equation}
\label{A1nA2:closest}
\begin{tikzpicture}[baseline=-.3ex,scale=.6] 
\draw[web, midto] (-1,-1) -- (0,0);
\draw[web, midto] (-1,1) -- (0,0);
\draw[web, midfrom] (0,0) -- (2,0);
\draw[web, midto]  (2,0) -- (3,1);
\draw[web, midto] (2,0) -- (3,-1);
\draw[blue] (-1,.4) to[out=30,in=150] (3,.4); 
\draw[red] (-1,0) to[out=15,in=150] (3,.1); 
\end{tikzpicture}
\end{equation}

The strands $s$ and $s_{k+1}$ can't be globally non-intersecting on $w$, otherwise $s$ and $s_{k+1}$ would intersect twice on $w'$, and hence they must intersect somewhere in $w$ outside the $\ep$-neighborhood, for example as in diagram (\ref{A1nA2::outside}) below:

\begin{equation}
\label{A1nA2::outside}
\begin{tikzpicture}[baseline=-.3ex,scale=.6] 
\draw[web, midto] (-1,-1) -- (0,0);
\draw[web, midto] (-1,1) -- (0,0);
\draw[web, midfrom] (0,0) -- (2,0);
\draw[web, midto]  (2,0) -- (3,1);
\draw[web, midto] (2,0) -- (3,-1);
\draw[blue] (-2,-.8) to[out=60,in=210] (-1,.4) to[out=30,in=150] (3,.4); 
\draw[red] (-2,-.4) -- (-1,0) to[out=15,in=150] (3,.1); 
\end{tikzpicture}
\end{equation}

Now the cut paths on the $A_2$ component corresponding to $s$ and $s_{k+1}$ intersect both at the $A_2$ face corresponding to their crossing, along with the $A_2$ face which contains the $H$ relator. The subweb bounded by the two strands is strictly smaller than $w$, so by the vertex induction we can move $s_{k+1}$ to follow $s$ on this segment in $w$ with all relators being invariant under $\pi_{\leq k}$. However, by doing this we may have to move $s_{k+1}$ over some $A_2$ trivalent vertices, which could mess up the algorithm. However, we've already restricted to the case where all $A_2$ vertices are bounded between the cut-paths corresponding to $s_{k+1}$ on $w$ and $w'$, so any such relator reduces the number of bounded vertices, which would allow us to apply induction.


Then since there are no vertices in the region bounded by the two strands $s$ and $s_{k+1}$, we can use triangle relators to move the crossing of $s$ and $s_{k+1}$ to occur in an $\ep$-neighborhood of the $H$-relator configuration, and hence make it non-parallel in the $\ep$-neighborhood. Since all of these relators involve $s_{k+1}$, they are $\pi_{\leq k}$-invariant too. So by repeating this process for each parallel strand, we can apply the $H$ relator by \cref{A1nA2:Evacuation}. This reduces the number of vertices bounded by the two cut-paths corresponding to $s_{k+1}$ on the $A_2$ factor, so we can apply induction on the number of vertices. Hence, we can assume $s_{k+1}$ gives the same $A_2$ cut-path for $w$ and $w'$, and hence the region bounded by the cut-paths in $\pi_{\leq k}(w)=\pi_{\leq k}(w')$ contains no trivalent vertices. Therefore, this bounded region has the same relators as an $A_1^n$ spider, so by \cref{A1n:singlestrand} there is a sequence of relators in $\pi_{\leq k+1}(w)$ moving $s_{k+1}$ to the same place as the $s_{k+1}$ in $\pi_{\leq k+1}(w')$ which is invariant under $\pi_{\leq k}$. We can then use \cref{A1n:Evacuate} to extend this to a sequence of relators on $w$ (possibly at the expense of moving some other strands $s_i$ with $i>k+1$), which completes the proof that we can go from $w$ to $w'$ as desired.

Finally we need to connect this to the spectral sequence. We have given a generating set of webs for the second page $E^2$ and shown that they are uniquely determined by their projections onto the components. Therefore, the number of such webs is equal to the product of the number of corresponding $A_1$ webs and $A_2$ webs for each component, but we know from the representation theory that this is the dimension of the invariant space. Hence, $E^2 = E^\infty$.

\end{proof}

\section{Simple Spiders Using Canceling Sequences}

\subsection{A Criterion for \texorpdfstring{$E^1$ Convergence in Rank $3$}{ Convergence in Rank }}
We will now change gears a bit and create a list of properties that are sufficient for a spider to have $E^1$ convergence. We'll see that we've already proven some of these in the case of $A_3$. To do this, we generalize the notion of types of relators. For us, a \tbf{type function} is just an arbitrary map $\tau: Rel \to \lb 0, 1 \rb$. For the contracted $A_3$ spider, we will send the triangle relators to $\ol{0}$ and the square relators to $\ol{1}$ 

To prove the theorem, we break first it into three cases: horizontal sequences on reducible webs, horizontal sequences on reduced webs, and reduction sequences. For the first case, we'll argue that the sequence is equivalent to one which has a reducible face on some web, and then reduce it to the reduction sequence case. The second case will basically follow immediately from one of the criteria (and so is postponed until later). For the last case, we'll control the sequence enough to make sure that the reduction relator is adjacent to exactly one of the horizontal relators, then change the sequence so that the last horizontal relator becomes a reduction relator on this face. This then commutes with all other relators, and hence may be omitted by lemma \ref{commute out}.

\begin{thm}
\label{CC}
Any spider presentation with trivalent and/or tetravalent vertices, whose relators correspond to faces, and which fulfills the following criteria has a spectral sequence that converges at the $E^1$ page:
\begin{enumerate}
	\item Relators: Each face is a leading order term of at most one local relator, and has at most two leading order terms
	\item Purity: $Rel$ admits a type function $\tau$ such that if two consecutive horizontal relators $s_1$ and $s_2$ of different type in a relator sequence don't commute, then applying $-s_1$ or $s_2$ will create a reducible face (and in particular the corresponding reduction sequence $(r,-s_1,s_2,r')$ will be consistently reducible by local confluence below).
	\item Global Reducibility: On any web without a reducible face, no more than one type of the horizontal relator can destroy a minimal reducible configuration of strands, and every web with a reducible configuration of strands is reducible.
	\item Simply-connected: Any horizontal sequence on a reduced web which has relators with lower order terms is equivalent to a horizontal sequence where two consecutive relators are inverses of each other.
	\item Controlled degeneration: Any two reducible configurations whose intersection is disconnected, must have a reducible configuration contained in the bounded region.
	\item Local confluence: Any canceling sequence on a web with two faces is consistently reducible.
\end{enumerate}
\end{thm}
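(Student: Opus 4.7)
The plan is to apply \cref{Canceling}, which reduces $E^1$ convergence to verifying that every canceling sequence is consistently reducible. Following the outline preceding the theorem, I would split into three cases: horizontal sequences on reduced webs, reduction sequences, and horizontal sequences on reducible webs. The easiest is horizontal sequences on reduced webs: criterion 4 (simply-connected) supplies an equivalent horizontal sequence with two consecutive inverse relators. These cancel via the basic fact about inverse relators from the previous chapter, yielding a shorter sequence; induction on length reduces matters to the two-face situation handled by criterion 6 (local confluence), completing this case.

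For a reduction sequence $(s_0, s_1, \ldots, s_n, s_{n+1})$, I would first invoke purity (criterion 2) to commute horizontal relators of different types past each other wherever possible, and use controlled degeneration (criterion 5) together with global reducibility (criterion 3) to guarantee that the minimal reducible configuration of strands corresponding to $s_{n+1}$ persists back through the sequence. The goal is to arrange that at most one of the horizontal relators is actually adjacent to the face being reduced. Replacing this last horizontal relator by a reduction relator on the modified face produces a relator that commutes with every previous relator in the sequence; \cref{commute out} then lets us commute it out, shortening the sequence. Induction on length, with base case again supplied by local confluence, finishes this case.

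For a horizontal sequence on a reducible web, I would argue that after suitable equivalences the sequence must pass through some intermediate web that already possesses a reducible face. Global reducibility tracks how the strand configurations evolve under each relator application, and controlled degeneration rules out the pathological case where two overlapping reducible configurations leave a disconnected intersection. Together these force a reducible face to materialize at some stage of the sequence. Splitting the sequence at that point decomposes it as a horizontal prefix, a reduction sequence, and a horizontal suffix, and so the problem reduces to the reduction-sequence case already handled.

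The main obstacle I expect is the reduction-sequence case: the combinatorics of simultaneously enforcing purity, tracking the evolution of the minimal reducible configuration, and maintaining the single-adjacency condition at the final reduction step is delicate. The type function of criterion 2 is the key device, since it prevents the two types of horizontal relator from cooperating to destroy the minimal reducible configuration, while controlled degeneration provides the structural backbone that keeps the strand configurations well-behaved throughout the sequence.
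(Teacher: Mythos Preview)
Your overall plan follows the paper's three-case outline, and the reduced-web horizontal case is correct. But two of the cases have genuine gaps.

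\textbf{Horizontal sequences on reducible webs.} Your claim that the sequence itself ``must pass through some intermediate web that already possesses a reducible face'' is not justified, and in general is false: the webs $w_i=s_i\cdots s_1(w)$ may all be reducible without any of them having a reducible \emph{face}. The paper's mechanism is different. After reducing to a single-type sequence via purity, one uses Lemma~\ref{purity} on each $w_k$ to obtain a minimal sequence $(t^k_i)$ of relators of the \emph{other} type leading to a reducible face; taking $k$ with $m_k$ minimal, one argues that every $t^k_i$ commutes with every $s_j$ (else purity would produce a shorter such sequence), and then Lemma~\ref{horiz comm} transports the entire horizontal sequence by the $t^k_i$ to an equivalent one in which some web does have a reducible face. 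Also, your proposed decomposition ``horizontal prefix, reduction sequence, horizontal suffix'' is not how one passes to a reduction sequence: since $w_0=w_n$, one instead cyclically shifts and bookends with a reduction relator, obtaining $(r,s_{k+1}',\ldots,s_n',s_1',\ldots,s_k',-r)$.

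\textbf{Reduction sequences.} Your plan to ``replace the last horizontal relator by a reduction relator that commutes with every previous relator'' glosses over precisely the case where the argument is hard. The paper tracks two reducible configurations, $C'$ near $s_0$ and $C$ near $s_{n+1}$, and uses global reducibility plus a minimality function $f(i)=n-i$, $f'(i)=i$ to show that \emph{every} $s_i$ must be adjacent to both $C$ and $C'$; in particular $C$ and $C'$ are adjacent. One then replaces $s_n$ by $s_n'$ of the other type on the complementary face of $C$; purity forces $s_n'$ to commute with all $s_i$ (else decompose), so either it also commutes with $s_0$ and Lemma~\ref{commute out} shortens the sequence, or it does not, which forces $C$ and $C'$ to be adjacent at a \emph{second}, disconnected location. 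Only at this point does controlled degeneration enter: it supplies a reducible configuration strictly inside the region bounded by $C$ and $C'$, enabling a decomposition into a sequence on strictly fewer faces. You invoke controlled degeneration earlier and for the wrong purpose (persistence of configurations is global reducibility's role), and you do not supply the two-point adjacency argument that is the actual trigger for criterion~5.
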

\begin{remark}
Let us quickly discuss which of these relators seem necessary.

In order to get the relator condition, we may need to contract edges corresponding to $I=H$ relators, but otherwise the first part seems to hold for all examples we care about. The second part fails in general for all higher rank spiders (besides some semisimple cases).

The purity criterion is fundamental to the proof. It is not satisfied by the $A_1^2 \times A_2$ spider whose webs correspond to placing two $A_1$ webs and one $A_2$ web transversely on top of each other. As expected, this spider fails to have any finite presentation with the $E^1$ property.

The global reducibility criterion is used for bookkeeping in the proof. It's not clear whether the proof could be generalized without it.

The simply-connected condition is stronger than we need, but seems to hold for all rank 3 spiders, but not for higher rank spiders.

The controlled degeneration criterion appears necessary. It is satisfied by neither the annular $A_3$ spider nor the $A_2 \times A_2$ spider. Correspondingly, these two spiders do not have finite presentations with $E^1$ convergence

Finally, local confluence is certainly necessary for $E^1$ convergence, although a failure here can be repaired by adding in relators.
\end{remark}

Before getting to the proof, we need a generalization of lemma \ref{A3 Purity}:
\begin{lemma}
\label{purity}
Given the hypotheses of theorem \ref{CC}, for any reducible web $w$, there is a minimal sequence of horizontal relators $(s_i)$ such that $s_n...s_1(w)$ has a reducible face, the type of every horizontal relator in the sequence is the same, and this type can be chosen to be either one of the two types. Moreover, every reduction sequence decomposes into a reduction sequence $s'$ which contains only one type of horizontal relator, along with reduction sequences of the form $(r_0,s_1,s_2,r_3)$ where $s_1$ and $s_2$ are non-commuting horizontal relators
\end{lemma}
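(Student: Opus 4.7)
The plan is to mirror the proof of Proposition \ref{A3 Purity}, with the abstract criteria of Theorem \ref{CC} playing the role of the $A_3$-specific lemmas: criterion 2 (purity) will replace Lemma \ref{collision}, and criterion 3 (global reducibility) will replace Lemma \ref{purity lemma}. Fix a minimal sequence $(r_1,\ldots,r_n)$ of horizontal relators on $w$ such that $r_n\cdots r_1(w)$ has a reducible face. The first observation is that any two adjacent relators $r_i,r_{i+1}$ of different type in this sequence must commute; otherwise, criterion 2 would produce a reducible face on $r_{i-1}\cdots r_1(w)$ upon applying only one of the two relators, contradicting minimality. Using Lemma \ref{commute out} and the commutation equivalences of Section 3.4, we can therefore bubble-sort the last relator of the ``unwanted'' type (say type $1$) all the way to position $n$.

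The key second step is a generalization of Lemma \ref{purity lemma}: if applying a type-$1$ horizontal relator to a reduced web creates a reducible face, then some type-$0$ horizontal relator on the same web also creates a reducible face (possibly after a short sequence of further type-$0$ moves). To establish this, I would note that $r_{n-1}\cdots r_1(w)$ is reducible---since criterion 3 says every web with a reducible face has a reducible configuration of strands, and horizontal relators cannot create such a configuration from nothing---so it carries a minimal reducible configuration of strands. Criterion 3 then forbids both types of horizontal relator from destroying this configuration, so a compatible type-$0$ relator $r'$ exists whose effect on the configuration parallels $r_n$'s; substituting $r'$ for $r_n$ yields a sequence with one fewer type-$1$ relator. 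Induction on this count produces a minimal sequence containing only type-$0$ relators, and the symmetric argument lets us choose type $1$ instead.

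For Part 2, given an arbitrary reduction sequence $(s_0,s_1,\ldots,s_n,s_{n+1})$, I would inspect the horizontal middle $(s_1,\ldots,s_n)$. Whenever an adjacent cross-type pair commutes, I swap them using the equivalences of Section 3.4 to group same-type runs together, which does not change the sum modulo lower-order terms. Whenever an adjacent cross-type pair $(s_i,s_{i+1})$ does not commute, criterion 2 produces a reducible face on the intermediate web between them, yielding a reduction relator $r$ that, together with its inverse $-r$, splits the original sequence in the sense of the ``decomposes into'' operation defined in Section 3.4. The resulting four-term block $(r_0,s_i,s_{i+1},r_3)$ is exactly the non-commuting cross-type piece named in the statement, and the outer pieces are shorter reduction sequences on which we iterate. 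What remains after extracting all such blocks is a reduction sequence $s'$ in which every adjacent cross-type pair of horizontal relators commutes; a final rearrangement then collapses its horizontal middle to a single type.

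The hard part will be making rigorous the abstract version of Lemma \ref{purity lemma} in the second step, since criterion 3 only records which type of horizontal relator can destroy a minimal reducible configuration and does not directly tie this to the type that creates the terminal reducible face. In the $A_3$ setting this link was transparent from the local geometry of adjacent faces; in general I expect to need criterion 5 (controlled degeneration) to force a single reducible configuration to be responsible for the terminal face, so that the ``non-destroying'' type emerges as the natural candidate for the replacement relator $r'$.
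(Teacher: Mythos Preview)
Your overall approach matches the paper's: mirror Proposition~\ref{A3 Purity} for the first claim (with criterion~2 in place of Lemma~\ref{collision} and criterion~3 in place of Lemma~\ref{purity lemma}), and for the second claim commute wrong-type relators toward the end and split off four-term pieces at non-commuting cross-type pairs. You also correctly flag the genuine sticking point---the abstract analogue of Lemma~\ref{purity lemma}---which the paper handles only by saying ``the proof of the first claim is the same as in proposition~\ref{A3 Purity}.''

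There is, however, a concrete gap in your Part~2 endgame. You write that once every adjacent cross-type pair commutes, ``a final rearrangement then collapses its horizontal middle to a single type.'' Commuting alone cannot do this: it preserves the multiset of types, so after any rearrangement you still have both types present. The paper's argument is different here. It commutes the last wrong-type relator $s_k$ forward; if it reaches position $n$ and commutes with the reduction relator $s_{n+1}$, it is removed via Lemma~\ref{commute out}; if it does \emph{not} commute with $s_{n+1}$, the paper invokes exactly the replacement step from Part~1 to produce a relator $s_n'$ of the opposite type on $w_{n-1}$ with $s_n'(w_{n-1})$ having a reducible face, then splits off the four-term piece $(-r,-s_n',s_n,s_{n+1})$ and continues on the shorter sequence $(s_0,\ldots,s_{n-1},s_n',r)$. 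In other words, the type-conversion you need in Part~2 is not a rearrangement but precisely the Part~1 replacement, used at the interface with the terminal reduction relator.

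A smaller point: you say ``criterion~3 then forbids both types of horizontal relator from destroying this configuration.'' Criterion~3 only says that at most one type can destroy a minimal reducible configuration on a web without a reducible face; it does not rule out one of the two. This matters for how you argue the existence of the replacement relator $r'$.
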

\begin{remark}
The $(r_0,s_1,s_2,r_3)$ correspond to only two faces, and so reduce consistently by local confluence.
\end{remark}
\begin{proof}
The proof of the first claim is the same as in proposition \ref{A3 Purity}. For the second claim, we have a reduction sequence $s=(s_0,s_1,...,s_n,s_{n+1})$ where $s_1$ is a relator on a web $w$. If $s_n$ and $s_{n+1}$ commute then we can remove the $s_n$ by lemma \ref{commute out}, so we can assume that they do not commute. Then there must be a relator $s_n'$ of a different type than $s_n$ which when be applied to $w_n$ creates a reducible face. Then $s$ decomposes into $s'=(s_0,s_1,...,s_n',r)$ and $(-r,-s_n',s_{n},s_{n+1})$ for some reduction relator $r$. We claim that we can make all the horizontal relators in $s$ be of type $\tau(s_n)$ or $\tau(s_n')$. Let $s_k$ have the largest $k$ of relators of type different from $\tau(s_n)$. We can commute it to a later position, so we can assume that $s_k$ and $s_{k+1}$ don't commute, or $k=n$. In the former case, we get a decomposition of $s$ into $(s_0,s_1,...,s_k,r_1)$, $(-r_1,-s_k,s_{k+1},r_2)$, and $(-r_2,s_{k+2}..,s_n,s_{n+1})$ for some reduction relators $r_1$ and $r_2$. $(s_0,s_1,...,s_k,r_1)$ and $(-r_2,s_{k+2}..,s_n,s_{n+1})$ have shorter length, so we can apply induction. On the other hand, if $k=n$, then if $s_k=s_n$ and $s_{n+1}$ commute, then it is equivalent to a sequence without $s_n$ by lemma \ref{commute out}. Otherwise, we get a decomposition of $s$ into $(s_0,s_1,...,s_n'',r_3)$ and $(-r_3,-s_n'',s_n,s_{n+1})$ for some horizontal relator $s_n''$ of the same type as the original $s_n$, and $r_3$ some reduction relator. $(s_0,s_1,...,s_n',r_3)$ has more type $\tau(s_n)$ relators than $s$, so we can repeat this algorithm until $s$ has only one type of relator. Doing the same thing to $s'=(s_0,s_1,...,s_n',r)$ will give us a sequence of the other type.
\end{proof}

\begin{proof}[Proof of Theorem \ref{CC}] 
First let's assume without loss of generality that type $0$ relators don't affect the reducible configurations.

By proposition \ref{Canceling}, it is sufficient to prove that all horizontal and reduction sequences reduce consistently. We induct first on the skein grading, then on the number of horizontal relators (for those webs with the same number of vertices). First we will reduce to the reduction sequence case by proving that for any horizontal canceling sequence $s$, there is a reduction sequence $s'$ such that $d(\sum s) = d(\sum s')$ with the same number of horizontal relators. Then we will prove the reduction sequence case.

If $s=(s_i)$ is a \tbf{horizontal sequence on a reducible web}, assume by contradiction that $s$ doesn't reduce consistently and that it has the shortest length of any minimal canceling sequence with this property. By purity, we know that we can decompose the sequence into types, unless some relators of different types don't commute. In that case, after commuting two such relators adjacent to each other, applying either of those non-commuting relators will create a reducible face. In other words, we can take $s_1$ and $s_n$ to not commute, and hence there exists a reduction relator $r$ such that $(r,s_1,...,s_n,-r)$ is a reduction sequence with the same number of horizontal relators, and thus we can reduce to the reduction sequence case. If, on the other hand, the sequence has relators of only one type, by lemma \ref{purity} we know that for each $k$ there is a minimal sequence of relators $(t_i^k)_{i=1}^{m_k}$ on $w_k=s_k...s_1(w)$ which are of a different type than $s_i$ and lead to a reducible face. Now take $k$ such that $m_k$ is minimal. We claim that each $t_i^k$ commutes with all $s_j$. Otherwise, let $t_i^k$  (where $0<i<m_k$) be the first relator which doesn't commute with all relators in $s$, and let $s_j$ be the first relator in $s$ which doesn't commute with $t_i^k$. Take each $t'^k_i$ (where $0 \leq 0 \leq i$) to be defined as the relator equivalent to $t^k_i$ on $w_j$. Then $t'^k_{i-1}...t'^k_1(w_{j+1})$ has a reducible face since $t'^k_{i}$ and $s_j$ don't commute, so this contradicts the fact that $m_k$ is minimal. Hence, the $t_i^k$ commute with all $s_i$, and so applying the sequence to all the relators, we get a horizontal sequence $s'$ such that $w_k$ has a reducible face, and is equivalent to the original sequence by lemma \ref{horiz comm}. This is a reduction sequence $(r,s_{k+1}',...s_n',s_1',...s_{k}', -r)$ with the same lower order terms as $s$.

If $s=(s_i)$ is a \tbf{horizontal sequence on a non-reducible web} then by the simply-connectedness criterion, it is equivalent to a sequence with consecutive inverse relators. Canceling the inverse relators, and applying induction, we get our result.

Finally, if $s=(s_0,s_1,...,s_n,s_{n+1})$ is a \tbf{reduction sequence}, then by applying lemma \ref{purity}, we can decompose it into a sequence composed purely of relators of the type which doesn't alter reducible configurations together with small sequences which are consistently reducible. On $w_{n} := s_{n}...s_1(w)$, $w_0:=w$ (where $w$ is the leading term of $s_0$), we have a reducible face along with its associated configuration of strands $C$. Symmetrically, we also have such a reducible configuration of strands $C'$ on $w$. By the global reducibility property, we know that all the webs $w_i$ continue to have these two configurations of strands. We will use two different arguments to show that $C$ and $C'$ at two different place.

Define functions $f,f': \lb 0,...,n \rb \to N$, where $f$ is the minimum number of relators required to create a reducible face in an $\ep$ neighborhood of $C$ on the web $w_{n}$, and similarly for $f'$ and $C'$. By global reducibility, these numbers are finite. If for any $i =1,...,n$, $f(i)<min\lb i,n-i \rb$, then there exists a sequence $(r_1,...,r_{f(i)})$ such that $r_{f(i)}...r_1(w_i)$ has a reduction face $r$, and hence we can decompose $s$ into $(s_0,...,s_i,r_1,...,r_{f(i)},r)$ and $(-r,-r_{f(i)},...,-r_{1},s_{i+1},...,s_{n+1})$ which are both shorter reduction sequences and hence we can apply induction. So by symmetry we get $min\lb f(i), f'(i) \rb \geq min\lb i,n-i \rb$, and the sequence $s$ itself gives us $ f'(i) \leq i$ and $f(i) \leq n-i$, we get $f'(i)=i$ and $f(i)=n-i$. In particular, we know that $\ep$ neighborhoods around $C$ and $C'$ both change with every relator, and thus $C$ and $C'$ must be adjacent.

We also know that on $w_{n}$, if there are no strands crossing $C$ then it is reduction relator face, and we can decompose $s$. On the other hand, on $w_n$, the number of crossing strands must be zero, and therefore $s_n$ involves moving a strand off of $C$. So this strand on $w_{n-1}$ cuts $C$ into two faces, with $s_n$ applied to one of the two which we denote by $f$. By lemma \ref{purity}, there is another horizontal relator $s_n'$ on the second face $f'$ of $C$ (in $w_{n-1}$), and we can find a new sequence from $s$ by replacing $s_n$ by $s_n'$, getting a new sequence $(s_0,....,s_n',s_{n+1}')$ where $\tau(s_n) \neq \tau(s_n')$. Since $s_n'$ is a different type than the other $s_i$ for $0<i<n+1$, it must commute with all of these $s_i$ or else we would get a reducible face, and be able to decompose this sequence. If it also commuted with $s_0$, then we could commute it out by fact \ref{commute out}, and shorten the sequence, so assume that $(-s_1)...(-s_{n-1})(s_n')$ and $s_0$ do not commute. Thus the face corresponding to $C'$ (the face of $s_0$) is adjacent to the face of $(-s_1)...(-s_{n-1})(s_n')$ on $w$. Hence $C'$ and $C$ are adjacent or the same (one can't be contained in the other since reduction relators correspond to faces, and faces have no internal edges). However, if $C$ and $C'$ were the same, then $s_n'$ and $s_1$ wouldn't have commuted which violates our assumption, and hence $w_{n-1}$ will look something like the following (with potentially more adjacencies and strands crossing $C'$):

$$
\begin{tikzpicture}[baseline=-.3ex,scale=1] 
\node at (-.85, 0)  (f)     {f}; 
\node at (.5, 0)  (f')     {f'}; 
\draw[midto,web] (-1,-1) -- (1,-1); 
\draw[web] (-1,-1) to[out=120,in=240] (-1,1)node[anchor=south east] {C};
\draw[midfrom,web] (-1,1) -- (1,1);
\draw[web] (1,-1) to[out=60,in=-60] (1,1);
\draw[web,red] (-.2,-1.5) -- (-.2,1.5); 
\draw[midto,web] (1.6,-1) -- (3.6,-1); 
\draw[web] (1.6,-1) to[out=120,in=240] (1.6,1);
\draw[midfrom,web] (1.6,1) -- (3.6,1)node[anchor=south west] {C'};
\draw[web] (3.6,-1) to[out=60,in=-60] (3.6,1);
\end{tikzpicture}
$$

Where $f$ corresponds to $s_n$ which is the type $0$, and $f'$ corresponds to $s_n'$ which is type $1$, but by the previous argument also know $s_n$ must affect $C'$, and hence the face $f$ must also be adjacent to $C'$. Since $f$ and $f'$ are disjoint, we get that $C$ and $C'$ are adjacent at disconnected points.

Then by controlled degeneration, there are relators $t_i$ of a different type than $s_i$ such that $t_k....t_1(w)$ has a reducible face. If $t_i$ is the first relator which doesn't commute with all the $s_j$, then by lemma \ref{horiz comm}, we get the sequence $(s_i') \equiv (s_i)$ where $s_i'=t_{j-1}...t_1(s_i)$, but there is a reduction relator on $t_{j-1}...t_1(w_j)$ by purity, and hence we can decompose the sequence. If they all commute, then we can apply them all and get a reducible relator $r$ contained in the region bounded between $C$ and $C'$. Decomposing $s$ as $(s_0,r)$ and $(-r,s_1,...,s_n)$, we see the latter is in a neighborhood around $C'$ together with the bounded region. This neighborhood has at least one fewer face, and hence a smaller skein grading, so we can apply induction.

We have now reduced all possibilities to reduction sequences coming from purity, and reduction sequences consisting of two reduction relators, hence giving us our result.

\end{proof}

\subsection{\texorpdfstring{$E^1$ Convergence for $A_3$}{ Convergence for }}
The purpose of this section is to prove that the $A_3$ spider defined earlier satisfies the criteria of theorem \ref{CC}.

\begin{thm}
The (both contracted and uncontracted) $A_3$ spider define above has a spectral sequence which converges at $E^1$
\end{thm}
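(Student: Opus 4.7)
The plan is to apply Theorem \ref{CC} to the contracted $A_3$ spider. By Propositions \ref{excise} and \ref{extend strand} it suffices to prove the criteria for the contracted version $S(w)$ on webs with only single edges on the boundary, so I restrict to this setting throughout. Take $\tau$ to send every horizontal triangle relator (\ref{Kekule C}) to $\ol{0}$ and every horizontal square relator (\ref{SS C}) to $\ol{1}$. I will verify each of the six hypotheses of Theorem \ref{CC} in turn.

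Criteria 1--3 are essentially in hand. Criterion 1 (Relators) is immediate from inspection of the contracted relator list: each face shape is the leading term of at most one local relator, and each local relator has at most two leading webs. Criterion 2 (Purity) is exactly Lemma \ref{collision}. Criterion 3 (Global Reducibility) follows by combining Lemma \ref{purity lemma}---which shows that horizontal triangle relators preserve strand configurations, since they do not change which maximal directed strands intersect or how often---with Theorem \ref{Global}, which identifies reducible webs with those carrying a reducible configuration of strands; so only the horizontal square relator can affect a minimal configuration, satisfying the ``no more than one type'' condition.

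Criterion 4 (Simply-connected) is the main obstacle. Only the triangle relator (\ref{Kekule C}) has nontrivial lower-order terms, so I must show that every horizontal cycle on a reduced web that uses at least one triangle relator admits a consecutive pair of inverses after rewriting. The approach is to track, for each cyclically oriented hexagon of the reduced web, the parity of the number of times it is flipped by a triangle relator in the cycle, and symmetrically for square faces with square relators. Because the cycle returns to the starting web, each parity must be even. Using Lemma \ref{commute out} and the observation that two triangle relators on disjoint hexagons commute, while two non-commuting triangle relators on a reduced web would have to share an edge and, by Lemma \ref{collision}, force a reducible face---impossible on a reduced web---I can permute the cycle until two flips of the same hexagon become adjacent, at which point they form an inverse pair up to square relators commuted between them, which in turn pair up to identity by the same argument.

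Criterion 5 (Controlled degeneration) is a combinatorial case check on the four shapes from Theorem \ref{Global}: if two reducible configurations $C,C'$ have disconnected intersection, then a strand shared between the two components must exit and re-enter the bounded region, producing a self-intersection, a second crossing with another boundary strand, or a non-cyclically-oriented triangle, each of which is a reducible configuration inside the bounded region. Criterion 6 (Local confluence) is verified on the finite list of two-face webs: for each pair of adjacent faces that both admit relators, I compute both reduction orders directly using (\ref{Pure Bigon C})--(\ref{Kekule C}) and observe they agree modulo lower-order terms; this is a routine but tedious enumeration. The verification of Criterion 4 is the hardest part, since it requires a global understanding of horizontal-relator cycles on a reduced web, while the remaining criteria reduce to local diagrammatic checks using the lemmas already established.
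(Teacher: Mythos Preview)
Your overall plan---verify the six hypotheses of Theorem~\ref{CC} for the contracted spider and then invoke Propositions~\ref{excise} and~\ref{extend strand}---matches the paper exactly, and your treatment of Criteria~1--3 and~6 is essentially the same as the paper's. The interesting divergence is in Criterion~4.

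Your parity argument for simple-connectedness is in fact \emph{simpler} than what the paper does (the paper chooses a minimal strand $\alpha$, tracks a vertex-count function $f$ along the cycle, and inducts on $\sum_i f(i)$), and your strategy is sound: on a reduced web all horizontal triangle relators commute with one another and with all square relators, so one can freely permute the sequence and pair up flips of the same hexagon. However, your justification for the commutation of two triangle relators is wrong. Lemma~\ref{collision} concerns a Kekul\'e face adjacent to a horizontal \emph{square} face, not two Kekul\'e faces. The correct argument is an orientation check: if two cyclically oriented triangles shared an edge $AB$, then at the tetravalent vertex $A$ the interior edges $AC$ and $AD$ are forced to be opposite (same strand), so the direction $C\to A$ forces $A\to D$, which makes the second triangle non-cyclic---hence a reducible triangle face rather than a Kekul\'e face. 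So two Kekul\'e faces are never adjacent on \emph{any} web, and your commutation claim follows. Fix the citation and this criterion goes through, arguably more cleanly than in the paper.

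For Criterion~5 your one-sentence sketch (``a shared strand must exit and re-enter the bounded region, producing a smaller reducible configuration'') is too vague to count as a proof. The paper reduces to the two reduction faces with at least four boundary edges---the bigon~(\ref{Double Square C}) and the reducible triangle~(\ref{Single Square C})---and then checks the three possible disconnected-adjacency diagrams explicitly. You should either carry out that finite case analysis or make your strand-exit argument precise enough to cover those cases; as written it is a gap, though a repairable one.
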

\begin{proof}
We will apply the criteria of theorem \ref{CC} to the contracted $A_3$ spider, and then the result will follow for the uncontracted spider by proposition \ref{excise}.

\begin{enumerate}
	\item The relator criterion follows immediately from the presentation.
	
	\item We proved purity in lemma \ref{collision}. 

	\item Global reducibility follows from the proof of the global reducibility criterion (theorem \ref{Global}) as soon as we prove that any triangle relator that destroys a reducible configuration must introduce a reducible face.

	\item For simply-connectedness, assume we have a horizontal sequence $s=(s_i)$ of triangle horizontal relators on a reduced contracted web $w$, let $w_i=s_i...s_1(w)$, and as usual we will induct on the number of relators in $s$. We first apply proposition \ref{extend strand} so that we can assume that $w$ has only single edges on the boundary. So the maximal undirected strands correspond to matchings of boundary edges, and hence each one divides the web into two regions. Find an undirected strand $\al$ such that no other strand is completely contained on one side (call this region $D$). Define a function $f: \N \to \N$ where $f(i)$ is the number of tetravalent vertices in $D$ on the web $w_i$. If it is constant, then the sequence decomposes into a horizontal sequences in $D$ and $D^c$ which commute and so we can apply induction. Reordering the sequence we can assume that $f(0)=f(n)$ is the minimum of the function. We will also induct on $\sum_i f(i)$ for those sequences of equal length.
	
Let $k$ be the first integer such that $f(k)$ is the maximum, and let $\ell>k$ be the first such integer such that $f(\ell+1)<f(\ell)$. Let $\ga$ and $\ga'$ be the strands together with $\al$ that correspond to $s_k$. If $\ell \neq k+1$ then and the relator $s_{k+1}$ fails to commute with $s_k$ this would mean that $s_k$ makes a strand cross the configuration of these three strands, and hence cross two of three strands. If it would cross all three, it would give four mutually intersecting strands which means the web has a reducible configuration for orientation reasons. However, by minimality of $D$, we know it must cross $\al$. Therefore $s_{k+1}$ would have to change $f$, which contradicts our assumption that $\ell \neq k+1$ and $f(k)$ is maximum. Hence we can commute $s_{k+1}$ and $s_k$. Repeating this argument to remove all relators between $s_k$ and $s_\ell$, we can assume $\ell = k+1$. Now, $s_{k+1}=s_\ell$ removes a vertex from $D$ so it can't add a vertex to the region bounded by $\al$, $\ga$, and $\ga'$ which is contained in $D$. So either $s_{k+1} = -s_k$ or the two commute. In the first case we can cancel to get a shorter sequence. In the second case we can commute them change $f$ by $f(i) \mapsto f(i)-2$ and the rest are unchanged allowing us to apply induction.

	\item For controlled degeneration, we can just list the possible ways two reducible faces can be adjacent in a degenerate way, then check them case by case. We only need to check the case where there are four or more boundary edges (in the non-contracted spider), since each adjacency will be via two boundary edges. So there are two reduction relators of this type: the cyclically oriented bigon (\ref{Double Square C}), and the reducible triangle (\ref{Single Square C}) giving the following three possibility: 
 	
\begin{equation}
\begin{tikzpicture}[baseline=-.3ex,scale=.75] 
\draw[web] (-1,.5) -- (0,0);
\draw[web] (-1,-.5) -- (0,0);

\draw[web, midfrom]  (0,0) to[out=30,in=150] (2,0);
\draw[web, midto]  (0,0) to[out=-30,in=-150] (2,0);

\draw[web] (2,0) -- (3,.5);
\draw[web] (2,0) -- (3,-.5);
\draw[web, midto]  (3,.5) to[out=30,in=150] (-1,.5);
\draw[web]  (3,-.5) to[out=-30,in=0] (3,2);
\draw[web, midfrom]  (3,2) -- (-1,2);
\draw[web]  (-1,2) to[out=180,in=210] (-1,-.5);
\end{tikzpicture}
\end{equation}
In the above case, the middle region gives a reducible configuration, so there is a sequence of the square horizontal relators inside this region leading to a reduction relator.
\begin{equation}
\begin{tikzpicture}[baseline=-1ex,scale=1] 
\draw[web] (0,0) -- (1,0); 
\draw[midto,web] (1,0) -- (2,0);
\draw[midfrom,web] (2,0) -- (3,0);

\draw[web] (0.5,-.5) -- (1,0); 
\draw[midto,web] (1,0) -- (1.5,.5);
\draw[midto,web] (1.5,.5) -- (2,1);

\draw[web] (2.5,-.5) -- (2,0); 
\draw[midfrom,web] (2,0) -- (1.5,.5);
\draw[midfrom,web] (1.5,.5) -- (1,1);

\draw[web]  (2.5,-.5) to[out=-45,in=-45] (4,1.5);
\draw[web]  (4,1.5) to[out=135,in=45] (2,1);
\draw[midto,web] (0.5,-.5) to[out=225,in=-115] (5,-.5) to[out=75,in=0] (5,2.5) to[out=180,in=135] (1,1);
\end{tikzpicture}
\end{equation}
In the above case, the undirected horizontal strand must cross the same strand twice in order to leave the bounded region, hence giving us our reducible configuration.

\begin{equation}
\begin{tikzpicture}[baseline=-1ex,scale=1] 
\draw[web] (0,0) -- (1,0); 
\draw[midto,web] (1,0) -- (2,0);
\draw[web] (2,0) -- (3,0);

\draw[web] (0.5,-.5) -- (1,0); 
\draw[midto,web] (1,0) -- (1.5,.5);
\draw[midto,web] (1.5,.5) -- (2,1);

\draw[web] (2.5,-.5) -- (2,0); 
\draw[midfrom,web] (2,0) -- (1.5,.5);
\draw[midfrom,web] (1.5,.5) -- (1,1);

\draw[midfrom,web] (1,1) to[out=135,in=45] (2,1);
\draw[midto,web] (0,0) to[out=180,in=180] (1.5,2) to[out=0,in=0] (3,0);
\end{tikzpicture}
\end{equation}
Again, the bounded region in the above case is a reducible relator, in this case (\ref{Mixed Bigon C}).

	\item The most labor intensive calculation is proving local confluence. Since reduction sequences can only occur when there are two relators on every web, we know that both faces must have a relator.

The first set of calculations are a monogon (\ref{Mixed Bigon C}) adjacent to other relators, which must be adjacent via tetravalent vertex oriented cyclically. We'll sometimes put a black dot on the boundary to indicate a base point to make the more complicated webs comparable.
\begin{equation}
\begin{split}

\end{align*}
\end{enumerate}

Therefore we have confirmed all the criteria in the theorem, which shows us that the $A_3$ spider with the Morrison-Kim presentation is $E^1$ convergent. 
\end{proof}

\chapter{The Euclidean Building and Geometric Satake}
\linespread{1}\selectfont
This chapter has two related functions. The first is to apply the combinatorial results about the spiders from the previous sections to prove interesting geometric results regarding combinatorial disks with a fixed boundary in the Euclidean building. In particular, we prove some results relating web bases to the geometric Satake basis. The second function is to provide a geometric interpretation of some of the technical lemmas which on one hand motivates them, and on the other hand may point to the correct generalizations for future work. The second section on the $A_1^n$ spider is independent of the final two sections regarding the $A_3$ spider.

\section{Background}
\subsection{Buildings and \texorpdfstring{$CAT(0)$}{} Geometry}
As demonstrated in \cite{fkk:buildings}, webs have a natural geometric interpretation. Let $G$ be an algebraic group (eg: $SL_{n+1}$), then as a set the \tbf{affine Grassmannian} is $Gr_G =G(\C((t)))/G(\C[[t]])$, the actual geometric structure is as an ind-variety which is a categorical limit of varieties. In fact, if $G$ is the group we care about, we will be more interested in $Gr_{G^\vee}$ where $G^\vee$ is the Langlands dual group which is isomorphic to $G$ if $G$ is type $A$, $D$, $E$, $F$ or $G$, but if $G$ is $B_n$ then $G^\vee$ is $C_n$ and vice versa. We won't get too deep into this algebraic geometry point of view since we will be mainly working with a related geometric object: a polyhedral complex called the Euclidean building which we define below. The key connection is that there exists a injective map from the affine Grassmannian into the vertices of the Euclidean building whose image is the so-called \tbf{special vertices} of the Euclidean building (which we can just take as the definition). In the case of $A_n$, every vertex is special, so we won't need to worry about his distinction. 

There exists an evaluation map at $t=0$ $ev_0: G(\C[[t]]) \surj  G(\C)$. Let $B$ be any Borel subgroup of $G(\C)$ (eg: the upper triangular matrices in $SL_{n+1}(\C)$). Then $\wt{B}=ev_0^{-1}(B) \sbs G(\C[[t]])\sbs G(\C((t)))$ is an example of what is called a \tbf{Iwahori subgroup} which functions similarly to a Borel subgroup (more rigorously it is part of a $BN$-pair). More generally, all conjugates of $\wt{B}$ in $G(\C((t)))$ will be called an Iwahori subgroup. In our running example of upper triangular matrices in $SL_{n+1}$, $\wt{B}$ is the subgroup of determinant-$1$ matrices with $\C[[t]]$-coefficients, where all entries below the diagonal have coefficients in $t\C[[t]]$. 

Next we define an analogue of parabolic subgroups called parahoric subgroups. A \tbf{standard parahoric subgroup} is a subgroup of $G(\C((t)))$ which contains $\wt{B}$ and is contained in a finite union of double cosets $\wt{B}g\wt{B}$. We need this second condition unlike in the parabolic case to avoid including much larger subgroups and making our space too big. A \tbf{parahoric subgroup} is then any conjugate of a standard parahoric subgroup. In particular, Iwahori subgroups are examples of parahoric subgroups. We order these subgroups by reverse inclusion so that Iwahori subgroups are the "largest" in this ordering. This then gives an abstract polyhedral complex where a Iwahori subgroup $\wt{B}$ gives a top dimensional cell $C_{\wt{B}}$, subcells of $C_{\wt{B}}$ correspond to parahoric subgroups which contain $\wt{B}$, and points are given by maximal parahorics. We call this complex the \tbf{Euclidean building} and denote it $X$.

This complex satisfies the axioms of a Tits building for an affine Coxeter group, and has many nice properties some of which we list here.
\begin{enumerate}
	\item The Euclidean building is a locally finite polyhedral complex
	\item There exists a collection of subcomplexes homeomorphic to $\R^n$ called apartments such that any pair of cells are contained in an apartment.
	\item If $A$ and $A'$ are apartments containing a pair of cells as above, then there exists a combinatorial isomorphism $A \natiso A'$ such that the points in the two cells are fixed under this map.
	\item The link of a special vertex of the Euclidean building is isomorphic to the \tbf{spherical building} which by definition is the polyhedral complex whose cells are parabolic subgroups of $G$
\end{enumerate}

Beyond the naturally induced CW complex topology, it admits a metric where each top dimensional cell is isometrically isomorphic to a Weyl alcove of $G(\C)$, and each apartment is isometric to the weight space of $G$ (and hence isometric to $\R^n$ as a metric space). One of the key features is that it satisfies a generalization of the non-positively curved property of some smooth manifolds. This property is called the $CAT(0)$ property, and is defined as follows:

We will follow \cite{bh:curvature}. Let $Y$ be a metric space. In this context, a \tbf{geodesic} is a length minimizing path between two fixed points (which we should point out does not coincide with the definition for Riemannian manifolds). A \tbf{geodesic triangle} $\Delta(p,q,r)$ for points $p,q,r \in Y$ is a choice of geodesics $[p,q]$, $[q,r]$, and $[r,p]$ connecting each pair of points. The \tbf{comparison triangle} $\ol{\Delta}(p,q,r)$ is any geodesic triangle in $\R^2$ with vertices $\ol{p}$, $\ol{q}$, and $\ol{r}$ whose side lengths are equal to the lengths of $[p,q]$, $[q,r]$, and $[r,p]$. If $x$ is on $[p,q]$, then we denote by $\ol{x}$ the \tbf{comparison point} of $x$ which is the point on the geodesic $[\ol{p},\ol{q}]$ such that $d(\ol{x},\ol{p})=d(x,p)$ and $d(\ol{x},\ol{q})=d(x,q)$. We say $Y$ is $CAT(0)$ if for every geodesic triangle $\Delta(p,q,r)$, and $x \in [p,q]$, $d(x,r)<d(\ol{x},\ol{r})$, or intuitively: if triangles in $Y$ are thinner than triangles in $\R^2$. $CAT(0)$ spaces then inherit many of the properties of non-positively curved manifolds:

\begin{prop}[cf. \cite{bh:curvature}]
\begin{enumerate}
	\item For any two points in a $CAT(0)$ space, there is a unique geodesic segment joining the two points
	\item Any $CAT(0)$ space is contractible 
\end{enumerate}
\end{prop}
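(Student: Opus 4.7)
The plan is to derive both statements from the $CAT(0)$ inequality applied to degenerate triangles.

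For (1), suppose $\gamma_1, \gamma_2 \colon [0, L] \to Y$ are two unit-speed geodesics from $p$ to $q$, where $L = d(p,q)$. For a fixed $t \in [0, L]$, form the geodesic triangle $\Delta(p, q, \gamma_2(t))$ whose sides are $\gamma_1$, $\gamma_2|_{[t, L]}$, and $\gamma_2|_{[0, t]}$. The side lengths are $L$, $L - t$, and $t$, which saturate the triangle inequality, so the comparison triangle $\ol{\Delta}(\ol{p}, \ol{q}, \ol{\gamma_2(t)})$ in $\R^2$ is degenerate: $\ol{p}, \ol{\gamma_2(t)}, \ol{q}$ are collinear, and in particular the comparison point $\ol{x}$ of $x := \gamma_1(t) \in [p, q]$ coincides with $\ol{\gamma_2(t)}$. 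The $CAT(0)$ inequality then gives $d(\gamma_1(t), \gamma_2(t)) \leq d(\ol{x}, \ol{\gamma_2(t)}) = 0$, so $\gamma_1 = \gamma_2$.

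For (2), fix a basepoint $p_0 \in Y$, and for each $q \in Y$ let $\gamma_q \colon [0, 1] \to Y$ be the unique (by part 1) unit-time parameterized geodesic with $\gamma_q(0) = q$ and $\gamma_q(1) = p_0$. Define the candidate contraction by $H(q, t) = \gamma_q(t)$; then $H(q, 0) = q$ and $H(q, 1) = p_0$, so it suffices to verify that $H$ is jointly continuous. The main step is to establish the convexity estimate
\begin{equation*}
d(\gamma_q(t), \gamma_{q'}(t)) \leq (1 - t)\, d(q, q'),
\end{equation*}
which follows by applying the $CAT(0)$ inequality twice: once to the triangle $\Delta(q, q', p_0)$ with the point $\gamma_q(t) \in [q, p_0]$ to compare against $\ol{\gamma_q(t)}$ on the comparison segment $[\ol{q}, \ol{p_0}]$, and once to $\Delta(q', \gamma_q(t), p_0)$ to bound $d(\gamma_{q'}(t), \gamma_q(t))$ by the corresponding Euclidean distance, where by similar-triangles in $\R^2$ the latter is at most $(1-t)\, d(q, q')$. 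Combined with continuity of $t \mapsto \gamma_q(t)$ along each geodesic, this yields joint continuity of $H$.

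The only genuinely delicate point is the convexity estimate; everything else is formal. The obstacle, such as it is, is just being careful with which comparison triangle is used at which step, since the $CAT(0)$ inequality only directly compares a vertex to a point on the opposite side, so one has to bootstrap through an intermediate triangle to compare two points lying on two different geodesics emanating from distinct basepoints.
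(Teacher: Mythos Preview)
The paper does not actually prove this proposition: it is stated as background with a citation to Bridson--Haefliger (the ``cf.\ \cite{bh:curvature}'' in the heading), and no argument is given in the text. So there is nothing to compare against on the paper's side.

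Your sketch is the standard argument and is correct in outline. For (1), the degenerate-triangle trick is exactly the usual proof. For (2), the geodesic retraction to a basepoint is the standard contraction, and you correctly identify the convexity estimate $d(\gamma_q(t),\gamma_{q'}(t)) \le (1-t)\,d(q,q')$ as the only substantive step. One small caution: the two-step bootstrap you describe is right in spirit, but the phrase ``by similar triangles in $\R^2$'' hides a short monotonicity argument (Alexandrov's lemma, or equivalently that enlarging one side of a Euclidean triangle while fixing the other two can only increase the opposite angle), since the second comparison triangle is not literally a subtriangle of the first. This is routine, and you flag it as the delicate point, so the sketch is fine as written.
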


One of the main theorems about the geometry of buildings is:

\begin{thm}[cf. \cite{bh:curvature}]
The Euclidean building (with top dimensional cells metrized as Weyl alcoves) is a $CAT(0)$ space, hence contractible
\end{thm}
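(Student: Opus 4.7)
The plan is to follow the standard strategy from Bridson--Haefliger: exhibit the building $X$ as a polyhedral complex that is both simply connected and locally $CAT(0)$, and then invoke the Cartan--Hadamard theorem for geodesic metric spaces (the nonpositive-curvature-implies-global-$CAT(0)$ theorem for simply connected length spaces) to promote the local condition to the global one. Because $X$ is built from finitely many isometry types of Euclidean cells (the Weyl alcoves and their faces), the intrinsic length metric exists and is complete, so the hypotheses of the Cartan--Hadamard theorem will be available once the local and topological conditions are verified.

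First I would establish that $X$ is simply connected. By the apartment axioms listed above, any two cells of $X$ lie in a common apartment $A \cong \mathbb{R}^n$, and any two apartments containing a pair of cells are identified by a combinatorial isometry fixing those cells pointwise. Starting from one apartment and iteratively attaching apartments along convex subcomplexes, one writes $X$ as an increasing union of contractible subcomplexes glued along contractible intersections; a Mayer--Vietoris / van Kampen argument then gives $\pi_1(X)=0$. (Equivalently, the system of retractions onto a fixed apartment based at a chamber exhibits $X$ as a nested union of convex, hence contractible, subsets.)

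Next I would verify the local $CAT(0)$ condition. By Gromov's link criterion for $M_\kappa$-polyhedral complexes with finitely many shapes (Bridson--Haefliger, Theorem II.5.2), a Euclidean polyhedral complex is locally $CAT(0)$ if and only if the link of every vertex is $CAT(1)$ in its induced spherical metric. The link of a vertex $v$ of $X$ is naturally a spherical polyhedral complex assembled from spherical simplices (the links of the Weyl alcoves at $v$); by the structure theory of Tits buildings, this link is itself a spherical building, either a spherical building for the finite Weyl group of $G$ (at special vertices) or, more generally, a spherical building for the stabilizer subgroup acting on the link. So the local $CAT(0)$ check reduces cleanly to proving that every spherical building, with its natural metric in which every chamber is isometric to a spherical Weyl chamber of radius $\pi/2$, is a $CAT(1)$ space.

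The main obstacle, and the step that carries essentially all of the content, is therefore the $CAT(1)$ statement for spherical buildings. I would prove this inductively on the rank, again by the Gromov link condition applied now in the spherical setting (where the analogous local-to-global result requires checking that geodesic loops of length less than $2\pi$ bound and that links of vertices are $CAT(1)$ of one lower rank). The base case (rank one spherical buildings, i.e.\ discrete sets of diameter $\pi$) is immediate, and the inductive step uses the fact that any two points of a spherical building lie in a common apartment isometric to a round sphere $S^{n-1}$, together with the folding/retraction maps onto a fixed apartment that are $1$-Lipschitz by the chamber-transitive action of the Weyl group. Once $CAT(1)$ is established for spherical buildings, the link criterion gives local $CAT(0)$ for $X$, simple connectivity combined with Cartan--Hadamard upgrades this to global $CAT(0)$, and contractibility then follows from the standard fact that every $CAT(0)$ space is contractible via geodesic straight-line homotopy to any basepoint.
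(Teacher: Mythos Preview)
The paper does not actually prove this theorem: it is stated with the attribution ``[cf.\ \cite{bh:curvature}]'' and no proof is given, the result being imported wholesale from Bridson--Haefliger. Your outline is essentially the argument one finds there (simple connectivity from the apartment axioms, Gromov's link condition reducing to $CAT(1)$ for spherical buildings, and Cartan--Hadamard to globalize), so there is nothing to compare against in the paper itself.

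One small remark on your sketch: the phrase ``spherical Weyl chamber of radius $\pi/2$'' is not quite right---the chambers of a spherical building are spherical simplices whose diameter depends on the type, and what matters for the $CAT(1)$ argument is that each apartment is isometric to a unit round sphere (so has diameter $\pi$) and that the $1$-Lipschitz retractions control geodesics. Also, for the inductive $CAT(1)$ step you need not only that links are $CAT(1)$ but that the spherical building has no closed geodesic of length less than $2\pi$; this is where the apartment/retraction argument does real work, and you should flag it explicitly rather than bundle it into the link condition.
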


There's another kind of distance we can put on the Euclidean building. Given any two points $v$ and $w$, there exists an apartment $A$ containing both points. This apartment is isomorphic to the weight space, and we can choose the isomorphism $\ph_{A,v,w}$ to send $v$ to $0$, and $w$ into the Weyl chamber. Then $d_{wt}(v,w)=\ph_{A,v,w}(w)$ does not depend on the choices, and hence gives a weight-valued distance function \cite{kuperberg:rank}, which satisfies properties analogous to a real valued distance functions. We'll mainly only be interested in the distance between vertices.

\subsection{Dual Webs}
We now return to spiders, again following \cite{fkk:buildings}. A key insight in this paper was to look at the \textbf{dual web} $D(w)$, defined as follows. $w$ is a planar graph, and hence we can construct a dual graph whose points are the faces of $w$ and two vertices are connected by an edge if the corresponding faces are adjacent. Moreover, each vertex of the web corresponds to a $2$-cell in the weight space, so we can form a polyhedral complex with cells of dimension $\leq 2$. We denote this complex $D(w)$ and call it the dual web. See \cite{fkk:buildings} for more details. 

If we fix a polygon $P$ where each edge is labeled by a fundamental weight $\vec{\la}$, then we can define a space $Q(P)$ of based maps from the vertices of $P$ into the affine Grassmannian $Gr=G^\vee(\C((t)))/G^\vee(\C[[t]])$ such that if two vertices of $P$ are connected by an edge of length $\la$, then in the image they must be distance $\la$ from each other. We call this a \textbf{configuration space}, and in the same way, we can define the configuration space $Q(D(w))$ of maps from the vertices of $D(w)$ into $Gr$ (notice we are ignoring the $2$ dimensional structure). We will refer to the elements of this space as \textbf{configurations}. Furthermore, by restricting a configuration of $D(w)$ to the boundary, we get a configuration of $P$, hence giving a map $Q(D(w)) \to Q(P)$, which then extends to a map of homology: $H_\ast(Q(D(w)),\C)  \to H_\ast (Q(P))$. In fact, $Q(P)$ is the Satake fiber $F(\vec{\la})$ mentioned in the introduction, and hence $H_{top}(Q(P),\C)$ is naturally isomorphic to the invariant space of $V_{\la_1} \ot ... \ot V_{\la_n}$.

\section{The \texorpdfstring{$A_1^n$ spider and a $CAT(0)$}{ spider and a } Subcomplex of the Building}
One of the issues that we will try to address in this chapter is the failure of minimal vertex dual webs to have coherent geodesics. The other is the fact that there are more than one choice of equivalent minimal webs. 

If we look at a generic map from a dual web to the building $D(w) \to X$ the first issue amounts to the fact that, for fixed boundary points $v$ and $w$, there may not be any (weak) geodesics between $v$ and $w$ which stay in $D(w)$. We will first look at $A_1^n$ as a test case. Since it is geometrically a cube complex, this makes it easy to prove that it is $CAT(0)$. As a proposed solution to both problems, we will take all (dual) webs which are equivalent to $w$, and argue that they naturally combine into a $n$-dimensional subcomplex of the building which has coherent geodesics, is geometrically $CAT(0)$, and naturally includes all dual webs equivalent to $w$ as a subcomplex.

We will first restrict to a finite subcomplex of the building. Recall that the Euclidean building of a product of Lie groups is the product of the buildings. Let $w$ be a minimal vertex $A_1^n$ web. Then for the $i$th $A_1$ component of $A_1^n$ we get a minimal vertex $A_1$ web, whose dual graph is a tree which we denote $\Ga_i$. Next we give it the path-length metric where each edge is length $1$. Each face of $w$ is then naturally labeled by vertices in $Y := \prod\limits_{i=1}^n\Ga_i$, and this has the natural product path-length metric. If two vertices in $w$ are adjacent, then the corresponding vertices in $Y$ are adjacent because the label is only changing for one $A_1$ factor, and similarly squares in $D(w)$ correspond to squares in $Y$. Therefore, we get a combinatorial map $D(w) \to Y$. Moreover, $Y$ is the same for every equivalent web $w'$ since if the matchings for $w$ and $w'$ are the same, then all the $A_1$ components are too. Let $\om$ be the equivalence class of webs equal to $w$ in the spider $Sp$. We get a combinatorial map $\ph:\bigsqcup\limits_{w \in \om} D(w) \to Y$.

As an aside, let's relate $Y$ to the building $X$ directly. First choose an embedding $\psi: D(w) \to X$. Composing with the projections $\pi_i$ of $X$ onto the $i$th $A_1$ term we get embeddings $\psi_i$ of each $A_1$ component of $D(w)$ into the $A_1$ building. Then by the universal property of products we get a unique embedding $\wt{\psi}$ of $Y$ into the $A_1^n$ building such that $\psi_i=\pi_i \circ \psi$, where $\pi_i$ is the projection of the $A_1^n$ building onto the $i$th $A_1$ term. Therefore, we get a natural map of configuration spaces $Q(D(w)) \to Q(Y)$, which is an isomorphism with inverse obtained by restricting a configuration from $Y$ to a configuration from $D(w) \inj Y$.

We now precede to discuss our main geometric object of interest:

\begin{defn}
Given $w$, $Y$, and $\ph$ as above, the \textbf{pocket} of $w$ denoted $P=P_w$ is the subcomplex of $Y$ whose $0$-skeleton is the $0$-skeleton of $im(\ph)$ and whose cells of dimension $>0$ are all cells in $Y$ whose adjacent vertices are all contained in $im(\ph)$
\end{defn}

In fact, we get a bijection between $2$-skeletons as follows:

\begin{prop}
The map $\ph:\bigsqcup\limits_{w \in \om} D(w) \to P^{(2)}$ is surjective.
\end{prop}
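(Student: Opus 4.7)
The plan is to establish surjectivity separately on 1-cells and on 2-cells (the 0-cell case holds by definition of $P$). The main tool throughout is Corollary~\ref{A1n:singlestrand}, which allows me to slide any single strand within a web using only relators involving that strand, leaving the remaining strand pattern untouched.

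For 1-cells: let $e = [u, v]$ be a 1-cell in $P$ with $u, v$ differing only in the $i$-th coordinate, say $u_i = a$ and $v_i = a'$ (adjacent vertices of $\Gamma_i$). The edge $[a, a']$ of $\Gamma_i$ corresponds to a specific $i$-strand $s_i$ in any minimal-vertex projection of a web in $\om$ to the $i$-th factor. Starting from any $w \in \om$ whose dual contains a face $F$ labeled $u$, I would apply Corollary~\ref{A1n:singlestrand} to slide $s_i$ (while holding every other strand fixed) until $s_i$ borders $F$ on the $a'$-side. The face on the opposite side of $s_i$ in the modified web is then labeled $v$, so this web realizes $e$.

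For 2-cells: let $c$ be a 2-cell of $P$ with corners $p_1, p_2, p_3, p_4$ forming a square in the $\Gamma_i \times \Gamma_j$ plane, all other coordinates fixed to some $\vec{v}$. By Theorem~\ref{E1:A1n}, every equivalence class in $\om$ is represented by minimal-vertex webs in which any two strands from different factors cross at most once, and each tetravalent vertex is exactly one such crossing, whose dual 2-cell in $Y$ lies in the $\Gamma_i \times \Gamma_j$ plane with the four surrounding face-labels forming a square. Realizability of all four corners of $c$ forces the particular strands $s_i$ and $s_j$ (distinguished by the edges $[p_1, p_2]$ and $[p_1, p_4]$) to actually cross in every minimal-vertex web of $\om$: otherwise one of the four coordinate combinations $(a, b), (a', b), (a', b'), (a, b')$ in $\Gamma_i \times \Gamma_j$ could not appear as a face label. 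So in any minimal $w \in \om$, $s_i$ and $s_j$ cross at a tetravalent vertex whose surrounding labels take the form $(\vec{v}', a, b), (\vec{v}', a', b), (\vec{v}', a', b'), (\vec{v}', a, b')$ for some $\vec{v}'$, and it remains to show that $\vec{v}'$ can be driven to $\vec{v}$.

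This will be achieved by triangle relators (Y-moves): a triangle move involving $s_i, s_j$, and a third strand $s_k$ slides the $(s_i, s_j)$-crossing across $s_k$, changing $\vec{v}'$ by one edge in the $\Gamma_k$ direction. The hard part will be justifying that the required triangle moves are actually available throughout the motion. The plan is to observe that the realizability of the four corners persists as we move along any monotone path from $\vec{v}'$ to $\vec{v}$ in $P$, and that this realizability, combined with Corollary~\ref{A1n:singlestrand} applied to any intervening third strand, supplies precisely the local strand configuration needed to carry out each triangle move; this is the same mechanism used in the proof of Theorem~\ref{E1:A1n}. Iterating yields a web $w' \in \om$ whose dual web contains the 2-cell $c$, completing the proof.
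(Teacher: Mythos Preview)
Your overall decomposition (0-cells by definition, then 1-cells, then 2-cells) matches the paper's, and your identification of Corollary~\ref{A1n:singlestrand} as the main engine is correct. But there is a genuine gap in the 1-cell step. Corollary~\ref{A1n:singlestrand} takes as input \emph{two} minimal webs that differ only in the position of a single strand $s$; it does not let you ``slide $s_i$ until'' a condition is met. To apply it here you must first exhibit a minimal web $w'$ that agrees with $w$ on every strand except $s_i$ and in which $s_i$ actually borders the face $F$. You have not argued that such a $w'$ exists. What you know is only that \emph{some} web $w_2\in\om$ has a face labeled $v$; but in $w_2$ the other strands may sit in completely different positions than in $w$, so you cannot feed $(w,w_2)$ into the corollary. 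Showing that $s_i$ can be rerouted through the face $\tilde F$ of $\tilde w$ (the web with $s_i$ deleted) while remaining minimal is exactly the nontrivial content here, and it is where the paper spends most of its effort: it takes a full relator sequence from $w_1$ to $w_2$, tracks the face labeled $\wt\ph(v_1)=\wt\ph(v_2)$ through that sequence, isolates the first moment this face has jumped to the other side of $s$, and then uses a careful parallel-strand argument to push $s$ onto the face.

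Your 2-cell plan is also weaker than the paper's. Rather than trying to drag the $(s_i,s_j)$-crossing directly to the target using an unspecified supply of triangle moves, the paper first invokes the already-proved 1-skeleton case to move $s_1$ and $s_2$ so that each borders the target face, and only then moves their crossing into that face by observing that the corresponding cut paths on $\wh w$ meet twice; this makes the required triangle moves available in a controlled region. If you persist with the direct approach you will need to make ``the realizability of the four corners persists along monotone paths'' precise and show it actually furnishes each needed triangle, which is essentially redoing the 1-skeleton argument inside the 2-cell step. Note also that the paper's 2-skeleton argument is what yields Corollary~\ref{A1n:SquareComplete}, used immediately afterward in the $CAT(0)$ proof; make sure whatever you do still delivers that corollary.
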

\begin{remark}
We can construct a similar polyhedral complex in the other simple and semisimple cases, but in general we need to add in extra edges and faces.
\end{remark}
\begin{proof}
We get the result for the \tbf{$0$-skeleton} by definition, so we'll start with the \tbf{$1$-skeleton}. Let $v_1 \in D(w_1)$ and $v_2 \in D(w_2)$ where $w_1 \sim w_2$ are minimal webs, and $\ph(v_1)$ is adjacent to $\ph(v_2)$ via an edge $e \in P$. Denote by $f_i$ the face in $w_i$ corresponding to $v_i \in D(w_i)$. $e$ then corresponds to a strand $s$ in the two webs which without loss of generality we'll say is on the first $A_1$ component. If there is no such vertex $v$ on $D(w_1)$ such that $\ph(v)=\ph(v_2)$, then graphically this means that $s$ is not adjacent to the face $f_1$. 


We'll inductively show that we can move $s$ to cross $f_1$. First note that there exists a sequence of triangle relators $(r_i)$ sending $w_1$ to $w_2$ by the $E^1$ convergence of $A_1^n$ (\cref{E1:A1n}). 

In $w_1$ the face $f_1$ is on the opposite side of $s$ compared to the face labeled $f_2$ on $w_2$ since they are connected by an edge corresponding to $s$ in $P$. If $w$ is a web equivalent to $w_1$, let $\wt{w}$ be the web obtained from $w$ by omitting the strand $s$, and similarly let $\wt{P}$ be its pocket with map $\bigsqcup\limits_{w \in \om} D(\wt{w}) \to \wt{P}$. There is a projection $D(w) \surj D(\wt{w})$, and hence we can extend the pocket map to $\wt{\ph}:\bigsqcup\limits_{w \in \om} D(w) \to \wt{P}$. The key reason we defined all this is that $\wt{\ph}(v_1)=\wt{\ph}(v_2)$ since $v_1$ and $v_2$ differ only by the strand $s$.

If a face labeled $\wt{\ph}(v_1)=\wt{\ph}(v_2)$ exists on the web $r_k...r_1(w_1)$ for every $k$, then the strand must at some point cross one such face (since there is no single relator that allows a strand to jump over a face), and then we'd be done. So assume otherwise, and find the last web on $w_1':=r_k...r_1(w_1)$ such that the face labeled $\wt{\ph}(v_1)$ exists, and is on the same side of $s$ as $f_1$ is in $w_1$, and let $w_2':=r_\ell...r_1(w_1)$ be the first web after that where a face labeled $\wt{\ph}(v_1)$ exists (and hence $s$ is on the opposite side). 

Then on $w_1'$ and $w_2'$ the face labeled $\ph(v_1)$ and $\ph(v_2)$ must be triangles, since triangle relators only change the label on their associated face. Since the triangle face are on opposite sides of $s$, this shows that $s$ crosses all three strands $\lb s_1,s_2,s_3 \rb$ associated to this relator (otherwise one of the webs would be reducible). Up to rotation and reflection in $w_1'$ this looks something like:
\begin{equation}
\begin{tikzpicture}[baseline=1.5ex,scale=1] 
\draw[red] (0,0) -- (1,0); 
\draw[red] (1,0) -- (2,0);
\draw[red] (2,0) -- (3,0);

\draw[blue] (0.5,-.5) -- (1,0); 
\draw[blue] (1,0) -- (1.5,.5);
\draw[blue] (1.5,.5) -- (2,1);

\draw[green] (2.5,-.5) -- (2,0); 
\draw[green] (2,0) -- (1.5,.5);
\draw[green] (1.5,.5) -- (1,1);

\draw[orange] (0,-.5) -- (1,.5) to[out=45,in=180] (2.25,.75); 
\node[right,orange] at (2.25,.75) {s};
\end{tikzpicture} 
\end{equation}

One can directly check that $s$ must make a triangle with one of the outer corners of the triangle $s_1s_2s_3$, which without loss of generality will be $ss_1s_2$. Now by \cref{A1n:singlestrand}, if there are no strands parallel to $s$ in an $\ep$-neighborhood of $ss_1s_2$, we can move $s$ onto the face label $\ph(v_1)$, and then we'd be done. So assume otherwise: let $s'$ be a parallel strand that is minimally close to $s$ in the sense that it crosses one of the two other strands in the outer triangle closest to $s$.
\begin{equation}
\begin{tikzpicture}[baseline=1.5ex,scale=1] 
\draw[red] (-.25,0) -- (1,0); 
\draw[red] (1,0) -- (2,0);
\draw[red] (2,0) -- (3,0);

\draw[blue] (0.5,-.5) -- (1,0); 
\draw[blue] (1,0) -- (1.5,.5);
\draw[blue] (1.5,.5) -- (2.5,1.5);

\draw[green] (2.5,-.5) -- (2,0); 
\draw[green] (2,0) -- (1.5,.5);
\draw[green] (1.5,.5) -- (.5,1.5);

\draw[orange] (-.25,-.5) -- (.75,.75) to[out=45,in=180] (2.5,1); 
\node[right,orange] at (2.5,1) {s};
\draw[purple] (1.1,.6) to[out=45,in=180] (2,.75);
\draw[purple, dashed] (.75,.25) -- (1.1,.6);
\draw[purple, dashed] (2,.75) -- (2.5,.75) ;
\node[below,purple] at (2.5,.75) {s'};
\end{tikzpicture} 
\end{equation}

$s'$ must cross $s$ globally, otherwise in $w_2'$, $s'$ would still be on the same side of $s$, and hence there would be no face labeled $\ph(v_1)$. Considering them as cut paths on the web of all other strands, this shows that they intersect in the dual web twice, and hence shifting one segment to the other would still be a minimal cut path. Therefore, by \cref{A1n:singlestrand}, there is a sequence of relators moving $s$ to $s'$ such that there is no vertices in the interior of the bounded region of $s$, $s'$, and one of the strands in the triangle relator. 

\begin{equation}
\begin{tikzpicture}[baseline=1.5ex,scale=1] 
\draw[red] (-.25,0) -- (1,0); 
\draw[red] (1,0) -- (2,0);
\draw[red] (2,0) -- (3,0);

\draw[blue] (0.5,-.5) -- (1,0); 
\draw[blue] (1,0) -- (1.5,.5);
\draw[blue] (1.5,.5) -- (2.5,1.5);

\draw[green] (2.5,-.5) -- (2,0); 
\draw[green] (2,0) -- (1.5,.5);
\draw[green] (1.5,.5) -- (.5,1.5);

\draw[orange] (-.25,-.5) -- (.75,.75) to[out=45,in=180] (2.5,1); 
\draw[purple] (1.1,.6) to[out=45,in=180] (2,.75);
\draw[purple, dashed] (.75,.25) -- (1.1,.6);
\draw[purple] (2,.75) to[out=0,in=180] (2.5,1.25) ;
\end{tikzpicture} 
\end{equation}

Applying triangle relators to cross all dividing strands, we can then move the crossing of $s$ and $s'$ so that $s'$ is no longer parallel to $s$ in the outer triangle $ss_1s_2$. Applying induction on the number of strands parallel in an $\ep$-neighborhood, we can move $s_1$ onto the face labeled $\ph(v_1)$ only using relators involving $s_1$, and hence create a pair of adjacent faces corresponding to $e$.

Now we'll look at the \tbf{$2$-skeleton}. Given a square in $P$, fix a vertex $v$, and adjacent edges $e_1$ and $e_2$. We have two associated strands $s_1$ and $s_2$ which cross in all webs equivalent to $w$. Similar to before we can construct a web $\wh{w}$ by omitting $s_1$ and $s_2$, a corresponding pocket $\wh{P}$, and a corresponding map $\wh{\ph}: \bigsqcup\limits_{w \in \om} D(w) \to \wh{P}$. Then the $3$ points which are adjacent to either $e_1$ or $e_2$ have the same image under $\wh{\ph}$. By the above proof, we can move $s_1$ and $s_2$ to cross the subweb $f=\wh{\ph}^{-1}(\wh{\ph}(v)) \cap D(w)$ associated to this vertex, however $s_1$ and $s_2$ may not cross on this subweb, in which case we wouldn't get the desired $2$-cell. However, we know the corresponding cut-paths on $\wh(w)$ do cross twice: once at the crossing of the strands, and once inside $f$. Since the web is reduced, the strands follow minimal cut-paths, which means that if we replace the segment of $s_1$ between these two crossings with the corresponding segment of $s_2$, we still get a reduced web. At this point, since there are no vertices between $s_1$ and $s_2$, we can apply triangle relators to move the crossing into $f$, giving us the desired result.
\end{proof}

As an immediate corollary of the second part of this proof, we get something stronger that will be used to tell us more about the link of vertices in $P$.

\begin{coro}
\label{A1n:SquareComplete}
Let $v \in P_w$, and let $v_1$ and $v_2$ be vertices adjacent to $v$ via edges which are associated to strands $s_1$ and $s_2$ which intersect in $w$. Then there exists a minimal vertex web $w'$ where the crossing of $s_1$ and $s_2$ is adjacent to faces with images $v$, $v_1$, and $v_2$ in $P$.
\end{coro}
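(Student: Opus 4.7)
The plan is to adapt, nearly verbatim, the argument used for the $2$-skeleton at the end of the previous proposition, with just one notational adjustment: rather than starting from a full square in $P$ we start from a single vertex $v$ together with two neighbors $v_1, v_2$ whose connecting strands $s_1$ and $s_2$ already cross in $w$. The fact that $s_1$ and $s_2$ intersect in $w$ is exactly what forces a fourth corner $v_{12}$ and completes $v, v_1, v_2$ into a square of $P$, so the hypothesis here is really a special case of the input used in the $2$-skeleton proof.

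First I would form the reduced web $\wh{w}$ obtained from $w$ by deleting the two strands $s_1$ and $s_2$, together with its pocket $\wh{P}$ and pocket map $\wh{\ph} \colon \bigsqcup_{w\in\om} D(w) \to \wh{P}$. Since $v_i$ differs from $v$ only by the coordinate corresponding to $s_i$, the three vertices $v$, $v_1$, $v_2$ collapse to a single point $\wh{\ph}(v) = \wh{\ph}(v_1) = \wh{\ph}(v_2)$, and I will write $f = \wh{\ph}^{-1}(\wh{\ph}(v)) \cap D(w)$ for the associated subweb of $w$ between the two strands.

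Next I apply the $1$-skeleton argument from the previous proposition (equivalently \cref{A1n:singlestrand}) twice, once for each of the edges $v \sim v_1$ and $v \sim v_2$, to produce a minimal web $w''$ equivalent to $w$ in which both $s_1$ and $s_2$ cross $f$. The remaining question is whether the crossing of $s_1$ and $s_2$ itself lies inside $f$. If not, then viewing $s_1$ and $s_2$ as cut paths on $\wh{w''}$, they intersect at least twice, namely at their actual transverse crossing and again inside $f$; both cut paths are minimal length because $w''$ is reduced, so we may replace the segment of $s_1$ between these two intersections by the corresponding segment of $s_2$ and remain in the class of reduced webs. The region bounded by the old and new segments then contains no trivalent vertices, so triangle moves push the crossing of $s_1$ and $s_2$ into $f$, yielding the desired web $w'$.

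The main obstacle is purely bookkeeping: once the crossing has been placed inside $f$ in $w'$, one must verify that the four faces surrounding it really do map under $\ph$ to the prescribed points $v$, $v_1$, $v_2$, and the forced fourth corner $v_{12}$. This is automatic from two facts: crossing the strand $s_i$ toggles exactly one coordinate of the label in $Y$, and the four surrounding faces share a common image $\wh{\ph}(v)$ under $\wh{\ph}$ by construction of $f$. No new ingredient beyond the proof of the previous proposition is needed.
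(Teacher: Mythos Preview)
Your proposal is correct and follows exactly the paper's approach: the paper gives no separate proof at all, merely stating that this is ``an immediate corollary of the second part of this proof'' (namely the $2$-skeleton argument in the preceding proposition), and your write-up faithfully unpacks that argument with the minor observation that the hypothesis ``$s_1$ and $s_2$ intersect in $w$'' replaces the hypothesis ``there is a square in $P$'' used there.
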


$Y$ itself is $CAT(0)$ because it is a product of $CAT(0)$ spaces, but this subcomplex is not convex so we don't immediately get that $P$ is $CAT(0)$ (this was also an issue in $A_2$). However, $P$ can still be shown to be $CAT(0)$, which is the main result of this section. We will use a well known results about cube complexes to prove this, but first we need some related definitions. An abstract simplicial complex is called a \tbf{flag complex} if every clique in the $1$-skeleton spans a simplex in the complex \cite{bh:curvature}.

\begin{thm}
\label{A1n:cubeCAT}
A finite-dimensional simply-connected cube complex is $CAT(0)$ if and only if the link of each vertex is a flag complex \cite{bh:curvature}.
\end{thm}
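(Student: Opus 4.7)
The plan is to prove Gromov's link condition via the metric Cartan--Hadamard theorem, reducing a global curvature condition to a purely combinatorial local condition at vertices. First I would recall the Cartan--Hadamard theorem for geodesic metric spaces (as stated in \cite{bh:curvature}): a complete, simply-connected length space which is locally $CAT(0)$ is globally $CAT(0)$. A finite-dimensional cube complex equipped with the standard piecewise-Euclidean metric is complete and geodesic (this uses Bridson's lemma that a piecewise Euclidean complex with finitely many isometry types of cells is complete), so the theorem reduces to checking that the complex is locally $CAT(0)$ everywhere.

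Next I would invoke the standard local-to-link reduction for piecewise Euclidean complexes: a point $p$ has a $CAT(0)$ neighborhood if and only if the link $Lk(p)$, endowed with its induced piecewise spherical metric, is $CAT(1)$. In the interior of a top-dimensional cube the link is a round sphere, which is automatically $CAT(1)$, so the condition is only nontrivial at vertices of the cube complex. At a vertex $v$, the link has a natural structure as an all-right spherical simplicial complex: its vertices are the edges of the cube complex emanating from $v$, and a set of link-vertices spans a spherical simplex (all of whose edge lengths are $\pi/2$) exactly when the corresponding edges at $v$ span a cube.

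The heart of the argument is therefore to characterize when an all-right spherical complex $L$ is $CAT(1)$, and the claim is that this happens if and only if $L$ is a flag simplicial complex. For the forward direction (easier), suppose $L$ contains vertices $v_{0},\dots,v_{k}$ pairwise joined by edges but not spanning a $k$-simplex. One can then exhibit a locally geodesic loop of length strictly less than $2\pi$ by traversing three of the pairwise-connecting $\pi/2$-edges, producing a closed local geodesic of length $3\pi/2 < 2\pi$ and contradicting the standard fact that closed local geodesics in $CAT(1)$ spaces have length at least $2\pi$. For the converse (flag implies $CAT(1)$), I would run Gromov's induction on dimension: the link of any vertex of an all-right flag complex is itself all-right flag of one lower dimension, so by induction each such link is $CAT(1)$; then I would verify the $CAT(1)$ condition on $L$ directly by showing that any pair of points at distance less than $\pi$ is joined by a unique geodesic and that geodesic triangles of perimeter less than $2\pi$ satisfy the comparison inequality, gluing the contributions from each simplex using the flag property (which guarantees that any ``locally shortest'' path lies in an actual simplex of $L$).

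I expect the flag-implies-$CAT(1)$ direction to be the main obstacle, as it is the substantive content of Gromov's theorem; the other ingredients (Cartan--Hadamard, local-to-link, and the monogon/bigon obstruction in the reverse direction) are essentially formal. Since the theorem itself is standard and due to Gromov with a clean treatment in \cite{bh:curvature}, the proof in this dissertation would most naturally just cite that source rather than reproduce the full inductive verification, and I would present the argument above as a sketch for completeness rather than a full derivation.
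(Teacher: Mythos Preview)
Your anticipation at the end is exactly right: the paper does not prove this theorem at all. It is stated with the citation to \cite{bh:curvature} and then immediately used as a black box to prove that the pocket $P$ is $CAT(0)$. So there is nothing to compare your sketch against; the dissertation treats Gromov's link condition as an imported result.

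Your sketch of the standard argument is broadly correct, though one step in the ``$CAT(1)$ implies flag'' direction is stated a bit loosely. If $v_0,\dots,v_k$ are pairwise adjacent but do not span a simplex, it does not follow that some three of them fail to span a $2$-simplex; a minimal non-spanning clique can have more than three elements, in which case every triple \emph{does} bound a $2$-simplex and the $3\pi/2$ loop you describe is not locally geodesic. The usual fix is to pass to the link of one of the $v_i$, obtaining a smaller all-right complex in which the remaining $k$ vertices form a non-spanning clique, and run the argument by induction on dimension. Since you explicitly frame this as a sketch and the paper does not attempt a proof, this is a minor point.
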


We will use this to prove the following result from the introduction:

\CAT*
\begin{proof}
First we'll prove that the link of every vertex is a flag complex. Fix an arbitrary vertex $v$ in $P$, and take a set of pairwise adjacent vertices in the (geometric) link $lk_P(v)$ with corresponding edges $e_1,...,e_n$ in $P$. By assumption, the corresponding strands $s_1,...s_n$ pairwise intersect. There is a corresponding $n$-cube $C$ in the product space $Y$, and we'll show that this $n$-cube is contained in $P$ which follows almost immediately from \cref{A1n:SquareComplete}. To do this, we'll argue by induction that all $k$-cells adjacent to $v$ in $C \sbs Y$ are in $P$. Let $S$ be a $k+1$-cell in $C$ adjacent to $v$. All of the $k$-cells adjacent to $v$ are already in $P$, and the vertices in these $k$-cells opposite to $v$ are precisely the vertices of $S$ adjacent to vertex opposite of $v$ in $S$ which we'll label $v'$. We know from the above that all vertices besides $v'$ must be in $P$. Take a square adjacent to $v'$. The two edges that are not adjacent to $v'$ in this square are contained in $P$, and thus by \cref{A1n:SquareComplete}, $v'$ is in $P$, and so all of $C$ is in $P$ by construction. But this proves that the link of $v$ is a flag complex.

Now we need to show $P$ is simply-connected. Fix a boundary vertex $\ast$ as the base point, and let $p \in \pi_1(P,\ast)$ be a loop. By van Kampen it is sufficient to assume that $p$ is a combinatorial path $(v_i)_{i=1}^n$ in $P^{(2)}$, and let $d_i = d(\ast,v_i)$. We will apply induction on $\sum d_i$. Each edge either goes away from $\ast$ or towards it in one of the quotient trees, and hence $d_i \neq d_{i+1}$ for all $i$. Let $d_k$ be a maximal value, so in particular $d_k>d_{k-1}$ and $d_k>d_{k+1}$. The edges $(v_{k-1},v_k)$ and $(v_{k},v_{k+1})$ correspond to some strands $s$ and $s'$. If $s=s'$, then these are the same edge, so we can omit the pair and apply induction. If $s \neq s'$, then these strands must intersect, otherwise $d_{k+1}>d_k$ since all paths would have to cross $s$ in order to cross $s'$. But \cref{A1n:SquareComplete} says that the corresponding edges span a square in $P$, and by homotoping over this square we get a loop $p'$ whose new distances $(d_i')$ are the same except $d_k'=d_k-2$, and hence we can apply induction.
\end{proof}

To put everything together, we have complexes $D(w) \sbs P \sbs Y$ whose configuration spaces into the Euclidean building are all naturally isomorphic, such that $P$ and $Y$ are $CAT(0)$, and $P^{(2)}$ is exactly the union of the dual webs of all webs equivalent to $w$.

\section{The \texorpdfstring{$A_3$}{} Spider and the Geometric Satake Basis}
We will use the concept of coherent webs defined in \cite{fontaine:generating} to prove an upper triangularity result. Unlike the previous sections, we will be exploiting quite a few nice properties of the $A_3$ spider presentation to do this, and so many of these results are unlikely to generalize easily. 

Let $D(w)$ be the dual graph of $w$ defined in \cite{fkk:buildings} where each vertex $D(w)$ is a face of $w$. $D(w)$ has a weight-valued distance function given by adding up the weights of each edge in a path. In particular, cut paths in $w$ correspond to paths in $D(w)$ between boundary vertices, and minimal cut paths correspond to maximal geodesics, which we are just defined as minimal length path between two points in $D(w)$.

Following \cite{fontaine:generating} we define coherent $A_n$ webs:
\begin{defn}
\label{CoherenceDef}
We say a web $w$ is \text{coherent} at a boundary face $\ast$ if
\begin{enumerate}
	\item Any geodesic in $D(w)$ from $\ast$ to some vertex $a$ are equal length (hence there is a well defined notion of distance from $\ast$ and any vertex)
	\item Every vertex in $D(w)$ is crossed by some geodesic from $\ast$ to another boundary vertex
	\item If vertices $a$ and $b$ of $D(w)$ are separated by a edge of weight $\la$, then the difference in length from $\ast$ to $a$ and $\ast$ to $b$ is in the Weyl orbit $W \la$ of $\la$.
\end{enumerate}
\end{defn}
\begin{remark}
As we'll see in proposition \ref{Rel:Redundant}, the third axiom is redundant.
\end{remark}

This definition is essentially the exact hypotheses used in \cite{fkk:buildings} to prove upper unitriangularity for $SL_3(\C)$. Every coherent web then gives us an LS path by taking all cut paths from $\ast$ to each boundary point. Subsequently, we get the following theorem from \cite{fontaine:generating}

\begin{thm}
\label{coherent}
Any set of coherent webs which is in bijection with LS paths (of the corresponding type given by the boundary) forms a basis for the invariant space which is upper unitriangular with respect to the geometric Satake basis in terms of the natural partial ordering.
\end{thm}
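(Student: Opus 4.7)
The plan is to follow Fontaine's strategy in \cite{fontaine:generating}, which generalizes the $SL_3$ argument of \cite{fkk:buildings}. The argument splits into three natural steps.

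First, I would attach to each coherent web $w$ (with base point $\ast$) its associated LS path $\pi_w$, by recording for each boundary vertex $v$, read cyclically from $\ast$, the weight-valued distance $d_{wt}(\ast, v)$ in $D(w)$. Axiom (1) of coherence ensures that this distance is well-defined; axioms (2) and (3) guarantee that the resulting sequence of successive weight differences lies in the correct Weyl orbits and satisfies the LS axioms. Under the hypothesis of the theorem, this assignment realizes the given bijection between coherent webs and LS paths.

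Next, I would realize each coherent web geometrically. Using the map $\pi: \Hom_{loc.iso.}(D(w), X) \to F(\vec{\la})$ from Section 2, each $w$ pushes its fundamental class forward to a class in $H^{top}(F(\vec{\la}),\C)$, which via the Satake correspondence becomes a vector in $Inv(V(\la_1) \ot \cdots \ot V(\la_n))$. Since the coherent webs are assumed to be equinumerous with the Satake basis (indexed by LS paths), it suffices to establish upper unitriangularity with respect to the natural partial order; the basis statement follows automatically.

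The hard part is the upper-unitriangularity. I would prove it in two pieces: (a) a \emph{generic} local isometry $D(w) \to X$ has boundary configuration lying in the MV cycle $Z_{\pi_w}$, and the induced map on top-dimensional strata is birational onto $Z_{\pi_w}$; (b) the \emph{non-generic} locus contributes only classes of MV cycles $Z_{\pi'}$ with $\pi' < \pi_w$. For (a), coherence is used directly: since every vertex of $D(w)$ lies on some geodesic from $\ast$ to a boundary vertex, the image of each boundary vertex of a generic embedding realizes exactly the weights prescribed by $\pi_w$; a dimension count at interior vertices (analyzing the link of each such vertex in $X$) then yields birationality. For (b), the key observation is that any degeneration of an embedding of $D(w)$ corresponds to collapsing or folding a portion of the dual web through a lower-dimensional face of $X$, and one checks that the resulting partial sums of boundary weights are dominated, term by term, by the partial sums of $\pi_w$. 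This is exactly the partial order on LS paths, giving the lower-triangular corrections with complex coefficients.

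The main obstacle I anticipate is step (a): confirming birationality in full generality requires a careful genericity argument for maps of $D(w)$ into the building, making sure that the combinatorial data of the web is reflected faithfully in the dimension of the corresponding subvariety of $F(\vec{\la})$. This will likely require invoking axiom (2) of coherence together with the fact (used in the proof of \cref{Global}) that in $A_3$, geodesics in $D(w)$ can be chosen compatibly with strand structure.
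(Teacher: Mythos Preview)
This theorem is not proved in the paper at all: it is quoted verbatim from \cite{fontaine:generating} (the sentence immediately preceding it reads ``Subsequently, we get the following theorem from \cite{fontaine:generating}''), and the paper then \emph{uses} it as a black box in the proof of \cref{intro:Satake}. So there is no ``paper's own proof'' to compare against; your outline is essentially a sketch of Fontaine's argument, which is the right thing to reproduce.

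That said, one point in your sketch is off. In step (a) you plan to secure birationality by appealing to the strand structure and \cref{Global}. But \cref{Global} is an $A_3$-specific result about reducible configurations in the contracted spider, whereas \cref{coherent} is stated (and, in Fontaine's paper, proved) for general $A_n$ coherent webs. The birationality/dimension count in Fontaine's argument does not use strands; it relies only on the three coherence axioms and a local analysis in the spherical building at each interior vertex of $D(w)$. If you invoke $A_3$ strand machinery you will have proved a strictly weaker statement than the one being cited. The rest of your outline---associating an LS path via axiom (1), checking the LS conditions via axioms (2) and (3), and bounding the degenerate locus by lower LS paths---matches the structure of the argument in \cite{fkk:buildings} and \cite{fontaine:generating}.
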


To apply this, we will argue that any application of a hexagon relator or a reduction relator respects coherency on at least one of the resulting webs. First we want to discuss convexity properties of cut paths on webs.

Let $X$ be a $2$-D polyhedral complex with directed edges colored by a set $S$. Next give $\Z^S$ some partial ordering. We say $X$ is \tbf{combinatorially convex} at a vertex $v$ if for every pair of vertices $w,w' \in X$ connected by an edge $e$, there exists geodesics $\ga$ and $\ga'$ from $v$ to $w$ and $v$ to $w'$ respectively, such that there are no vertices in the open region bounded by $\ga$, $\ga'$, and $e$ (i.e. the homological inside).

\begin{prop}
\label{Rel:cvx}
Let $w$ be a web satisfying the first two axioms of a coherent web. Then $D(w)$ is combinatorially convex at $\ast$, and moreover the corresponding geodesics can be extended to maximal geodesics
\end{prop}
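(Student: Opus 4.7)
The plan is to prove the convexity claim by a combinatorial minimality argument, and then obtain the extension to maximal geodesics as an easy consequence of axioms 1 and 2.

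Fix an edge $e$ between adjacent vertices $a, b$ of $D(w)$. By axiom 2, geodesics from $\ast$ to both $a$ and $b$ exist, so pairs $(\gamma, \gamma')$ of geodesics from $\ast$ to $a$ and to $b$ exist. Among all such pairs, I would choose one minimizing the combinatorial area (number of $2$-cells) of the disk $R$ bounded by $\gamma \cup e \cup \gamma'$. To prove convexity, it is enough to show that this minimal $R$ contains no interior vertex.

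Suppose for contradiction that $v$ is an interior vertex of $R$. By axiom 2, pick a maximal geodesic $\pi_v$ through $v$; axiom 1 forces each prefix of $\pi_v$ ending at a vertex $x$ to realize $d(\ast,x)$, and hence the sub-arcs of $\pi_v$ between any two of its vertices realize the corresponding distance. Since $v$ lies in the interior of $R$ while the endpoints of $\pi_v$ lie on the boundary of $D(w)$, the path $\pi_v$ must first enter the interior of $R$ at some vertex $u$ on $\gamma \cup \gamma'$ and later exit at some vertex $u'$ on $\gamma \cup e \cup \gamma'$. Splicing the interior arc $\pi_v[u,u']$ into the boundary of $R$ in place of the corresponding boundary arc then yields a new pair $(\tilde{\gamma}, \tilde{\gamma}')$ of geodesics from $\ast$ to $a$ and $b$: a length computation via axiom 1 (applied once to the $\ast$-prefixes of the rerouted paths and once to $\pi_v$) shows that whether the splice replaces a sub-segment of $\gamma$, of $\gamma'$, or connects $\gamma$ to $\gamma'$ across $R$, the total length of each new path is still $d(\ast,a)$ or $d(\ast,b)$. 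The new bounded region is $R$ with the component of $R \setminus \pi_v$ on one side of the chord excised, and that component has positive combinatorial area because $\pi_v$ traverses the interior vertex $v$ away from the boundary of $R$. This contradicts the minimality of $R$. The main obstacle here is the case analysis on where $u$ and $u'$ lie (both on $\gamma$, both on $\gamma'$, one on each, or coinciding with $a$ or $b$) and in each case identifying which of $\gamma$ or $\gamma'$ gets rerouted; the degenerate possibility that $\pi_v$ crosses $R$ only through the single edge $e$ is excluded because $e$ has only the two vertices $a,b$ and $v \notin \{a,b\}$.

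For the extension to maximal geodesics, axiom 2 furnishes for each of $a$ and $b$ a maximal geodesic $\pi$ through it, with suffix $\rho$ from the endpoint to a boundary vertex $p$. Axiom 1 applied to the $\ast$-to-$a$ and $\ast$-to-$p$ prefixes of $\pi$ gives $|\rho| = d(\ast,p) - d(\ast,a)$, so the concatenation $\gamma \cdot \rho$ has length $d(\ast,p)$ and is itself a maximal geodesic extending $\gamma$; the same construction extends $\gamma'$, completing the proposition.
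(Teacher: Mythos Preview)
Your argument is correct and rests on the same core idea as the paper's proof: pick a geodesic through any interior vertex using axiom~2, and use axiom~1 to show that splicing it into the boundary of the region preserves geodesicity while strictly shrinking the region. The paper organizes this slightly differently: it begins with \emph{maximal} geodesics $\gamma,\gamma'$ through $v,v'$ (furnished directly by axiom~2), first arranges them to be non-crossing by segment replacement, and then observes that a maximal geodesic $\rho$ through an interior vertex, once rerouted to lie between $\gamma$ and $\gamma'$, must actually pass through $v$ or $v'$; replacing the appropriate geodesic by $\rho$ reduces the interior-vertex count while keeping both geodesics maximal throughout. This buys the ``extension to maximal geodesics'' clause for free and avoids your entry/exit case analysis, at the cost of the preliminary non-crossing step. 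Your approach trades that preliminary step for the case split on where $u$ and $u'$ lie, and handles the extension separately; both routes are sound.
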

\begin{proof}
Let $v$ and $v'$ be adjacent vertices. By coherence, we can find a maximal geodesic $\ga$ from $\ast$ through $v$ and $\ga'$ from $\ast$ through $v'$. Now if $\ga'$ ever crosses to the side of $\ga$ opposite of $v'$ before $v'$, we can choose to follow $\ga$ instead. More precisely, if $\ga'$ crosses $\ga$ at points $x_1,x_2,...x_n$, then we can replace any segment $[x_i,x_{i+1}]_{\ga'}$ of $\ga'$ which is to the right of $\ga'$ with the corresponding segment $[x_i,x_{i+1}]_{\ga}$ of $\ga$. Hence we can take $\ga'$ to be on the same side of $\ga$ as $v'$ up to $v'$, and hence the two geodesic segments from $\ast$ to $v$ and $\ast$ to $v'$ together with the edge connecting $v$ and $v'$ bound a region.

If $w$ is a vertex in the open region bounded by the two geodesics, we can find a maximal geodesic $\rho$ passing through it from $\ast$. By replacing some segments as above, we can have $\rho$ always be (non-strictly) between the two geodesics $\ga$ and $\ga'$. Therefore, it must pass through either $v$ or $v'$, and thus we can replace either $\ga$ or $\ga'$ with it and reduce the number of bounded vertices. Applying induction, we have our result.
\end{proof}

We will use this fact to conclude that the Fontaine's third axiom is redundant:

\begin{prop}
\label{Rel:Redundant}
If an $A_n$ web satisfies the first two axioms of a coherent web, then it is coherent.
\end{prop}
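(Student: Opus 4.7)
The plan is to leverage the combinatorial convexity of Proposition \ref{Rel:cvx} to reduce the third axiom to a local statement about faces of $D(w)$, and then to verify that local statement using the apartment geometry of the Euclidean building.

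Concretely, given adjacent vertices $a, b \in D(w)$ joined by an edge of weight $\la$, I would apply Proposition \ref{Rel:cvx} to produce maximal geodesics $\ga\colon \ast \to a$ and $\ga'\colon \ast \to b$ which, together with the edge $(a,b)$, bound a region $R \subset D(w)$ containing no interior vertices. Since the $2$-faces of $D(w)$ correspond to trivalent vertices of the $A_n$ web $w$, the region $R$ is a finite union of triangular faces, and I would proceed by induction on the number of faces of $R$.

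For the base case, $R$ is a single triangular face, and $\ga$ and $\ga'$ necessarily share all but their last edge: there is a common vertex $v$ with $\ga = \tilde\ga \cdot (v,a)$ and $\ga' = \tilde\ga \cdot (v,b)$ for some geodesic $\tilde\ga\colon \ast \to v$. Writing $\mu, \nu$ for the weights of $(v,a)$ and $(v,b)$, coherence of geodesics (axiom 1) gives $\ell(a) = \ell(v) + \mu$ and $\ell(b) = \ell(v) + \nu$, so it suffices to show $\nu - \mu \in W\la$. Axiom 2 supplies a maximal geodesic through $v$ that realizes the face in a configuration $\pi \in Q(D(w))$, placing the three vertices $v, a, b$ as special points of a single apartment $A$ in the Euclidean building with apartment-displacements $\mu_a \in W\mu$, $\mu_b \in W\nu$, and $\mu_b - \mu_a \in W\la$. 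Because $\ga$ and $\ga'$ are geodesics, their lengths $\ell(a), \ell(b)$ coincide with the dominant conjugates of $\mu_a, \mu_b$; a direct Weyl-equivariant computation in $A$ then gives $\nu - \mu \in W\la$.

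For the inductive step, $R$ has at least two faces but no interior vertices, so there is an interior edge $e_1$ of $R$ shared by two faces whose endpoints lie on the geodesic boundary $\ga \cup \ga' \cup (a,b)$. This edge cuts $R$ into two strictly smaller convex subregions. Sub-paths of geodesics are geodesics under axiom 1, so each subregion inherits a bounding pair of geodesics from $\ast$ plus the interior edge $e_1$, and the inductive hypothesis applies to each. Telescoping the resulting Weyl-orbit conditions across $e_1$ yields $\ell(b) - \ell(a) \in W\la$.

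The main obstacle is the base case, specifically confirming that axioms 1 and 2 force the single-triangle piece of $R$ to immerse into an apartment in a manner that couples the abstract ``length'' in $D(w)$ with the apartment-level displacement; once this identification is made the orbit condition is essentially automatic. The inductive step is largely bookkeeping, with the subtlety being that after cutting along $e_1$ one must verify that neither subregion introduces a shorter geodesic violating axiom 1 — which is guaranteed precisely by the convexity provided by Proposition \ref{Rel:cvx}.
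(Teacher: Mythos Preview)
Your overall architecture matches the paper's: use Proposition~\ref{Rel:cvx} to trap the edge $(a,b)$ between two geodesics with no interior vertices, then induct on the size of the bounded region. But both your base case and your inductive step have real gaps.

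\textbf{The inductive step does not telescope.} Suppose you peel off the last triangle, with edges $e$ (between $w$ and $w'$), $e'$ (between $w$ and some $w''$ on $\ga'$), and $e''$ (the step of $\ga'$ from $w''$ to $w'$). Induction gives $d(\ast,w)-d(\ast,w'')\in W\la_{e'}$, and the geodesic $\ga'$ gives $d(\ast,w')=d(\ast,w'')+\la_{e''}$. You then need
\[
\bigl(d(\ast,w)-d(\ast,w'')\bigr)-\la_{e''}\in W\la_e,
\]
but an arbitrary element of $W\la_{e'}$ minus $\la_{e''}$ is \emph{not} in $W\la_e$ in general (already in $A_2$: $L_2-\la_1=L_2-L_1\notin W\la_1$). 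The paper closes this gap with a genuinely nontrivial root-system computation: it extracts a further constraint, namely that $d(\ast,w)-d(\ast,w'')$ also lies in $\la_{e''}-\la_e^\ast+\Delta^+$, coming from comparing the length of $\ga'$ to the path $\ga$ followed by $e$. Intersecting this half-space condition with $W\la_{e'}$ and analyzing which $L_s$'s can appear is the actual content of the argument; ``telescoping'' is not a substitute.

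\textbf{The base case via building geometry is circular.} Axioms 1 and 2 are purely combinatorial statements about the abstract weighted complex $D(w)$; neither hands you a configuration $\pi\in Q(D(w))$, let alone one in which the internal path-lengths of $D(w)$ agree with building distances. That agreement is essentially what coherence is later \emph{used} to establish. The base case is in fact elementary: with a single triangular face the two geodesics share all but their last step, so the difference $d(\ast,a)-d(\ast,b)$ equals $\mu-\nu$, and one checks directly in the $A_n$ weight lattice that for the three edge-weights of a trivalent-vertex triangle (with the correct orientations, $i+j+k=n+1$) this difference lies in the required Weyl orbit. No building is needed.
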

\begin{remark}
One might wonder whether we can remove one of the other two axioms, but it isn't too hard to construct $A_3$ webs with one axiom but not other. I'll quickly sketch the constructions. First, the web corresponding to an $I=H$ relator fails axiom $1$ if we put $\ast$ so that it is not adjacent to the double edge. However, it has no interior vertices, so it fulfills axiom $2$. Next, take the square from the horizontal square relator. Attach a trivalent vertex to one of the boundary double edges, oriented such that there is no $I=H$ relator, and let $\ast$ be the newly added boundary face. This fulfills axiom $1$, but not axiom $2$ since no geodesic passes through the interior vertex.
\end{remark}
\begin{proof}
Let $v$ and $v'$ be faces in the web. By combinatorial convexity (Proposition \ref{Rel:cvx}), we know there exists geodesics $\ga$ and $\ga'$ to $v$ and $v'$ respectively such that the open bounded region contains no vertices, and moreover let us choose the pair of geodesics with the fewest number of edges between them. I claim that if $w$ and $w'$ are vertices on $\ga$ and $\ga'$ (respectively), such that they are adjacent by an edge $e$ of dominant weight $\la_{i}$ for some $i$, then $d(\ast,w) - d(\ast,w') \in W \la_i$. We'll prove this by induction on the number of edges in the open region bounded by $\ga$, $\ga'$, and $e$. If there is only one, then this follows from the fact that each single triangle is coherent. Otherwise, there is a last interior edge $e'$ of weight $\la_{j}=L_1+...+L_j$ in the bounded region. Without loss of generality, we can assume that $e$ and $e'$ share a vertex $w$ on $\ga$. In $\ga'$, let $\la_{k}$ be the weight of the edge connecting the other vertices $w'$ and what we'll call $w''$ of $e'$ and $e''$ respectively. And again, by duality symmetry, let the triangle $ee'e''$ be oriented clockwise.
\begin{equation}
\begin{tikzpicture}[baseline=-.3ex,scale=1]  
	\node at (0, -.1)  (*)     {$\ast$}; 
	\node at (.2, 1.7)  (i)     {$\la_i$};
	\node at (0, .75)  (j)     {$\la_j$};
	\node at (-.8, 1.5)  (k)     {$\la_k$};
	\draw[blue] (0,0)node[anchor=east]{$\ga'$} to[out=120,in=-90] (-.55,1);
	\draw[red] (0,0)node[anchor=west]{$\ga$} to[out=60,in=-90] (.55,1);
	\draw[blue,dashed] (-.55,1) -- (-.55,2);
	
	\draw[web,midfrom] (-.5,1)node[anchor=east]{$w''$} -- (.5,1)node[anchor=west]{$w$};
	\draw[web,midfrom] (.5,1) -- (-.5,2)node[anchor=south east]{$w'$};
	\draw[web,midfrom] (-.5,2) -- (-.5,1);
\end{tikzpicture}
\end{equation}
We know the following:
\begin{enumerate}
	\item  $i+j+k=n+1$ by the presentation of the $A_n$ spider
	\item  $d(\ast,w)-d(\ast,w'') \in W \la_{n+1-j}$ by induction
	\item  $d(\ast,w)-d(\ast,w'') \in \la_{k} - \la_{n+1-i} + \Delta^+$. This is equivalent to $d(\ast,w)+\la_{n+1-i}-d(\ast,w') \in \Delta^+$ (ie is a positive root), which we know is the true because the path following $\ga$ then $\la_i$ to get to $w'$ and the path following $\ga'$ to $w'$ should have comparable lengths, and they can't be equal length otherwise we could have found geodesics with fewer bounded edges by choosing both geodesics to follow one side or the other.
\end{enumerate}
We know $\la_{k} - \la_{n+1-i}= -L_{k+1}-...-L_{n+1-i}$, and from the last two items we know it is in $\la_{k} - \la_{n+1-i} \in W \la_{n+1-i} - \Delta^+=W \la_{n+1-i} + \Delta^-$. Hence, we want to determine the ways $-L_{k+1}-...-L_{n+1-i}$ can be written as an element of $W \la_{n+1-i} + \Delta^-$. Since $W$ is the symmetric group, $W \la_{n+1-i}$ is the set of sums of $n+1-i$ distinct $L_s$'s. Also recall that $\Delta^+=\lb L_s - L_t : s < t \rb$. There are four kinds of elements in $W \la_{n+1-i} + \Delta^-$: 1) the ones where the $L_s$'s of the root and the $L_s$'s of the weight are disjoint, 2) the ones where only the negative $L_s$ from $\Delta^-$ overlaps, 3) the ones where only the positive $L_s$ from $\Delta^-$ overlaps, and 4) the ones that where both the positive and negative $L_s$ from $\Delta^-$ overlaps with the $L_s$'s from $W \la_{n+1-i}$. Since adding integer multiples of $L_1+...+L_{n+1}$ doesn't change the differences of coefficients of $L_s$'s, we know that (1), (3), and (4) can't happen because there exists an $L_s$ and $L_t$ whose coefficients differ by $2$, which don't exist in elements of $W \la_{n+1-i}$. Therefore it must be of type (2). 

The relevant elements are thus the sum of $n+1-i$ distinct $L_s$'s. The element must be the sum of $L_s$ distinct from $\lb L_{k+1},...,L_{n+1-i} \rb$ so that when we add $L_1+...+L_{n+1}$ to $\la_{k} - \la_{n+1-i}$ we get equality. So the element is $L_1+...+L_k + L_{n+1-i+1}+...+L_{n+1}$. Next, we want to figure out what the root could have been. Negative roots are of the form $L_s-L_t$ with $s>t$. If I subtract one and get something in $W \la_{n+1-i}$ we see that $t>k$, so $s>k$, and hence the element from $W \la_{n+1-i}$ has $\la_k$ as a summand, which tells us that $d(\ast,w) - d(\ast,w'') \in \la_k +W\la_{i}$, so $d(\ast,w) - d(\ast,w') \in W\la_{i}$
\end{proof}

We will now return to the $A_3$ spider. For simplicity, we will look at the contracted spider, and allow cut paths to cross through tetravalent vertices which correspond to $I=H$ relators, or to double edges in the non-contracted spider that can be crossed.

\begin{lemma}
\label{Rel:StrandCross}
If $w$ is a coherent web with respect to $\ast$, then a geodesic cut path from $\ast$ can cross a strand at most once (where crossing a tetravalent vertex, or a double edge adjacent to two strands counts as crossing both strands).
\end{lemma}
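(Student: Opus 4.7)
The plan is to argue by contradiction via a weight-valued length comparison that exploits coherence. Suppose a geodesic cut path $\gamma$ from $\ast$ crosses $s$ at two edges; choose two consecutive such crossings at edges $e_1, e_2$ of $s$. By consecutiveness and planarity, the subpath $\gamma'$ of $\gamma$ strictly between the two crossings lies on one side of $s$, say the right. Write $s'$ for the segment of $s$ from $e_1$ to $e_2$ (containing $k$ edges and $k-1$ internal tetravalent vertices), and let $\la$ denote the fundamental weight of $s$, which equals $\omega_1$ in the contracted $A_3$ spider.

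Since subpaths of geodesics are geodesics and the web is coherent, the subpath of $\gamma$ from the left face at $e_1$ to the left face at $e_2$ is a geodesic of weight-length $2\la + |\gamma'|$, and coherency identifies this with the weight-valued distance between the two left faces. An alternative path traverses the left of $s'$, crossing the perpendicular-left edge at each of the $k-1$ internal tetravalent vertices; after contraction these perpendicular edges are all single edges of weight $\la$, so this path has length $(k-1)\la$. Minimality of the distance then forces $|\gamma'| \leq (k-3)\la$ in the weight partial order.

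For $k = 2$, this gives $|\gamma'| \leq -\la$, which is impossible since $|\gamma'|$ is a non-negative weight. For $k = 3$, it gives $|\gamma'| = 0$, so $\gamma'$ is trivial and the right faces at $e_1$ and $e_2$ coincide; but the two internal tetravalent vertices of $s'$ split the right side of $s'$ into three distinct faces, contradicting the coincidence. For $k \geq 4$ the pure length comparison is not decisive, so my plan is to iterate combinatorial convexity (Proposition \ref{Rel:cvx}) together with coherency axiom 2 to show that the region bounded by $\gamma'$ and $s'$ would contain interior vertices unreachable by any geodesic from $\ast$, contradicting coherence. The underlying geometric intuition, to be made rigorous via the map $D(w) \to X$ into the Euclidean building, is that strands behave as walls in the $CAT(0)$ building and no geodesic crosses a wall more than once.

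The main obstacle is precisely the case $k \geq 4$: the direct length comparison admits configurations it does not immediately rule out, so the argument must be supplemented either by a careful combinatorial reduction to smaller $k$ via sub-strands of $s'$, or by a rigorous implementation of the building-theoretic picture in which coherent dual webs embed into apartments with strands realized as walls.
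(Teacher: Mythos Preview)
Your argument has two genuine gaps. First, the weight bookkeeping is not correct: in the contracted $A_3$ spider every single edge carries weight $\la_1$ or $\la_3$ depending on its orientation, so neither the two crossings of $s$ nor the $k-1$ perpendicular edges along $s'$ have a common weight ``$\la$''. The inequality $2\la+|\gamma'|\le (k-1)\la$ is therefore not well-posed, and even with correct weights you would be comparing sums of $\la_1$'s and $\la_3$'s in a partial order where such sums are frequently incomparable. Second, and more seriously, you explicitly leave the case $k\ge 4$ open; the sketch via combinatorial convexity or building geometry is not a proof, and the building argument cannot be made rigorous at this point in the paper since \cref{geo:Strandgeo} (which identifies strand-following paths with geodesics in the building) is proved later and in fact uses this very lemma.

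The paper avoids all of this with one clean idea: define the $\Z$-linear functional $\ph_s$ on weights by $\ph_s(\la_1)=\ph_s(\la_3)=1$ and $\ph_s(\la_2)=2$, and check directly from the generators of the partial order that $\ph_s$ is monotone. Then $\ph_s(|\rho|)$ is exactly the number of strand crossings of a cut path $\rho$ (a tetravalent vertex, i.e.\ a contracted double edge, contributes $\ph_s(\la_2)=2$, accounting for both strands). Hence any geodesic automatically minimizes the total number of strand crossings --- there is no need to track which of $\la_1,\la_3$ each edge carries, and no case split on $k$. One then assumes a geodesic crosses a strand twice, uses coherence axiom 2 to thread geodesics through any interior faces of the resulting bigon and thereby empty it, and finally observes that the detour around the now-empty bigon crosses strictly fewer strands, contradicting minimality of $\ph_s$. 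Your reduction for small $k$ is morally a fragment of this, but the linear functional is what makes the argument uniform.
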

\begin{proof}
The ($\Z$-linear) partial ordering on dominant weights is generated by $\la_1 + \la_3 \leq 2 \la_2$, $\la_2 \leq 2\la_1$, and $\la_2 \leq 2\la_3$. Define a function $\ph_s:\Z[\la_1,\la_2,\la_3] \to \Z$ by $\ph_s(\la_1)=\ph_s(\la_3)=1$, and $\ph_s(\la_2)=2$. Then if $x \leq y$ for dominant weights $x$ and $y$, we get $\ph_s(x) \leq \ph_s(y)$. In terms of the contracted web, composing this map with the length of a cut path gives the number of undirected strands crossed by the cut path, where crossing a tetravalent vertex counts as crossing both strands. This proves that any geodesic cut path must cross a minimal number of undirected strands. 

Now assume by contradiction that there's a geodesic from $\ast$ which crosses a strand twice. If there are any internal faces in the corresponding bigon, then we can find a geodesic from $\ast$ passing through it. If this new geodesic crosses the other cut path twice, we can reduce the number of contained faces by replacing a segment of the original cut path with the new cut path. If it crosses the cut path and the strand we can replace the end of the original cut path with the new cut path and get fewer faces in between. Finally, if it crosses the strand twice, we can just replace the original cut path with the new cut path. Hence, we can assume that there are no faces between the cut path and the strand, or in other words there are just some number of other strands passing straight through the constructed bigon. This implies that we can shift the geodesic follow the other side of the strand and so cross fewer strands, and hence the original cut path wasn't a geodesic.
\end{proof}

In particular this shows that a geodesic cut path can't cross the same $I=H$ tetravalent vertex twice, which shows that geodesic cut paths correspond in $S(w)$ correspond to a geodesic cut path in some $w'$ with $S(w')=S(w)$, and moreover this $w'$ can be chosen to be the same for all cut paths coming from $\ast$: indeed $\ast$ is in one of the four quadrants cut out by the two (undirected) strands, and the only geodesics which can cross the tetravalent vertex are ones that go to the opposite quadrant.

\begin{lemma}
\label{Rel:summand}
Let $w$ be a contracted $A_3$ spider with a reduction relator or horizontal triangle relator $r$ on $w$. Then there is a unique summand web $w'$ in $r$ which is coherent, and who's maximal geodesic cut path distances from $\ast$ are the same as $w$
\end{lemma}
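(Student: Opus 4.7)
The plan is to proceed by case analysis on the type of relator $r$ applied to $w$, in each case using \cref{Rel:StrandCross} to constrain how maximal geodesics from $\ast$ interact with the relator face, and then identifying the unique summand whose local replacement preserves the length of every maximal geodesic from $\ast$.

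First I would dispose of the relators whose face is not crossed by any geodesic from $\ast$: the loops (\ref{Single Loop}), (\ref{Double Loop}) and the monogon (\ref{Mixed Bigon C}). By \cref{Rel:StrandCross}, no geodesic from $\ast$ can enter a face bounded by a self-intersecting strand, so removing the face (the unique summand, up to a quantum-integer scalar) preserves every maximal geodesic originating at $\ast$, and coherence axioms 1 and 2 are inherited. Uniqueness is automatic because there is only one summand on the right.

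Second, I would treat the one-leading-term horizontal relators, namely the pure bigon (\ref{Pure Bigon C}). Every geodesic from $\ast$ that enters this bigon must cross each of its two bounding strands exactly once, by \cref{Rel:StrandCross}; after applying the relator those same two strands meet at a single tetravalent vertex, and the geodesic now registers the same pair of crossings at that vertex. Strand-crossing counts are preserved edge-by-edge, so the maximal geodesic distances are unchanged, and coherence axiom 3 (proved redundant in \cref{Rel:Redundant}) follows from axioms 1 and 2.

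Third, the real content lies in the multi-summand relators: the horizontal square (\ref{Double Square C}), the reducible triangle (\ref{Single Square C}), and the Kekul\'e relator (\ref{Kekule C}). In each of these cases, the summands correspond to inequivalent ways of locally reconnecting strands through the face (two parallelizations of a square, or two opposite cyclic orientations of a triangle). Here I would use \cref{Rel:cvx} to fix, for every boundary vertex beyond the relator face, a maximal geodesic from $\ast$ that touches the face only along its boundary; the direction in which such geodesics approach the face (determined by the side of the face on which $\ast$ lies) then singles out exactly one summand whose replacement keeps every such geodesic intact, with the same strand-crossing sequence and hence the same weight-length. The other summand either creates a shortcut through the replaced face (violating maximality of some geodesic) or forces an extra strand crossing (changing the length), so it fails coherence axiom 1; this also establishes uniqueness.

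The main obstacle is the Kekul\'e case, where both summand triangles have the same number of vertices and the same strand skeleton up to orientation, so length preservation is not automatic from the strand count alone. The hard part is to show that the wrong cyclic orientation actually produces two maximal geodesics from $\ast$ to some common boundary vertex of different length, not merely that the local picture changes. The plan here is to use \cref{Rel:cvx} to bound the hexagonal face between two geodesics from $\ast$ on opposite sides, track how strands entering one side leave the other, and observe that reversing the triangle's cyclic orientation swaps which pair of boundary edges a geodesic threading the face must cross; this changes the $\ph_s$-weighted length used in the proof of \cref{Rel:StrandCross} on exactly one of the two bounding geodesics, producing the required incoherence on the wrong summand and confirming the coherence of the right one.
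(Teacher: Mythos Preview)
Your case split misfires at the very first step. The hypothesis is that $w$ is coherent, and the paper's opening observation is that the loop relators, the pure bigon (\ref{Pure Bigon C}), and the monogon (\ref{Mixed Bigon C}) simply cannot occur in a coherent web: any cut path through the relator face has a shortcut around it, so the interior face is never on a maximal geodesic from $\ast$, violating axiom~2 for $w$ itself. Your own argument for the loops and monogon (``no geodesic from $\ast$ can enter a face bounded by a self-intersecting strand'') actually proves exactly this, but you draw the wrong conclusion: instead of ``so $w$ was not coherent, contradiction,'' you say ``so removing the face preserves coherence,'' which is vacuous. For (\ref{Pure Bigon C}) your treatment is worse: you argue that crossing the two bigon strands in $w$ matches crossing the tetravalent vertex in the summand, but the weights differ ($2\la_1$ versus $\la_2$), so the single summand does \emph{not} preserve maximal geodesic distances in general; again the correct conclusion is that this relator cannot appear when $w$ is coherent.

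For the three remaining relators (\ref{Double Square C}), (\ref{Single Square C}), (\ref{Kekule C}) your plan is in the right direction but too schematic compared to what the paper actually does. The paper proceeds by explicitly drawing, for each relator, every geodesic type that can cross the face and exhibiting the unique summand that carries an equal-length geodesic of the same type; it then separately checks that (i) no strictly shorter geodesic arises in $w'$ (by pushing any candidate back to $w$), (ii) the summand is the \emph{same} regardless of which geodesic you used to pick it (this is nontrivial for (\ref{Single Square C}), where several geodesic types coexist), and (iii) axiom~2 holds in $w'$, with a separate argument for the horizontal triangle case. Your ``direction of approach'' heuristic via \cref{Rel:cvx} does not by itself establish any of (i)--(iii), and your Kekul\'e paragraph does not pin down the mechanism: the paper handles it by noting that both leading terms have identical local boundary distances, so one leading term always works, and then checks axiom~2 by finding a geodesic through the relator face in $w'$ adjacent to the original one. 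Also, (\ref{Double Square C}) is a reduction relator, not the horizontal square; the horizontal square (\ref{SS C}) is outside the scope of this lemma.
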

\begin{proof}
First we notice that the loop relators, and relators (\ref{Pure Bigon C}) and (\ref{Mixed Bigon C}) can't exist in a coherent web since any cut path crossing the face has a short cut by going around the face. This leaves relators (\ref{Double Square C}), (\ref{Single Square C}), and the triangle relator (\ref{Kekule C}). 

Next we'll prove that there is summand web $w'$ whose distances between $\ast$ and boundary faces are equal to $w$, notice that each of the remaining three local relators can only decrease distances between its boundary faces, and each geodesic cut path on a leading order term has a corresponding equal length geodesic cut path on one of the other terms:
\begin{equation}
\begin{tikzpicture}[baseline=-.3ex,scale=.75] 
\draw[midto,web] (-1,.5) -- (0,0);
\draw[midfrom,web] (-1,-.5) -- (0,0);

\draw[web, midfrom]  (0,0) to[out=30,in=150] (2,0);
\draw[web, midto]  (0,0) to[out=-30,in=-150] (2,0);

\draw[midto,web] (2,0) -- (3,.5);
\draw[midfrom,web] (2,0) -- (3,-.5);

\draw[blue, dashed] (1.2,-1) -- (1.2,1); 
\end{tikzpicture}\;=\; [2]_q
\begin{tikzpicture}[baseline=4ex,scale=.75] 
\draw[web, midfrom]  (0,0) to[out=60,in=-60] (0,2); 
\draw[web, midto]  (1.5,0) to[out=120,in=-120] (1.5,2);

\end{tikzpicture}\;+\; 
\begin{tikzpicture}[baseline=-4ex,scale=.75] 
\draw[web, midto]  (0,0) to[out=-30,in=-150] (2,0); 
\draw[web, midfrom]  (0,-1.25) to[out=30,in=150] (2,-1.25);
\draw[blue, dashed] (1.2,-1.5) -- (1.2,0); 
\end{tikzpicture}
\end{equation}

\begin{equation}
\begin{tikzpicture}[baseline=-1ex,scale=1,transform shape, rotate=180] 
\draw[web] (0,0) -- (1,0); 
\draw[midto,web] (1,0) -- (2,0);
\draw[web] (2,0) -- (3,0);

\draw[web] (0.5,-.5) -- (1,0); 
\draw[midto,web] (1,0) -- (1.5,.5);
\draw[midto,web] (1.5,.5) -- (2,1);

\draw[web] (2.5,-.5) -- (2,0); 
\draw[midfrom,web] (2,0) -- (1.5,.5);
\draw[midfrom,web] (1.5,.5) -- (1,1);
\draw[blue, dashed] (1.3,-.6) -- (1.6,1.1); 
\draw[green, dashed] (0.5,.35) -- (2.5,.35); 
\end{tikzpicture}\;=\;
\begin{tikzpicture}[baseline=-.3ex,scale=.5] 
\draw[midfrom,web] (0,-1) -- (0,0);
\draw[web] (0,0) -- (-1,0);
\draw[web] (0,0) -- (0,1);
\draw[midto,web] (0,0) -- (2,0);
\draw[web] (2,0) -- (3,0);
\draw[web] (2,0) -- (2,1);
\draw[midfrom,web] (2,0) -- (2,-1);
\draw[green, dashed] (-.75,-.7) -- (2.75,-.7); 
\end{tikzpicture}\;+\; 
\begin{tikzpicture}[baseline=-2ex,scale=.75] 
\draw[web] (-1,-.5) -- (0,0);
\draw[web] (0,0) -- (1,.5);
\draw[web] (1,-.5) -- (0,0);
\draw[web] (0,0) -- (-1,.5);
\draw[web, midfrom]  (-1,-1) to[out=30,in=150] (1,-1);

\draw[blue, dashed] (-.2,-1.1) -- (.2,1.1); 
\end{tikzpicture}
\end{equation}

\begin{equation}
\begin{tikzpicture}[baseline=-1ex,scale=1,transform shape, rotate=180] 
\draw[midfrom,web] (0,0) -- (1,0); 
\draw[midto,web] (1,0) -- (2,0);
\draw[web] (2,0) -- (3,0);

\draw[midfrom, web] (0.5,-.5) -- (1,0); 
\draw[midto,web] (1,0) -- (1.5,.5);
\draw[midto,web] (1.5,.5) -- (2,1);

\draw[web] (2.5,-.5) -- (2,0); 
\draw[midfrom,web] (2,0) -- (1.5,.5);
\draw[midfrom,web] (1.5,.5) -- (1,1);
\draw[red, dashed] (0.5,-0.25) -- (2.25,.6); 
\end{tikzpicture}\;=\;
\begin{tikzpicture}[baseline=-.3ex,scale=.5] 
\draw[midfrom,web] (0,-1) -- (0,0);
\draw[web] (0,0) -- (-1,0);
\draw[web] (0,0) -- (0,1);
\draw[midto,web] (0,0) -- (2,0);
\draw[midto,web] (2,0) -- (3,0);
\draw[midto,web] (2,0) -- (2,1);
\draw[midfrom,web] (2,0) -- (2,-1);
\draw[red, dashed] (3,.75) -- (1,.75) to[out=-180,in=45] (0,0) --(-.75,-.75); 
\end{tikzpicture}\;+\; 
\begin{tikzpicture}[baseline=-2ex,scale=.75] 
\draw[web] (-1,-.5) -- (0,0);
\draw[midto,web] (0,0) -- (1,.5);
\draw[midfrom,web] (1,-.5) -- (0,0);
\draw[web] (0,0) -- (-1,.5);
\draw[web, midfrom]  (-1,-1) to[out=30,in=150] (1,-1);

\end{tikzpicture}
\end{equation}

\begin{equation}
\begin{tikzpicture}[baseline=1.5ex,scale=1] 
\draw[midto,web] (0,0) -- (1,0); 
\draw[midto,web] (1,0) -- (2,0);
\draw[midto,web] (2,0) -- (3,0);

\draw[midfrom,web] (0.5,-.5) -- (1,0); 
\draw[midfrom,web] (1,0) -- (1.5,.5);
\draw[midfrom,web] (1.5,.5) -- (2,1);

\draw[midto,web] (2.5,-.5) -- (2,0); 
\draw[midto,web] (2,0) -- (1.5,.5);
\draw[midto,web] (1.5,.5) -- (1,1);

\draw[blue, dashed] (1.3,-.6) -- (1.6,1.1); 
\draw[red,dashed] (1.75,-.5) to[out=90,in=225] (2.2,.7);
\end{tikzpicture} \; + \;
\begin{tikzpicture}[baseline=-.3ex,scale=.5,transform shape, rotate=180]  
\draw[midto,web] (-0.87,-2) to[out=90,in=0] (-2.47,0);
\draw[midto,web] (-1.87,1.5) to[out=-30,in=210] (1.47 , 1.5);
\draw[midto,web]  (2.07 , 0) to[out=180,in=90]  (0.6,-1.87);
\end{tikzpicture}
 \;=\;
\begin{tikzpicture}[baseline=-1ex,scale=1,transform shape, rotate=180] 
\draw[midfrom,web] (0,0) -- (1,0); 
\draw[midfrom,web] (1,0) -- (2,0);
\draw[midfrom,web] (2,0) -- (3,0);

\draw[midto,web] (0.5,-.5) -- (1,0); 
\draw[midto,web] (1,0) -- (1.5,.5);
\draw[midto,web] (1.5,.5) -- (2,1);

\draw[midfrom,web] (2.5,-.5) -- (2,0); 
\draw[midfrom,web] (2,0) -- (1.5,.5);
\draw[midfrom,web] (1.5,.5) -- (1,1);

\draw[blue, dashed] (1.3,-.6) -- (1.6,1.1); 
\draw[red,dashed] (-.1,-.4) -- (1.4,1.1);
\end{tikzpicture}
\; + \;
\begin{tikzpicture}[baseline=-.3ex,scale=.5]  
\draw[midfrom,web] (-0.87,-2) to[out=90,in=0] (-2.47,0);
\draw[midfrom,web] (-1.87,1.5) to[out=-30,in=210] (1.47 , 1.5);
\draw[midfrom,web]  (2.07 , 0) to[out=180,in=90]  (0.6,-1.87);
\end{tikzpicture}
\end{equation}
We want to know whether there could be a geodesic on $w'$ with a shorter or incompatible length compared to the above paths $\rho$ going from $\ast$ to some boundary face $b$. We know that any cut path on $w'$ that crosses along the boundary has a corresponding cut path on $w$ of the same length, and hence has a distance equal to or less than the indicated path on $w'$. Therefore, the only thing that could go wrong is if there were a geodesic cut path $\ga$ in $w'$ between the same faces as $\rho$ which would have needed to go through the face in $w$ (and so doesn't correspond to a geodesic in $w$). $\ga$ decomposes into three paths: $\ga_1$ goes from $\ast$ to the boundary of $r$, $\ga_2$ is contained in $r$, and $\ga_3$ goes from the boundary of $r$ to another boundary face. $\ga_1$ and $\ga_3$ have corresponding cut paths in $w$ which must be geodesics. However, it's easy to see that every above geodesic which crosses through a reduction relator can be modified into a geodesic which goes along the boundary without crossing the face. Hence we can replace either $\ga_1$ or $\ga_3$ with the corresponding geodesic in $w'$, and therefore we can reduce to the case when $\ga$ and $\rho$ don't cross transversely. Since there aren't many, we can simply look at each case separately:
\begin{equation}
\begin{tikzpicture}[baseline=-.3ex,scale=.75] 
\draw[midto,web] (-1,.5) -- (0,0);
\draw[midfrom,web] (-1,-.5) -- (0,0);

\draw[web, midfrom]  (0,0) to[out=30,in=150] (2,0);
\draw[web, midto]  (0,0) to[out=-30,in=-150] (2,0);

\draw[midto,web] (2,0) -- (3,.5);
\draw[midfrom,web] (2,0) -- (3,-.5);

\draw[blue, dashed] (1.2,-1) -- (1.2,1); 
\end{tikzpicture} \; \mapsto \;
\begin{tikzpicture}[baseline=-4ex,scale=.75] 
\draw[web, midto]  (0,0) to[out=-30,in=-150] (2,0); 
\draw[web, midfrom]  (0,-1.25) to[out=30,in=150] (2,-1.25);
\draw[blue, dashed] (1.2,-1.5) -- (1.2,0); 
\draw[red, dotted,line width=.5mm] (0,-0.6) -- (2,-.6); 
\end{tikzpicture}
\end{equation}
In the bigon case, we can deform the dashed blue cut path on the leading term to cross any boundary faces (the left and right ones in the diagram). Therefore, $\ga_1$ and $\ga_3$ can be replaced with the corresponding segments of $\rho$, which is all of $\ga$, so this tells us that the length of the two strands are the same. Next we have the following:

\begin{equation}
\begin{tikzpicture}[baseline=-1ex,scale=1,transform shape, rotate=180] 
\draw[web] (0,0) -- (1,0); 
\draw[midto,web] (1,0) -- (2,0);
\draw[web] (2,0) -- (3,0);

\draw[web] (0.5,-.5) -- (1,0); 
\draw[midto,web] (1,0) -- (1.5,.5);
\draw[midto,web] (1.5,.5) -- (2,1);

\draw[web] (2.5,-.5) -- (2,0); 
\draw[midfrom,web] (2,0) -- (1.5,.5);
\draw[midfrom,web] (1.5,.5) -- (1,1);
\draw[blue, dashed] (1.3,-.6) -- (1.6,1.1); 
\end{tikzpicture} \; \mapsto \; 
\begin{tikzpicture}[baseline=-2ex,scale=.75] 
\draw[web] (-1,-.5) -- (0,0);
\draw[web] (0,0) -- (1,.5);
\draw[web] (1,-.5) -- (0,0);
\draw[web] (0,0) -- (-1,.5);
\draw[web, midfrom]  (-1,-1) to[out=30,in=150] (1,-1);

\draw[blue, dashed] (-.2,-1.1) -- (.2,1.1); 
\draw[green, dotted,line width=.5mm] (-1,-.7) to[out=30, in=150] (1,-.7); 
\draw[red, dotted,line width=.5mm] (1,0)  -- (-1,0); 
\end{tikzpicture}
\end{equation}
where the horizontal green and red dotted lines are the shortened geodesics. However, as above we can alter the vertical blue geodesic to pass through the boundary faces of $r$ in $w$ which correspond to the lower horizontal green strand. Replacing $\ga_1$ and $\ga_3$ we get that the length of the lower horizontal green cut path and the blue cut paths have to be the same, and arguing that the higher red cut path can made to follow the blue cut path outside of this region, we get that it connects the same two boundary faces of the relator, but has longer length, than the original.

In the remaining cases, we don't get the ability to cross every boundary edge:

\begin{equation}
\begin{tikzpicture}[baseline=-1ex,scale=1,transform shape, rotate=180] 
\draw[midfrom,web] (0,0) -- (1,0); 
\draw[midto,web] (1,0) -- (2,0);
\draw[web] (2,0) -- (3,0);

\draw[midfrom, web] (0.5,-.5) -- (1,0); 
\draw[midto,web] (1,0) -- (1.5,.5);
\draw[midto,web] (1.5,.5) -- (2,1);

\draw[web] (2.5,-.5) -- (2,0); 
\draw[midfrom,web] (2,0) -- (1.5,.5);
\draw[midfrom,web] (1.5,.5) -- (1,1);
\draw[red, dashed] (0.5,-0.25) -- (2.25,.6); 
\end{tikzpicture}\; \mapsto \;
\begin{tikzpicture}[baseline=-.3ex,scale=.5] 
\draw[midfrom,web] (0,-1) -- (0,0);
\draw[web] (0,0) -- (-1,0);
\draw[web] (0,0) -- (0,1);
\draw[midto,web] (0,0) -- (2,0);
\draw[midto,web] (2,0) -- (3,0);
\draw[midto,web] (2,0) -- (2,1);
\draw[midfrom,web] (2,0) -- (2,-1);
\draw[red, dashed] (3,.75) -- (1,.75) to[out=-180,in=45] (0,0) --(-.75,-.75);
\draw[orange, dotted,line width=.5mm] (.8,-.95) -- (.8,.75); 
\end{tikzpicture} \qquad , \qquad
\begin{tikzpicture}[baseline=-1ex,scale=1,transform shape, rotate=180] 
\draw[web] (0,0) -- (1,0); 
\draw[midto,web] (1,0) -- (2,0);
\draw[web] (2,0) -- (3,0);

\draw[web] (0.5,-.5) -- (1,0); 
\draw[midto,web] (1,0) -- (1.5,.5);
\draw[midto,web] (1.5,.5) -- (2,1);

\draw[web] (2.5,-.5) -- (2,0); 
\draw[midfrom,web] (2,0) -- (1.5,.5);
\draw[midfrom,web] (1.5,.5) -- (1,1);
\draw[green, dashed] (0.5,.35) -- (2.5,.35); 
\end{tikzpicture}\; \mapsto \;
\begin{tikzpicture}[baseline=-.3ex,scale=.5] 
\draw[midfrom,web] (0,-1) -- (0,0);
\draw[web] (0,0) -- (-1,0);
\draw[web] (0,0) -- (0,1);
\draw[midto,web] (0,0) -- (2,0);
\draw[web] (2,0) -- (3,0);
\draw[web] (2,0) -- (2,1);
\draw[midfrom,web] (2,0) -- (2,-1);
\draw[green, dashed] (-.75,-.7) -- (2.75,-.7); 
\draw[blue, dotted,line width=.5mm] (1.25,-.7) -- (1.25,1.25);
\end{tikzpicture}
\end{equation}
Where orange and blue dotted lines represent $\ga_2$. In the first case, if the shortened cut path follows $\rho$ from the left we see that it can be shortened. If it comes from the right, we can replace it with an equal length segment going down and right avoiding the interior edge, and hence corresponding to a path in $w$. Then, in the second (i.e. right) case, we see that we can shorten the cut path by crossing one of the tetravalent vertex no matter which side we come from.

Finally the horizontal relator is actually simpler. In this case, the distances between boundary faces are unchanged in the two leading order terms, so the lengths of maximal geodesics are automatically unchanged.

So what we've proven is that for every geodesic cut path in $w$, we can find a summand $w'$ and equal length geodesic in $w'$. In order to get a well defined summand, we need to make sure that the summand is the same for every geodesic cut path in $w$. For the bigon and horizontal relator case this follows from there only being one summand chosen. In the reducible triangle case, it's enough to show that if there is a geodesic crossing the cyclically oriented vertex (as depicted in blue below) then there can't be geodesic cut paths of the other two types originating from $\ast$.
\begin{equation}
\begin{tikzpicture}[baseline=-1ex,scale=1,transform shape, rotate=180] 
\draw[web] (0,0) -- (1,0); 
\draw[midto,web] (1,0) -- (2,0);
\draw[web] (2,0) -- (3,0);

\draw[web] (0.5,-.5) -- (1,0); 
\draw[midto,web] (1,0) -- (1.5,.5);
\draw[midto,web] (1.5,.5) -- (2,1);

\draw[web] (2.5,-.5) -- (2,0); 
\draw[midfrom,web] (2,0) -- (1.5,.5);
\draw[midfrom,web] (1.5,.5) -- (1,1);
\draw[blue, dashed] (1.3,-.6) -- (1.6,1.1); 
\draw[green, dashed] (0.5,.35) -- (2.5,.35); 
\draw[red, dashed] (0.5,-0.25) -- (2.25,.6); 
\end{tikzpicture}
\end{equation}
but we see that all three types cross all 3 strands, so if the non-cyclic type (ie red or green) were to start where the cyclic type starts, then they would need to cross one of the 3 strands twice, and hence wouldn't be a geodesic. 

Next we want to prove that every face $f$ in $w'$ is still crossed by a geodesic cut path. In the case of reduction relators, the fact that any maximal geodesic cut path can be made to avoid the relator face means that we can find a cut path $p$ in $w'$ which corresponds to a geodesic in $w$, but by the above argument, any geodesic in $w'$ can be made to correspond to a geodesic in $w$, so $p$ is minimum length since the corresponding path in $w$ is minimum length.

In the horizontal relator case, any maximal geodesic cut path in $w'$ be associated to a geodesic in $w$ with the same length crossing all the same faces outside the relator face. So the only thing left to check is that there is a maximal geodesic cut path crossing the relator face. If there is a maximal geodesic cut path in $w$ connects opposite boundary faces of the relator, then this immediate, but notice that the red dashed cut path in the corresponding diagram below isn't associated to a cut path in $w'$ which crosses the face:
\begin{equation}
\begin{tikzpicture}[baseline=1.5ex,scale=1] 
\draw[midto,web] (0,0) -- (1,0); 
\draw[midto,web] (1,0) -- (2,0);
\draw[midto,web] (2,0) -- (3,0);

\draw[midfrom,web] (0.5,-.5) -- (1,0); 
\draw[midfrom,web] (1,0) -- (1.5,.5);
\draw[midfrom,web] (1.5,.5) -- (2,1);

\draw[midto,web] (2.5,-.5) -- (2,0); 
\draw[midto,web] (2,0) -- (1.5,.5);
\draw[midto,web] (1.5,.5) -- (1,1);

\draw[red,dashed] (1.75,-.5) to[out=90,in=225] (2.2,.7);
\end{tikzpicture}
 \; \mapsto \;
\begin{tikzpicture}[baseline=-1ex,scale=1,transform shape, rotate=180] 
\draw[midfrom,web] (0,0) -- (1,0); 
\draw[midfrom,web] (1,0) -- (2,0);
\draw[midfrom,web] (2,0) -- (3,0);

\draw[midto,web] (0.5,-.5) -- (1,0); 
\draw[midto,web] (1,0) -- (1.5,.5);
\draw[midto,web] (1.5,.5) -- (2,1);

\draw[midfrom,web] (2.5,-.5) -- (2,0); 
\draw[midfrom,web] (2,0) -- (1.5,.5);
\draw[midfrom,web] (1.5,.5) -- (1,1);

\draw[red,dashed] (-.1,-.4) -- (1.4,1.1);
\end{tikzpicture}
\end{equation} 
But we know $w$ is coherent, so by combinatorial convexity, there is a geodesic which crosses the opposite (ie top) face which is always adjacent to $\rho$, so if it doesn't cross the face it must be:
\begin{equation}
\begin{tikzpicture}[baseline=1.5ex,scale=1] 
\draw[midto,web] (0,0) -- (1,0); 
\draw[midto,web] (1,0) -- (2,0);
\draw[midto,web] (2,0) -- (3,0);

\draw[midfrom,web] (0.5,-.5) -- (1,0); 
\draw[midfrom,web] (1,0) -- (1.5,.5);
\draw[midfrom,web] (1.5,.5) -- (2,1);

\draw[midto,web] (2.5,-.5) -- (2,0); 
\draw[midto,web] (2,0) -- (1.5,.5);
\draw[midto,web] (1.5,.5) -- (1,1);

\draw[blue, dashed] (.2,-.5) -- (1.7,1); 
\draw[red,dashed] (1.75,-.5) to[out=90,in=225] (2.2,.7);
\end{tikzpicture}
 \; \mapsto \;
\begin{tikzpicture}[baseline=-1ex,scale=1,transform shape, rotate=180] 
\draw[midfrom,web] (0,0) -- (1,0); 
\draw[midfrom,web] (1,0) -- (2,0);
\draw[midfrom,web] (2,0) -- (3,0);

\draw[midto,web] (0.5,-.5) -- (1,0); 
\draw[midto,web] (1,0) -- (1.5,.5);
\draw[midto,web] (1.5,.5) -- (2,1);

\draw[midfrom,web] (2.5,-.5) -- (2,0); 
\draw[midfrom,web] (2,0) -- (1.5,.5);
\draw[midfrom,web] (1.5,.5) -- (1,1);

\draw[blue, dashed] (.8,-.5) -- (2.3,1); 
\draw[red,dashed] (-.1,-.4) -- (1.4,1.1);
\end{tikzpicture}
\end{equation}
but in $w'$ we can then find an equal length geodesic which crosses the relator face as depicted.

Therefore we've shown that we can find a well-defined summand where all maximal geodesic cut path distances from $\ast$ are the same, and the summand is still coherent.
\end{proof}

We can finally prove theorem \cref{intro:Satake}, which we restate here:

\Satake*
\begin{proof}
We will first prove that there exist reduced, coherent webs. We will use the existence of a coherent web for each LS path from \cite{fontaine:generating}. Let $w_1$ be such a coherent web. If it is reduced, we're done. Otherwise, there exists a sequence of hexagon relators which leads the web to be reducible. By \cref{Rel:summand}, the leading order term is still coherent. By again by \cref{Rel:summand} we can find a coherent summand web of the reduction relator which has the same cut path lengths between $\ast$ and the boundary vertices. We can then repeat this process until it is reduced. This gives us existence.

Since such a set of coherent webs are a basis of $\gr Sp$, and we have $E^1$ convergence, for every reduced web $w$ there is a sequence of horizontal relators which makes $w$ into one of these coherent webs which we'll call $w'$. Since the horizontal square and triangle relators commute on reduced webs, we can have the square relators occur before the triangle relators in the sequence. Call the intermediate web before applying any hexagon relators, $w''$. Since $w''$ and $w$ are obtained from each other via square relators which have no lower order terms, they are equal on the nose in $Sp$. Moreover, $w''$ is coherent since it is equivalent to $w'$ via hexagon relators. Therefore, if we decompose $w=w''$ in the Satake basis, it is upper unitriangular. So any basis of reduced webs in $\gr Sp$ correspond to a upper unitriangular basis. Since this is true for any base point, we get our result. 
\end{proof}

\section{Combinatorial Minimal Disks in the \texorpdfstring{$A_3$}{} Euclidean Building}
Geometrically, it is better to look at the so-called dual web. Take a non-collapsed web $w$, and then take the dual graph with directed edges labeled by weights. We can then assign a distance to every cellular path with values in the dominant weight cone as defined in \cite{fkk:buildings}. Denote this complex by $D(w)$. Then in any simplicial complex with weight valued metric, a \tbf{combinatorial geodesic path} is a cellular path whose length is minimal with respect to the natural ordering of dominant weights. These exactly correspond to the geodesic cut paths from before.

Following \cite{fkk:buildings}, take a polygon with fixed base point whose sides are labeled by dominant weights, and look at the configuration space of locally isometric maps from the $0$-skeleton $D(w)^0$ into the Euclidean building (over $\C$). For each LS path, there is a generic embedding such that the distances from the base point to the other vertices on the immersed polygon are exactly the weights in this path. By \cite{fontaine:generating}, for any coherent web $w$ corresponding to this LS path, we can extend the map of the polygon to a map of the coherent web. Conversely, if we have an immersed subcomplex of the building which is a disk whose boundary is the immersed polygon, we get a dual web with boundary given by the weights of the polygon. Hence, we can interpret coherent webs as surfaces in the building with a fixed boundary. We say that such a disk is a combinatorial minimal disk if the area is minimal among all such disks. We're interested in the geometric properties of these disks.

First, we reinterpret the relators geometrically:
\begin{prop}
\label{geo:horz}
Let $D$ be a topological disk which is an immersed subcomplex of the $A_3$ Euclidean building with generic boundary polygon $P$, and let the corresponding web $w$ have a horizontal square (\ref{SS}) or $I=H$ (\ref{IH}) relator $r$, then there is another such topological disk $D'$ with the same boundary whose corresponding web is $r(w)$
\end{prop}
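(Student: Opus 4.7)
The plan is to establish the proposition by a local geometric construction, handling the two relator types in \cref{IH} and \cref{SS} separately since they involve different kinds of local modifications of the immersed disk $D$.

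For the $I=H$ relator (\cref{IH}): Inspection of the figures shows that in both the $I$ and $H$ forms all four external single edges are oriented as incoming to the two internal trivalent vertices, so in both forms each trivalent vertex has type $VVW$. The dual web of each form therefore consists of two $(\om_1, \om_1, \om_2)$-triangles glued along a common $\om_2$-edge. However, the two triangles are glued along \emph{different} diagonals of the outer rhombus determined by the four fixed exterior $0$-cells: in the $I$-form the shared $\om_2$-edge connects the ``top'' and ``bottom'' interior vertices, while in the $H$-form it connects the ``left'' and ``right'' ones. To construct $D'$, I work in an apartment $A$ containing the outer rhombus, keep the rest of $D$ outside the local region, and replace the two original interior vertices $T, B$ (together with their four connecting $\om_1$-edges and the shared $\om_2$-edge) by two alternative interior vertices $L, R$ realizing the opposite diagonal. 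The existence of $L, R$ follows from the symmetry of the outer rhombus (all four sides have weight $\om_1$), which admits both diagonal triangulations related by the Weyl reflection exchanging the two diagonals.

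For the horizontal square relator (\cref{SS}): The local region of $D(w)$ consists of a central $0$-cell $v$ surrounded by four triangular $2$-cells (dual to the four corner trivalent vertices of the square), of alternating types $(\om_1,\om_1,\om_2)$ and $(\om_3,\om_3,\om_2)$. The four interior edges from $v$ are labeled alternately $\om_1,\om_3,\om_1,\om_3$, while the four external $\om_2$-edges forming the outer rhombic boundary of the local region are unchanged by $r$. The relator $r$ swaps $\om_1 \leftrightarrow \om_3$ throughout the interior. To construct $D'$, I again work in an apartment $A$ containing the local region, fix the four exterior vertices (determined by the rest of $D$), and replace $v$ by an alternative central vertex $v'$ whose four connecting edges to the exterior vertices carry the swapped labels. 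The existence of $v'$ follows from the $A_3$ Dynkin involution $\om_1 \leftrightarrow \om_3$, which fixes $\om_2$ and acts on $A$ as a local symmetry of the outer $\om_2$-rhombus about its center.

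The main obstacle in both cases will be verifying that the alternative interior vertices ($L, R$ in the $I=H$ case, $v'$ in the SS case) genuinely exist as vertices of the building (not merely as geometric midpoints of the rhombus), and that the required $\om_1$- and $\om_2$-edges between them and the fixed exterior vertices are realized by actual cells of the building. This reduces to an explicit check in the $A_3$ coweight lattice inside $A$, using that every vertex of the $A_3$ Euclidean building is special (so its link is a full spherical $A_3$ building) together with the appropriate Weyl or Dynkin symmetry of the outer rhombus. Once the alternative interior structure is installed, $D'$ is the union of this replaced local piece with the unchanged part of $D$, giving a topological disk immersed in the building with the same boundary polygon $P$ as $D$ and dual web $r(w)$, as required.
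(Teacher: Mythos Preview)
Your apartment-and-symmetry approach differs from the paper's, and in the $I=H$ case it rests on an incorrect reading of the relator. The two internal trivalent vertices are \emph{not} both of type $VVW$: one has two incoming single edges (type $VVW$) while the other has two outgoing single edges (type $V^*V^*W$). Consequently the outer quadrilateral of the local dual patch has two $\om_1$-sides and two $\om_3$-sides, not four $\om_1$-sides, so there is no Weyl reflection exchanging the diagonals while fixing the four outer vertices as you claim. Moreover, those four dual vertices are all fixed by the relator (they are the four faces of the web, which do not change); what changes is only which diagonal $\om_2$-edge, and hence which pair of $2$-cells, is present. The paper argues instead in the link of one of the two vertices on the $\om_2$-diagonal: in that spherical $A_3$ building the three neighbours appear as a line, a plane, and a hyperplane in $\C^4$ forming a full flag, and the trivial implication $(\text{line}\subset\text{plane}\subset\text{hyperplane})\Rightarrow(\text{line}\subset\text{hyperplane})$ immediately produces the missing edge and the other pair of triangles.

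For the square relator your local picture is correct, but two problems remain. First, you assume the five vertices lie in a common apartment without justification; the building axioms only put two simplices in a common apartment, not an arbitrary star of four triangles. Second, the ``Dynkin involution about the rhombus centre'' does not fix $N,E,S,W$ individually; at best the point reflection through that centre swaps $N\leftrightarrow S$ and $E\leftrightarrow W$, which is not what you asserted. The paper again bypasses apartments by working in the link of the central vertex $v$: the four neighbours are two lines $L_1,L_2$ and two hyperplanes $H_1,H_2$ in $\C^4$, the four existing triangles give $L_i\subset H_j$ for all $i,j$, and any plane $M$ with $L_1+L_2\subseteq M\subseteq H_1\cap H_2$ (which exists by an elementary dimension count) furnishes the replacement vertex $v'$ together with all four new triangles.
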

\begin{remark}
Unlike the $I=H$ relator and horizontal square relator, the boundary of the hexagon relator can be embedded in many different ways, which makes it much less easy to work with. Luckily we can use are previous combinatorial results to get what we want without needing these hexagon relators.
\end{remark}
\begin{proof}
First let $r$ be a square relator, and $v$ be the vertex in $D$ corresponding to the relator face. The combinatorial link of $v$ in the building is an $A_3$ spherical building, whose vertices can be associated to subspaces of $\C^4$. The adjacent vertices in $D$ will correspond to two $1$-D subspaces, and two $3$-D subspaces, where each $3$-D subspaces contains both of the $1$-D subspaces. There exists at least one $2$-D subspace contained in the two $3$-D subspaces, and contained the two $1$-D subspaces (if all subspaces are distinct then there is actually a unique such subspace). The corresponding vector in the building is adjacent to all four vertices, giving us a square composed of four triangles. Replacing the original square relator face with this face gives us our disk $D'$.

Similarly, if $r$ is an $I=H$ relator, look at the link of one over the vertices adjacent to the double edge. We have a $1$-D, $2$-D, and $3$-D subspace such that the $1$-D is contained in the $2$-D, and the $2$-D is contained in the $3$-D, and thus the $1$-D is contained in the $3$-D, which tells us that there is an edge in the building connecting the two vertices which are unconnected in $D$. Replacing the original double edge with this double edge, we get $D'$.
\end{proof}

\begin{prop}
\label{geo:red}
Let $D$ be a topological disk which is an immersed subcomplex of the $A_3$ Euclidean building with boundary polygon $P$, and let the corresponding web $w$ have a reduction relator $r$, then there is another such topological disk $D'$ with lower area but the same boundary polygon.
\end{prop}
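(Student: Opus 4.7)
The plan is to use Proposition~\ref{A3 Purity} to replace $w$ with an equivalent web having a genuine reducible face via area-preserving horizontal moves that can be realized geometrically, and then exhibit a strict area decrease in the building via an incidence argument in the spherical link, mirroring the strategy of Proposition~\ref{geo:horz}.

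By the second half of Proposition~\ref{A3 Purity}, there is a sequence $s_1,\dots,s_n$ of horizontal square (\ref{SS}) and $I{=}H$ (\ref{IH}) relators such that $w' := s_n \cdots s_1(w)$ has a reducible face in the spider. Iterating Proposition~\ref{geo:horz} produces a disk $D_n$ in the building with the same boundary polygon $P$ as $D$, the same area, and corresponding web $w'$. It therefore suffices to replace $D_n$ by a disk with boundary $P$ and strictly smaller area.

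Next, I do a case analysis on the reducible face of $w'$, according to Theorem~\ref{Global}: the face is either a loop or monogon (relators (\ref{Single Loop})--(\ref{Mixed Bigon})), a bigon (relator (\ref{Pure Bigon}) or (\ref{Mixed Bigon})), or a non-cyclically oriented triangle corresponding to a square relator (\ref{Double Square}) or (\ref{Single Square}). For each I work in the combinatorial link of a suitable vertex of the subdisk of $D_n$ cut out by the reducible configuration; this link is the $A_3$ spherical building, identified with the flag complex of proper subspaces of $\mathbb{C}^4$. Loop and monogon configurations collapse to lower-dimensional subcomplexes and can be discarded directly. For bigons, the two parallel boundary edges have equal weight labels, so under the local isometry they map to the same edge of the building; the interior face and internal vertex, if any, may be deleted. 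For a reducible square face, the four consecutive vertices of the bounding square correspond to $1$- or $3$-dimensional subspaces of $\mathbb{C}^4$; the non-cyclic orientation forces an incidence pattern guaranteeing the existence of an auxiliary intermediate subspace, via which the subdisk splits as a triangle plus a bigon (for (\ref{Single Square})) or as a pair of bigons (for (\ref{Double Square})), matching the right-hand side of the relator.

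The main obstacle is the square case: whereas the horizontal square relator of Proposition~\ref{geo:horz} replaces a square face with four triangles meeting at a central vertex and preserves area, here we must leverage the non-cyclic orientation to conclude that the new decomposition has strictly fewer $2$-cells. This reduces to a finite incidence check in the $A_3$ spherical building, handled by observing that non-cyclic orientation is precisely the condition under which two of the four surrounding vertices in $D_n$ are forced to coincide as points of the building, so that the square degenerates into the smaller configuration on the right-hand side of the corresponding reduction relator, giving the required strict area decrease.
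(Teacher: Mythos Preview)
Your approach is essentially the paper's: a case-by-case link computation in the $A_3$ spherical building, exactly in the spirit of Proposition~\ref{geo:horz}. Two points, however, are off.

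First, the opening detour through Proposition~\ref{A3 Purity} and Proposition~\ref{geo:horz} is unnecessary. The hypothesis is that $w$ already \emph{has} a reduction relator $r$; by definition this means the leading term of $r$ (a loop, bigon, or reducible-square face) already sits inside $w$ as a subgraph. There is nothing to uncover with horizontal moves---you can take $n=0$ and proceed directly to the case analysis. (Proposition~\ref{A3 Purity} is about producing a reducible face from a merely \emph{reducible} web; here the face is handed to you.)

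Second, your summary of the square case is not quite right. You assert that ``non-cyclic orientation is precisely the condition under which two of the four surrounding vertices in $D_n$ are forced to coincide.'' That is not what happens in general. For relator~(\ref{Double Square}) the link contains two $1$-dimensional and two $2$-dimensional subspaces of $\C^4$ with each $1$-dimensional contained in each $2$-dimensional; one then argues that if the $1$-dimensional subspaces are distinct they span, forcing the two $2$-dimensional subspaces to coincide (and dually). For relator~(\ref{Single Square}) one has two $1$-dimensional subspaces, one $2$-dimensional, and one $3$-dimensional; if the $1$-dimensional subspaces are distinct they span the $2$-dimensional, which then must lie in the $3$-dimensional, and this \emph{new incidence} (not a coincidence of existing vertices) yields a smaller disk with two triangles. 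Your earlier phrasing (``splits as a triangle plus a bigon'') is closer to this; the final ``two vertices coincide'' summary misses the generic branch of the (\ref{Single Square}) case.
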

\begin{proof}
This follows mainly from direct computation in the link of each vertex which is a spherical building:
\begin{enumerate}
	\item If $r$ is a loop, then we can just omit the corresponding edge.
	\item If $r$ is the bigon we see that the two adjacent faces are separated by two edges of the same type. This implies that the corresponding edges are the same in the building, and hence if we omit the vertex corresponding to $r$ we still have a disk
	\item If $r$ is the square relator with two interior double edges, we look at the link of the corresponding vertex. We have two $2$-D subspaces and two $1$-D subspaces (up to duality). If both $1$-D subspaces are distinct, then the $2$-D subspaces must be the same which means the complex gets folded and we can omit the vertex corresponding to $r$. Similarly, if both $2$-D subspaces are different, their intersection will have to be $1$-D, and so the two $1$-D subspaces are the same. In that case the complex gets folded the other way.  If both $1$-D subspaces are the same and both $2$-D subspaces then we fold both ways and our relator corresponds to a single triangle which is connected the $D$ by one edge. Replacing the triangle with the edge in $D$ we get our result.
	\item If $r$ is the square relator with one interior double edge then we look at the link of the correspond vertex, and we have two $1$-D subspaces, one $2$-D subspace, and one $3$-D subspace, where the two larger subspaces contain both of the $1$-D subspaces. If the two $1$-D subspaces are different, then they span the $2$-D subspace, which implies the $3$-D subspaces contains the $2$-D subspaces, so we can get an smaller complex with two triangles. If the two $1$-D are the same, then the subcomplex folds, and we can remove the relator vertex and its four adjacent triangles.
\end{enumerate}
\end{proof}

This shows us that minimal area combinatorial disks correspond to reduced webs

\begin{prop}
\label{geo:reduced=minimal}
Let $D$ be a topological disk which is an immersed subcomplex of the $A_3$ Euclidean building with generic boundary polygon $P$. Then $D$ has minimal area among such disks if and only if the associated web $w$ is reduced.
\end{prop}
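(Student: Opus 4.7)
The forward direction is immediate from Proposition \ref{geo:red}: if the associated web $w$ is not reduced, the proposition produces an immersed disk $D'$ with the same boundary polygon $P$ and strictly smaller area, so $D$ cannot be area-minimal.

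For the converse, suppose $w$ is reduced and let $D'$ be any competing immersed disk with boundary $P$, with associated web $w'$. Applying Proposition \ref{geo:red} repeatedly to $D'$ produces a disk $D''$ with $\mathrm{area}(D'') \leq \mathrm{area}(D')$ whose associated web $w''$ is reduced. Hence it suffices to prove that any two reduced webs whose dual complexes admit combinatorial immersions into the building with the common boundary polygon $P$ have the same area.

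To handle this, my plan is to exploit the horizontal relators. By Proposition \ref{geo:horz}, the horizontal square and $I{=}H$ relators preserve the geometric area of the disk, since on both sides of each relator the internal cell structure inside the bounded region is the same up to relabeling. Direct inspection of the hexagon relator (\ref{Kekule}) shows that it likewise preserves the total count of internal vertices of the contracted web, and hence the area. Combining this with the $E^1$ convergence of the $A_3$ spider established earlier, any two reduced webs representing the same element of $Sp$ are connected by a sequence of horizontal relators, so the corresponding immersed disks have the same area.

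The remaining case, where $w$ and $w''$ represent distinct elements of $Sp$, is the main obstacle. Here the strategy is to invoke the geometric Satake correspondence discussed in the previous section: the generic immersed disks with boundary $P$ sweep out top-dimensional components of the Satake fiber $F(\vec\la)$, and the area of such a disk coincides with the top dimension of $F(\vec\la)$, a quantity that depends only on $\vec\la$ and can be read off from the boundary polygon via a standard weight-combinatorial formula. Since this dimension is common to all top components, it follows that $\mathrm{area}(D) = \mathrm{area}(D'')$, completing the converse. The delicate point will be turning this appeal to equidimensionality into a self-contained combinatorial identity expressing area in terms of the boundary weights.
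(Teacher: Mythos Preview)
Your forward direction has a gap. Proposition~\ref{geo:red} applies only when $w$ already has a reducible \emph{face}; but ``$w$ not reduced'' means only that some sequence of horizontal relators leads to such a face. The paper closes this gap exactly as one would expect: invoke Proposition~\ref{A3 Purity} to obtain a sequence consisting solely of square and $I{=}H$ relators, realize each step geometrically via Proposition~\ref{geo:horz} (which handles only those two relator types), and \emph{then} apply Proposition~\ref{geo:red}. Without this detour your argument does not start.

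For the converse you take a genuinely different route from the paper, and your own route is incomplete. The paper never splits into ``same in $Sp$'' versus ``different in $Sp$''. Instead it uses the coherence machinery from the previous section: by the proof of Theorem~\ref{intro:Satake}, any reduced web can be made coherent at a boundary face $\ast$ using only square and $I{=}H$ relators, and by Proposition~\ref{geo:horz} these moves are realized by disks of the same area. Since $P$ is generic, the boundary distances from $\ast$ determine a single LS path, so both reduced webs become coherent with the \emph{same} LS path. The unitriangularity of Theorem~\ref{intro:Satake} then forces them to agree in $\gr Sp$, and $E^1$ convergence gives a sequence of horizontal relators between them, hence equal vertex count and equal area. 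In particular your ``case~2'' is shown never to occur at the level of $\gr Sp$, which is all that matters for area.

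Your proposed substitute for case~2---equating disk area with the top dimension of $F(\vec\la)$ via equidimensionality of the Satake fiber---is an appealing idea, and morally this is what the coherence machinery is encoding. But the paper does not establish that the area of an arbitrary immersed disk equals the dimension of the component its configuration space sweeps out, so you would have to prove this separately; as you yourself note, this is the ``delicate point'' you have not resolved. The paper's route through coherent webs and LS paths is precisely what turns that geometric intuition into a proof.
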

\begin{proof}
We first assume that $D$ is minimal but corresponding to a reducible $w$. We know that the there is a sequence of square and $I=H$ relators on $w$ that lead to a reducible face by \cref{A3 Purity}. By \cref{geo:horz}, this web also corresponds to a minimal area disk with the same boundary polygon, but then by \cref{geo:red} there is a smaller disk with the same boundary polygon, contradicting the statement that $D$ is minimal.

Now assume that $w$ is reduced. Then there is a sequence of relators which makes $w$ coherent at some boundary vertex $\ast$, and this correspondingly gives a disk $D'$ of the same area by \cref{geo:red}. Now, given a minimal combinatorial disk, we know that its corresponding web is also reduced by the above, and hence we can take it to be coherent at $\ast$ after applying relators. Then $D'$ and this disk are coherent with the same boundary distances, and hence must be equal up to lower order terms by \cref{intro:Satake}. Thus by $E^1$ convergence, there is a sequence of horizontal relators connecting their webs, and thus they are equal area, so $D(w)$ has minimal area. 
\end{proof}

The following proposition will give an interpretation of strands as geodesic strips, and will allow us to extend results about coherent webs to minimal webs. Let $w$ be a reduced web, and let $p$ be a (possibly undirected) maximal strand in the contracted web $S(w)$. We can find a cut path $\rho_p$ in an $\ep$ neighborhood of the strand on the left or the right, and this corresponds to a unique cut path in $w$ because it doesn't cross any tetravalent vertices. We'll call such a path a \tbf{strand-following geodesic}.
\begin{prop}
\label{geo:Strandgeo}
If $w$ is the reduced web, then there is a sequence of horizontal relators $r_i$ such that $w'=r_n...r_1(w)$ is coherent at the starting face $\rho_p(0)$ of a strand-following geodesic $\rho_p$, each $r_i$ is disjoint from $\rho_p$, and $\rho_p$ is a geodesic on $w'$.
\end{prop}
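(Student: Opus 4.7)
The plan is to cut $w$ along the strand-following geodesic $\rho_p$, make each resulting piece coherent separately using the tools already developed, and then glue the results together. Concretely, let $w_L$ denote the subweb of $w$ on the side of the strand $p$ containing $\rho_p(0)$, and $w_R$ the subweb on the other side; these are obtained by cutting $w$ along the edges of $w$ traversed by $\rho_p$, so that these edges become boundary edges of each piece. First I would verify that both $w_L$ and $w_R$ are reduced: any minimal reducible configuration of strands in a subweb is automatically one in $w$ by the global criterion \cref{Global}, contradicting reducedness of $w$.

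Next I would apply the existence statement of \cref{intro:Satake} (whose proof produces a reduced coherent representative by combining Fontaine's existence with the reduction procedure from \cref{Rel:summand}) to $w_L$ with base face $\rho_p(0)$. This yields a sequence of horizontal relators $r_1^L, \dots, r_{m_L}^L$ supported inside $w_L$ making it coherent. I would do the same for $w_R$ with base face $\sigma$ chosen to be the boundary face of $w_R$ adjacent to the starting end of $p$ on the opposite side from $\rho_p(0)$. Because each $r_i^L$ is supported in $w_L$ and each $r_j^R$ in $w_R$, their lifts to $w$ are disjoint from $\rho_p$; concatenating produces the desired sequence, giving a web $w'$ in which $\rho_p$ survives unchanged as a cut path.

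To verify coherence of $w'$ at $\rho_p(0)$, for faces in $w_L$ this is immediate from coherence of the transformed $w_L$. For a face $f\in w_R$, I would argue that any cut path from $\rho_p(0)$ to $f$ must cross some edge of $\rho_p$, and then invoke \cref{Rel:StrandCross} to conclude that a geodesic crosses $p$ at most once, so that it decomposes cleanly as a geodesic in the coherent $w_L$-piece up to some crossing edge of $\rho_p$, that single crossing, and a geodesic in the coherent $w_R$-piece from the opposite face of that edge to $f$. Combinatorial convexity \cref{Rel:cvx} applied to each side then produces a well-defined distance, and the compatibility of weights across the cut is exactly the content of axiom three in \cref{Rel:Redundant}. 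For $\rho_p$ itself being a geodesic: since $\rho_p$ crosses precisely the strands of $w$ that cross $p$, each exactly once, it attains the minimum value of the strand-counting function $\varphi_s$ used in the proof of \cref{Rel:StrandCross}; in a coherent web this minimum is realized by actual geodesics, giving the result.

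The main obstacle is the gluing step, i.e.\ promoting coherence on each side to coherence on all of $w'$. The choice of base face $\sigma$ in $w_R$ is crucial: the distance from $\rho_p(0)$ to $f\in w_R$ ought to be independent of which edge of $\rho_p$ a geodesic crosses, and this demands that the distances from $\sigma$ in $w_R$ interact correctly with distances along $\rho_p$. A clean way to handle this may be to induct on the number of strands of $w_R$, each time peeling off a single strand that crosses $p$ and reducing to a situation in which the cut-and-glue argument is essentially trivial, so that only one crossing edge of $\rho_p$ remains relevant at each step.
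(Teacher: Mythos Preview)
Your cut-and-glue strategy is exactly the paper's approach, but you introduce an unnecessary complication that creates the very obstacle you worry about. After cutting $w$ along $\rho_p$, the starting face $\rho_p(0)$ is a boundary face of \emph{both} pieces $w_l$ and $w_r$ (the faces traversed by the cut path become boundary faces of each half). The paper therefore makes both pieces coherent at the \emph{same} base point $\rho_p(0)$, not at a separate point $\sigma$ for $w_r$. With this choice the gluing is immediate: since $\rho_p$ is a geodesic in each piece, any segment of a geodesic from $\rho_p(0)$ in $w'$ that crosses $\rho_p$ twice can be replaced by the corresponding segment of $\rho_p$, so every geodesic from $\rho_p(0)$ may be taken to lie entirely in $w_l'$ or $w_r'$. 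Coherence of $w'$ then follows directly from coherence of the two pieces, and axiom~2 holds because any maximal geodesic in a piece either already ends on the boundary of $w'$ or ends on $\rho_p$ and can be extended along $\rho_p$. Your proposed induction on the strands of $w_R$ is not needed.

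A smaller gap: your argument that $\rho_p$ is a geodesic via minimality of $\ph_s$ is incomplete, since two paths can share the same $\ph_s$-value yet have incomparable weight-lengths (for instance $2\la_1$ versus $2\la_3$). The paper handles this by observing that if a competing path between the same endpoints crosses a tetravalent vertex in $S(w)$, the two strands meeting there together with $p$ form a non-cyclically oriented triangle, contradicting reducedness of $w$ via \cref{Global}; hence any competing path either crosses some strand twice or avoids tetravalent vertices entirely, and in either case cannot beat $\rho_p$.
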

\begin{proof}
Since the strand doesn't cross any other strand more than once by \cref{Global}, we know that $\rho_p$ crosses every strand at most once. It is impossible for any other strand with the same end points to cross a double edge but not cross a strand twice in $S(w)$, indeed in $S(w)$ it would be crossing the intersection of two strands that must cross $p$, and the orientation could not be cyclic, and hence the triangle these strands make with $p$ would by non-cyclically oriented, and hence $w$ would be reducible by \cref{Global} as depicted below (up to dualizing): 
\begin{equation}
\begin{tikzpicture}[baseline=0ex,scale=1,] 
\draw[web] (0,0) -- (0,2)node[above, fill=white] {p}; 
\draw[dashed,blue] (0.3,0) -- (0.3,2)node[above, fill=white] {$\rho_p$};
\draw[web,midto] (0,.5) -- (.5,1); 
\draw[web] (-.5,0) -- (1.5,2);
\draw[web,midto] (0,1.5) -- (.5,1); 
\draw[web] (1.5,0) -- (-.5,2);
\end{tikzpicture}
\end{equation}
Now cut the web along $\rho_p$ to obtain two subwebs $w_l$ and $w_r$. By the proof of \cref{intro:Satake} we know that we can find a sequence of horizontal square relators and $I=H$ relators on $w_l$ and $w_r$ making each coherent at $\rho_p(0)$. These relators correspond to relators and $w$ giving a new web $w'$, and subwebs $w_l'$ and $w_r'$, and $\rho_p(0)$ is unchanged. We will prove that $w'$ is coherent at $\rho_{p}(0)$. Since $\rho_{p}$ is still following a strand, by the previous argument we know that there can't be cut paths with shorter length, and hence it is a geodesic in both $w_l'$ and $w_r'$. Any segment of a geodesic which crosses $\rho_p$ twice can be replaced by a geodesic which follows $\rho_p$ instead of crossing it twice. In particular, for a geodesic from $\rho_p(0)$ to any point there is a cut path of shorter or equal length that stays in $w_l'$ or $w_r'$, and hence is a geodesics in that subweb. Hence geodesic distances are coherent. Moreover, this also shows that geodesics in $w_l'$ and $w_r'$ are geodesics in $w'$. We want to show that every face is along some geodesic from $\rho_p(0)$. Given a face $f$ which is in $w_r'$ or $w_l'$, there is a geodesic in $w_r'$ or $w_l'$ respectively from $\rho_p(0)$ which passes through it. If the ending face is on the boundary of $w'$, then this is a geodesic in $w'$. If it ends on $\rho_p$, then we can extend it via $\rho_p$ to a maximal geodesic in $w'$, so in either case $f$ is crossed by a maximal geodesic.
\end{proof}

Now fix an $LS$ path $\vec{\la}$, and construct the corresponding fan variety configuration space as in \cite{fkk:buildings}. This is an open dense subset of an irreducible component and hence irreducible. Look at the subvarieties obtained by enforcing particular (dominant-weight valued) distances between vertices in the polygon. Since these distances are bounded by the circumference, the fan variety is the union of finitely many disjoint subvarieties corresponding to each choice of distances. By irreducibility, there is a unique open dense subvariety, which we'll call $X_{\vec{\la}}$, which is thus also a dense subset of the corresponding irreducible component of the Satake fiber.

\begin{prop}
\label{geo:config}
If $w$ is a reduced web which is equal up to lower order terms to a coherent reduced web with boundary distances $\vec{\la}$, then every configuration of $X_{\vec{\la}}$ extends uniquely to a configuration of $D(w)$
\end{prop}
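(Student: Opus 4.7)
The plan is to reduce the statement to the case where $w$ is itself coherent, using $E^1$ convergence (Theorem \ref{intro:Satake}) and the geometric realization of horizontal square and $I=H$ relators (Proposition \ref{geo:horz}), and then to establish the coherent case directly from the coherence axioms and combinatorial convexity.

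For the reduction step, $E^1$ convergence yields a sequence of horizontal relators taking $w$ to a coherent reduced $w_c$. Since the horizontal square and $I=H$ relators have no lower order terms and commute with hexagon relators on reduced webs, the sequence can be reordered so that all square and $I=H$ relators occur first, producing an intermediate web $w''$ that is still coherent (by iterated application of Lemma \ref{Rel:summand} to the remaining hexagon relators) and is joined to $w$ by horizontal square and $I=H$ relators only. Each such relator is realized geometrically by Proposition \ref{geo:horz}: it replaces a small contractible sub-disk of $D(w_i)$ by another with the same boundary inside the building. On configuration spaces this yields birational maps $Q(D(w_i)) \dashrightarrow Q(D(w_{i+1}))$ commuting with the restriction $\pi$ to $Q(P)$. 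These maps are regular isomorphisms over $\pi^{-1}(X_{\vec{\la}})$, because the uniqueness of the new $2$-dimensional subspace (for a square relator) or of the new edge (for an $I=H$ relator) in the spherical link is a general-position condition on adjacent $1$-D, $2$-D, and $3$-D subspaces, which is forced on $X_{\vec{\la}}$ by fixing all boundary dominant-weight distances to the LS distances. Composing them identifies $\pi^{-1}(X_{\vec{\la}}) \sbs Q(D(w))$ with $\pi^{-1}(X_{\vec{\la}}) \sbs Q(D(w''))$, reducing the problem to the case $w$ coherent.

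For the coherent case, given a configuration in $X_{\vec{\la}}$, I define the image of each interior vertex $v$ of $D(w)$ as follows. By the second coherence axiom there exists a maximal geodesic in $D(w)$ from the base face $\ast$ through $v$ to some other boundary face $v'$, and by the first axiom the dominant-weight distance $d(\ast,v)$ depends only on $v$. On the generic stratum $X_{\vec{\la}}$, the images $\ph(\ast)$ and $\ph(v')$ lie at dominant-weight distance $d(\ast,v')$, and the apartment they span contains a unique vertex $p$ at weight-distances $d(\ast,v)$ from $\ph(\ast)$ and $d(v,v')$ from $\ph(v')$; set $\ph(v) := p$. To see this is independent of the choice of geodesic through $v$, apply combinatorial convexity at $\ast$ (Proposition \ref{Rel:cvx}): any two geodesics through $v$ cobound a simply-connected planar region of $D(w)$ none of whose interior vertices obstruct the common extension, and by Proposition \ref{geo:Strandgeo} every such region embeds as a union of strand-following geodesics, which on $X_{\vec{\la}}$ lie inside a single apartment of the building. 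Since the two candidate images of $v$ sit on geodesics from $\ph(\ast)$ inside that apartment with the same weight-distance $d(\ast,v)$, they agree. The same argument, applied to each edge of $D(w)$ realized as the last step of a convexly-chosen geodesic from $\ast$, shows the vertex-assignment extends to a configuration of $D(w)$ with the correct weight-length on every edge. Uniqueness of the extension is immediate from the uniqueness of each $\ph(v)$.

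The main obstacle is the embedding into a single apartment used in the well-definedness check, i.e.\ showing that on the generic stratum $X_{\vec{\la}}$ the planar region cobounded by two geodesics from $\ast$ through $v$ maps isometrically into one apartment. This is precisely the ``no degenerations'' property distinguishing $X_{\vec{\la}}$ from the lower-dimensional strata of the full Satake fiber, and should reduce to the fact that two generic LS-galleries with common endpoints in a Euclidean building share the apartment they sweep out. Granting this transversality input, both existence and uniqueness of the extension follow, and the reduction above promotes the conclusion from the coherent representative $w''$ to the original reduced web $w$.
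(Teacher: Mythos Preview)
Your overall architecture---reduce to a coherent representative via square and $I=H$ moves, then handle the coherent case---matches the paper for \emph{existence}, but your treatment of \emph{uniqueness} and of the coherent case itself diverges from the paper and has gaps.

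The paper does not attempt to prove the coherent case from scratch. For existence it simply cites that coherent webs extend configurations on $X_{\vec\la}$ (this is the content of \cref{coherent}/\cref{intro:Satake} and the fan-variety construction from \cite{fontaine:generating}), then pulls back along \cref{geo:horz}. Your attempt to build $\ph(v)$ directly from coherence axioms runs into exactly the obstacle you flag: showing that the region cobounded by two geodesics from $\ast$ lands in a single apartment on $X_{\vec\la}$. You do not prove this, and invoking \cref{geo:Strandgeo} does not supply it---that proposition produces a single strand-following geodesic and a coherent modification of $w$, not an apartment containing an arbitrary convex region. So your well-definedness argument is incomplete.

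More importantly, the paper's uniqueness argument avoids the coherent case entirely and works directly on $w$. The key tool is \cref{geo:Strandgeo}: every vertex of $D(w)$ lies on a strand-following geodesic $\rho_p$, and the weight sequence of $\rho_p$ coincides with that of a geodesic in some coherent equivalent web, hence with the building distance between the endpoints on $X_{\vec\la}$. Since geodesics in the building are unique once endpoints and weight sequence are fixed, the image of every vertex is forced. This bypasses both the apartment-embedding issue and the need to show that the \cref{geo:horz} constructions are \emph{bijective} over $X_{\vec\la}$---a claim you assert (``regular isomorphisms'') but do not justify, since \cref{geo:horz} only asserts existence of the new disk, not uniqueness of the replacement subspace under generic boundary conditions.
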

\begin{proof}
Given any configuration in $f \in X_{\vec{\la}}$. 

We first prove uniqueness. Given one of the strand-following geodesics $\rho_p$ in $D(w)$, by proposition \ref{geo:Strandgeo}, the weight sequence of $\rho_p$ is the same as for a coherent web. Now the configuration of a coherent web corresponds to the fan variety and so gives an open dense subset of the Satake component. This tells us that the weight sequence of $\rho_p$ is the same as the distance between the corresponding vertices in the building, and hence if $f$ extends to $\tilde{f}$ from $D(w)$ to the Euclidean building, then $\rho_p$ must be a geodesic which is uniquely determined by this weight sequence. Since every vertex is crossed by one of these strand-following geodesics, the image of every point is determined. Since every vertex has uniquely determined image, and every higher simplex is determined uniquely by its vertices, this implies uniqueness.

For existence, we know that $w$ can be made coherent only using horizontal square and $I=H$ relators. Call this coherent web $w'$. We know by \cref{intro:Satake} that there is a unique extension of $f$ to $D(w')$. Finally, using \cref{geo:horz}, we know that we can apply horizontal relators to get a disk with the same boundary and whose web is $w$.
\end{proof}
	
In particular we get the following;
 
\begin{prop}
If $D$ is a minimal combinatorial disk in the Euclidean building with boundary a generic polygon $P \in X_{\vec{\la}}$, then every vertex in $D$ is contained in a geodesic between vertices in $P$.
\end{prop}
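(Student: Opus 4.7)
The strategy is to reduce the claim about $D$ to the corresponding statement for a coherent web, using \cref{geo:Strandgeo} to interpolate between the two. By \cref{geo:reduced=minimal}, the minimal disk $D$ corresponds to a reduced web $w$ via the locally isometric embedding $D(w) \hookrightarrow X$ guaranteed by \cref{geo:config}. Fix a vertex $v \in D$ and let $f$ be the face of $w$ it corresponds to. Any edge of $f$ lies on a unique maximal strand $p$ of $w$, and the associated strand-following geodesic $\rho_p$ visits the face $f$ by construction.

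Applying \cref{geo:Strandgeo} produces a reduced web $w'$, obtained from $w$ by a sequence of horizontal square and $I=H$ relators disjoint from $\rho_p$, such that $w'$ is coherent at the boundary face $\ast := \rho_p(0)$ and such that $\rho_p$, viewed as a cut path on $w'$, is still a geodesic. By \cref{geo:horz}, this sequence of relators realizes $w'$ as the web of a combinatorial disk $D'$ in the building with the same generic boundary polygon $P \in X_{\vec{\la}}$ as $D$. Since the relators are disjoint from $\rho_p$, the face $f$ persists unchanged in $w'$; and by the uniqueness clause of \cref{geo:config}, both $D$ and $D'$ are the unique locally isometric extensions of the configuration of $P$, so on the common face $f$ they embed to the same building vertex $v$.

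Finally, coherence of $w'$ at $\ast$ supplies, by axiom (2) of \cref{CoherenceDef}, a maximal geodesic cut path $\gamma$ in $D(w')$ from $\ast$ through $f$ to some other boundary face of $P$. Under the locally isometric embedding $D(w') \hookrightarrow X$, the combinatorial geodesic $\gamma$ lifts to a cellular path in $X$ of the same weight-valued length, connecting the two corresponding vertices of $P$ and passing through $v$; because its weight-length already equals the weight-valued building distance between those endpoints (a consequence of coherence applied to a configuration in $X_{\vec{\la}}$), the lifted path is genuinely a geodesic in the building. The main technical point is to transport the coherence of $w'$ back to a statement about $D$ itself, i.e.\ to know that $v$ really is the same building vertex whether accessed through $D$ or $D'$; this is exactly what the uniqueness of configuration extension in \cref{geo:config} provides, and it is what makes the whole reduction to the coherent case go through.
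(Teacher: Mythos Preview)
Your proof is correct and follows essentially the same approach as the paper's: both invoke \cref{geo:reduced=minimal} to get a reduced web and then \cref{geo:Strandgeo} to show that the strand-following path through any given vertex maps to a genuine geodesic in the building. The paper's argument is simply a terser version of yours, and your detour through axiom~(2) of coherence is unnecessary since $\rho_p$ itself already furnishes the needed geodesic through $f$ in $w'$.
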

\begin{proof}
By \cref{geo:reduced=minimal}, the corresponding web $w$ is reduced. Then by \cref{geo:Strandgeo}, each vertex of $D(w)$ is contained in a path following a strand, and these correspond to geodesics in the image, which gives the result.  
\end{proof}

This implies that the strand-following geodesics are geodesics in a stronger sense:
\begin{coro}
If $\rho_p$ is a geodesic following a strand, then every path between vertices on $\rho_p$ has less than or equal length (so in particular have comparable lengths).
\end{coro}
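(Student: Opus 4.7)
The plan is to reduce to the coherent case using \cref{geo:Strandgeo} and then deduce the statement from a weight-poset triangle inequality supplied by axiom (3) of coherence. First I would apply \cref{geo:Strandgeo} to produce a sequence of horizontal square and $I{=}H$ relators, all disjoint from $\rho_p$, carrying $w$ to a web $w'$ that is coherent at $\ast := \rho_p(0)$ and in which $\rho_p$ is still a geodesic. Because none of these relators touch $\rho_p$, its vertex sequence, its edge weights, and (via \cref{geo:horz} and \cref{geo:config}) its image in the Euclidean building are all preserved, so it suffices to prove the corollary inside $D(w')$.

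The heart of the argument is a weight-valued triangle inequality in the coherent web: for any cellular path $\sigma$ from $\ast$ to a vertex $u$ of $D(w')$, the difference $\mathrm{len}(\sigma) - d(\ast, u)$ lies in the positive root cone. Writing $\sigma = e_1 e_2 \cdots e_n$ with edge weights $\lambda_1, \ldots, \lambda_n$, axiom (3) of coherence forces consecutive displacements from $\ast$ to be Weyl-conjugates of the $\lambda_i$, so $d(\ast, u) = \sum_i w_i \lambda_i$ for some $w_i \in W$. Since $\lambda_i$ is dominant and $w_i \in W$, each $\lambda_i - w_i \lambda_i$ is a nonnegative integer combination of positive simple roots (a standard Lie-theoretic fact), and therefore $\mathrm{len}(\sigma) - d(\ast, u) = \sum_i (\lambda_i - w_i \lambda_i) \geq 0$ in the dominance order; in particular the two weights are comparable.

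To conclude, fix $v_1, v_2$ on $\rho_p$ with $v_1$ preceding $v_2$, and let $\pi$ be any cellular path from $v_1$ to $v_2$ in $D(w')$ (or more generally in the image disk in the building, to which $\pi$ can be pulled back via \cref{geo:config}). Applying the inequality to the concatenation $\sigma := \rho_p[\ast, v_1] \cdot \pi$ gives $\mathrm{len}(\rho_p[\ast, v_1]) + \mathrm{len}(\pi) \geq d(\ast, v_2) = \mathrm{len}(\rho_p[\ast, v_1]) + \mathrm{len}(\rho_p[v_1, v_2])$, where the last equality uses that $\rho_p$ is a coherent geodesic from $\ast$. Translation-invariance of the dominance order permits cancellation of the common prefix, yielding $\mathrm{len}(\rho_p[v_1, v_2]) \leq \mathrm{len}(\pi)$ with comparable lengths, which is exactly the corollary.

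The main obstacle is justifying the middle paragraph's triangle inequality rigorously: one must ensure that the Weyl-conjugate expansion coming from axiom (3) is path-independent in a way that lets the sum telescope to $d(\ast, u)$. This is essentially the content of the proof of \cref{Rel:Redundant}, and a mild induction along $\sigma$ using combinatorial convexity (\cref{Rel:cvx}) should handle the bookkeeping cleanly; once this building block is in place the remaining steps are formal manipulations in the dominant weight poset.
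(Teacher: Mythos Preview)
Your approach takes a genuinely different route from the paper, but it has a gap that your parenthetical does not quite close. The paper's proof is a one-liner: the map $D(w)\to X$ produced in the proof of \cref{geo:config} sends $\rho_p$ to a building geodesic and sends every cellular path to a path of equal weight-valued length; since the Euclidean building itself has coherent geodesics (a standard property of its weight-valued metric), the comparison is immediate. No passage to a coherent web is required.

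Your combinatorial triangle inequality inside a coherent web is correct and pleasant: axiom~(3) gives $d(\ast,u)=\sum_i w_i\lambda_i$, and $\lambda_i - w_i\lambda_i$ is a nonnegative sum of simple roots, so $\mathrm{len}(\sigma)\ge d(\ast,u)$. The trouble is your first step. The relators supplied by \cref{geo:Strandgeo} are guaranteed disjoint from $\rho_p$, not from an arbitrary path $\pi$. An $I{=}H$ relator deletes one $\lambda_2$-edge of $D(w)$ and inserts another between a different pair of faces, so a path in $D(w)$ that crossed that double edge has no equal-length counterpart in $D(w')$ (the detour costs $\lambda_1+\lambda_3$, which is \emph{incomparable} to $\lambda_2$). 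A square relator reverses the cyclic orientation of its four single edges, which swaps the $\lambda_1/\lambda_3$ contributions and can change the length of a path through the square face. Thus paths between $v_1,v_2$ on $\rho_p$ in $D(w)$ do not transport to $D(w')$ with their lengths intact, and your inequality in $D(w')$ does not control them. Your parenthetical (``pull back via \cref{geo:config}'') is precisely the paper's argument; once you invoke the building embedding you already have the corollary, and the detour through $w'$ becomes superfluous.
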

\begin{proof}
Each such path gives an equal length path in the Euclidean building which we know has coherent geodesics.
\end{proof}

Finally we can show that disks whose corresponding webs differ by a relator are the same except at the relator face:
\begin{prop}
If $D$ and $D'$ are minimal combinatorial disks of the Euclidean building with boundary a generic polygon $P \in X_{\vec{\la}}$, and $r$ is a horizontal relator whose leading order term is $w-w'$, then if $v$ is a vertex in which doesn't not correspond to the relator face, then the image of $v$ and $r(v)$ are the same in the Euclidean building
\end{prop}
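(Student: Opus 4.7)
The plan is to use uniqueness of extensions. By Prop \ref{geo:reduced=minimal} both associated webs $w$ and $w'=r(w)$ are reduced, and Prop \ref{geo:config} then says that the boundary polygon $P \in X_{\vec{\la}}$ has a unique extension to a minimal combinatorial disk whose web is $w$ (namely $D$) and a unique extension to one whose web is $w'$ (namely $D'$). So it suffices to construct an auxiliary disk $\wt D$ with boundary $P$, corresponding web $w'$, and agreeing vertex-by-vertex with $D$ outside the relator face; uniqueness will then force $\wt D = D'$, and the conclusion follows.

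For the horizontal square relator \eqref{SS C} and the I=H relator \eqref{IH}, such a $\wt D$ is produced directly by the proof of Prop \ref{geo:horz}. That proof performs a purely local modification: it replaces the cell(s) associated to the relator face by a different collection of simplices built inside the link of the relevant vertex in the spherical building, while leaving everything outside the face untouched. Hence every vertex of $\wt D$ outside the relator face is literally the same point of the Euclidean building as in $D$, and the identification $\wt D = D'$ gives the claim.

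For the horizontal hexagon relator \eqref{Kekule C}, which is not covered by Prop \ref{geo:horz}, I would argue via strand-following geodesics. Given a vertex $v$ outside the relator face, pick a strand $p$ through $v$ and apply Prop \ref{geo:Strandgeo} to obtain a sequence of horizontal square and I=H relators on $w$ that coherentizes $w$ at $\rho_p(0)$ while keeping $\rho_p$ intact and still a geodesic; do the analogous thing to $w'$. By the square/I=H case already handled, these coherentizations lift to disks obtained from $D$ and $D'$ by modifications disjoint from $\rho_p$. Since every horizontal relator preserves both the strand matching and the weight-length of each strand-following geodesic (the horizontal-relator step in the proof of Prop \ref{Rel:summand} is exactly the observation that the two leading terms have the same boundary distances), the two coherent versions have identical boundary distances from $\rho_p(0)$. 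In both disks the image of $v$ is then the endpoint of the unique building geodesic starting at the fixed point $\rho_p(0)$ with this common weight-sequence, so the images coincide.

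The main obstacle will be the hexagon case: the square and I=H cases essentially fall out of Prop \ref{geo:horz} combined with uniqueness, whereas the hexagon step requires carefully matching strand data on the two sides of the relator under a coherentization procedure and then invoking uniqueness of geodesics in the $CAT(0)$ Euclidean building.
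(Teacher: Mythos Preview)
Your square and $I{=}H$ case is correct and in fact takes a different route from the paper: combining the explicit local construction of Proposition~\ref{geo:horz} with the uniqueness statement of Proposition~\ref{geo:config} is a clean way to force $\wt D = D'$, and hence to match vertices outside the relator face. The paper instead gives a single uniform argument via strand-following geodesics, with no case split on the relator type.

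The hexagon case has a real gap. You correctly observe that horizontal relators preserve the total boundary-to-boundary weight-length of each strand-following geodesic, and you cite Proposition~\ref{Rel:summand} for this. But you then jump to ``the image of $v$ is the endpoint of the unique building geodesic \ldots\ with this common weight-sequence.'' The problem is that $v$ is an interior vertex, so what you actually need is the \emph{partial} weight along $\rho_p$ from $\rho_p(0)$ to $v$, not the total boundary distance; and if $\rho_p$ passes through the relator face before reaching $v$, the two triangle edges it crosses have their orientations swapped by the Kekul\'e move, so the partial weight-sequence can genuinely differ in $w$ versus $w'$ even though the full length does not. Nothing in your coherentization detour recovers this partial information. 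The paper supplies exactly the missing observation: a strand-following path in a reduced web visits any given face at most once, so $\rho_p$ meets the relator face at most once, and hence one of the two segments (from $\rho_p(0)$ to $v$, or from $v$ to $\rho_p(1)$) avoids the relator face entirely and is literally the same path in $D(w)$ and $D(w')$. That segment is then a building geodesic from a fixed boundary point with an unchanged weight-sequence, which by uniqueness of geodesics in the building determines the image of $v$. Once you add this step, the coherentization machinery becomes unnecessary and the paper's direct argument falls out.
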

\begin{proof}
We know the corresponding webs $w$ and $w'$ are reduced by \cref{geo:reduced=minimal}. Let $v$ be such a vertex not corresponding to the relator face, then there is a maximal strand-following geodesic which passes through it. Since the geodesic can't cross the relator face twice, either the geodesic segment before or after $v$ must not cross the relator face. Then this segment has the same weight-sequence in $w$ and $r(w)$. Since both geodesic segments are part of geodesics connecting the same boundary vertices, both segments are inside a common apartment, and since their weight-sequences are the same they must be equal.
\end{proof}

Therefore, relators are geometrically local.

\chapter{Examples and Counterexamples}
\linespread{1}\selectfont
In this chapter we will provide counterexamples to some potentially desirable properties. We will make use of product spiders to do this. The sections here are all independent.

\section{Failure of \texorpdfstring{$E^1$}{} Convergence in \texorpdfstring{$A_1^2 \times A_2$}{} and \texorpdfstring{$A_2 \times A_2$}{}}
In this section we will show that the spiders of type $A_1^2 \times A_2$ and $A_2 \times A_2$ fail to have $E^1$ convergence, and hence product spiders of the form $A_1^n \times A_2^m$ fail to have $E^1$ convergence unless $m =0$ or the rank is less than or equal to $3$. Recall that when drawing product webs we will not draw the orientation of every edge as long as the orientation of the corresponding strand is indicated (in order to avoid clutter).

First, for $A_1^2 \times A_2$ take the following sequence of webs:
\begin{equation}
\label{Ex:A12A2}
\begin{tikzpicture}[baseline=-.3ex,scale=.6]
\draw[midto,web] (-1,-1) -- (0,0);
\draw[midto,web] (-1,1) -- (0,0);
\draw[midfrom,web] (0,0) -- (2,0);
\draw[midto,web]  (2,0) -- (3,1);
\draw[midto,web] (2,0) -- (3,-1);
\draw[red] (.9,-1) -- (.9,1);
\draw[blue] (-1,0) to[out=30,in=150] (3,0);
\end{tikzpicture}\; , \quad
\begin{tikzpicture}[baseline=-.3ex,scale=.6]
\draw[midto,web] (-1,-1) -- (0,0);
\draw[midto,web] (-1,1) -- (0,0);
\draw[midfrom,web] (0,0) -- (2,0);
\draw[midto,web]  (2,0) -- (3,1);
\draw[midto,web] (2,0) -- (3,-1);
\draw[red] (.5,-1) -- (.5,1);
\draw[red] (1.5,-1) -- (1.5,1);
\draw[blue] (-1,0) to[out=30,in=150] (3,0);
\end{tikzpicture} \; , \quad
\begin{tikzpicture}[baseline=-.3ex,scale=.6]
\draw[midto,web] (-1,-1) -- (0,0);
\draw[midto,web] (-1,1) -- (0,0);
\draw[midfrom,web] (0,0) -- (2,0);
\draw[midto,web]  (2,0) -- (3,1);
\draw[midto,web] (2,0) -- (3,-1);
\draw[red] (.4,-1) -- (.4,1);
\draw[red] (.9,-1) -- (.9,1);
\draw[red] (1.6,-1) -- (1.6,1);
\draw[blue] (-1,0) to[out=30,in=150] (3,0);
\end{tikzpicture} \; ,\quad ...
\end{equation}
Now as elements in $Sp$ we know that these are equivalent to the following sequence:
\begin{equation}
\begin{tikzpicture}[baseline=-.3ex,scale=.6]
\draw[midto,web] (-1,-1) -- (0,0);
\draw[midto,web] (-1,1) -- (0,0);
\draw[midfrom,web] (0,0) -- (2,0);
\draw[midto,web]  (2,0) -- (3,1);
\draw[midto,web] (2,0) -- (3,-1);
\draw[red] (.9,-1) -- (.9,1);
\draw[blue] (-1,0) to[out=-30,in=210] (3,0);
\end{tikzpicture}\; , \quad
\begin{tikzpicture}[baseline=-.3ex,scale=.6]
\draw[midto,web] (-1,-1) -- (0,0);
\draw[midto,web] (-1,1) -- (0,0);
\draw[midfrom,web] (0,0) -- (2,0);
\draw[midto,web]  (2,0) -- (3,1);
\draw[midto,web] (2,0) -- (3,-1);
\draw[red] (.5,-1) -- (.5,1);
\draw[red] (1.5,-1) -- (1.5,1);
\draw[blue] (-1,0) to[out=-30,in=210] (3,0);
\end{tikzpicture} \; , \quad
\begin{tikzpicture}[baseline=-.3ex,scale=.6]
\draw[midto,web] (-1,-1) -- (0,0);
\draw[midto,web] (-1,1) -- (0,0);
\draw[midfrom,web] (0,0) -- (2,0);
\draw[midto,web]  (2,0) -- (3,1);
\draw[midto,web] (2,0) -- (3,-1);
\draw[red] (.4,-1) -- (.4,1);
\draw[red] (.9,-1) -- (.9,1);
\draw[red] (1.6,-1) -- (1.6,1);
\draw[blue] (-1,0) to[out=-30,in=210] (3,0);
\end{tikzpicture} \; ,\quad ...
\end{equation}
and yet there are no subgraphs corresponding to the leading terms of relators since all internal faces are squares with three colors. This violates $E^1$ convergence. Moreover, elements in the second sequence are the only minimal vertex webs which are equal to the webs in the first sequence in $Sp$ (which can be seen by projecting to pairs of factors), so in order for a presentation with the same generating morphisms to have the $E^1$ property, it must possess all of these as relators, and so it can't be finite.

Now one question to ask is why does this spider fails to have $E^1$ convergence. One answer coming from our criterion (theorem $\ref{CC}$) is that it fails the purity criterion, basically because when we create a reducible face with horizontal relators there may not be a choice of which relator to make reducible. Another answer comes from geometry. If we look at the interior face of:
\begin{equation}
\begin{tikzpicture}[baseline=-.3ex,scale=.6]
\draw[midfrom,web] (0,0) -- (2,0);
\draw[midto,web]  (2,0) -- (3,1);
\draw[midto,web] (2,0) -- (3,-1);
\draw[red] (.9,-1) -- (.9,1);
\draw[blue] (0,-.5) to[out=0,in=210] (3,0);
\end{tikzpicture}
\end{equation}
the corresponding polygon in the dual web has a total interior angle of $330^\circ$, so it is positively curved, and yet there is no associated relator. Only after attaching another such face as in (\ref{Ex:A12A2}) does a (non-local) relator emerge.

For $A_2 \times A_2$ we place two regular $2n$-gons ($n \geq 3$) on top of each other where one is rotated by $\pi/2n$. For example for a hexagon we'd get something like this:

\begin{equation}
\begin{tikzpicture}[baseline=0,scale=.8] 
\draw[dashed] (0,0) circle (2cm); 
\draw[red,midto] (0:1) -- (60:1) ; 
\draw[red,midfrom] (60:1)-- (120:1);
\draw[red,midto] (120:1) --(180:1);
\draw[red,midfrom] (180:1) -- (240:1) ;
\draw[red,midto] (240:1)-- (300:1);
\draw[red,midfrom] (300:1) -- (0:1);

\draw[red,midto] (0:1) -- (0:2) ; 
\draw[red,midfrom] (60:1)-- (60:2) ;
\draw[red,midto] (120:1) --(120:2);
\draw[red,midfrom] (180:1) -- (180:2) ;
\draw[red,midto] (240:1)-- (240:2);
\draw[red,midfrom] (300:1) -- (300:2);

\draw[blue,midto] (35:1.5) -- (95:1.5) ; 
\draw[blue,midfrom] (95:1.5)-- (155:1.5);
\draw[blue,midto] (155:1.5) --(215:1.5);
\draw[blue,midfrom] (215:1.5) -- (275:1.5) ;
\draw[blue,midto] (275:1.5)-- (335:1.5);
\draw[blue,midfrom] (335:1.5) -- (35:1.5);

\draw[blue,midto] (35:1.5) -- (35:2); 
\draw[blue,midfrom] (95:1.5)-- (95:2);
\draw[blue,midto] (155:1.5) --(155:2);
\draw[blue,midfrom] (215:1.5) -- (215:2) ;
\draw[blue,midto] (275:1.5)-- (275:2);
\draw[blue,midfrom] (335:1.5) -- (335:2);
\end{tikzpicture}
\end{equation}

All of the interior faces are pentagons or larger, and hence there are no local product relators. However, there is another web which is equal in the spider $Sp$ where we switch which $2n$-gon is interior vs exterior:
\begin{equation}
\begin{tikzpicture}[baseline=0,scale=.8] 
\begin{scope}[rotate=30]
\draw[dashed] (0,0) circle (2cm); 
\draw[blue,midto] (5:1) -- (65:1) ; 
\draw[blue,midfrom] (65:1)-- (125:1);
\draw[blue,midto] (125:1) --(185:1);
\draw[blue,midfrom] (185:1) -- (245:1) ;
\draw[blue,midto] (245:1)-- (305:1);
\draw[blue,midfrom] (305:1) -- (5:1);

\draw[blue,midto] (5:1) -- (5:2) ; 
\draw[blue,midfrom] (65:1)-- (65:2) ;
\draw[blue,midto] (125:1) --(125:2);
\draw[blue,midfrom] (185:1) -- (185:2) ;
\draw[blue,midto] (245:1)-- (245:2);
\draw[blue,midfrom] (305:1) -- (305:2);

\draw[red,midfrom] (30:1.5) -- (90:1.5) ; 
\draw[red,midto] (90:1.5)-- (150:1.5);
\draw[red,midfrom] (150:1.5) --(210:1.5);
\draw[red,midto] (210:1.5) -- (270:1.5) ;
\draw[red,midfrom] (270:1.5)-- (330:1.5);
\draw[red,midto] (330:1.5) -- (30:1.5);

\draw[red,midfrom] (30:1.5) -- (30:2); 
\draw[red,midto] (90:1.5)-- (90:2);
\draw[red,midfrom] (150:1.5) --(150:2);
\draw[red,midto] (210:1.5) -- (210:2) ;
\draw[red,midfrom] (270:1.5)-- (270:2);
\draw[red,midto] (330:1.5) -- (330:2);
\end{scope}
\end{tikzpicture}
\end{equation}

Just as in $A_1^2 \times A_2$, there are no other minimal vertex webs which are equal to these pairs in the spider, and so in order to give a presentation with the same generating morphisms, we would need infinitely many relators.

Again we can attempt to explain this. Looking at the criterion (theorem \ref{CC}) the spider fails controlled degeneration. However, geometrically this example is a bit more complicated than the case of $A_2 \times A_1^2$. In fact, in the hexagon case both of the corresponding dual webs are intrinsically flat. Embedding both of these into the building, the link of the interior vertex would be the disjoint union of two circles (one for each of the minimal webs above). So in some sense this is a dimension $>3$ problem, because we have two different flat surfaces with the same boundary.

\section{Non-Trivial Topology}
A possibly desirable quality for spiders would be for them to admit a height function on webs which has a unique highest web and lowest web. Unfortunately, there are examples where there are three extremal webs (where we define an extremal web be one where all local relators are non-adjacent), and so there is no way that such an ideal height function would exist, such as this $A_1^3$ web:
\begin{equation}
\begin{tikzpicture}[baseline=0ex,scale=1.5] 
\draw[dashed] (0,0) circle (1cm);
\draw[red] (.707,.707) to[out=225, in=-45] (-.707,.707);
\draw[red] (-.966,.259) to[out=-15, in=75] (-.259,-.966);
\draw[red] (.966,.259) to[out=195, in=105] (.259,-.966);
\begin{scope}[rotate=40]
\draw[blue] (.707,.707) to[out=225, in=-45] (-.707,.707);
\draw[blue] (-.966,.259) to[out=-15, in=75] (-.259,-.966);
\draw[blue] (.966,.259) to[out=195, in=105] (.259,-.966);
\end{scope}
\begin{scope}[rotate=80]
\draw[green] (.707,.707) to[out=225, in=-45] (-.707,.707);
\draw[green] (-.966,.259) to[out=-15, in=75] (-.259,-.966);
\draw[green] (.966,.259) to[out=195, in=105] (.259,-.966);
\end{scope}
\end{tikzpicture}
\end{equation}
where the three extremal webs correspond to removing all strands of one color from the boundary of the internal nonagon, so for example for red we get:
\begin{equation}
\begin{tikzpicture}[baseline=0ex,scale=1.5] 
\draw[dashed] (0,0) circle (1cm);
\draw[red] (.707,.707) to[out=165, in=15] (-.707,.707);
\begin{scope}[rotate=120]
\draw[red] (.707,.707) to[out=165, in=15] (-.707,.707);
\end{scope}
\begin{scope}[rotate=240]
\draw[red] (.707,.707) to[out=165, in=15] (-.707,.707);
\end{scope}
\begin{scope}[rotate=40]
\draw[blue] (.707,.707) to[out=225, in=-45] (-.707,.707);
\draw[blue] (-.966,.259) to[out=-15, in=75] (-.259,-.966);
\draw[blue] (.966,.259) to[out=195, in=105] (.259,-.966);
\end{scope}
\begin{scope}[rotate=80]
\draw[green] (.707,.707) to[out=225, in=-45] (-.707,.707);
\draw[green] (-.966,.259) to[out=-15, in=75] (-.259,-.966);
\draw[green] (.966,.259) to[out=195, in=105] (.259,-.966);
\end{scope}
\end{tikzpicture}
\end{equation}
If we look at the corresponding pocket, the point corresponding to the nonagon center vertex does not have an orientable neighborhood. In the link, each of the 9 adjacent relators correspond to a triangular strip and each vertex gives a half-twist, hence overall showing that the link is topologically a M\"obius band.

However, note that this example can't be extended to $A_3$ because it's impossible to orient a nonagon of strands such that the orientations of each edge of the nonagon alternate. Hence there would need to be a reducible triangle face.

\section{An Embeddable but non-Minimal Web}
One difficulty with the $A_3$ spider is that the minimal vertex spiders are not the only ones that (generically) embed in the Euclidean building, and similarly they aren't the only coherent spiders. The smallest example is as follows:

\begin{equation}
\begin{tikzpicture}[baseline=-.3ex,scale=.5] 
\draw[double,web] (-1,-1) -- (1,-1);
\draw[midto,web] (-1,-1) -- (-1,1);
\draw[midfrom,web] (-1,1)-- (1,1);
\draw[midfrom,web] (1,-1)-- (1,1);
\draw[midto,web] (-1,-1) -- (-2,-2);
\draw[double,web] (-1,1) -- (-2,2);
\draw[midfrom,web] (1,-1) -- (2,-2);
\draw[double,web] (1,1) -- (2,2);
\fill (0:2) circle (4pt);
\end{tikzpicture}
\end{equation}
The above reducible web is coherent with respect to the indicated basepoint, and it can be directly checked that the distances from the indicated point to every other face are distinct. Hence the corresponding generic map must be an embedding. We can also see that the image of any combinatorial map from the dual into the building is contained in a single apartment as follows. If we take the triangles corresponding to opposite trivalent vertices we know these must be contained in a single apartment, but every vertex in the dual web is in the union of vertices. So all the vertices, and corresponding simplices must also be in that apartment. 

\section{Independence of Axioms 1 and 2 of Coherent Webs}
Since we showed that the third axiom of coherent webs was redundant, a natural question is whether one of the other two is as well. Unfortunately, the answer is no as we can see from the following examples in the $A_3$ spider. 

The first is simple. It's just the $I$ from the $I=H$ relator:
\begin{equation}
\begin{tikzpicture}[baseline=-.3ex,scale=.5] 
\draw[midfrom,web] (-1,-1) -- (1,-1);
\draw[midfrom,web] (-1,-1) -- (-1,1);
\draw[double,web] (-1,-1) -- (-2,-2);
\draw[midfrom,web] (-2,-2) -- (-2,-4);
\draw[midfrom,web] (-2,-2) -- (-4,-2);
\fill (225:5) circle (4pt);
\end{tikzpicture}
\end{equation}
The geodesics from the base point to the opposite face are length $2\la_1$ or $2\la_3$, which are incomparable weights. However, by applying the $I=H$ relator we see that the true length should be $\la_2$ which is smaller than both. The fact that the $I=H$ relator doesn't preserve coherence was one of the primary difficulties that we had to deal with.

The second is an example of a web which has coherent geodesics lengths from the base point, but there is a face which is not crossed by any geodesic:
\begin{equation}
\begin{tikzpicture}[baseline=-.3ex,scale=.5] 
\draw[midto,web] (-1,-1) -- (1,-1);
\draw[midto,web] (-1,-1) -- (-1,1);
\draw[midfrom,web] (-1,1)-- (1,1);
\draw[midfrom,web] (1,-1)-- (1,1);
\draw[double,web] (-1,-1) -- (-2,-2);
\draw[double,web] (-1,1) -- (-2,2);
\draw[double,web] (1,-1) -- (2,-2);
\draw[double,web] (1,1) -- (2,2);

\draw[midfrom,web] (-2,-2) -- (-2,-4);
\draw[midfrom,web] (-2,-2) -- (-4,-2);
\fill (225:5) circle (4pt);
\end{tikzpicture}
\end{equation}
We see that the geodesics have coherent lengths, but no geodesic crosses the square face. However, if we apply the relator corresponding to this face, and then an $I=H$ relator we get a coherent web. In a sense, square relators can hide the incoherence caused by the $I=H$ relators.

From these two we see that we can't omit either of the axioms. However, seeing as the $I=H$ relator is contracted away in the contracted spider, it remains possible that in the contracted spider the first axiom is implied by the second.

\nocite{*}
\printbibliography[heading=bibintoc,title={References}]
\end{document}